\documentclass[a4paper, 12pt]{amsart}

\usepackage{amssymb}
\usepackage{amsthm}
\usepackage{amsmath}
\usepackage[mathscr]{eucal}

\usepackage{comment}
\usepackage{graphicx}
\usepackage{stmaryrd}

\usepackage[T1]{fontenc}

\usepackage{longtable}
\usepackage{tabu}
\usepackage{tabularx}
\usepackage{ltablex}

\theoremstyle{plain}
\newtheorem{thm}{Theorem}[section]
\newtheorem{prop}[thm]{Proposition}
\newtheorem{cor}[thm]{Corollary}
\newtheorem{lem}[thm]{Lemma}

\theoremstyle{definition}
\newtheorem{df}{Definition}[section]

\theoremstyle{remark}
\newtheorem{rmk}{Remark}[section]


\AtBeginDocument{%
   \def\MR#1{}
}




\newcommand{\zz}{\mathbb{Z}}

\newcommand{\rr}{\mathbb{R}}

\newcommand{\To}{\Longrightarrow}


\DeclareMathOperator{\card}{Card}

\DeclareMathOperator{\cl}{CL}


\DeclareMathOperator{\met}{Met}

\DeclareMathOperator{\ult}{UMet}

\DeclareMathOperator{\chara}{\chi}

\newcommand{\invs}[1]{\triangledown(#1)}

\newcommand{\hahnsp}[1]{\mathbb{H}(#1)}
\newcommand{\hahnkor}[1]{\mathbb{F}(#1)}
\newcommand{\hahntai}[1]{\mathbb{P}(#1)}

\newcommand{\setdis}{\varrho}

\newcommand{\Arc}[1]{\mathbf{A}(#1)}

\newcommand{\mychu}[1]{\sigma(#1)}
\newcommand{\opop}{\mathrm{op}}

\DeclareMathOperator{\metdis}{\mathcal{D}}

\DeclareMathOperator{\umetdis}{\mathcal{UD}}

\DeclareMathOperator{\metrank}{\mathscr{MG}}

\newcommand{\doublecirc}{{\ooalign{$\bigcirc$\crcr\hss$\bullet$\hss}}}
\newcommand{\mzero}{\doublecirc}

\newcommand{\GGG}[1]{\mathrm{GRP}(#1)}
\newcommand{\FFF}[1]{\mathrm{ORD}(#1)}

\newcommand{\lolo}[1]{\Xi(#1)}

\newcommand{\mainmap}{\Phi}
\newcommand{\mainmapult}{\Upsilon}

\newcommand{\submap}{\Psi}

\newcommand{\subsubmap}{\Theta}
\newcommand{\paramap}{\Sigma}

\newcommand{\comp}[1]{#1^{\#}}

\newcommand{\abs}{\mathrm{abs}}

\newcommand{\myfilter}[1]{\mathcal{#1}}

\newcommand{\mysub}{\subseteq}
\newcommand{\mysup}{\supseteq}

\newcommand{\cova}{\lambda}

\newcommand{\eemap}{E}
\newcommand{\eemapsec}{W}
\newcommand{\aimap}{I}

\newcommand{\opmap}{O}

\newcommand{\concon}{\tau}

\newcommand{\stst}[2]{#1[#2]}

\newcommand{\stars}[1]{#1^{\bigstar}}
\newcommand{\oposi}[1]{\opop(#1)}

\newcommand{\myomega}{\omega_{0}}

\newcommand{\myBmap}{\mathfrak{B}}

\newcommand{\myVnbd}{\mathscr{V}}

\DeclareMathOperator{\tdim}{\dim_{T}}
\newcommand{\orddis}{M}

\newcommand{\metabs}{D[\abs]}
\newcommand{\metcova}{D[\cova_{G}]}
\makeatletter
\DeclareRobustCommand{\ggeq}{\mathrel{\mathpalette\gglleq@\gg}}
\DeclareRobustCommand{\lleq}{\mathrel{\mathpalette\gglleq@\ll}}

\newcommand{\gglleq@}[2]{%
  \begingroup
  \sbox\z@{$\m@th#1#2$}\sbox\tw@{$#1\leq$}%
  \vcenter to \dimexpr\ht\tw@+\dp\tw@{%
    \offinterlineskip
    \hbox{$\m@th#1#2$}%
    \vss
    \vbox to \dimen@{
      \vss
      \hbox to \wd\z@{$\m@th\mspace{-0.5mu}#1{-}\hss{-}\mspace{-0.5mu}$}%
      \kern-1.5\fontdimen8 \gglleq@font{#1} 3
      \kern-\fontdimen22 \gglleq@font{#1} 2
    }
  }%
  \endgroup
}
\newcommand{\gglleq@font}[1]{%
  \ifx#1\displaystyle\textfont\else
  \ifx#1\textstyle\textfont\else
  \ifx#1\scriptstyle\scriptfont\else
  \scriptscriptfont\fi\fi\fi
}

\makeatother

\newcommand{\arel}{\asymp}
\newcommand{\arcle}{\lleq}
\newcommand{\arclele}{\ll}

\pagestyle{plain}


\renewcommand{\labelenumi}{\textup{(\arabic{enumi})}}

\makeatletter
\@addtoreset{equation}{section}

\makeatother

\begin{document}

\title[Extension]
{
Simultaneous
extensions  of  metrics 
and 
ultrametrics 
of high power
 }

\author[Yoshito Ishiki]
{Yoshito Ishiki}
\address[Yoshito Ishiki]
{\endgraf
Photonics Control Technology Team
\endgraf
RIKEN Center for Advanced Photonics
\endgraf
2-1 Hirasawa, Wako, Saitama 351-0198, Japan}
\email{yoshito.ishiki@riken.jp}

\subjclass[2020]{Primary 54E99, Secondary  54C15, 54C20, 12J25, 26E30}
\keywords{
$\omega_{\mu}$-metrics, 
Ultrametrics, 
Hahn fields, 
Retractifiable spaces, 
Extension of metrics, 
Final compactness}

\begin{abstract}

In this paper, 
generalized metrics mean  
metrics taking values in general linearly ordered Abelian groups. 
Using the Hahn fields, 
we first prove that for every generalized metric space, 
if the set of 
the  Archimedean equivalence classes of the range group of the metric has an  infinite
 decreasing sequence, 
 then 
every  non-empty  closed subset of the space is 
a uniform retract of the ambient space. 
Next
 we construct simultaneous extensions 
 of  generalized  metrics and ultrametrics. 
From the existence of extensors of generalized  metrics, 
we characterize the 
final compactness of generalized metrizable
 spaces
using the completeness of generalized metrics. 
\end{abstract}

\maketitle

\setcounter{tocdepth}{1}
\tableofcontents

\section{Introduction}\label{sec:intro}
 Sikorski \cite{sikorski1950remarks} 
introduced the notion of 
$\omega_{\mu}$-metric spaces as spaces 
equipped with metrics taking  values  in 
a 
linearly ordered Abelian group, 
which  is an example of 
 topological spaces of high power ($\omega_{\mu}$-additive spaces). 
There are many developments of research 
on $\omega_{\mu}$-metric ($\omega_{\mu}$-metrizable) spaces concerning 
different or  common properties between 
ordinary metrics and generalized metrics. 
For instance, 
Nyikos--Reichel \cite{MR440515} and 
Wang \cite{MR166749} 
provided 
some  $\omega_{\mu}$-metrization theorems. 
Stevenson--Thorn \cite{stevenson1969results}
 proved that 
a topological space is 
$\omega_{\mu}$-metrizable for some $\mu$ if and 
only if the space have a linearly ordered uniformity. 
Juh\'{a}sz \cite{MR195052}, 
 Kucia--Kulpa
\cite{MR326672}, 
Hayes \cite{hayes1973uniformities}, 
and
 Souppouris 
\cite{MR370524} 
independently
proved that all 
$\omega_{\mu}$-metrizable spaces 
are paracompact. 
Di Concilio--Guadagni 
\cite{MR3542043} 
investigated 
$\omega_{\mu}$-metrizable spaces on which 
all continuous maps are uniformly continuous. 
Di Concilio--Guadagni  \cite{MR3721339} and  
Artico--Marconi--Pelant 
\cite{MR1419403}
discussed 
 hyperspaces of 
$\omega_{\mu}$-metric spaces. 
Hung \cite{MR314008} discussed the amalgamation property of 
generalized metric, 
which property  is related to the construction of 
of the  Urysohn universal space. 
Comicheo 
\cite{MR3946544}
proved 
a generalized open mapping theorem for 
linear  spaces equipped with norms taking values in general 
 ordered sets.  

In the present paper, 
using the Hahn fields, 
we first prove that for every generalized metric space, 
if the set 
of the Archimedean equivalence classes of the range group of the metric has an infinite decreasing sequence, 
then 
every  non-empty  closed subset of the space is 
a uniform retract of the ambient space (see Theorem \ref{thm:retract}). 
This is a generalization of  van Douwen's theorem \cite[Theorem 7 in Chapter 3]{vD1975}  and Brodskiy--Dydak--Higes--Mitra's theorem 
\cite[Theorem 2.9]{brodskiy2007dimension}. 
Next
 we construct simultaneous extensions
 of  ultrametrics and generalized  metrics
  (see Theorems \ref{thm:extensor} and \ref{thm:extensorult}). 
 This is an analogue  of 
Nguyen Van Khue and Nguyen To Nhu's theorem  \cite{NN1981} on simultaneous extensions of metrics, and a generalization of the author's extension theorem  \cite{Ishiki2021ultra} of ultrametrics. 
From the  existence of  extensions of generalized metrics, 
we characterize the 
final compactness of generalized metrizable
 spaces
using the completeness of generalized metrics 
 (see Theorem \ref{thm:kappa-compact}). 
This is an analogue of Niemytzki--Tychonoff's theorem 
\cite{NT1928}.


Before stating precisely our main results, 
we introduce some  notions and notations.

\subsection{Basic definitions}\label{subsec:basicnotations}
Let $L$ be a set. 
A binary relation $R$ on $L$ is said to be 
a
\emph{linear order} if 
it is reflexive, 
transitive, 
antisymmetric
and 
it 
satisfies 
$a\mathbin{R}b$ or 
$b\mathbin{R}a$ 
for all 
$a, b\in L$
(see,  for example, \cite[Chapter 1]{W1970}). 
In this case, the relation $R$ is often  symbolically
represented as $\le_{L}$, 
and we write 
$a<_{L}b$ 
if $a\le_{L}b$ and $a\neq b$. 
We denote by $x\ge_{L}y$ if $y\le_{L} x$. 
The pair $(L, \le_{L})$ 
is call a \emph{linearly ordered set}. 
By abuse if notation, 
we simply denote by 
$L$ the linearly ordered set $(L, \le_{L})$. 
In this paper, 
we often denote by    ``$\le$'' and ``$<$'' the orders
 ``$\le_{L}$'' and ``$<_{L}$'', respectively,  
 when no confusion can arise. 

In this paper, 
for an Abelian group $G$, 
we always denote by 
$+$ 
the group  operation on $G$, 
and 
we denote by $0_{G}$ its zero element. 
A pair $(G, \le_{G})$ of an Abelian group $G$ and a 
linear  order $\le_{G}$ on $G$
is said to be a 
\emph{linearly ordered Abelian group} if 
for all $a, b, c\in G$,  the inequality $a\le_{G} b$ implies that $a+c\le_{G} b+c$. 
By abuse of notation, 
we  simply  denote by
$G$  the linearly ordered Abelian group $(G, \le_{G})$. 
For example, the real numbers $\rr$ and  the integers $\zz$
are linearly ordered Abelian groups.
For simplicity of notation, 
we often write  $0$ instead  of $0_{G}$. 
For a linearly ordered Abelian group $G$, and 
for  $a\in G$, 
we denote by $G_{\ge a}$ (resp.~$G_{>a}$) the set of all $x\in G$ satisfying $a\le x$ (resp.~$a<x$).

Let $G$ be a linearly ordered Abelian group. 
For $x, y\in G_{>0}$, 
we denote by 
$x\ll y$ if for all $n\in \zz_{\ge 1}$
 we have 
$n\cdot x<y$. 
We define
a relation $\arel$ on $G_{>0}$ by 
  $x\arel y$ if and only if 
there exist integers $n, m\in \zz_{\ge  1}$
such that $y\le n\cdot x$ and $x\le m\cdot y$. 
Then $\arel$ is an equivalence relation
on $G_{>0}$, and we denote by 
$[x]_{\arel}$ the equivalence class of $x$ by
$\arel$. 
If $x\arel y$, 
we say that $x$ and $y$ are 
\emph{Archimedean 
equivalent to each other}. 
The relation $\arel$ is  called the 
\emph{Archimedean 
equivalence} on $G$. 
We denote by 
$\Arc{G}$ the quotient set of $G_{>0}$ by 
$\arel$. 
If $\card(\Arc{G})=1$, the group $G$ is said to be 
\emph{Archimedean}; otherwise, 
it is said to be  \emph{non-Archimedean}, 
where ``$\card$'' stands for the cardinality. 
For example, the real numbers $\rr$ and 
the integers $\zz$ are Archimedean. 
For $\alpha, \beta\in \Arc{G}$ with 
$\alpha=[x]_{\arel}$ and $\beta=[y]_{\arel}$, 
we write 
$\alpha\arcle \beta$ if 
$x\ll y$ or  $x\arel y$
(For the well-definedness of 
the order $\arcle$, 
see Lemma \ref{lem:welldef}). 
Then $(\Arc{G}, \arcle)$  becomes 
a linearly ordered set. 
Since $\Arc{G}$ plays an important role in this paper, 
we use 
the special symbol $\arcle$ as the order on 
$\Arc{G}$ 
rather than $\le$. 
By abuse of notation, 
we use the same symbol 
the symbol $\arclele$ on $G_{>0}$ as 
the strict order  $\arclele$ on 
$\Arc{G}$
meaning $\alpha\arcle \beta$ and 
$\alpha\neq \beta$. 
This is equivalent to 
$x\arclele y$, where 
$\alpha=[x]_{\arel}$ and 
$\beta=[y]_{\arel}$. 


\begin{df}\label{df:gmetrics}
Let $X$ be a set, and $G$ be a
linearly 
ordered Abelian group. 
We say that 
a map $d\colon X\times X\to G$ is 
a \emph{$G$-metric} if the following conditions are satisfied:
\begin{enumerate}
\renewcommand{\labelenumi}{(M\arabic{enumi})}
\item\label{item:m0} 
for all $x, y\in X$, the equality 
$d(x, y)=0$ implies $x=y$; 
\item\label{item:m1}
for all $x\in X$, we have 
$d(x, x)=0$;

\item\label{item:m2}
for all $x, y\in X$, we have $0\le d(x, y)$;
\item\label{item:m3}
for all $x, y\in X$, 
we have $d(x, y)=d(y, x)$;
\item\label{item:m4}
for all $x, y, z\in X$, we have 
$d(x, y)\le d(x, z)+d(z, y)$.
\end{enumerate}
The condition (M\ref{item:m4}) is called 
the \emph{triangle inequality}. 
For a $G$-metric $d$ on $X$, 
the topology on $X$ induced from $d$ is 
defined as the topology generated from 
open balls of $d$. 
\end{df}

For a topological  space $X$, 
we denote by $\met(X; G)$ the set of all 
$G$-metrics that generate the same topology 
of 
$X$. 
A topological space $X$ is said to be 
\emph{$G$-metrizable} if 
$\met(X; G)\neq \emptyset$. 
A topological  space is metrizable in the ordinary sense
if and only if it is  $\rr$-metrizable. 

\begin{df}\label{df:sultrametrics}
We say that a linearly  ordered set $S$ is 
\emph{bottomed} if it  has the least element, 
and we denote by $\mzero_{S}$ the least element. 
We often
simply
 denote by  $\mzero$ the least element 
$\mzero_{S}$ of $S$
when no confusion can arise. 
Let $S$ be a bottomed linearly ordered 
set. 
Let $X$ be a set. 
We say that 
a map $d: X\times X\to S$ is 
an \emph{$S$-ultrametric} if the following conditions are satisfied:
\begin{enumerate}
\renewcommand{\labelenumi}{(U\arabic{enumi})}
\item\label{item:u0}
for all $x, y\in X$, the equality 
$d(x, y)=\mzero$ implies $x=y$; 
\item\label{item:u1}
for all $x\in X$, we have 
$d(x, x)=\mzero$;

\item\label{item:u2}
for all $x, y\in X$, we have $\mzero\le d(x, y)$;
\item\label{item:u3}
for all $x, y\in X$, 
we have $d(x, y)=d(y, x)$;
\item\label{item:u4}
for all $x, y, z\in X$, we have 
$d(x, y)\le d(x, z)\lor d(z, y)$, 
where $\lor$ stands for the maximal operator on $S$, 
i.e., $x\lor y=\max\{x, y\}$.
\end{enumerate}
The condition (U\ref{item:u4}) is called 
the \emph{strong triangle inequality}. 
The topology on $X$ induced from $d$ is defined in a 
similar way to $G$-metrics. 
\end{df}

For a topological space $X$, 
we denote by 
$\ult(X; S)$
the set of all $S$-ultrametrics that generate
the same topology of $X$.

We say that a topological space $X$ is 
\emph{$S$-ultrametrizable} if we have 
$\ult(X; S)\neq \emptyset$. 
A topological space is ultrametrizable in the 
ordinary sense if and only if 
it is $\rr_{\ge 0}$-ultrametrizable.













For an ordinal $\mu$, 
the symbol $\omega_{\mu}$ 
stands for 
the $\mu$-th cardinal, namely,  
$\omega_{\mu}$  is the least ordinal 
whose cardinal is $\aleph_{\mu}$. 
Note   that $\myomega$ is  the least infinite cardinal, namely,  $\myomega=\{0, 1, 2, \dots\}$. 
\begin{df}\label{df:character}
For a bottomed linearly ordered set $S$, 
we define the 
\emph{character} 
$\chara(S)$ of 
$S$ as the minimal cardinal 
of all 
$\kappa>0$
such that there exists a map 
$f\colon\kappa+1\to T$ satisfying that 
\begin{enumerate}
\item if $\alpha, \beta<\kappa$ satisfy 
$\alpha<\beta$, 
then $f(\beta)<f(\alpha)$;
\item 
$f(\kappa)=\mzero_{S}$; 
\item 
for every $t\in S$, there exists $\alpha<\kappa$ with $f(\alpha)\le t$. 
\end{enumerate}
Note that $\chara(S)=1$,  or 
 $\chara(S)$ is a regular cardinal. 
 Remark  that 
the character $\chara(S)$ is equal to 
the cofinality of $S\setminus \{\mzero_{S}\}$. 
For example, we have $\chara(\rr_{\ge 0})=\myomega$. 
\end{df}

A topological space is said to be 
\emph{$\omega_{\mu}$-metrizable} 
(in the sense of Sikorski)
if 
there exits a linearly ordered group $G$ such that
$\chara(G_{\ge 0})=\omega_{\mu}$ and 
$\met(X; G)\neq \emptyset$. 
The notion of 
$\omega_{\mu}$-metrizable space was 
 introduced by 
 Sikorski \cite{sikorski1950remarks} as
 a generalization of ordinary metric spaces.


As remarked in 
 \cite{MR440515} and \cite{artico1981some}, 
 if a topological space $X$ is 
 $\omega_{\mu}$-metrizable,
then
$X$ is $S$-ultrametrizable 
for some  linearly ordered set  
$S$ with $\chara(S)=\omega_{\mu}$. 
This statement can be proven by 
considering the Archimedean equivalence class of 
$G$ with $\met(X; G)\neq \emptyset$
(see Lemma \ref{lem:uniformdis} and Proposition \ref{prop:characterization}). 
Focusing on this fact, 
and
based on 
the
Archimedean equivalence on 
linearly ordered Abelian groups, 
we introduce 
the 
notion of 
metrizable gauges in Definition  
\ref{df:metgauge}. 
Before metrizable gauges, 
we  define a one-point extension of 
a linearly ordered set. 

\begin{df}\label{df:oneptextorder}
Let $L$ be a linearly ordered set. 
We  denote by $\lolo{L}$ 
the one-point extension ordered set of $L$
by adding the element $\mzero_{\lolo{L}}\not\in L$ 
to $L$, 
and by defining 
$\mzero_{\lolo{L}}\le x$ for all $x\in L$. 
The set $\lolo{L}$ contains $L$ as a ordered subset. 
Note that $\lolo{L}=\{\mzero_{\lolo{L}}\}\sqcup L$
 and $\mzero_{\lolo{L}}$ is the minimal element 
 of $\lolo{L}$. 
\end{df}

\begin{df}\label{df:metgauge}
Let $X$ be a topological space. 
We say that a cardinal $\kappa$  is  
a \emph{metrizable gauge of $X$}
 if 
there exists a linearly 
 ordered abelian group $G$ such that 
$\chara(\lolo{\Arc{G}})=\kappa$ and 
$\met(X; G)\neq \emptyset$. 
In this case, the cardinal $\kappa$ should be $1$ or 
a regular cardinal. 
We denote by $\metrank(X)$ the set (class) of 
all metrizable gauges of $X$. 
We say that 
a topological  space $X$ \emph{possesses an  infinite metrizable gauge} if
there exists $\kappa\in \metrank(X)$ 
such that $\myomega \le \kappa$.  
\end{df}
\begin{df}
For a cardinal $\kappa$, 
we denote by $\GGG{\kappa}$ the 
class of all linearly ordered Abelian groups 
$G$ such that $\chara(\lolo{\Arc{G}})=\kappa$.
Note that for an uncountable  regular  cardinal $\kappa$, we have  $G\in \GGG{\kappa}$ if and only if 
$\chara(G_{\ge 0})=\kappa$ (see Proposition  \ref{prop:chacha}). 
We also denote by 
$\FFF{\kappa}$
 the class of all bottomed  linearly ordered sets
$S$ such that $\chara(S)=\kappa$.
For example, 
we observe that $\rr\in \GGG{1}$ and 
$\rr_{\ge 0}\in \FFF{\myomega}$. 
Note that a linearly ordered Abelian group 
$G$ is Archimedean if and only if 
$G\in \GGG{1}$. 
\end{df}

We should note  the difference 
between 
the $\omega_{\mu}$-metrizability 
in  the sense 
of Sikorski
and 
the metrizable gauges of the present paper. 
For an uncountable regular cardinal 
$\kappa$, 
there exists $\mu>0$ such that 
$\kappa=\omega_{\mu}$,
and  
a topological  space $X$ is $\omega_{\mu}$-metrizable if and only if $\kappa\in \metrank(X)$. 
The difference arises only  in the countable case. 
Since all ordinary metric and ultrametric spaces are $\myomega$-metrizable, 
we can not distinguish  ultrametrizable  spaces and 
 metrizable  and non-ultrametrizable space using 
the $\omega_{0}$-metrizability. 
In main results of this paper, we shall treat 
common properties on ultrametric spaces and 
$\omega_{\mu}$-metrizable spaces with $\mu>0$.
Using metrizable gauges, 
we unify those cases as the case where a topological space possesses an infinite metrizable gauge, 
and  
we can separate 
 ultrametrizable  spaces and 
  metrizable  and non-ultrametrizable 
  spaces
 (see Proposition \ref{prop:classify}). 
Due to this observation, 
in this paper, 
we mainly use the metrizable gauges 
rather than the $\omega_{\mu}$-metrizability. 

One of the key points of the present paper is 
to utilize 
concepts  on  ordered fields such as 
the Hahn fields,  Hahn's embedding theorem,   
and the Archimedean equivalence,  in
the theory of metrics taking values in general 
linearly ordered Abelian groups. 


\subsection{Main results}\label{subsec:mainresults}
Let $X$ be a topological space. 
A subset $A$ of $X$ is said to be 
a \emph{retract} if there exists a 
continuous map $r\colon X\to A$ such that 
$r(a)=a$ for all $a\in A$. In this case, 
the continuous map $r$ is said to be 
a \emph{retraction}. 
A topological space is said to be 
\emph{retractifiable} if every its
non-empty closed subset is a 
retract of the ambient space. 
This concept was first introduced by 
van Douwen \cite{vD1975}. 
There are some 
 results on retracts of zero-dimensional spaces.

The proofs of 
Theorem 9.1 in 
Dugundji's paper \cite{MR113217} and 
Theorem 4
in 
Kodama's paper  \cite{MR89411}
contain the statement that 
every 
non-empty closed subset of 
an
ultrametrizable space is a retract of 
the ambient space
(see also \cite{dancis1993each}).

Engelking \cite{MR239571} proved that 
if a closed subset $F$ of a metrizable space $X$ 
satisfies that $\tdim(X\setminus F)=0$, then 
there exists a retraction 
$r\colon X\to F$ which is a closed map, 
where $\tdim$ stands for the covering dimension. 

In 1975, 
van Douwen \cite{vD1975} established 
 the 
following results: 
\begin{enumerate}
\item 
All ultrametrizable spaces and 
$\omega_{\mu}$-metrizable spaces for 
$\mu>0$ are retractifiable. 
\item 
Every non-empty closed $G_{\delta}$-subset 
of a strongly zero-dimensional collectionwise normal space
 is a retract of the ambient space. 
 \item For every ordinal $\alpha$, 
 the space $\alpha$ with
 the ordered topology 
 is (hereditarily) retractifiable. 
 \item 
 The Sorgenfrey line is (hereditarily)
 retractifiable. 

\item 
Every locally compact totally disconnected 
orderable space is retractifiable. 
\end{enumerate}

K\k{a}kol--Kubzdela--\'{S}liwa
\cite{MR3035503}
showed that every compact metrizable subspace 
$Y$ of an ultraregular space $X$ is a retract of 
$X$.

For a metric space $(X, d)$, we denote by 
$\exp(X)$ the space of all non-empty compact subsets of $X$ equipped with the Hausdorff distance. 
By Michael's zero-dimensional selection theorem \cite[Theorem 2]{MR1529282}, 
for a complete ultrametric  space $(X, d)$, 
Tymchatyn--Zarichnyi
\cite{tymchatyn2005note} constructed a 
map $R\colon X\times \exp(X)\to X$
such that 
for all $(x, A)\in X\times \exp(X)$, 
we have $R(x, A)\in A$,  and 
such that if 
 $x\in A$, then $R(x, A)=x$. 
Stasyuk--Tymchatyn
\cite{stasyuk2009continuous}
proved the existence of uniformly continuous 
$R\colon X\times \exp(X)\to X$ satisfying 
the conditions mentioned above.

For every (ordinary) ultrametric space $(X, d)$, 
 for every  closed subset $A$ of $X$, 
 for all $\delta\in (1, \infty)$, 
Brodskiy--Dydak--Higes--Mitra \cite{brodskiy2007dimension}  constructed a $\delta$-Lipschitz retraction from $X$ to $A$
with respect to $d$. 

The author \cite{Ishiki2021ultra} used 
the Brodskiy--Dydak--Higes--Mitra 
 theorem to prove the extension theorem 
of ultrametrics. 
Since  
 the multiplication of the real numbers was utilized in the proof in \cite{brodskiy2007dimension}, 
 the author \cite{Ishiki2021ultra}   said that 
an analogue of  the retractifiable  theorem  for 
ultrametrics valued in general bottomed 
linearly  ordered sets
seemed to be not true; however, 
that is false. 
Indeed,  
in  \cite[Theorem 7 in Chapter 3]{vD1975}, 
it was proven that all $\omega_{\mu}$-metrizable spaces
are retractifiable for all $\mu>0$. 
Note that 
the proof of \cite[Theorem 7 in Chapter 3]{vD1975} 
seems not to imply the existence of uniformly continuous retractions. 
In the present paper, 
 we show
 the
 existence of uniformly continuous retractions. 
 Using the construction of the Hahn fields (see Section \ref{sec:pre}), 
every  linearly ordered set can be regarded as 
a subset of positive numbers of 
a
linearly ordered field (see Proposition \ref{prop:ee}), 
and we can apply the argument 
of Brodskiy--Dydak--Higes--Mitra \cite{brodskiy2007dimension} to 
generalized (ultra)metrics. 
The following is our first result:
\begin{thm}\label{thm:retract}
We assume that 
$X$ is  a topological space possessing 
an infinite metrizable gauge. 
Let 
$\kappa$ be a regular cardinal with 
$\kappa\in \metrank(X)$. 
Then, 
 for every $G\in \GGG{\kappa}$, 
 for every $d\in \met(X; G)$,  
 and for every closed non-empty subset
 $A$ of $X$, 
there exists  a uniformly continuous  retraction 
$r\colon X\to A$ with respect to $d$. 
In particular, the space  $X$ is retractifiable. 
\end{thm}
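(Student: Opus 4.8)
\emph{Approach.} The plan is to reduce the statement to the ultrametric case via the Archimedean equivalence, transport everything into a linearly ordered field through the Hahn construction so that multiplication becomes available, and then run a metric-projection argument in the spirit of Brodskiy--Dydak--Higes--Mitra \cite{brodskiy2007dimension}.

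First I would replace $d$ by an ultrametric. Since $\kappa\in\metrank(X)$ is infinite, the linearly ordered set $\Arc{G}$ has no least element (equivalently, it contains an infinite strictly decreasing sequence coinitial to the bottom). Define $\rho\colon X\times X\to\lolo{\Arc{G}}$ by $\rho(x,y)=[d(x,y)]_{\arel}$ for $x\neq y$ and $\rho(x,x)=\mzero$. Using the characterization that $[a]_{\arel}\arcle[b]_{\arel}$ holds if and only if $a\le m\cdot b$ for some $m\in\zz_{\ge 1}$, one checks directly from $d(x,y)\le d(x,z)+d(z,y)$ that $\rho$ satisfies the strong triangle inequality, so that $\rho$ is an $\lolo{\Arc{G}}$-ultrametric with $\lolo{\Arc{G}}\in\FFF{\kappa}$. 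Because $\Arc{G}$ has no least element, the open balls of $\rho$ and of $d$ refine one another, so $\rho$ and $d$ induce the same uniformity on $X$; this is essentially Lemma \ref{lem:uniformdis} together with Proposition \ref{prop:characterization}. It therefore suffices to produce a retraction $r\colon X\to A$ that is uniformly continuous with respect to $\rho$.

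Next I would bring multiplication into the picture. By Proposition \ref{prop:ee} the bottomed set $\lolo{\Arc{G}}$ embeds, by an order isomorphism onto a subset of $K_{\ge 0}$ sending $\mzero$ to $0$, into a linearly ordered field $K$ (a Hahn field); henceforth I regard $\rho$ as a $K$-valued ultrametric. Fix $\delta\in K$ with $\delta>1$ (say $\delta=2$). As $K$ need not be Dedekind complete, I pass to the Dedekind completion $\widehat{K}$ of $K$ as a linearly ordered set; multiplication by the fixed positive element $\delta$ is an order automorphism of $K$, hence extends to an infimum-preserving order automorphism of $\widehat{K}$. Now fix a well-ordering of $A$, set $r(a)=a$ for $a\in A$, and for $x\in X\setminus A$ put $\ell(x)=\inf_{a\in A}\rho(x,a)\in\widehat{K}$. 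Since $A$ is closed and $x\notin A$, some ball around $x$ misses $A$, whence $\ell(x)>0$; as $\delta\ell(x)>\ell(x)=\inf_{a}\rho(x,a)$, there is $a\in A$ with $\rho(x,a)<\delta\ell(x)$, and I let $r(x)$ be the least such $a$ in the fixed well-ordering. Then $r$ is a retraction onto $A$, and a case analysis using only the strong triangle inequality together with $\rho(x,r(x))<\delta\ell(x)\le\delta\cdot\rho(x,a)$ for every $a\in A$ shows $\rho(r(x),r(y))\le\delta\cdot\rho(x,y)$ for all $x,y\in X$. Finally, using that $\Arc{G}$ has no least element and that distinct Archimedean classes embed into distinct orders of magnitude in $K$ (Proposition \ref{prop:ee}), for every $s\in\lolo{\Arc{G}}$ with $s>\mzero$ there is $s'\in\lolo{\Arc{G}}$ with $\mzero<s'$ and $\delta s'\le s$ in $K$; hence the $\delta$-Lipschitz map $r$ is uniformly continuous for the $\rho$-uniformity, and therefore for $d$. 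As $A$ was an arbitrary non-empty closed set, $X$ is retractifiable.

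\emph{Main obstacle.} The decisive points are precisely those that fail for ordinary ($\rr$-valued, Archimedean) metrics and so must genuinely use the hypothesis: that $\rho$ recaptures the uniformity of $d$, and that the metric projection is well defined and yields uniform continuity. The first requires $\Arc{G}$ to have no least element, i.e.\ the existence of an infinite metrizable gauge; the second requires replacing the (missing) order-completeness and multiplication of $\rr$ by the ordered field $K$ and its Dedekind completion $\widehat{K}$. I expect the verification of $\rho(r(x),r(y))\le\delta\cdot\rho(x,y)$ in all cases to be the most delicate computation, since one must track which of $x,y$ lies in $A$ and compare $\rho(x,y)$ with $\ell(x)$ and $\ell(y)$.
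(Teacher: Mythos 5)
Your strategy is the paper's own: replace $d$ by the ultrametric $\cova_{G}\circ d$ valued in $\lolo{\Arc{G}}$ (Lemma \ref{lem:uniformdis}), transport everything into a Hahn field so that multiplication by a fixed $\delta>1$ and infima become available, run the Brodskiy--Dydak--Higes--Mitra well-ordering/metric-projection construction, and pull uniform continuity back through the uniform equivalence. The overall reduction is sound, but the two steps you defer are exactly where the work lies, and as stated they contain genuine gaps. The first is the Lipschitz estimate $\rho(r(x),r(y))\le\delta\,\rho(x,y)$, which you assert with a constant the construction does not obviously deliver. The paper's case analysis (Theorem \ref{thm:uniformretract}) produces $\concon^{2}$, not $\concon$: in the hard case one shows $d(r(x),x)=d(r(x),y)=D$, then uses $\concon^{-1}D\le d(x,r(y))=d(y,r(y))$ together with the failure of $r(x)$ to be a candidate for $y$ to produce $b\in A$ with $d(y,b)<\concon^{-1}D$, and the chain $d(x,y)<\concon^{-2}D\le \concon^{-1}d(y,r(y))\le \setdis_{d,A}(y)\le d(y,b)$ costs two factors of $\concon$ before the contradiction $d(x,b)<\setdis_{d,A}(x)$. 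If you assume only $\delta\,\rho(x,y)<D$, the configuration $\ell(y)=\delta^{-1}D$ with $\rho(y,r(y))$ strictly between $\delta^{-1}D$ and $D$ survives the analysis. Since any fixed constant yields uniform continuity, this does not sink the theorem, but you must either carry out the case analysis with $\delta^{2}$ or justify the sharper constant; it cannot be waved through.

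The second gap is the well-definedness of $r(x)$. You take $\ell(x)=\inf_{a\in A}\rho(x,a)$ in the Dedekind completion $\widehat{K}$ of the ambient field and need some $a$ with $\rho(x,a)<\delta\ell(x)$, for which you need $\ell(x)<\delta\ell(x)$ in $\widehat{K}$. The extension of the order automorphism $t\mapsto\delta t$ to $\widehat{K}$ can fix a positive cut: this happens precisely when the set of lower bounds of $\{\rho(x,a)\}_{a\in A}$ in $K$ is closed under multiplication by $\delta$, and that is the typical situation here, because distinct values of $\rho$ land in distinct Archimedean classes of $K$ (Proposition \ref{prop:ee} and Lemma \ref{lem:echoes}); if the infimum is not attained, then $t\le\rho(x,a)$ for all $a$ forces $\delta t\le\rho(x,a)$ for all $a$, so $\delta\ell(x)=\ell(x)$ and your candidate set is empty. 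The paper orders the two completions the other way (first Dedekind-complete $\lolo{\Arc{G}}$ as an ordered set, then embed the completed set g-characteristically into a Hahn field, taking the infimum inside the completed ordered set), and in either setup the non-emptiness of the candidate set when the infimum is not attained needs an explicit argument rather than the one-line appeal to $\ell(x)<\delta\ell(x)$.
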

\begin{rmk}
In contrast to 
Brodskiy--Dydak--Higes--Mitra's theorem 
\cite[Theorem 2.9]{brodskiy2007dimension} on the existence of 
$\delta$-Lipschitz retractions for all $\delta\in (1, \infty)$, 
it is worth noting when there exits a $1$-Lipschitz retraction $r\colon X\to A$. 
Artico--Moresco \cite{artico1981some} 
characterize a $1$-Lipschitz retract 
of 
generalized ultrametric spaces using the proximality
(see Theorem \ref{thm:Artico} in this paper, or 
\cite[Theorem 4.6 and Proposition 2.6]{artico1981some}). 
By their theorem, we characterize 
 closed subsets of generalized ultrametric spaces
 (see Corollary \ref{cor:characlosed}) using the proximality and  $1$-Lipschitz maps.
\end{rmk}



We next explain a result on extensors of ultrametrics and 
generalized metrics. 
Hausdorff 
\cite{Ha1930} proved 
that for every $\rr$-metrizable space $X$,  
for every closed subset $A$ of $X$, 
and for every  $d\in \met(A; \rr)$, 
there exists $D\in \met(X; \rr)$
satisfying that  
$D|_{A^2}=d$. 
The author \cite{Ishiki2021ultra}
proved 
an ultrametric version of the Hausdorff metric extension theorem, i.e., 
for every 
 $S\mysub \rr_{\ge 0}$ with $0\in S$  possessing 
a decreasing sequence convergent to $0$, and 
 for every
  $\rr_{\ge 0}$-ultrametrizable space $X$, 
and for every 
   closed subset $A$ of $X$, and for 
every $d\in \ult(A;  S)$, 
there exists $D\in \ult(X; S)$ satisfying that 
$D|_{A^{2}}=d$. 

Let $\metdis_{X}$ stand for the 
supremum metric on $\met(X; \rr)$, 
which can take the value $\infty$. 
For every  metrizable space $X$, 
and for every closed subset $A$ of $X$, 
Nguyen Van Khue and Nguyen To Nhu
\cite{NN1981}
constructed 
 maps 
 $\Phi_1, \Phi_2\colon \met(A; \rr)\to \met(X; \rr)$ such that 
\begin{enumerate}
\item\label{item:NN1}  
the maps $\Phi_1$ and $\Phi_2$ are extensors; namely, for every $d\in \met(A)$, we have 
$\Phi_1(d)|_{A^{2}}=d$ and $\Phi_2(d)|_{A^2}=d$;  

\item\label{item:NN2} 
the map 
$\Phi_1$ is $20$-Lipschitz with respect to the metrics $\mathcal{D}_A$ and $\mathcal{D}_X$; 
\item\label{item:NN3}  
the map
$\Phi_2$ is continuous with respect to the 
topologies of pointwise convergence  on $\met(A; \rr)$ and 
$\met(X; \rr)$;
\item\label{item:NN4} 
 the map $\Phi_2$  preserves orders; namely,  
if   $d, e\in \met(A)$ satisfy  $d(a, b)\le e(a, b)$ for all $a, b\in A$, then    $\Phi_2(d)(x, y)\le \Phi_2(e)(x, y)$ for all $x, y\in X$; 
\item\label{item:NN5} 
 if $X$ is completely metrizable, 
then $\Phi_1$ and $\Phi_2$ map any complete metric in 
$\met(A; \rr)$ into  a complete metric in 
$\met(X; \rr)$.  
\end{enumerate}

Although 
their constructions of $\Phi_{1}$ and $\Phi_{2}$ 
need
 the Dugundji extension theorem
  and 
it is known that an analogue of Dugundji's extension theorem for $\omega_{\mu}$-metric spaces is 
false in general (see \cite{MR1387953}), 
we can prove an analogue of 
Nguyen Van Khue and Nguyen To Nhu's 
theorem for generalized metrics. 
We
shall  construct monotone  extensors
 of  ultrametrics or generalized  metrics 
 (see Theorem \ref{thm:extensor}). 
 
For a compact ultrametrizable $X$, 
Tymchatyn--Zarichnyi
\cite{tymchatyn2005note} 
constructed maps 
from the set of all continuous ultrametrics defined on 
closed subsets of $X$ into the set of all 
continuous ultrametrics defined on $X$, 
which is continuous with respect to 
the Vietoris topology. 
Stasyuk--Tymchatyn
\cite{stasyuk2009continuous}
constructed such a map for 
a complete ultrametrizable space. 
The author does not know whether 
we can constructed a Tymchatyn--Zarichnyi type 
map from $\bigcup_{A\in \exp(X)}\ult(X; \rr_{\ge 0})$
into $\ult(X; \rr_{\ge 0})$ or not.

Extension theorems  of ultrametrics 
 can be considered as special cases of
 extending  a weight on  the edge set of a given graph 
 into an ultrametric on the vertex set 
of the graph. 
Dovgoshey--Martio--Vuorinen \cite{MR3090172}
proved 
theorems extending weights into (pseudo)ultrametrics. 
Dovgoshe\u{\i}--Petrov \cite{MR3135687}
also 
provided  theorems 
on metrization of weighted graphs. 

Let $X$ be a topological space.  
Let $G$ be a linearly ordered Abelian group
and $S$ be a linearly ordered set such that 
$\met(X; G)\neq \emptyset$ and 
$\ult(X; S)\neq \emptyset$. 
Let
$d\in \met(X; G)$ or $d\in \ult(X; S)$. 
We say that the  space 
 $(X, d)$ is \emph{complete} if 
every  Cauchy filter on $(X, d)$ has a limit point
(for the definition of Cauchy filters, see Section \ref{sec:pre}).

Let $d, e\in \met(X; G)$ 
(resp.~$d,  e\in \ult(X; G)$). 
We define $d\lor e\in \met(X; d)$ 
(resp.~$d\lor e\in \ult(X; S)$)
by $(d\lor e)(x, y)=d(x, y)\lor e(x, y)$, where
$\lor$ in the right hand side is the maximal operator on 
$G$ (resp.~$S$). 

We introduce the topology on $\met(X; G)$ as follows: 
For all $\epsilon\in G_{>0}$, 
we denote by  $\myVnbd(d; \epsilon)$ the 
set of all $e\in \met(X; G)$ such that 
for all $x, y\in X$ we have $e(x, y)<d(x, y)\lor \epsilon$ and $d(x, y)<e(x, y)\lor \epsilon$. 
We consider that 
$\met(X; G)$ is equipped with 
 the topology induced from 
$\{\, \myVnbd(d; \epsilon)\mid d\in \met(X; G), \epsilon\in G_{>0}\, \}$.

We introduce 
the notion of characteristic or g-characteristic 
subsets, which is a central concept in the present paper. 
\begin{df}\label{df:charazero}
Let $S$ be a 
bottomed linearly ordered set. 
A subset $T$ of $S$ 
is said to be 
\emph{characteristic} if 
$\mzero_{S}\in T$
and 
for all $s\in S\setminus \{\mzero_{S}\}$, 
there exists $t\in T\setminus \{\mzero_{S}\}$
 such that $t\le s$. 
Let $G$ be a linearly ordered Abelian group. 
A subset $Q$ of $G$ is said to be 
\emph{g-characteristic} if 
$Q$ is a characteristic subset of 
the bottomed linearly ordered  set $G_{\ge 0}$. 
The word ``g-characteristic'' means 
``group-characteristic''. 
\end{df}

The following is our second result: 

\begin{thm}\label{thm:extensor}
We assume that $X$ 
is a topological space possessing 
an infinite metrizable gauge. 
Let $\kappa$ be a regular cardinal
 with 
$\kappa\in \metrank(X)$. 
Let $G\in \GGG{\kappa}$. 
Let $S$ be a g-characteristic subset of 
$G$. 
Let  $A$ be a closed subset of $X$. 
Then
 there exists a map 
$\mainmap\colon\met(A, G)\to \met(X, G)$ such that 

\begin{enumerate}
\renewcommand{\labelenumi}{(A\arabic{enumi})}
\item 
the map
$\mainmap$ is continuous; \label{item:contia1}
\item 
for all $d\in \met(A; G)$, 
we have 
$\mainmap(d)|_{A^{2}}=d$;\label{item:a2}
\item\label{item:a3}
if 
$d_{1}, d_{2}\in \met(X; G)$ satisfy 
$d_{1}(a, b)\le d_{2}(a, b)$ for all $a, b\in A$, 
then for all $x, y\in X$, we have $\mainmap(d_{1})(x, y)\le \mainmap(d_{2})(x, y)$;
\item\label{item:a4}
if $d\in \ult(A; G_{\ge 0})$, 
then $\mainmap(d)\in \ult(X; G_{\ge 0})$;
\item\label{item:a45}
if $d\in \ult(A; S)$, 
then $\mainmap(d)\in \ult(X; S)$;
\item 
for all $d, e\in \met(A; G)$, we have 
$\mainmap(d\lor e)=\mainmap(d)\lor \mainmap(e)$. \label{item:a5}
\end{enumerate}
Moreover, if there exists 
a complete $G$-metric in $\met(X; G)$, 
then 
we can choose $\mainmap$ as a map satisfying 
all the conditions mentioned above and  
\begin{enumerate}
\setcounter{enumi}{6}
\renewcommand{\labelenumi}{(A\arabic{enumi})}
\item 
if $d\in \met(A; G)$ is  complete, 
then so is $\mainmap(d)$.\label{item:a6}
\end{enumerate}
\end{thm}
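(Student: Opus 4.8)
The plan is to build $\mainmap$ from three pieces of auxiliary data that do not depend on the input metric $d$: a fixed background metric $\rho\in\met(X;G)$, a uniformly continuous retraction $r\colon X\to A$, and the associated height function $\phi\colon X\to G_{\ge 0}$ defined by $\phi(x)=\rho(x,r(x))$. Since $X$ possesses an infinite metrizable gauge and $G\in\GGG{\kappa}$, Theorem \ref{thm:retract} furnishes such a retraction $r$; for the clauses (A\ref{item:a4}) and (A\ref{item:a45}) concerning ultrametrics I would moreover take $\rho$ to be a $G_{\ge 0}$-ultrametric (respectively an $S$-ultrametric), which exists because an infinite metrizable gauge forces $X$ to be ultrametrizable of the appropriate character. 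The function $\phi$ vanishes exactly on $A$ and is continuous, and---this is the property I will extract from the explicit Brodskiy--Dydak--Higes--Mitra construction underlying Theorem \ref{thm:retract}, using the ordered-field structure of Proposition \ref{prop:ee}---it satisfies a (strong) Lipschitz estimate relating $\phi(x)$, $\phi(y)$ and $\rho(x,y)$.

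With this data I would set
\[
\mainmap(d)(x,y)=d(r(x),r(y))\lor B(x,y),\qquad B(x,y)=\rho(x,y)\wedge\bigl(\phi(x)\lor\phi(y)\bigr),
\]
where $\wedge$ denotes the binary minimum and $B$ is independent of $d$. The point of this shape is that several requirements become formal. Because $B$ does not depend on $d$ and $\lor$ is associative and idempotent, (A\ref{item:a3}) (monotonicity) and (A\ref{item:a5}) ($\mainmap(d\lor e)=\mainmap(d)\lor\mainmap(e)$) are immediate, and (A\ref{item:contia1}) (continuity) follows because replacing $d$ by a $d'$ with $d(a,b)<d'(a,b)\lor\epsilon$ for all $a,b$ changes $d(r(x),r(y))$ correspondingly, and taking $\lor$ with the fixed $B$ preserves $\epsilon$-closeness. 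Since $\phi$ vanishes on $A$ we get $B|_{A^{2}}=0$ and hence $\mainmap(d)|_{A^{2}}=d$, which is (A\ref{item:a2}). For the value-set clauses I note that $\phi$ takes values in the value set of $\rho$, that a linearly ordered set is closed under $\lor$ and $\wedge$, and that $d(r(x),r(y))$ lies in the value set of $d$; choosing $\rho$ valued in $G_{\ge 0}$ (resp.\ in the g-characteristic set $S$) then keeps $\mainmap(d)$ valued there, giving (A\ref{item:a4}) and (A\ref{item:a45}).

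The substantive work is to show that $\mainmap(d)$ is a genuine $G$-metric generating the topology of $X$, and an ultrametric whenever $d$ is. Since a maximum of a pseudometric and a metric is a metric, a maximum of two ultrametrics is an ultrametric, and $d(r(\cdot),r(\cdot))$ is a pseudometric (an ultrametric when $d$ is), the whole question reduces to the single key lemma that $B$ itself is a metric, and an ultrametric when $\rho$ is. This is precisely where the estimate on $\phi$ enters: $B$ is the ambient (ultra)metric $\rho$ truncated at the height scale $\phi(x)\lor\phi(y)$, and I would establish the (strong) triangle inequality for $B$ by a case analysis on whether each pair is ``close'' or ``far'' relative to the heights, using the comparison of $\phi(x)$, $\phi(y)$, $\rho(x,y)$ to control the truncation. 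I expect this triangle inequality for $B$---and, in the ultrametric clauses, reconciling the exact constant $1$ needed for the strong triangle inequality with the possibly non-sharp retraction constant $\delta>1$---to be the main obstacle; there I would exploit the discreteness of the value scale afforded by the embedding of Proposition \ref{prop:ee} into an ordered field to arrange a height function satisfying the estimate with constant $1$.

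That $\mainmap(d)$ generates the topology of $X$ I would verify by the standard two-sided argument: convergence $x_{n}\to x$ in $\rho$ forces $r(x_{n})\to r(x)$, hence $d(r(x_{n}),r(x))\to 0$, and also $B(x_{n},x)\to 0$; conversely $\mainmap(d)(x_{n},x)\to 0$ forces $\rho(x_{n},x)\to 0$, treating the cases $x\in A$ and $x\notin A$ separately and using that $\phi$ is bounded below by a positive element near any point off $A$. Finally, for (A\ref{item:a6}) I would take $\rho$ complete (possible once $\met(X;G)$ contains a complete metric) and argue with Cauchy filters: a Cauchy filter for $\mainmap(d)$ either stays bounded away from $A$, where $\mainmap(d)$ is uniformly equivalent to the complete metric $\rho$ and so converges, or its $r$-image is a Cauchy filter for $d$ on $A$ with heights tending to $\mzero$, in which case completeness of $d$ yields a limit in $A$ to which the filter converges in $X$; should this dichotomy need sharpening, enlarging $\mainmap(d)$ by $\lor$ with a fixed complete reference metric off $A$ makes it clean without disturbing (A\ref{item:contia1})--(A\ref{item:a5}).
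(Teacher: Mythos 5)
Your overall architecture is the same as the paper's: $\mainmap(d)=r^{*}d\lor(\text{truncation of a fixed ambient ultrametric near }A)$, with the retraction supplied by Theorem \ref{thm:retract} and the truncation in the style of Hausdorff. Your one structural deviation --- making the truncation term $B$ completely independent of $d$, where the paper uses $\subsubmap[T,u[d],A]$ with $u[d]=r^{*}k[d]\lor h$ depending on $d$ --- is actually a simplification: it renders (A\ref{item:contia1}), (A\ref{item:a3}), (A\ref{item:a5}), and the isometry (B\ref{item:b1}) of Theorem \ref{thm:extensorult} formal, and your topology-generation and completeness arguments go through for it essentially as in Lemmas \ref{lem:metmet} and \ref{lem:completeext}.

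The genuine gap is exactly the point you flag and defer: the (strong) triangle inequality for $B(x,y)=\rho(x,y)\wedge(\phi(x)\lor\phi(y))$ with $\phi(x)=\rho(x,r(x))$. What that inequality needs is the $1$-Lipschitz estimate $\phi(x)\le\rho(x,z)\lor\phi(z)$ (consider the case $\rho(x,z)\le\phi(x)\lor\phi(z)$ and $\rho(z,y)>\phi(z)\lor\phi(y)$), and this \emph{fails} for your $\phi$, because the retraction of Theorem \ref{thm:uniformretract} is only $\concon^{2}$-Lipschitz with $\concon>1$: one only gets $\phi(x)\le\concon\cdot\setdis_{\rho,A}(x)\le\concon\bigl(\rho(x,z)\lor\phi(z)\bigr)$, and the strong triangle inequality tolerates no constant. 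The paper avoids this entirely by using the distance function $\setdis_{h,A}(x)=\inf\{h(x,a)\mid a\in A\}$ in place of $\phi$, for which $\setdis_{h,A}(x)\le h(x,z)\lor\setdis_{h,A}(z)$ holds for free (Definition \ref{df:subsubmap}, Proposition \ref{prop:vanishingmetric}); the price is that the infimum must exist, which the paper arranges by forcing the truncation metric to take values in the Dedekind complete chain $T=l(\invs{\kappa})$ obtained via $l\circ\xi\circ\cova_{G}$ --- a point your proposal does not address at all. Your suggested repair (use the ``discreteness of the value scale'' from Proposition \ref{prop:ee} to force the estimate with constant $1$) can in fact be made to work --- if $\rho$ takes values in a chain of pairwise Archimedean-inequivalent positive elements, then $\phi(x)\le\concon\cdot\setdis_{\rho,A}(x)$ together with $\phi(x)\ge\setdis_{\rho,A}(x)$ forces $\phi=\setdis_{\rho,A}$ --- but carrying it out requires precisely the Dedekind-complete, Archimedean-spread value set $T\cong\invs{\kappa}$ that the paper constructs, and at that point one should simply use $\setdis_{\rho,A}$ directly. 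As written, the key lemma of your proof (that $B$ is a pseudo-ultrametric) is asserted rather than proved, and it is the only nontrivial step in the whole construction.
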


To state our  next result, 
for a topological space $X$, 
and for $S\mysub \rr_{\ge 0}$ with $0\in S$, 
we define a function 
$\mathcal{UD}_X^S\colon\ult(X, S)^2\to [0, \infty]$
 by assigning $\mathcal{UD}_X^S(d, e)$ to  the infimum of 
 $\epsilon\in S\sqcup \{\infty\}$ such that 
 for all $x, y\in X$ we have 
 \[
 d(x, y)\le e(x, y)\lor \epsilon, 
 \]
 and 
 \[
  e(x, y)\le d(x, y)\lor \epsilon. 
 \]
 Note that 
the function 
$\umetdis_{X}^{S}$
 is an ultrametric on $\ult(X, S)$ valued in $\cl(S)\sqcup \{\infty\}$, 
 where $\cl(S)$ stands for the closure of $S$ in 
 $\rr_{\ge 0}$. 
 The ultrametric $\umetdis_{X}^{S}$
 was utilized  in \cite{Ishiki2021ultra}.

In the case of ordinary  ultrametric spaces, 
the  extensor in Theorem \ref{thm:extensor} becomes an isometric embedding.

 \begin{thm}\label{thm:extensorult}
Let $S$ be a 
characteristic subset of $\rr_{\ge 0}$.  
Let $X$ be an $\rr_{\ge 0}$-ultrametrizable space. 
Then there exists a map 
$\mainmapult\colon \ult(A, S)\to \ult(X, S)$ such that 
\begin{enumerate}
\renewcommand{\labelenumi}{(B\arabic{enumi})}

\item\label{item:b1}
the map $\mainmapult$ is an isometric embedding; 
namely, 
for all $d_{1}, d_{2}\in \ult(A; S)$, 
we have 
\[
\umetdis_{X}^{S}(\mainmapult(d_{1}), \mainmapult(d_{2}))=\umetdis_{A}^{S}(d_{1}, d_{2});
\]
\item\label{item:b2}
for all $d\in \ult(A; S)$, 
we have $\mainmapult(d)|_{A^{2}}=d$;
\item\label{item:b3}
if $d_{1}, d_{2}\in \ult(A; G)$ satisfies 
$d_{1}(a, b)\le d_{2}(a, b)$ for all $a, b\in A$, 
then for all $x, y\in X$, we have 
 $\mainmapult(d_{1})(x, y)\le 
 \mainmapult(d_{2})(x, y)$;
\item\label{item:b4}
for all $d, e\in \ult(A; G)$, we have 
$\mainmapult(d\lor e)=
\mainmapult(d)\lor \mainmapult(e)$.

\end{enumerate}

Moreover, if 
$X$ is completely metrizable, 
then we can choose 
$\mainmapult$ as a map satisfying 
all the conditions mentioned above and 
\begin{enumerate}
\setcounter{enumi}{4}
\renewcommand{\labelenumi}{(B\arabic{enumi})}
\item\label{item:b5}
if $d\in \ult(A; G)$ is  complete, 
then so is $\mainmapult(d)$. 
\end{enumerate}

\end{thm}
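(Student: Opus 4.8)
The plan is to deduce the theorem from Theorem~\ref{thm:extensor} by encoding the ordinary ultrametric situation inside the generalized framework, and then to upgrade the qualitative continuity to the quantitative isometry~(B\ref{item:b1}) by exploiting the fact that, in the ultrametric world, the extension formula is assembled entirely from the lattice operations $\lor$ and $\land$. Throughout, let $A$ denote the closed subset of $X$ under consideration. Since $S$ is a characteristic subset of $\rr_{\ge 0}$, it contains a strictly decreasing sequence tending to $0$, so $\chara(S)=\myomega$. Using the Hahn-field encoding (Proposition~\ref{prop:ee} and the constructions of Section~\ref{sec:pre}), I would fix a linearly ordered Abelian group $G$ together with an order embedding $\iota\colon \rr_{\ge 0}\hookrightarrow G_{\ge 0}$ which fixes the least element and sends distinct magnitude scales of $\rr_{\ge 0}$ into distinct Archimedean classes of $G$; then $\Arc{G}$ carries an infinite descending coinitial sequence, whence $G\in\GGG{\myomega}$, while $S':=\iota(S)$ is a g-characteristic subset of $G$. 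Because an order embedding preserves the topology generated by an ultrametric (this topology depends only on the order type of the value set), and because every $S'$-ultrametric is automatically a $G$-metric (as $a\lor b\le a+b$ for $a,b\in G_{\ge 0}$), composition with $\iota$ identifies $\ult(X;S)$ with a family of $S'$-ultrametrics inside $\met(X;G)$. In particular $\myomega\in\metrank(X)$, so $X$ possesses an infinite metrizable gauge and all hypotheses of Theorem~\ref{thm:extensor} hold for $G$, $S'$, and $A$.

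Applying Theorem~\ref{thm:extensor} yields a map $\mainmap\colon\met(A;G)\to\met(X;G)$ satisfying (A\ref{item:contia1})--(A\ref{item:a5}), together with (A\ref{item:a6}) whenever a complete $G$-metric exists on $X$. I would define $\mainmapult$ by conjugating $\mainmap$ with $\iota$: for $d\in\ult(A;S)$, the map $\iota\circ d$ is an $S'$-ultrametric on $A$, hence a $G$-metric, and by (A\ref{item:a45}) its image $\mainmap(\iota\circ d)$ lies in $\ult(X;S')$, so $\mainmapult(d):=\iota^{-1}\circ\mainmap(\iota\circ d)$ is a well-defined element of $\ult(X;S)$. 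Since $\iota$ is an order isomorphism onto $S'$ and transports both the operation $\lor$ and the defining order relations of $\umetdis$, the properties (B\ref{item:b2}), (B\ref{item:b3}), and (B\ref{item:b4}) are immediate translations of (A\ref{item:a2}), (A\ref{item:a3}), and (A\ref{item:a5}), respectively. For (B\ref{item:b5}) one checks that a complete $\rr$-metric on the completely metrizable space $X$ transports to a complete $G$-metric, so that (A\ref{item:a6}) is available, and that $\iota$ preserves Cauchy filters and hence completeness.

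It remains to prove the isometry~(B\ref{item:b1}), which is the crux. The inequality $\umetdis_A^S(d_1,d_2)\le\umetdis_X^S(\mainmapult(d_1),\mainmapult(d_2))$ follows at once from (B\ref{item:b2}), since restricting ultrametrics to $A$ cannot increase the $\umetdis$-distance and the restrictions recover $d_1$ and $d_2$. The reverse inequality is where the real work lies. Here I would invoke the explicit construction of $\mainmap$ from the proof of Theorem~\ref{thm:extensor} and verify that, specialized to ultrametrics, each value $\mainmapult(d)(x,y)$ is built from the numbers $d(a,b)$, with $a,b\in A$, solely through $\lor$, $\land$, and selections of points of $A$ close to $x$ and $y$. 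The decisive observation is that these operations are non-expansive for $\umetdis$: if $d_1(a,b)\le d_2(a,b)\lor\epsilon$ and $d_2(a,b)\le d_1(a,b)\lor\epsilon$ hold for all $a,b\in A$, then the same pair of relations is preserved under $\lor$ and $\land$, hence propagates to $\mainmapult(d_1)(x,y)$ and $\mainmapult(d_2)(x,y)$ for all $x,y\in X$. This yields $\umetdis_X^S(\mainmapult(d_1),\mainmapult(d_2))\le\umetdis_A^S(d_1,d_2)$, and combining the two inequalities gives (B\ref{item:b1}).

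The main obstacle is precisely this last verification: I must confirm that the extension formula produced by Theorem~\ref{thm:extensor}, once restricted to ordinary ultrametrics, involves no genuine group addition or averaging but only the lattice operations and nearest-point selections, so that the Lipschitz behavior of the Nguyen Van Khue--Nguyen To Nhu type construction collapses to an exact isometry. This requires careful bookkeeping of how the Archimedean re-encoding $\iota$ interacts with the suprema and infima appearing in the formula, to ensure that no value is spread across distinct Archimedean classes in a way that would inflate the $\umetdis$-distance.
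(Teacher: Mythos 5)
Your proposal follows the same route as the paper's proof: take $G$ to be the Hahn field $\hahntai{S}$ so that $S$ becomes a g-characteristic subset of $G$ (Proposition \ref{prop:ee}), note $\ult(A;S)\mysub\met(A;G)$ (Lemma \ref{lem:inclusions}), apply Theorem \ref{thm:extensor}, and restrict the resulting extensor to $\ult(A;S)$, using (A\ref{item:a45}) to see that the values land in $\ult(X;S)$. Your treatment of the isometry (B\ref{item:b1}) is also the paper's: the lower bound comes from (B\ref{item:b2}) by restriction, and the upper bound comes from re-reading the proof of (A\ref{item:contia1}) quantitatively --- every step there is a $\lor$-inequality, and the discretization satisfies $l\circ\xi\circ\cova_{G}(\eta)<\eta$, so a given $\epsilon$ passes through the chain unchanged. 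Whether you phrase the identification of $S$ with its image in $G$ as a conjugation by $\iota$ or as an inclusion is immaterial. For (B\ref{item:b1})--(B\ref{item:b4}) your argument is essentially identical to the paper's.

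The one genuine gap is in (B\ref{item:b5}). To invoke (A\ref{item:a6}) of Theorem \ref{thm:extensor} you must first exhibit a complete $G$-metric in $\met(X;G)$, and your proposed source --- ``a complete $\rr$-metric on the completely metrizable space $X$ transports to a complete $G$-metric'' --- does not work as stated. The embedding $\iota$ you built deliberately sends distinct scales of $\rr_{\ge 0}$ into distinct Archimedean classes of $G$, so it is very far from additive, and composing it with a genuine (non-ultra) complete metric destroys the triangle inequality in $G$; moreover $\iota$ is only guaranteed to be defined on $S$. What is actually needed is a complete \emph{ultrametric} on $X$ with values in $S$: that a completely metrizable, $\rr_{\ge 0}$-ultrametrizable space admits one is not automatic, and the paper supplies it by citing \cite[Proposition 2.17]{Ishiki2021ultra}. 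Once such a $D\in\ult(X;S)$ is in hand, Lemma \ref{lem:inclusions} makes it a complete member of $\met(X;G)$ and (A\ref{item:a6}) applies (your remark that $\iota$ preserves Cauchy filters for ultrametrics is fine, via Lemmas \ref{lem:amenable} and \ref{lem:cpltuni}). Without that input, your argument for (B\ref{item:b5}) is incomplete.
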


Theorems \ref{thm:extensor} and \ref{thm:extensorult} are
 not only  
 new  extension theorems for ultrametrics and generalized metric spaces but also  the   improvement 
of the author's extension theorem of ordinal ultrametrics in \cite{Ishiki2021ultra}. 

\begin{rmk}
In Nguyen Van Khue and Nguyen To Nhu's theorem, 
the $20$-Lipschitz extensor  $\Phi_{1}$ and 
the 
monotone pointwise continuous extensor $\Phi_{2}$
are constructed. 
In Theorem \ref{thm:extensorult} (or Theorem \ref{thm:extensor}), 
we unify these properties into a single extensor, 
namely, 
we construct a
metric  extensor which is 
isometric and monotone. 
\end{rmk}

\begin{rmk}
Let $G$ be a linearly ordered Abelian group. 
We assume that $G$ is non-Archimedean. 
Then, 
we 
can 
take 
$x, y\in G_{>0}$ with  
$x\ll y$. 
Put 
\begin{align}
&E=\{\, z\in G\mid 
\text{for all $n\in \zz_{\ge 1}$, 
we have $nz<y$}\, \}, \\
&F=\{\, z\in G\mid \text{there exists $n\in \zz_{\ge 1}$ such that $y\le nz$}\, \}.
\end{align}
Then, we have 
$F=G\setminus E$, 
and $E\neq \emptyset$
and $F\neq \emptyset$ (in fact, 
$x\in E$ and $y\in F$). 
Moreover, 
we have $a<b$ for all $a\in E$ and 
$b\in F$, 
namely, the set $E$
(or the pair $(E, F)$) is a 
cut in the sense of Dedekind
(see Subsection \ref{subsec:ord}). 
Remark that 
the set $E$ has no  supremum  and 
$F$ has no infimum 
(see \cite[Lemma 2.2]{MR314008}). 
This observation implies that 
if $G$ is non-Archimedean,  
then $G$ is not Dedekind complete, 
and 
$G$ does not have 
the  Dedekind completion which is 
a linearly ordered Abelian group. 
Due to this phenomenon, 
when 
we defined the topology of $\met(X, G)$, 
we used  $\myVnbd(d; \epsilon)$, 
and we did not 
define   a metric on $\met(X; G)$  
such as $\umetdis_{X}^{S}$ 
on $\ult(X; S)$, 
 which is defined using the infimum. 
\end{rmk}


We next explain an application of Theorem \ref{thm:extensor}. 
Niemytzki--Tychonoff \cite{NT1928} proved that 
an $\rr$-metrizable space $X$ is compact if and only if 
all $d\in \met(X; \rr)$ are complete. 
There are many analogues of this characterization for 
various geometric structures. 

Nomizu--Ozeki \cite{NO1961}
proved that a second countable connected 
differentiable manifold is compact if and only if all Riemannian metrics on the manifold are complete. 

The author \cite{Ishiki2021ultra} 
showed that 
an $\rr_{\ge 0}$-ultrametrizable space 
$X$ is compact 
if and only if 
all $d\in \ult(X; \rr_{\ge 0})$ are complete. 
The author \cite{ishiki2021dense} also showed that 
a metrizable space $X$ is finite-dimensional 
and compact 
(resp.~zero-dimensional and compact) if and only if 
the set of all doubling (resp.~uniformly disconnected) metrics is dense $F_{\sigma}$ in $\met(X; \rr)$. 

Dovgoshey--Shcherbak \cite{MR4335845} proved that 
an $\rr_{\ge 0}$-ultrametrizable space $X$ is separable if and only if 
for every  $d\in \ult(X; \rr_{\ge 0})$, the set 
$\{\, d(x, y)\mid x, y\in X\, \}$ is countable, 
and 
they also 
proved that  
an $\rr_{\ge 0}$-ultrametrizable space $X$ is compact if and only if all $d\in \ult(X; \rr_{\ge 0})$ are totally bounded.  

Hausdorff \cite{Ha1930} used the  extension theorem of metrics explained before   to give another proof of 
Niemytzki--Tychonoff's characterization theorem. 
The author's theorem  \cite{Ishiki2021ultra} explained above is proven using Hausdorff's argument.

Let $\kappa$ be a cardinal. 
A topological space is said to be  \emph{finally $\kappa$-compact} 
if 
every  open cover of the space  
has a subcover with cardinal $<\kappa$. 
In this paper, using 
the existence of extensors of generalized metrics
(Theorem \ref{thm:extensor})
 and 
Hausdorff's argument, 
we characterized the 
final compactness of generalized metrizable
 spaces
by the completeness of generalized metrics. 
This is an analogue of Niemytzki--Tychonoff's theorem for generalized metics.

\begin{thm}\label{thm:kappa-compact}

We assume that $X$ 
is  a topological space possessing 
an infinite metrizable gauge. 
Let $\kappa$ be a regular cardinal 
with $\kappa\in \metrank(X)$. 
Then the following 
statements are equivalent to each other. 
\begin{enumerate}
\item\label{item:com1}
The space $X$ is finally $\kappa$-compact. 
\item\label{item:com2}
There exists $G\in \GGG{\kappa}$ such that 
for all $d\in \met(X; G)$, the space 
$(X, d)$ is complete. 
\item\label{item:com3}
For all $G\in\GGG{\kappa}$ and for all 
$d\in \met(X; G)$, the space 
$(X, d)$ is complete. 
\item\label{item:com4}
There exists $S\in \FFF{\kappa}$ such that 
for all $d\in \ult(X; S)$, the space 
$(X, d)$ is complete. 

\item\label{item:com5}
For all $S\in \FFF{\kappa}$ and for all $d\in \ult(X; d)$, 
the space $(X, d)$ is complete. 
\end{enumerate}

\end{thm}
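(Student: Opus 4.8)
The plan is to prove the five statements equivalent by establishing the two cycles $(1)\To(3)\To(2)\To(1)$ and $(1)\To(5)\To(4)\To(1)$. The implications $(3)\To(2)$ and $(5)\To(4)$ are immediate from the hypothesis $\kappa\in\metrank(X)$, which furnishes some $G\in\GGG{\kappa}$ with $\met(X;G)\neq\emptyset$ and, via Lemma \ref{lem:uniformdis} and Proposition \ref{prop:characterization}, some $S\in\FFF{\kappa}$ with $\ult(X;S)\neq\emptyset$. Thus the theorem reduces to the two genuine implications relating final $\kappa$-compactness and completeness.

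For $(1)\To(3)$ I argue contrapositively. Assume some $d\in\met(X;G)$ is not complete and fix a Cauchy filter $\myfilter{F}$ with no limit point. Since $\chara(\lolo{\Arc{G}})=\kappa$, the uniformity of $(X,d)$ has a base indexed by a strictly decreasing coinitial family $(\epsilon_{\alpha})_{\alpha<\kappa}$ in $G_{>0}$, so $\myfilter{F}$ admits a decreasing base $(F_{\alpha})_{\alpha<\kappa}$ with $\diam_{d}(F_{\alpha})<\epsilon_{\alpha}$. For each $x\in X$, non-convergence of $\myfilter{F}$ to $x$ together with the Cauchy condition yields $\delta_{x}\in G_{>0}$ and some $F\in\myfilter{F}$ with $B_{d}(x,\delta_{x})\cap F=\emptyset$. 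The open balls $\{B_{d}(x,\delta_{x})\}_{x\in X}$ cover $X$; if some subfamily indexed by $I$ with $\card(I)<\kappa$ covered $X$, then, replacing each witnessing $F$ by a smaller member of the base and using regularity of $\kappa$, the intersection of the $\card(I)$ witnesses would contain some $F_{\gamma}\neq\emptyset$ yet be disjoint from the union of that subfamily, a contradiction. Hence $X$ is not finally $\kappa$-compact. The implication $(1)\To(5)$ is verbatim the same, reading $S$-ultrametrics for $G$-metrics.

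The substantive direction is $(2)\To(1)$, again by contraposition. Suppose $X$ is not finally $\kappa$-compact and fix $G\in\GGG{\kappa}$; I must produce an incomplete $d\in\met(X;G)$. Choose an open cover with no subcover of cardinality $<\kappa$ and, using that $X$ is $G$-metrizable of infinite gauge and hence paracompact, pass to a locally finite open refinement $\myfilter{V}$. A refinement of a cover admitting no small subcover admits none either, so $\card(\myfilter{V})\ge\kappa$; selecting distinct members $(V_{\alpha})_{\alpha<\kappa}$ and points $a_{\alpha}\in V_{\alpha}$, local finiteness makes $A=\{a_{\alpha}\mid\alpha<\kappa\}$ a closed discrete subspace of cardinality $\kappa$. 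Fixing a strictly decreasing coinitial $(\epsilon_{\alpha})_{\alpha<\kappa}$ in $G_{>0}$, I set $d(a_{\alpha},a_{\beta})=\epsilon_{\min(\alpha,\beta)}$ for $\alpha\neq\beta$; a direct check shows $d$ is in fact an ultrametric on $A$, generates the discrete (hence subspace) topology, and is incomplete, since the tail filter is Cauchy without a limit. Applying the extensor $\mainmap$ of Theorem \ref{thm:extensor} gives $\mainmap(d)\in\met(X;G)$ with $\mainmap(d)|_{A^{2}}=d$ by (A\ref{item:a2}); as $A$ is closed, the incomplete Cauchy filter on $A$ remains Cauchy and non-convergent in $(X,\mainmap(d))$, so $\mainmap(d)$ is incomplete, establishing $\neg(1)\To\neg(2)$. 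For $(4)\To(1)$ I proceed identically: given $S\in\FFF{\kappa}$, realize $S$ as a g-characteristic subset of some $G\in\GGG{\kappa}$ through the Hahn-field embedding of Proposition \ref{prop:ee}, build the incomplete $S$-ultrametric on the same closed discrete $A$, and extend it by $\mainmap$, invoking (A\ref{item:a45}) to keep the extension in $\ult(X;S)$.

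The main obstacle I anticipate is the extraction of a closed discrete subset of size exactly $\kappa$: one must be certain that the failure of final $\kappa$-compactness in a $G$-metrizable space of gauge $\kappa$ genuinely forces such a subset. I secure this through paracompactness and the locally finite refinement above, where the distinctness of the chosen points requires a thinning argument using that $\kappa$ is regular and that local finiteness gives finite point-fibers. The remaining steps---well-definedness and incompleteness of the model ultrametric on $A$, and preservation of incompleteness under $\mainmap$---are then routine consequences of Theorem \ref{thm:extensor}.
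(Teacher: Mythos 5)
Your proof is correct, and for the substantive direction it rests on the same two pillars as the paper's: a closed discrete subset $A$ of cardinality $\kappa$ extracted from the failure of final $\kappa$-compactness, the incomplete ``$\min$-indexed'' ultrametric on $A$ (the paper's $\orddis_{\invs{\kappa}}$ pulled back along a bijection $A\to\kappa$), and the extensor of Theorem \ref{thm:extensor} to transport the incompleteness to $X$. The differences are in the scaffolding. First, you close the loop with $(2)\To(1)$ and $(4)\To(1)$ separately, whereas the paper proves only $(4)\To(1)$ and links $(2)$ to $(4)$ via the inclusion $\ult(X;G_{\ge 0})\mysub\met(X;G)$ and Proposition \ref{prop:chacha}; your version is slightly longer but handles the existential quantifiers in $(2)$ and $(4)$ more explicitly (the paper's contrapositive of $(4)\To(1)$ exhibits an incomplete ultrametric for one specific $S$ and leaves the transfer to an arbitrary $S\in\FFF{\kappa}$ implicit). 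Second, for $(1)\To(3)$ and $(1)\To(5)$ the paper argues directly via Lemmas \ref{lem:senkei} and \ref{lem:cptfilter}, while you argue contrapositively with a cover by balls avoiding the non-convergent Cauchy filter; the content is the same, but be careful with the phrase ``decreasing base'': a filter is closed only under \emph{finite} intersections, so a nested $\kappa$-indexed base is not free (the paper manufactures a nested family of filter members only by passing to the ultrametric $\cova_{G}\circ d$ and using Lemma \ref{lem:centers}). Fortunately your argument does not actually need nestedness: it suffices to pick, for each $\alpha$, \emph{some} member $F_{\alpha}\in\myfilter{F}$ of diameter less than $\epsilon_{\alpha}$, shrink each radius $\delta_{x}$ so that $\delta_{x}'+\delta_{x}'\le\delta_{x}$, and observe that a single deep member $F_{\gamma}$ with $\gamma=\sup_{i}\alpha_{i}<\kappa$ (regularity) is disjoint from every ball of the putative small subcover. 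Third, you extract $A$ from paracompactness and a locally finite refinement; this is legitimate (the paracompactness of $\omega_{\mu}$-metrizable spaces is classical and cited in the introduction) but imports an external theorem, where the paper's Lemmas \ref{lem:kappaandseq} and \ref{lem:closedsubandcpt} obtain $A$ from a $\kappa$-sequence with no convergent $\kappa$-subsequence, staying inside the machinery already developed. What your route buys is a cleaner treatment of the quantifiers and independence from the sequence lemma; what the paper's buys is self-containment and a shorter path to $A$.
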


\begin{rmk}
Since 
the final $\omega_{0}$-compactness 
is 
identical with 
 the ordinary  compactness, 
Theorem \ref{thm:kappa-compact} contains 
the author's result \cite[Corollary 1.3]{Ishiki2021ultra}. 
\end{rmk}


The organization of this paper is as follows: 
In Section \ref{sec:pre}, 
we prepare some concepts such as 
the Hahn groups, the Hahn fields, and 
Cauchy filters.  
We also discuss some basic  statements on  
ordered sets, metrizable gauges, 
 and generalized metrics and ultrametrics.
In Section \ref{sec:retraction}, 
we prove Theorem \ref{thm:retract}. 
As a consequence of 
the existence of retractions,  
and  Artico--Moresco's characterization 
(Theorem \ref {thm:Artico}) of proximal subsets of 
generalized 
ultrametric spaces, 
we prove Corollary \ref{cor:characlosed}. 
In Section \ref{sec:extensors}, 
we  first discuss modifications of generalized metrics. 
We construct  a metric vanishing on a given closed subset
(see Proposition \ref{prop:vanishingmetric}), 
and show basic properties of 
 an extension of generalized metrics 
 (see Lemma \ref{lem:metmet} and \ref{lem:completeext}). 
We then prove Theorem \ref{thm:extensor} and 
Theorem \ref{thm:extensorult}. 
In Section \ref{sec:tableofsymbols}, 
we exhibit the table of symbols appearing  in this
paper.


\section{Preliminaries}\label{sec:pre}

In this section, we prepare some basic
statements. 

\subsection{Order structures}\label{subsec:ord}
We first discuss
 order structures such as
the Dedekind completeness, 
the Hahn groups, 
and 
the Hahn fields.

\subsubsection{Basic statements on orders}
In the present  paper, 
we use the set-theoretic representation of 
ordinals. 
For example, 
if $\alpha$, $\kappa$ are
 ordinals, 
 then 
the relation $\alpha<\kappa$ means 
$\alpha\in \kappa$, and $\kappa+1=\kappa\cup\{\kappa\}$. 
For more information, 
we refer the readers to 
\cite{MR1940513}.

For a linearly ordered set $(L, \le_{L})$, 
we define the \emph{dual order  $\le_{\oposi{L}}$ of 
$\le_{L}$} by 
$a\le_{\oposi{L}}b$ if and only if 
$b\le_{L} a$. 
We denote by  
$\oposi{L}$ the ordered set $(L, \le_{\oposi{L}})$. 

In Definition \ref{df:opkappa}, 
we introduce a special linearly ordered set, 
which plays an important role in this paper. 
\begin{df}\label{df:opkappa}
For a cardinal $\kappa$, 
we define the ordered set $\invs{\kappa}$ by 
$\invs{\kappa}=\oposi{\kappa+1}$. 
Note that $\invs{\kappa}$ is bottomed and 
$\mzero_{\invs{\kappa}}=\kappa$, and note that 
the maximum of $\invs{\kappa}$ is $0$. 
\end{df}

We define  isotone and 
 antitone maps, 
and isomorphisms between ordered sets. 
\begin{df}\label{df:isotone}
Let $L, R$ be  linearly ordered sets. 
A map $f\colon L\to R$ is  said to be 
\emph{isotone} (resp.~\emph{antitone})
if for all 
$x, y\in L$, the inequality 
$x\le y$ implies 
$f(x)\le f(y)$ (resp.~$f(y)\le f(x)$). 
An injective isotone map is said 
to be an 
\emph{isotone embedding}. 
If $f$ is isotone and bijective, then
$f^{-1}$ is also isotone. In this case, 
the map $f$ is called an 
\emph{isomorphism
between ordered sets}, and 
$L$ and $R$ are  said to be 
\emph{isomorphic as ordered sets}. 
\end{df}

We define the notion of  characteristic 
maps (see also Definition \ref{df:charazero}). 
\begin{df}\label{df:chara}
Let $S$ be a 
bottomed linearly ordered set. 
Let $G$ be a linearly ordered Abelian group. 
Let $E$ be a set. 
A map $f\colon E\to S$ (resp.~ $f\colon E\to G$) is said to be 
\emph{characteristic} (resp.~
\emph{g-characteristic})
 if  its image is a characteristic subset of $S$ 
(resp.~a g-characteristic subset of $G$). 
\end{df}

By the definitions of 
$\GGG{\kappa}$ and 
$\FFF{\kappa}$,
and characters, 
we obtain: 
\begin{lem}\label{lem:charakappa}
Let $\kappa$ be a regular cardinal. 
Let $G\in \GGG{\kappa}$ and 
$S\in \FFF{\kappa}$. 
Then there exist a characteristic isotone 
embedding 
$l: \invs{\kappa}\to S$ and 
a g-characteristic isotone embedding 
$l': \invs{\kappa}\to G$. 
\end{lem}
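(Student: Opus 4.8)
The plan is to extract both embeddings from the maps witnessing the relevant characters: the map into $S$ directly from $\chara(S)=\kappa$, and the map into $G$ from $\chara(\lolo{\Arc{G}})=\kappa$ after lifting Archimedean classes to representatives in $G_{>0}$. Since $\kappa$ is a regular cardinal it is infinite, hence a limit ordinal; I use this repeatedly to rule out the least value being attained before stage $\kappa$ and to pass to successor ordinals still below $\kappa$.

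First I handle $S$. As $\chara(S)=\kappa$, fix $f\colon \kappa+1\to S$ as in Definition \ref{df:character}: $f$ is strictly antitone on $\kappa$, $f(\kappa)=\mzero_{S}$, and the values $f(\alpha)$ for $\alpha<\kappa$ are coinitial above $\mzero_{S}$. Because $\kappa$ is a limit ordinal and $f$ strictly decreases with all values $\ge \mzero_{S}$, each $f(\alpha)$ with $\alpha<\kappa$ satisfies $f(\alpha)>\mzero_{S}$, so $f$ is injective on $\kappa+1$. Now let $l$ be $f$ itself, regarded as a map on the underlying set $\kappa+1$ of $\invs{\kappa}$. Since the order of $\invs{\kappa}$ is the reverse of that of $\kappa+1$, the strict antitonicity of $f$ becomes isotonicity of $l$, and $l(\kappa)=\mzero_{S}$ is the least value, matching that $\kappa$ is the bottom $\mzero_{\invs{\kappa}}$. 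Hence $l$ is an isotone embedding whose image $\{\,f(\alpha)\mid \alpha<\kappa\,\}\cup\{\mzero_{S}\}$ is characteristic by the coinitiality requirement; that is, $l$ is a characteristic isotone embedding.

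Next I handle $G$. From $\chara(\lolo{\Arc{G}})=\kappa$ I obtain, exactly as above, a strictly $\arclele$-decreasing family $\langle \gamma_{\alpha}\mid \alpha<\kappa\rangle$ in $\Arc{G}$ that is coinitial in $\Arc{G}$ (stage $\kappa$ lands on $\mzero_{\lolo{\Arc{G}}}$, and all earlier stages avoid it by the limit argument). For each $\alpha<\kappa$ choose $g_{\alpha}\in G_{>0}$ with $[g_{\alpha}]_{\arel}=\gamma_{\alpha}$; since $\gamma_{\beta}\arclele\gamma_{\alpha}$ forces $g_{\beta}\ll g_{\alpha}$, the family $\langle g_{\alpha}\rangle$ is strictly decreasing in $G_{>0}$. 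Define $l'(\alpha)=g_{\alpha}$ for $\alpha<\kappa$ and $l'(\kappa)=0_{G}$; reversing the order again makes $l'$ an isotone embedding. It remains to verify that the image $\{\,g_{\alpha}\mid \alpha<\kappa\,\}\cup\{0_{G}\}$ is g-characteristic, i.e.\ coinitial in $G_{>0}$.

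This coinitiality is the one genuinely non-formal point, and I expect it to be the main obstacle: coinitiality of the \emph{classes} in $\Arc{G}$ need not pass to the chosen \emph{representatives}, since inside a single Archimedean class a representative $g_{\alpha}$ may exceed a prescribed target $h$ even when $\gamma_{\alpha}=[h]_{\arel}$. The remedy is to drop down one class. Given $h\in G_{>0}$, coinitiality yields $\alpha<\kappa$ with $\gamma_{\alpha}\arcle [h]_{\arel}$; then $\gamma_{\alpha+1}\arclele\gamma_{\alpha}\arcle [h]_{\arel}$ gives $\gamma_{\alpha+1}\arclele [h]_{\arel}$, that is $g_{\alpha+1}\ll h$ and in particular $g_{\alpha+1}<h$, where $\alpha+1<\kappa$ because $\kappa$ is a limit ordinal. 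Hence the representatives are coinitial in $G_{>0}$, the image of $l'$ is a characteristic subset of $G_{\ge 0}$, and $l'$ is the desired g-characteristic isotone embedding.
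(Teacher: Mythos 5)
Your proof is correct. The paper offers no proof of this lemma at all (it is stated as following ``by the definitions of $\GGG{\kappa}$, $\FFF{\kappa}$, and characters''), and your argument is exactly the intended definitional unfolding: reversing the order turns the antitone witness of $\chara(S)=\kappa$ into an isotone embedding of $\invs{\kappa}$, and the witness for $\chara(\lolo{\Arc{G}})=\kappa$ is lifted to representatives in $G_{>0}$. The one point you rightly flag as non-formal --- that coinitiality of the Archimedean classes does not automatically transfer to the chosen representatives --- is a genuine subtlety (the paper glosses over the same issue in its proof of Proposition \ref{prop:chacha}), and your fix of passing from $\gamma_{\alpha}$ to $\gamma_{\alpha+1}$, using that $\kappa$ is a limit ordinal, handles it correctly.
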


Let $L$ be a linearly ordered set. 
A non-empty subset $A$ of $L$ is 
said to be 
\emph{bounded above} 
(resp.~\emph{bounded below}) if 
there exists $a\in L$ such that 
$x\le a$ (resp.~$a\le x$) for all 
$x\in A$. 
We say that 
 $L$ is \emph{Dedekind complete}
if for every non-empty subset $A$ of $L$ which is 
bounded above, 
the supremum $\sup A$ of $A$ exists in $L$. 
Note that $L$ is Dedekind complete if and only if 
every non-empty set bounded below has 
the  infimum. 
A subset $A$ of $L$ is said to be a \emph{cut} if 
the following conditions hold true:
\begin{enumerate}
\item $A\neq \emptyset$; 
\item $A$ is bounded above; 
\item 
if $x\in A$ and $y\in L$ satisfy  $y<x$, then $y\in A$; 
\item if $\sup A$ exists, then $\sup A\in A$. 
\end{enumerate}
We define the 
\emph{Dedekind completion}
$\comp{L}$ of $L$
 by the set of all cuts of $L$. 
We also define a map 
$\iota\colon L\to \comp{L}$ 
by 
$\iota(a)=\{\, x\in S\mid  x\le a\, \}$. 
Then $\comp{L}$ is linearly ordered by
the  inclusion 
$\mysub$, and the map $\iota$ is an 
isotone embedding. 
By abuse of notations, 
we represent the order on $\comp{L}$ as the same symbol as the order $\le$ on $L$. 
Using 
the canonical map $\iota\colon L\to \comp{L}$, 
we consider that  $L\mysub \comp{L}$. 

The statement (1) in the following  lemma 
is deduced  from 
the definition of $\comp{L}$. 
The statement (2) is
presented in 
  \cite[Proposition 1.1.4]{MR3156547}. 
\begin{lem}\label{lem:dedekindcomp}
Let $L$ be a linearly ordered set. Then 
the Dedekind completion 
$\comp{L}$ of $L$ satisfies the following statements: 
\begin{enumerate}
\item If $L$ is bottomed, then so is $\comp{L}$. 

\item If $s, t\in \comp{L}$ satisfy  $s<t$, then there exist
$x, y\in L$ with $s\le x<t$ and $s<y\le t$.\label{item:compst}

\end{enumerate}
\end{lem}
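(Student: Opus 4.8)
I would work throughout with the identification $L \mysub \comp{L}$ given by $\iota(a)=\{\,x\in L\mid x\le a\,\}$, using that each $\iota(a)$ is a (principal) cut, that the order on $\comp{L}$ is inclusion, and that for $a,b\in L$ one has $a\le b$ if and only if $\iota(a)\mysub \iota(b)$ (indeed $\iota(a)\mysub\iota(b)$ iff $a\in\iota(b)$ iff $a\le b$). Thus an inequality ``$s\le x$'' with $s\in\comp{L}$ and $x\in L$ is read as the inclusion $s\mysub\iota(x)$. For statement (1), the candidate least element is $\iota(\mzero_{L})=\{\mzero_{L}\}$. Given any cut $A$, nonemptiness provides some $a\in A$, and since $\mzero_{L}\le a$ and $A$ is downward closed we obtain $\mzero_{L}\in A$, so $\{\mzero_{L}\}\mysub A$. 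Hence $\iota(\mzero_{L})$ lies below every cut and is the least element, so $\comp{L}$ is bottomed.

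For statement (2), I would write $s<t$ as $s\subsetneq t$ and fix $b\in t\setminus s$. First I claim $y:=b$ settles the requirement $s<y\le t$. The inclusion $s\mysub\iota(b)$ holds because any $z\in s$ satisfies $z\le b$: if $z>b$, then downward closure of $s$ would force $b\in s$, contradicting $b\notin s$. This inclusion is strict since $b\in\iota(b)\setminus s$, giving $s<\iota(b)$; and $b\in t$ together with downward closure of $t$ yields $\iota(b)\mysub t$, that is $\iota(b)\le t$.

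For the requirement $s\le x<t$ I would split according to whether $b=\max t$. If $b$ is not the maximum of $t$, choose $c\in t$ with $b<c$; then $c\in t\setminus\iota(b)$ shows $\iota(b)<t$, and since $s\mysub\iota(b)$ was just established, the choice $x:=b$ works. If instead $b=\max t$, then $\iota(b)=t$ and this witness fails to lie strictly below $t$. In that case every element of $s$ is strictly below $b$ (as $b\notin s$ and $s\mysub\iota(b)$), so $b$ is an upper bound of $s$. If $s$ has a maximum $m$, then $s=\iota(m)$ and $x:=m$ gives $s\le\iota(m)=s<t$; if $s$ has no maximum, then by the cut axiom $\sup s$ cannot exist in $L$, so $b$ is an upper bound that is not the least one, and there is an upper bound $x<b$ of $s$, whence $s\mysub\iota(x)\subsetneq\iota(b)=t$.

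The main obstacle is precisely the subcase $b=\max t$: the obvious principal cut $\iota(b)$ then coincides with $t$ instead of lying strictly below it, and one must manufacture a strictly smaller principal cut that still contains $s$. The point to get right is that the defining property of cuts — that an existing supremum is attained — forces $s$ either to be principal (handled by its maximum) or to have no supremum at all (so that a strictly smaller upper bound below $\max t$ is available). Everything else reduces to routine verification of inclusions.
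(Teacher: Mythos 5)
Your proof is correct. Note that the paper does not actually supply an argument for this lemma: it declares statement (1) immediate from the definition of $\comp{L}$ and cites an external reference (Proposition 1.1.4 of the cited book) for statement (2), so there is no in-paper proof to compare against; your write-up is a complete, self-contained verification using only the paper's definition of cuts. The reductions are all sound: $\iota(\mzero_{L})=\{\mzero_{L}\}$ is a cut contained in every cut, which gives (1); the witness $y=b$ for $b\in t\setminus s$ always works for $s<y\le t$; and for the lower witness the only delicate point is the one you isolate, namely the subcase $b=\max t$, where $\iota(b)=t$ fails to be strictly below $t$. Your use of the fourth cut axiom (an existing supremum must belong to the cut) to split into ``$s$ is principal'' versus ``$\sup s$ does not exist, hence $b$ is a non-least upper bound and some upper bound $x<b$ exists'' is exactly the right mechanism, and both subcases close correctly.
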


Let $\kappa$ be a cardinal. 
Since $\kappa+1$ is well-ordered and 
closed under the supremum operator, 
we obtain: 
\begin{lem}\label{lem:dedekindkappa}
Let $\kappa$ be a cardinal. 
Then
 the linearly  ordered set $\invs{\kappa}$
 is Dedekind complete. 
\end{lem}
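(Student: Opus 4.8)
The plan is to exploit the fact that $\invs{\kappa}$ is by definition the dual ordered set $\oposi{\kappa+1}$, so that the order-reversing identity map turns every least-upper-bound question in $\invs{\kappa}$ into a greatest-lower-bound question in the ordinal $\kappa+1$. Concretely, I would first observe that for a non-empty subset $A$ of $\invs{\kappa}$, the statement ``$A$ is bounded above in $\invs{\kappa}$'' is equivalent to ``$A$ is bounded below in $\kappa+1$,'' and that whenever the infimum $\inf_{\kappa+1} A$ exists in $\kappa+1$, this same element is the supremum $\sup_{\invs{\kappa}} A$ in $\invs{\kappa}$. This reduces the claim to showing that every non-empty subset of $\kappa+1$ admits an infimum.

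Next I would invoke the well-ordering of the ordinal $\kappa+1$: every non-empty subset $A$ of a well-ordered set possesses a least element $m=\min A$, and since $m\in A$ this least element is automatically its greatest lower bound, i.e. $\inf_{\kappa+1} A=m$ exists. Note that the hypothesis of being bounded below is vacuous here, since $0$ is the minimum of $\kappa+1$, so in fact \emph{every} non-empty subset of $\kappa+1$ has an infimum. Translating back through the duality, $m$ is precisely $\sup_{\invs{\kappa}} A$, so every non-empty subset of $\invs{\kappa}$ that is bounded above has a supremum. By the definition of Dedekind completeness recalled before the lemma, this proves the statement.

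There is essentially no serious obstacle here; the only point requiring care is to keep track of the direction of the order reversal, so that ``bounded above'' and ``supremum'' in $\invs{\kappa}$ are correctly matched with ``bounded below'' and ``infimum'' in $\kappa+1$ rather than inadvertently swapped. Once this duality dictionary is fixed, the result is an immediate consequence of the well-ordering principle for ordinals, which is exactly the remark preceding the lemma in the excerpt.
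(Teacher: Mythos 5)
Your proof is correct and is exactly the argument the paper intends: the lemma is stated as an immediate consequence of the remark that $\kappa+1$ is well-ordered, and you have simply made the order-reversal dictionary explicit (suprema in $\invs{\kappa}$ correspond to minima in $\kappa+1$, which exist by well-ordering). No gaps.
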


\begin{df}\label{df:morph0}
Let $L$ be a linearly ordered set. 
A subset $E$  of $L$ is said to be 
\emph{cofinal} (resp.~\emph{coinitial})
 if 
for all $l\in L$, there exists $e\in E$ with 
$l\le e$ (reps.~$e\le l$). 
Let $E$ be a  set. 
A map $f\colon E\to L$ is said to be 
\emph{cofinal} (resp.~\emph{coinitial}) if 
its image is cofinal (resp.~coinitial). 
\end{df}
\begin{rmk}
A cardinal $\kappa$ is regular if and only if 
$\kappa$ is infinite  and 
$\kappa$ is the least cardinal of all cardinal 
$\nu$ 
satisfying that there exists a cofinal  isotone embedding 
$\nu\to \kappa$. Note that $\myomega$ is regular, 
and note that  if $\kappa$ is regular, 
then $\myomega\le \kappa$. 
\end{rmk}

By Lemma  \ref{lem:dedekindcomp}, 
we obtain the following two corollaries: 

\begin{cor}\label{cor:cofinal}
Let $L$ be a linearly ordered set. 
Then 
the set $L$ is
a cofinal and coinitial subset  in its 
Dedekind completion $\comp{L}$. 
\end{cor}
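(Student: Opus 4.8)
The plan is to unwind the definition of the Dedekind completion directly, rather than to route everything through Lemma \ref{lem:dedekindcomp}(2). Recall that $\comp{L}$ is, by definition, the set of all cuts of $L$, ordered by inclusion $\mysub$, and that $L$ is identified with its image under $\iota$, where $\iota(a)=\{\, x\in L\mid x\le a\,\}$. To verify the two assertions of Definition \ref{df:morph0} (with $L$ there replaced by $\comp{L}$ and $E=\iota(L)$), I would fix an arbitrary cut $A\in \comp{L}$ and exhibit one element of $\iota(L)$ lying weakly above $A$ and one lying weakly below $A$ in the inclusion order.

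For \emph{cofinality} I would invoke the clause in the definition of a cut which says that $A$ is bounded above: there is some $a\in L$ with $x\le a$ for every $x\in A$. This gives $A\mysub \iota(a)$, that is, $A\le \iota(a)$ in $\comp{L}$, so $\iota(a)$ is an element of $L$ dominating $A$. For \emph{coinitiality} I would instead use that a cut is non-empty and downward closed: choosing any $x_{0}\in A$, every $y\le x_{0}$ satisfies either $y<x_{0}$, whence $y\in A$ by downward closure, or $y=x_{0}\in A$; hence $\iota(x_{0})\mysub A$, i.e. $\iota(x_{0})\le A$, exhibiting an element of $L$ below $A$. Since $A$ was arbitrary, $L$ is both cofinal and coinitial in $\comp{L}$.

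I do not expect a genuine obstacle here: the corollary is a purely formal consequence of the three structural clauses defining a cut, namely non-emptiness, boundedness above, and downward closure. One could alternatively argue from Lemma \ref{lem:dedekindcomp}(\ref{item:compst}) by, for a non-extremal $s\in\comp{L}$, picking $t$ with $s<t$ (respectively $t<s$) and extracting an element of $L$ between them; but this forces a separate discussion of the possible maximal and minimal cuts of $\comp{L}$, which is precisely the case analysis that the direct cut-theoretic argument sidesteps. For this reason I would present the direct version as the proof.
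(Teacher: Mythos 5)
Your proof is correct, and it takes a genuinely more direct route than the paper's. The paper obtains Corollary \ref{cor:cofinal} as a consequence of Lemma \ref{lem:dedekindcomp}, in effect using the density statement (2): given $s\in\comp{L}$ one picks a comparable $t$ and extracts an element of $L$ between $s$ and $t$. As you observe, that argument forces a separate treatment of a possible maximum or minimum of $\comp{L}$, where no such $t$ exists; in those extremal cases one must check that the top (resp.\ bottom) cut is already of the form $\iota(a)$, which ultimately comes down to the same boundedness-above and non-emptiness clauses you invoke. Your version works straight from the definition of a cut: boundedness above gives $A\mysub\iota(a)$, hence cofinality, and non-emptiness together with downward closure gives $\iota(x_{0})\mysub A$, hence coinitiality, with no case split and no appeal to Lemma \ref{lem:dedekindcomp}. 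The only thing the paper's route buys is economy of exposition, since the single Lemma \ref{lem:dedekindcomp} is made to yield both Corollary \ref{cor:cofinal} and Corollary \ref{cor:ch}; your argument is self-contained and, if anything, cleaner for this particular statement.
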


\begin{cor}\label{cor:ch}
Let $S$ be a bottomed  linearly ordered set. 
Then
the set $S$ is a characteristic subset 
 in 
its Dedekind completion 
$\comp{S}$. 
\end{cor}

\subsubsection{Linearly ordered Abelian groups}
As mentioned in Section \ref{sec:intro}, 
we verify the well-definedness of the order 
$\arcle$ on 
$\Arc{X}$. 

\begin{lem}\label{lem:welldef}
Let $G$ be a linearly ordered Abelian group. 
Let $x, y, u, v\in G_{>0}$.
If $x\arel y$ and $u\arel v$ and $x\ll u$, 
then we have $y\ll v$. 
Namely, the order $\arcle$ on $\Arc{G}$
is well-defined. 
\end{lem}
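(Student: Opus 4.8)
The plan is to unwind the definitions and reduce the whole statement to a single chain of inequalities, using only the monotonicity and cancellation properties of integer scaling in a linearly ordered Abelian group. Recall that $y\ll v$ means $k\cdot y<v$ for every $k\in\zz_{\ge 1}$, so I would fix an arbitrary $k\in\zz_{\ge 1}$ and aim to establish $k\cdot y<v$. First I would extract from each hypothesis only the one-sided inequality I actually need: from $x\arel y$ take an integer $n\in\zz_{\ge 1}$ with $y\le n\cdot x$ (the other half of the equivalence plays no role here), and from $u\arel v$ take an integer $q\in\zz_{\ge 1}$ with $u\le q\cdot v$. The hypothesis $x\ll u$ then supplies $N\cdot x<u$ for every $N\in\zz_{\ge 1}$.

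Next I would assemble the estimate. Scaling $y\le n\cdot x$ by the positive integer $kq$ gives $(kq)\cdot y\le (kqn)\cdot x$; applying $x\ll u$ with $N=kqn$ gives $(kqn)\cdot x<u$; and $u\le q\cdot v$ closes the chain, so that
\[
(kq)\cdot y\le (kqn)\cdot x<u\le q\cdot v .
\]
Rewriting the left-hand side as $q\cdot(k\cdot y)$, this reads $q\cdot(k\cdot y)<q\cdot v$.

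The only step requiring genuine care—and hence the main, if mild, obstacle—is cancelling the common factor $q$, since one cannot simply divide in a general ordered group. Here I would fall back on the defining axiom $a\le b\To a+c\le b+c$, from which induction yields that scaling by a fixed positive integer is monotone, and contrapositively that $q\cdot a<q\cdot b$ forces $a<b$ (otherwise $a\ge b$ would give $q\cdot a\ge q\cdot b$, a contradiction). Applying this to $q\cdot(k\cdot y)<q\cdot v$ gives $k\cdot y<v$. Since $k\in\zz_{\ge 1}$ was arbitrary, this proves $y\ll v$, and the well-definedness of the order $\arcle$ on $\Arc{G}$ follows immediately: it shows that $x\arclele u$ depends only on the equivalence classes $[x]_{\arel}$ and $[u]_{\arel}$.
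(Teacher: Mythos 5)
Your proof is correct and follows essentially the same route as the paper's: both arguments extract only the one-sided bounds $y\le n\cdot x$ and $u\le q\cdot v$ from the Archimedean equivalences and combine them with $x\ll u$. The only organizational difference is that the paper first establishes the intermediate claim $y\ll u$ and then concludes by contradiction (thereby never needing to cancel a factor), whereas you run a direct forward chain $(kq)\cdot y\le (kqn)\cdot x<u\le q\cdot v$ and cancel the common positive integer $q$ at the end --- a step you correctly justify from the order axioms.
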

\begin{proof}
By $x\arel y$ and $u\arel v$, 
there exist integers $N, M\in \zz_{\ge 1}$ such that 
$y\le N\cdot x$ and $u\le M\cdot v$. 
Then we obtain 
$n\cdot y<nN\cdot x$ for all $n\in \zz_{\ge 1}$. 
From $x\ll u$, 
it follows that for all $n\in \zz_{\ge 1}$, 
we have 
$n\cdot y<u$. 
Hence $y\ll u$.
For the sake of contradiction, 
we suppose 
that 
there exists
$k\in \zz_{\ge 1}$ 
such that 
$v\le k\cdot y$. 
Then, we obtain 
$u\le Mk\cdot y$. 
This  contradicts $y\ll u$. 
Thus, we conclude that 
$y\ll v$. 
\end{proof}

\begin{df}\label{df:lambda}

For a linearly ordered Abelian group $G$, 
we define  the absolute value function 
$\abs\colon G\to G_{\ge 0}$ by 
\[
\abs(x)=
\begin{cases}
x & \text{if $x\ge 0$;}\\
-x & \text{if $x<0$.}
\end{cases}
\]
Some authors denote by $|x|$ the value  $\abs(x)$.
To emphasize that it is a function on $G$, 
we use the symbol $\abs(x)$ in this paper. 
We also define
a map  $\cova_{G}\colon G\to \lolo{\Arc{G}}$ by 
\[
\cova_{G}(x)=
\begin{cases}
[\abs(x)]_{\arel} & \text{if $x\neq 0$;}\\
\mzero_{\lolo{\Arc{G}}} & \text{if $x=0$.}
\end{cases}
\]
\end{df}

By definitions of $\abs$ and $\cova_{G}$,  we have: 
\begin{prop}\label{prop:abslambda}
Let $G$ be a linearly ordered Abelian group. 
Then  the following are satisfied:
\begin{enumerate}
\item\label{item:000cova}
For all $x\in G$, 
 we have 
$\cova_{G}(x)=\mzero_{\lolo{\Arc{G}}}$ if and only if 
$x=0_{G}$.
\item\label{item:abs}
For all $x, y\in G$, we have 
$\abs(x+y)\le \abs(x)+\abs(y)$.
\item\label{item:monotone}
If 
$x, y\in G$ and $n\in \zz_{\ge 1}$
satisfy 
$\abs(x)\le n\cdot  \abs(y) $, then 
we have 
$\cova_{G}(x)\arcle \cova_{G}(y)$.

\item\label{item:strong}
For all $x, y\in G$, 
 we have 
$\cova_{G}(x+y)\arcle 
\cova_{G}(x)\lor  \cova_{G}(x)$, where
$\lor$ is the maximum operator on $\lolo{\Arc{G}}$.
\end{enumerate}
\end{prop}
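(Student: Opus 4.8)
The plan is to handle the four assertions in turn: (1) is immediate from the definition of $\cova_{G}$, (2) is the standard absolute-value triangle inequality, and both (3) and (4) reduce the stated algebraic inequalities to comparisons in the linearly ordered set $(\Arc{G}, \arcle)$, whose order is well-defined by Lemma \ref{lem:welldef}.

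For (1), I would read off directly from Definition \ref{df:lambda} that $\cova_{G}(x)=\mzero_{\lolo{\Arc{G}}}$ holds exactly in the case $x=0_{G}$; when $x\neq 0_{G}$ the value $[\abs(x)]_{\arel}$ lies in $\Arc{G}$, which is disjoint from $\{\mzero_{\lolo{\Arc{G}}}\}$ by the construction of $\lolo{\cdot}$ in Definition \ref{df:oneptextorder}. For (2), I would first record that $x\le \abs(x)$ and $-x\le \abs(x)$ hold for every $x\in G$, directly from the definition of $\abs$. Adding the corresponding inequalities for $x$ and for $y$ and using the order-compatibility of $+$ gives both $x+y\le \abs(x)+\abs(y)$ and $-(x+y)\le \abs(x)+\abs(y)$; since $\abs(x+y)$ equals one of $x+y$ or $-(x+y)$, the triangle inequality follows.

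For (3), the case $x=0_{G}$ is trivial because $\cova_{G}(x)=\mzero_{\lolo{\Arc{G}}}$ is the least element of $\lolo{\Arc{G}}$. If $x\neq 0_{G}$, then $0<\abs(x)\le n\cdot\abs(y)$ forces $\abs(y)>0$, so $\abs(x),\abs(y)\in G_{>0}$ and $\cova_{G}(x)=[\abs(x)]_{\arel}$, $\cova_{G}(y)=[\abs(y)]_{\arel}$. Since $\arcle$ is a linear order, it suffices to exclude $\cova_{G}(y)\arclele\cova_{G}(x)$; but this would mean $\abs(y)\ll\abs(x)$, which by the definition of $\ll$ yields $n\cdot\abs(y)<\abs(x)$, contradicting $\abs(x)\le n\cdot\abs(y)$. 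Hence $\cova_{G}(x)\arcle\cova_{G}(y)$.

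For (4), read with right-hand side $\cova_{G}(x)\lor\cova_{G}(y)$, I would choose $z\in\{x,y\}$ with $\abs(z)=\abs(x)\lor\abs(y)$. Then (2) gives $\abs(x+y)\le\abs(x)+\abs(y)\le 2\cdot\abs(z)$, so (3) with $n=2$ yields $\cova_{G}(x+y)\arcle\cova_{G}(z)$; and (3) with $n=1$ shows that the $\arcle$-order of $[\abs(x)]_{\arel}$ and $[\abs(y)]_{\arel}$ agrees with the order of $\abs(x)$ and $\abs(y)$, whence $\cova_{G}(z)=\cova_{G}(x)\lor\cova_{G}(y)$. (When $x$, $y$, or $x+y$ equals $0_{G}$ the relevant value of $\cova_{G}$ is $\mzero_{\lolo{\Arc{G}}}$ and the inequality is immediate.) The only delicate point throughout is the bookkeeping of the zero versus nonzero cases together with the reduction in (3); once $\arcle$ is known to be a genuine linear order via Lemma \ref{lem:welldef}, each comparison is settled by a single appeal to the definition of $\ll$, so I expect no substantial obstacle beyond this careful case analysis.
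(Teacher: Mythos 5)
Your proposal is correct and follows essentially the same route as the paper: (1) and (2) from the definitions of $\cova_{G}$ and $\abs$, (3) by a zero/nonzero case split followed by the trichotomy of the Archimedean comparison (you exclude $\abs(y)\ll\abs(x)$ directly, where the paper splits on $\abs(x)\arel\abs(y)$ versus not — the same argument in different clothing), and (4) by reducing to the larger of $\abs(x),\abs(y)$ and applying (2) and (3) with $n=2$. You also correctly read the right-hand side of (4) as $\cova_{G}(x)\lor\cova_{G}(y)$, which is what the paper's own proof establishes.
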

\begin{proof}
The statement (\ref{item:000cova}) follows from 
the definition of $\cova_{G}$. 

 The statement (\ref{item:abs}) follows
from the fact that 
 for all $a\in G$, we have 
 $a\le \abs(a)$
 and 
 $-a\le \abs(a)$. 
 
 We next verify (\ref{item:monotone}). 
 We divide the proof into two cases. 
 Recall that $\arel$ is  the relation on 
 $G_{>0}$. 
 
 Case 1 ($x=0$ or $y=0$):
 If $x=0$, then, by $\cova_{G}(0)=\mzero$, 
 we obtain the conclusion. 
 If $y=0$, then, by the assumption, 
 we have $x=y=0$. Thus, we obtain 
 $\cova_{G}(x)=\cova_{G}(y)$. 
 This finishes the proof in Case 1. 
 
 Case 2 ($x\neq 0$ and $y\neq 0$): 
 If $\abs(x)\arel\abs(y)$,then  we obtain the conclusion. 
 If $\abs(x)\not\arel \abs(y)$, 
 then, by 
 the assumption that $\abs(x)\le n\cdot \abs(y)$, 
 we have $m\cdot \abs(x)<\abs(y)$ for all 
 $m\in \zz_{\ge 1}$, 
 namely, $
 \abs(x)\ll \abs(y)$. 
 Lemma \ref{lem:welldef} implies that 
 $\cova_{G}(x)\arcle \cova_{G}(y)$. 
This finishes the proof of  (\ref{item:monotone}). 

We next show the statement (\ref{item:strong}).  
We may assume that 
$\abs(y)\le \abs(x)$. 
By the statement (\ref{item:abs}), 
we have 
 $\abs(x+y)\le 2\cdot \abs(x)$. 
 Then, by 
 the statement 
 (\ref{item:monotone}), 
 we have 
 $\cova_{G}(x+y)\arcle \cova_{G}(x)$. 
Since $\cova_{G}(x)\lor \cova_{G}(y)=
\cova_{G}(x)$, 
we obtain 
$\cova_{G}(x+y)\arcle\cova_{G}(x)\lor \cova_{G}(y)$. 
This completes the proof of  the statement  (\ref{item:strong}). 
\end{proof}

\begin{prop}\label{prop:uniuni}
Let $\kappa$ be a regular  cardinal. 
Let $G\in \GGG{\kappa}$. 
Then the following statements hold true. 
\begin{enumerate}
\item 
For all $\epsilon\in G_{>0}$, 
there exits $\rho\in \Arc{G}$ such that  if 
 $\cova_{G}(x)\arclele \rho$, 
then we have 
$\abs(x)<\epsilon$.

\item 
For all $\eta\in \Arc{G}$, 
there exists $\theta\in G_{>0}$
such that if $\abs(x)<\theta$, 
then $\cova_{G}(x)\arclele \eta$.
\end{enumerate}

\end{prop}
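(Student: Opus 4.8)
The plan is to prove the two statements separately, treating (1) as an essentially formal consequence of the definitions and reserving the structural hypothesis $G\in\GGG{\kappa}$ for (2). For (1), given $\epsilon\in G_{>0}$ I would simply take $\rho=\cova_{G}(\epsilon)=[\epsilon]_{\arel}\in\Arc{G}$. If $\cova_{G}(x)\arclele\rho$ and $x\neq 0$, then $\cova_{G}(x)=[\abs(x)]_{\arel}$ and the strict inequality $[\abs(x)]_{\arel}\arclele[\epsilon]_{\arel}$ unwinds, by definition of $\arclele$, to $\abs(x)\ll\epsilon$; taking $n=1$ in the definition of $\ll$ yields $\abs(x)<\epsilon$. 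The case $x=0$ is immediate since $\abs(0)=0<\epsilon$. No hypothesis on $G$ is used here.

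For (2), write $\eta=[y]_{\arel}$ with $y\in G_{>0}$. The plan is to choose $\theta\in G_{>0}$ whose Archimedean class lies strictly below $\eta$, i.e.\ $\theta\ll y$, and then verify the implication by a telescoping inequality: if $\abs(x)<\theta$, then strict monotonicity of addition gives $n\cdot\abs(x)<n\cdot\theta$ for every $n\in\zz_{\ge 1}$, while $\theta\ll y$ gives $n\cdot\theta<y$; hence $n\cdot\abs(x)<y$ for all $n$, that is $\abs(x)\ll y$. Consequently $\cova_{G}(x)=[\abs(x)]_{\arel}\arclele[y]_{\arel}=\eta$, the two classes being distinct precisely because $\abs(x)\ll y$ rules out $\abs(x)\arel y$. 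The case $x=0$ gives $\cova_{G}(x)=\mzero\arclele\eta$ trivially.

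The only point that requires the hypothesis, and the main obstacle, is the existence of such a $\theta$, equivalently that $\eta$ is not the least element of $\Arc{G}$. By assumption $\chara(\lolo{\Arc{G}})=\kappa$, and since $\kappa$ is regular we have $\myomega\le\kappa$, so the character is infinite. The witnessing map $f\colon\kappa+1\to\lolo{\Arc{G}}$ from Definition \ref{df:character} is strictly decreasing with $f(\kappa)=\mzero$ and satisfies condition (3); its restriction to $\kappa$ takes values in $\Arc{G}=\lolo{\Arc{G}}\setminus\{\mzero\}$ and, being a strictly decreasing family of length $\kappa\ge\myomega$ that is coinitial by condition (3), it shows that $\Arc{G}$ has no least element. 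Concretely, applying condition (3) to $\eta$ produces $\alpha<\kappa$ with $f(\alpha)\arcle\eta$; replacing $\alpha$ by $\alpha+1<\kappa$ if equality holds, we obtain an element of $\Arc{G}$ strictly below $\eta$, and choosing $\theta\in G_{>0}$ representing this class completes the argument.
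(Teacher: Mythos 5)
Your proposal is correct and follows essentially the same route as the paper: for (1) it takes $\rho=[\epsilon]_{\arel}$ and unwinds the definition of $\arclele$ (the paper phrases this as a contrapositive via Proposition \ref{prop:abslambda}(3), but the content is identical), and for (2) it picks a class strictly below $\eta$ using that $\chara(\lolo{\Arc{G}})=\kappa\ge\myomega$ and a representative $\theta$ of that class, exactly as the paper does. Your write-up merely makes explicit the existence of the class below $\eta$ (via the witnessing map from Definition \ref{df:character}), which the paper leaves as ``since $\kappa$ is infinite''.
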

\begin{proof}
We first verify the statement  (1). 
Take $\epsilon \in G_{>0}$, 
and put $\rho=[\epsilon]_{\arel}$. 
If $\cova_{G}(x)\arclele \rho$,  
then we have 
$\abs(x)<\epsilon$ 
(if $\epsilon\le \abs(x)$, we have 
$\rho\arcle \cova_{G}(x)$ by (\ref{item:monotone}) 
in Proposition \ref{prop:abslambda}). 

We next show the statement (2). 
Since $\kappa$ is infinite, 
we can take $\eta, \eta'\in \Arc{G}$ with 
$\eta'<\eta$. 
Take $\theta\in G_{>0}$ with 
$\eta'=[\theta]_{\arel}$. 
If $\abs(x)<\theta$, 
then $\cova_{G}(x)\arcle \eta'$, 
and hence $\cova_{G}(x)\arclele \eta$.
This complete the proof.  
\end{proof}

\subsubsection{The Hahn groups and  fields}
For   a linearly  ordered set 
$L$, 
we say that $L$ is 
\emph{dually  well-ordered} 
if every  its non-empty subset has the maximum. 
This concept is the dual of the well-ordered property. 

We now define the  Hahn groups  and 
the Hahn fields. 
Let $L$ be a linearly ordered set. 
We define 
$\hahnsp{L}$ by the 
set of all maps 
$f\colon L\to \rr$ such that 
the set $\{\, x\in L\mid f(x)\neq 0_{\rr}\, \}$
is a dually well-ordered subset of  
$L$. 
We define $f+g$ as the coordinate-wise 
addition, namely, 
$(f+g)(x)=f(x)+g(x)$. 
We define $f\le_{\hahnsp{S}}g$ by 
$f(\theta)\le_{\rr}g(\theta)$, where 
$\theta=\max\{\, x\in L\mid f(x)\neq g(x)\,\}$. 

For a linearly ordered Abelian group 
$G$, we can define the product on 
the Hahn space $\hahnsp{G}$ induced from $G$ by 
\[
fg(x)=\sum_{a+b=x}f(a)g(b). 
\]
In this case,  the group $\hahnsp{G}$ becomes 
a field (see \cite{Hahn1907}, or see also \cite[Exercises 3.5.5 and 3.5.6]{MR2183496}). 
To emphasize the field structure, 
we denote by  $\hahnkor{G}$ the Hahn group 
$\hahnsp{G}$ equipped  with the product structure. 
Some authors used $\oposi{L}$ instead of $L$ and 
 defined the Hahn fields by the set of all 
functions whose supports are well-ordered sets in $\oposi{L}$ instead of dually well-ordered sets in $L$. 
The definition of the Hahn fields in this paper 
 can be found in  
\cite{MR2379002} and 
\cite{MR3946544}. 

\begin{lem}\label{lem:hahnlinear}
Let $L$ be a 
linearly ordered set. 
Then the Hahn group 
$\hahnsp{L}$
is a linearly ordered Abelian group. 
\end{lem}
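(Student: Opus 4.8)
The plan is to first establish the Abelian group structure on $\hahnsp{L}$ and then to realize the order $\le_{\hahnsp{L}}$ through a positive cone, which lets me treat antisymmetry, totality, transitivity, and compatibility with $+$ in a uniform way. For $f\in \hahnsp{L}$ write $\mathrm{supp}(f)=\{\, x\in L\mid f(x)\neq 0_{\rr}\, \}$. The two elementary facts I would record first are that a subset of a dually well-ordered set is dually well-ordered, and that a union of two dually well-ordered sets is dually well-ordered: given a non-empty $A$ contained in such a union, split $A$ along the two sets, take the maximum of each non-empty piece, and the larger of these (they are comparable since $L$ is linearly ordered) is $\max A$. Since $\mathrm{supp}(f+g)\subseteq \mathrm{supp}(f)\cup\mathrm{supp}(g)$ and $\mathrm{supp}(-f)=\mathrm{supp}(f)$, the set $\hahnsp{L}$ is closed under coordinate-wise addition and negation; the zero function has empty support and is the identity, while associativity and commutativity are inherited coordinate-wise from $\rr$. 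Hence $\hahnsp{L}$ is an Abelian group.

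Next I would reformulate the order. For $f\neq 0$ the support is non-empty and dually well-ordered, so $\theta_{f}:=\max \mathrm{supp}(f)$ exists and $f(\theta_{f})\neq 0_{\rr}$; call $f$ \emph{positive} ($f\in P$) if $f(\theta_{f})>_{\rr}0$. The point is that for $f\neq g$ one has $\{\, x\mid f(x)\neq g(x)\, \}=\mathrm{supp}(g-f)$, whose maximum $\theta$ is exactly $\theta_{g-f}$; this both guarantees that the maximum in the definition of $\le_{\hahnsp{L}}$ always exists (so the order is well defined) and shows $f<_{\hahnsp{L}}g\iff f(\theta)<_{\rr}g(\theta)\iff (g-f)(\theta_{g-f})>_{\rr}0\iff g-f\in P$. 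Thus $f\le_{\hahnsp{L}}g$ if and only if $g-f\in P\cup\{0\}$.

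Now I would prove the two cone properties. For trichotomy, any $f\neq 0$ has $f(\theta_{f})>0$ or $f(\theta_{f})<0$; in the latter case $-f\in P$ because $\mathrm{supp}(-f)=\mathrm{supp}(f)$ and $(-f)(\theta_{f})>0$. So each non-zero $f$ lies in exactly one of $P$, $-P$, which yields totality and antisymmetry. The crux is closure $P+P\subseteq P$: take $f,g\in P$. If $\theta_{f}>\theta_{g}$, then $g(\theta_{f})=0_{\rr}$ since $\theta_{f}\notin\mathrm{supp}(g)$, and every $x>\theta_{f}$ lies outside both supports, so $\theta_{f+g}=\theta_{f}$ and $(f+g)(\theta_{f})=f(\theta_{f})>0$; the case $\theta_{g}>\theta_{f}$ is symmetric; and if $\theta_{f}=\theta_{g}=:\theta$, then $(f+g)(\theta)=f(\theta)+g(\theta)>0$ is a sum of positive reals while no coordinate above $\theta$ is non-zero, so again $f+g\in P$.

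Finally I would assemble the conclusion. Transitivity follows from $P+P\subseteq P$: if $f\le_{\hahnsp{L}}g$ and $g\le_{\hahnsp{L}}h$, then $g-f,\,h-g\in P\cup\{0\}$, hence $h-f=(h-g)+(g-f)\in P\cup\{0\}$. Compatibility with the group operation is immediate, since $(g+h)-(f+h)=g-f$ shows $f\le_{\hahnsp{L}}g\iff f+h\le_{\hahnsp{L}}g+h$. Together with reflexivity, antisymmetry, and totality this proves $\le_{\hahnsp{L}}$ is a linear order compatible with $+$, so $\hahnsp{L}$ is a linearly ordered Abelian group. I expect the only genuinely delicate step to be the closure of the positive cone under addition, where one must correctly identify $\max\mathrm{supp}(f+g)$ in each of the three cases; this is precisely where the dually-well-ordered condition on supports is indispensable.
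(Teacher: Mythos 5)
Your proof is correct, and it is organized rather differently from the paper's. The paper's proof of this lemma is a three-line verification of a single axiom: it assumes $f\le_{\hahnsp{L}}g$, observes that $\{\,x\in L\mid (f+h)(x)\neq (g+h)(x)\,\}$ coincides with $\{\,x\in L\mid f(x)\neq g(x)\,\}$, and concludes translation invariance; the closure of $\hahnsp{L}$ under addition, the existence of the maximum defining the order, and the linear-order axioms (totality, antisymmetry, transitivity) are all left implicit. You instead set up the positive cone $P$, check that supports of sums remain dually well-ordered, prove trichotomy and $P+P\subseteq P$ by the case analysis on $\theta_{f}$ versus $\theta_{g}$, and then read off transitivity, antisymmetry, totality, and compatibility with $+$ in one stroke from $f\le_{\hahnsp{L}}g\iff g-f\in P\cup\{0\}$. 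What your route buys is completeness: the genuinely non-trivial content of the lemma is exactly the closure of the cone under addition (equivalently, transitivity of the order), and that is the step the paper does not write down. What the paper's route buys is brevity, since it treats the remaining axioms as standard facts about Hahn groups and isolates the one line of argument it wants on record. Your identification of translation invariance via $(g+h)-(f+h)=g-f$ is the same underlying observation as the paper's equality of disagreement sets, so on that one axiom the two proofs agree; everywhere else yours supplies detail the paper omits, and I see no gaps in it.
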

\begin{proof}
Take $f, g, h\in \hahnsp{L}$, and assume that 
$f\le_{\hahnsp{L}}g$. 
Put 
$\theta=\max\{\, x\in L\mid f(x)\neq g(x)\,\}$. 
Then $f(\theta)\le_{\rr}g(\theta)$. 
By the definition of the addition on $\hahnsp{L}$, 
we have 
\[
\theta=\max\{\, x\in L\mid (f+h)(x)\neq (g+h)(x)\,\}.
\] 
Thus, $(f+h)(\theta)\le_{\rr}(g+h)(\theta)$, 
and hence 
 $f+h\le_{\hahnsp{L}} g+h$. 
Therefore we conclude  that $\hahnsp{L}$ is
a linearly ordered 
 Abelian group. 
\end{proof}

\begin{df}\label{df:morph}
Let $G$ be a linearly ordered Abelian group. 
A subset $E$ of $G$ is said to be 
\emph{full} if 
for all $x\in G$, there exists $a\in E$ such that 
$\abs(x)\arel \abs(a)$. 
Let $N$ be a set. 
A map 
$f\colon N\to G$ is said to be  \emph{full} if 
its image is a full subset of $G$. 
\end{df}

We next  discuss the relation between 
a linearly ordered set and the Hahn group 
induced from the linearly ordered set. 
\begin{df}\label{df:eemap}
Let $(L, \le)$ be a linearly  ordered set. 
For $s\in L$, 
we define $e_{L, s}\in \hahnsp{L}$ by 
$e_{L, s}(x)=0$ if $x\neq s$; otherwise, 
$e_{L, s}(s)=0$.  
We define a 
map 
$\eemap_{L}\colon L\to \hahnsp{L}$
by 
$\eemap_{L}(s)=e_{L, s}$. 
\end{df}
\begin{lem}\label{lem:echoes}
Let $L$ be a linearly ordered set. 
Then the map $\eemap_{L}\colon L\to \hahnsp{L}$ 
satisfies the following 
statements: 
\begin{enumerate}
\item\label{item:eee1}
For all $s\in L$, 
we have $0<\eemap_{L}(s)$. 
\item\label{item:eee2}
The map $\eemap_{L}$ is an
isotone embedding. 
\item\label{item:eee3}
 If $s, t\in L$ satisfy $s<t$, 
then we have $\eemap_{L}(s)\arclele \eemap_{L}(t)$.

\item\label{item:eeecova}
For all $f\in \hahnsp{L}$, we have 
\[
\cova_{\hahnsp{L}}(f)=
[\eemap_{L}(\max\{\, x\in L\mid f(x)\neq 0\, \})]_{\arel}. 
\]

\item\label{item:eee4}
The map $\eemap_{L}$ is full. 

\item\label{item:eee5}
The map 
$[\eemap_{L}]_{\arel}\colon L\to \Arc{\hahnsp{L}}$
defined by 
$[\eemap_{L}]_{\arel}(s)
=[\eemap_{L}(s)]_{\arel}$
 is 
 an
isomorphism  between  
$(L, \le_{L})$ and 
$(\Arc{\hahnsp{L}}, \arcle)$. 
\end{enumerate}
\end{lem}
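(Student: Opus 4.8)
The plan is to prove the six assertions essentially in the listed order, since (\ref{item:eee4}) and (\ref{item:eee5}) will fall out formally once (\ref{item:eee3}) and (\ref{item:eeecova}) are in hand. The single organizing principle is that every comparison in $\hahnsp{L}$ is decided at the \emph{largest} coordinate where two functions disagree, and that for a nonzero $f\in\hahnsp{L}$ this coordinate is controlled by $\theta_{f}:=\max\{\, x\in L\mid f(x)\neq 0\, \}$, which exists because the support of $f$ is a nonempty dually well-ordered subset of $L$. Recall also that $e_{L,s}$ is supported only at $s$, where it takes the value $1$.

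For (\ref{item:eee1})--(\ref{item:eee3}) I would argue straight from the definition of $\le_{\hahnsp{L}}$. Comparing $0$ with $e_{L,s}$, the unique coordinate of disagreement is $s$, where $e_{L,s}(s)=1>0$, so $0<\eemap_{L}(s)$. For $s<t$, the functions $e_{L,s}$ and $e_{L,t}$ disagree exactly on $\{s,t\}$, so the deciding coordinate is $t$, where $e_{L,s}(t)=0<1=e_{L,t}(t)$; this gives $\eemap_{L}(s)<\eemap_{L}(t)$, and together with injectivity (distinct indices give distinct indicator functions) yields the isotone embedding (\ref{item:eee2}). The same deciding coordinate $t$ also handles $n\cdot e_{L,s}$ against $e_{L,t}$ for every $n\in\zz_{\ge 1}$, since $(n\cdot e_{L,s})(t)=0<1$; hence $n\cdot e_{L,s}<e_{L,t}$ for all $n$, which is precisely $\eemap_{L}(s)\arclele\eemap_{L}(t)$, proving (\ref{item:eee3}).

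The heart of the lemma is (\ref{item:eeecova}), and this is where I expect the real work. For nonzero $f$ I would show $\abs(f)\arel e_{L,\theta_{f}}$ by exhibiting explicit integer multipliers. Writing $c=\abs(f)(\theta_{f})>0$, choosing $n\in\zz_{\ge 1}$ with $nc>1$ keeps $\theta_{f}$ as the deciding coordinate between $e_{L,\theta_{f}}$ and $n\cdot\abs(f)$ and gives $e_{L,\theta_{f}}\le n\cdot\abs(f)$; choosing $m\in\zz_{\ge 1}$ with $m>c$ similarly gives $\abs(f)\le m\cdot e_{L,\theta_{f}}$. Thus $\abs(f)\arel e_{L,\theta_{f}}$, so $\cova_{\hahnsp{L}}(f)=[\abs(f)]_{\arel}=[\eemap_{L}(\theta_{f})]_{\arel}$. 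The point requiring care is exactly that multiplying by a positive integer must not push the deciding coordinate above $\theta_{f}$; this is guaranteed because both $\abs(f)$ and $e_{L,\theta_{f}}$ vanish strictly above $\theta_{f}$, so the largest coordinate of disagreement never exceeds $\theta_{f}$.

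Finally, (\ref{item:eee4}) and (\ref{item:eee5}) are corollaries. Fullness (\ref{item:eee4}) is immediate: for nonzero $x\in\hahnsp{L}$, the element $\eemap_{L}(\theta_{x})$ lies in the image and satisfies $\abs(x)\arel\abs(\eemap_{L}(\theta_{x}))$ by (\ref{item:eeecova}) together with (\ref{item:eee1}). For (\ref{item:eee5}), assertion (\ref{item:eee3}) shows that $[\eemap_{L}]_{\arel}$ is strictly isotone, hence injective, while (\ref{item:eeecova}) shows that every class $[g]_{\arel}\in\Arc{\hahnsp{L}}$ equals $[\eemap_{L}(\theta_{g})]_{\arel}$, giving surjectivity. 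Since a strictly isotone bijection between linearly ordered sets is automatically an isomorphism, $[\eemap_{L}]_{\arel}$ identifies $(L,\le_{L})$ with $(\Arc{\hahnsp{L}},\arcle)$, completing the proof.
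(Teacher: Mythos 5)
Your proposal is correct and follows essentially the same route as the paper: parts (1)--(3) are read off from the deciding (largest) coordinate of disagreement, part (4) reduces to $\abs(f)\arel e_{L,\theta_f}$ via the Archimedean property of $\rr$ at the coordinate $\theta_f$ (you merely make the integer multipliers explicit where the paper just cites that $\rr$ is Archimedean), and (5)--(6) are derived formally from (3) and (4) exactly as in the paper. No gaps.
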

\begin{proof}
By the definitions of $e_{L, s}$ and  $\eemap_{L}$, 
we have $0<\eemap_{L}(s)$ for all $s\in L$, which is 
the condition (\ref{item:eee1}). 

We next show that 
the map
$\eemap_{L}$ is an isotone embedding. 
Take $s, t\in L$ satisfying that  $s<t$. 
In this case, 
we have 
\[
t=\max\{\, x\in L\mid e_{L, s}(x)\neq e_{L, t}(x)\, \}, 
\]
and $e_{L, s}(t)=0$ and $e_{L, t}(t)=1$.
Hence,  for all $m\in \zz_{\ge 1}$, we
obtain 
$m\cdot e_{L, s}(t)<e_{L, t}(t)$. 
Thus, 
we have  $\eemap_{L}(s)\arclele \eemap_{L}(t)$. 
In particular, we also have 
$\eemap_{L}(s)<\eemap_{L}(t)$.
Then we conclude that
the statements (\ref{item:eee2}) and 
(\ref{item:eee3}) hold true. 

We next show the statement (\ref{item:eeecova}). 
Take $f\in \hahnsp{L}$, 
and put 
\[
\theta=\max\{\, x\in L\mid f(x)\neq 0\, \}\in L.
\] 
By the definition of the order on $\hahnsp{L}$, 
and by the fact that $\rr$ is Archimedean, 
we obtain  $\eemap_{L}(\theta)\arel f$. 
Thus, the statement 
(\ref{item:eeecova}) is true. 

We next verify that the map $\eemap_{L}$ is full. 
The statement (\ref{item:eeecova}) implies 
that the set 
$\{\, \eemap_{L}(s)\mid s\in L\, \}$
is  full in  $\hahnsp{L}$. 
Then the map $\eemap_{L}$ is full, and hence 
the statement (\ref{item:eee4}) is valid. 

We now prove the statement (\ref{item:eee5}). 
The statement (\ref{item:eee3}) implies that 
the map $[\eemap_{L}]_{\arel}$ is
an  isotone embedding. 
From (\ref{item:eee4}), 
the surjectivity of the map 
$[\eemap_{L}]_{\arel}$ follows. 
Therefore 
$[\eemap_{L}]_{\arel}\colon L\to \Arc{\hahnsp{L}}$ 
is an isomorphism between the  ordered sets
$(L, \le_{L})$ and 
$(\Arc{\hahnsp{L}}, \arcle)$. 
\end{proof}

\begin{rmk}
Let $G$ be a linearly ordered Abelian group. 
Instead of $\cova_{\hahnkor{G}}$, 
in the theory of valued fields, 
the 
Krull valuation 
$v_{\hahnkor{G}}\colon \hahnkor{G}\to G\sqcup\{\infty\}$ is 
ordinarily  used.  
The map $\cova_{\hahnkor{G}}$ is   a dual of $v_{\hahnkor{G}}$. 
The map $v_{\hahnkor{G}}$ is sometimes called the  
\emph{natural valuation on $\hahnkor{G}$} (see, for example, \cite{MR1760173}). 

\end{rmk}

\begin{df}\label{df:stars}
For a bottomed linearly ordered set 
$S$, we define 
$\stars{S}=S\setminus\{\mzero_{S}\}$. 
\end{df}

\begin{df}\label{df:opomap}
Let $L$ be a linearly ordered set. 
We define a map 
$\opmap_{L}\colon L\to \oposi{L}$
 by $\opmap_{L}(x)=x$.
 Note 
 $\opmap_{L}$ is antitone, namely, 
if  $x, y\in L$ satisfy 
$x\le_{L} y$, 
then $\opmap_{L}(y)\le_{\oposi{L}}\opmap_{L}(x)$. 
\end{df}

\begin{lem}\label{lem:antianti}
Let $S$ be a bottomed linearly ordered set
 with $\myomega\le \chara(S)$. 
Then the map 
$\eemapsec_{S}\colon \stars{S}\to \hahnsp{{\oposi{\stars{S}}}}$ defined by 
\[
\eemapsec_{S}(s)=-E_{\oposi{\stars{S}}}\circ\opmap_{\stars{S}}(s)
\]
 is a  coinitial  
isotone embedding. 
\end{lem}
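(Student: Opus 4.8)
The plan is to factor $\eemapsec_{S}$ through the three maps appearing in its definition and read off each property from the results already established. Writing $\eemapsec_{S}=\nu\circ \eemap_{\oposi{\stars{S}}}\circ \opmap_{\stars{S}}$, where $\nu(x)=-x$ denotes negation on the Hahn group $\hahnsp{\oposi{\stars{S}}}$, I would argue as follows. By Definition \ref{df:opomap} the map $\opmap_{\stars{S}}$ is antitone; by Lemma \ref{lem:echoes}(\ref{item:eee2}), applied with $L=\oposi{\stars{S}}$, the map $\eemap_{\oposi{\stars{S}}}$ is an isotone embedding; and since $\hahnsp{\oposi{\stars{S}}}$ is a linearly ordered Abelian group (Lemma \ref{lem:hahnlinear}), the negation $\nu$ is an order-reversing bijection. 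Composing an antitone map with an isotone one yields an antitone map, and composing that with the antitone $\nu$ yields an isotone map; injectivity is inherited because $\opmap_{\stars{S}}$ and $\nu$ are bijective and $\eemap_{\oposi{\stars{S}}}$ is injective. Hence $\eemapsec_{S}$ is an isotone embedding, which disposes of everything except coinitiality.

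For coinitiality (Definition \ref{df:morph0}) I must show that for every $g\in \hahnsp{\oposi{\stars{S}}}$ there is $s\in \stars{S}$ with $\eemapsec_{S}(s)\le g$, where $\eemapsec_{S}(s)=-e_{\oposi{\stars{S}}, s}$ is the function taking the value $-1$ at $s$ and $0$ elsewhere. If $g=0$ this is immediate from Lemma \ref{lem:echoes}(\ref{item:eee1}). Otherwise the support $\{x\mid g(x)\neq 0\}$ is a nonempty dually well-ordered subset of $\oposi{\stars{S}}$, so it has a maximum $\theta_{0}$ in $\oposi{\stars{S}}$; equivalently $\theta_{0}$ is the least element of that support in the original order of $\stars{S}$. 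The plan is then to choose $s\in \stars{S}$ lying strictly below $\theta_{0}$ in $\stars{S}$, i.e. strictly above every support point of $g$ in $\oposi{\stars{S}}$. For such $s$ the difference $-e_{\oposi{\stars{S}}, s}-g$ has support $\{s\}\cup \{x\mid g(x)\neq 0\}$, whose maximum in $\oposi{\stars{S}}$ is exactly $s$, and its value there is $-1<0$; by the definition of the order on the Hahn group this gives $-e_{\oposi{\stars{S}}, s}<_{\hahnsp{\oposi{\stars{S}}}} g$, as required.

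The step that carries the real content is the existence of such an $s$, that is, the fact that $\stars{S}$ has no least element. Here I would use the hypothesis $\myomega\le \chara(S)$ directly through Definition \ref{df:character}: fix a witnessing map $f\colon \chara(S)+1\to S$, which is strictly decreasing below $\chara(S)$, satisfies $f(\chara(S))=\mzero_{S}$, and is coinitial in $S$. Since $\mzero_{S}$ is the least element, no value $f(\beta)$ with $\beta<\chara(S)$ can equal $\mzero_{S}$ (otherwise a later, strictly smaller value would drop below $\mzero_{S}$), so all such $f(\beta)$ lie in $\stars{S}$. Given $\theta_{0}\in \stars{S}$, coinitiality yields $\alpha<\chara(S)$ with $f(\alpha)\le \theta_{0}$; because $\chara(S)$ is an infinite cardinal and hence a limit ordinal, $\alpha+1<\chara(S)$, so $s:=f(\alpha+1)$ satisfies $s<f(\alpha)\le \theta_{0}$ and $s\in \stars{S}$. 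This produces the desired $s$ and completes the coinitiality argument. I expect this last point, extracting a strictly smaller element of $\stars{S}$ from the character hypothesis, to be the only genuinely delicate step, the remainder being bookkeeping with the order on $\hahnsp{\oposi{\stars{S}}}$.
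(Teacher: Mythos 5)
Your proposal is correct, and the isotone-embedding half coincides with the paper's argument (the same factorization $\eemapsec_{S}=m\circ \eemap_{\oposi{\stars{S}}}\circ \opmap_{\stars{S}}$ with two antitone injections sandwiching an isotone embedding). Where you diverge is the coinitiality half. The paper argues at the level of Archimedean classes: given $f$, it uses fullness of $\eemap_{\oposi{\stars{S}}}$ (Lemma \ref{lem:echoes}(\ref{item:eee4})) to find $t\in\stars{S}$ with $[\abs(f)]_{\arel}=[\eemap_{\oposi{\stars{S}}}(t)]_{\arel}$, picks $s<t$ in $\stars{S}$, and invokes Lemma \ref{lem:echoes}(\ref{item:eee3}),(\ref{item:eee5}) to get $[\abs(f)]_{\arel}\arclele[\eemap_{\oposi{\stars{S}}}(s)]_{\arel}$, hence $\abs(f)<\eemap_{\oposi{\stars{S}}}(s)$ and so $-\eemap_{\oposi{\stars{S}}}(s)<f$. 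You instead compute the leading coefficient of $-e_{\oposi{\stars{S}},s}-g$ directly from the definition of the Hahn order: choosing $s$ strictly below the least support point $\theta_{0}$ of $g$, the maximum of the support of the difference is $s$ and the value there is $-1$, which gives the strict inequality at once. Your route is more elementary (it bypasses the $\arel$/$\arclele$ machinery entirely and needs only Lemma \ref{lem:echoes}(\ref{item:eee1}) for the $g=0$ case), and it has the additional merit of treating $g=0$ explicitly, a case the paper's argument silently skips since $[\abs(f)]_{\arel}$ is undefined for $f=0$; the paper's route buys uniformity with the surrounding lemmas, which are all phrased in terms of Archimedean classes. Both proofs ultimately rest on the same key consequence of $\myomega\le\chara(S)$, namely that $\stars{S}$ has no least element so that an $s$ strictly below a given element of $\stars{S}$ can always be found; the paper asserts this in one line, while you correctly extract it from Definition \ref{df:character} (all values $f(\beta)$, $\beta<\chara(S)$, lie in $\stars{S}$, and $\chara(S)$ being a limit ordinal lets you pass from $\alpha$ to $\alpha+1$).
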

\begin{proof}
By the statement (\ref{item:eee2}) in Lemma \ref{lem:echoes}, 
the map
$E_{\oposi{\stars{S}}}\colon \oposi{\stars{S}}\to \hahnsp{{\oposi{\stars{S}}}}$ 
is an  isotone embedding. 
We define
a map 
$m\colon \hahnsp{{\oposi{\stars{S}}}}\to \hahnsp{{\oposi{\stars{S}}}}$ 
by $m(x)=-x$. 
Then, 
the maps $\opmap_{\stars{S}}$ and 
$m$ are
antitone and  injective. 
 In this case, we obtain 
 $\eemapsec_{S}=m\circ E_{\oposi{\stars{S}}}\circ \opmap_{\stars{S}}$. 
Thus, the map $\eemapsec_{S}$ is
an  isotone embedding. 

We next show that $\eemapsec_{S}$ is coinitial. 
Take arbitrary $f\in \hahnsp{{\oposi{\stars{S}}}}$. 
Since $\eemap_{\oposi{\stars{S}}}$ is full
(see the statement (\ref{item:eee4}) in Lemma \ref{lem:echoes}), there exists $t\in L$
satisfying  that 
\begin{align}\label{al:21full}
[\abs(f)]_{\arel}=[\eemap_{\oposi{\stars{S}}}(t)]_{\arel}. 
\end{align}
By $\myomega\le \chara(S)$, 
there exists $s\in \stars{S}$ with $s<t$. 
According to 
 the statement (\ref{item:eee5}) in  Lemma \ref{lem:echoes}, and 
 $t<_{\oposi{\stars{S}}}s$, 
we have 
\begin{align}\label{al:2222ll}
[\eemap_{\oposi{\stars{S}}}(t)]_{\arel}
\arclele 
[\eemap_{\oposi{\stars{S}}}(s)]_{\arel}.
\end{align}
From  (\ref{al:21full}) and (\ref{al:2222ll}), 
it follows that 
\[
[\abs(f)]_{\arel}\arclele [\eemap_{\oposi{\stars{S}}}(s)]_{\arel}. 
\]
This inequality implies that 
$\abs(f)<E_{\oposi{\stars{S}}}(s)$. 
By $-f\le \abs(f)$, we obtain 
$-f<E_{\oposi{\stars{S}}}(s)$. 
Then $-E_{\oposi{\stars{S}}}(s)<f$, 
and hence  $\eemapsec_{S}(s)<f$. 
This means that
the map  $\eemapsec_{S}$ is coinitial. 
\end{proof}

\begin{df}\label{df:hahnp}
For a bottomed linearly ordered set $S$, 
we define  
$\hahntai{S}=\hahnkor{\hahnsp{\oposi{\stars{S}}}}$. 
\end{df}

\begin{prop}\label{prop:ee}
Let $S$ be a bottomed linearly ordered set
with $\myomega\le \chara(S)$. 
We define 
a map 
$I\colon S\to \hahntai{S}$
defined by 
\begin{align*}
I(s)=
\begin{cases}
E_{\hahnsp{\oposi{\stars{S}}}}\circ \eemapsec_{S}(s) & \text{if $\mzero_{S}<s$}\\
0_{\hahntai{S}}& \text{ if $s=\mzero_{S}$}
\end{cases}
\end{align*}
Then the map $\aimap$ is
a g-characteristic
isotone embedding. 
\end{prop}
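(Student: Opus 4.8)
The plan is to treat separately the two assertions packaged in the phrase ``g-characteristic isotone embedding'': that $\aimap$ is an isotone embedding, and that its image is a g-characteristic subset of $\hahntai{S}$ in the sense of Definition~\ref{df:charazero}. Throughout, write $G=\hahnsp{\oposi{\stars{S}}}$, so that $\hahntai{S}=\hahnkor{G}$ is a linearly ordered Abelian group (Lemma~\ref{lem:hahnlinear}) and $\aimap(s)=\eemap_{G}(\eemapsec_{S}(s))$ for $s\in \stars{S}$, while $\aimap(\mzero_{S})=0$.

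First I would check that $\aimap$ is isotone. If $s=\mzero_{S}<t$, then $\aimap(s)=0<\eemap_{G}(\eemapsec_{S}(t))=\aimap(t)$ by Lemma~\ref{lem:echoes}~(\ref{item:eee1}). If $s,t\in \stars{S}$ with $s\le t$, then $\eemapsec_{S}(s)\le \eemapsec_{S}(t)$ since $\eemapsec_{S}$ is isotone (Lemma~\ref{lem:antianti}), and hence $\aimap(s)\le \aimap(t)$ by the isotonicity of $\eemap_{G}$ (Lemma~\ref{lem:echoes}~(\ref{item:eee2})). Injectivity is immediate: $\eemapsec_{S}$ and $\eemap_{G}$ are both embeddings, so $\aimap$ is injective on $\stars{S}$, and $\aimap(\mzero_{S})=0$ does not lie in $\aimap(\stars{S})\subseteq (\hahntai{S})_{>0}$. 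Hence $\aimap$ is an isotone embedding.

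The substantial part is g-characteristicness, i.e.\ that $\aimap(S)$ is characteristic in $(\hahntai{S})_{\ge 0}$. The requirement $0\in \aimap(S)$ holds because $\aimap(\mzero_{S})=0$, so it remains to show that $\aimap(\stars{S})$ is coinitial in $(\hahntai{S})_{>0}$: given $g\in \hahnkor{G}$ with $g>0$, I must produce $s\in \stars{S}$ with $\aimap(s)\le g$. Since $\eemap_{G}$ is full (Lemma~\ref{lem:echoes}~(\ref{item:eee4})), there is $y\in G$ with $g\arel \eemap_{G}(y)$; and since $\eemapsec_{S}$ is coinitial---the construction in the proof of Lemma~\ref{lem:antianti} in fact yielding a strict inequality---there is $s\in \stars{S}$ with $\eemapsec_{S}(s)<y$ in $G$. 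Then Lemma~\ref{lem:echoes}~(\ref{item:eee3}) gives $\aimap(s)=\eemap_{G}(\eemapsec_{S}(s))\arclele \eemap_{G}(y)$, and feeding the relations $\aimap(s)\arel \aimap(s)$, $\eemap_{G}(y)\arel g$, and $\aimap(s)\arclele \eemap_{G}(y)$ into Lemma~\ref{lem:welldef} yields $\aimap(s)\arclele g$, hence in particular $\aimap(s)<g$. Thus $0<\aimap(s)\le g$, so $\aimap(\stars{S})$ is coinitial and $\aimap(S)$ is g-characteristic. I expect the main obstacle to be exactly this transition: converting the elementwise comparison $\eemapsec_{S}(s)<y$ in $G$ into a genuine order comparison $\aimap(s)\le g$ in the larger field $\hahnkor{G}$, which forces one to route through the Archimedean relation $\eemap_{G}(y)\arel g$ and the strict $\arclele$-domination supplied by Lemma~\ref{lem:echoes}~(\ref{item:eee3}) together with Lemma~\ref{lem:welldef}.
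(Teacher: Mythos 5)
Your proposal is correct and follows essentially the same route as the paper: isotonicity and injectivity from Lemma \ref{lem:antianti} together with Lemma \ref{lem:echoes}, and coinitiality of the image in the positive cone by locating an Archimedean representative $\eemap_{G}(y)$ of $g$ (the paper takes $y=\theta$ to be the maximum of the support of $g$, via Lemma \ref{lem:echoes}(\ref{item:eeecova}), which is how fullness is proved anyway), then dropping below it using the coinitiality of $\eemapsec_{S}$ and Lemma \ref{lem:echoes}(\ref{item:eee3}). Your explicit appeal to Lemma \ref{lem:welldef} to pass from $\aimap(s)\arclele\eemap_{G}(y)\arel g$ to $\aimap(s)<g$ is just a slightly more detailed justification of the step the paper performs directly.
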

\begin{proof}
Recall that $E_{\hahnsp{\oposi{\stars{S}}}}$ is 
a map 
$E_{\hahnsp{\oposi{\stars{S}}}}\colon 
\oposi{\stars{S}}\to \hahntai{S}$ defined in the same way to 
Definition \ref{df:eemap}. 
By Lemma \ref{lem:antianti}, 
the map $\eemapsec_{S}\colon \stars{S}\to \hahnsp{\oposi{\stars{S}}}$ is a coinitial  isotone embedding, and hence 
$\aimap\colon S\to \hahntai{S}$ is an 
isotone embedding.

We now show that $\aimap$ is 
g-characteristic. 
Take arbitrary  $f\in \hahntai{S}$ with 
$f>0_{\hahntai{S}}$. 
Put 
\[
\theta=\max\{\, x\in \hahnsp{\oposi{\stars{S}}}\mid f(x)\neq 0\, \}.
\] 
Then we have  $f(\theta)>0$. 
By the statement (\ref{item:eeecova}) in 
Lemma \ref{lem:echoes}, 
we verify  that  
$f\arel \eemap_{\hahnsp{\oposi{\stars{S}}}}(\theta)$. 
Since 
$\eemapsec_{S}$ is coinitial (see Lemma \ref{lem:antianti}),
there exists 
$s\in \stars{S}$
 such that 
 $\eemapsec_{S}(s)<\theta$. 
 By the statement (\ref{item:eee3}) in 
 Lemma \ref{lem:echoes}, 
 we have 
\begin{align}\label{al:love}
\eemap_{\hahnsp{\oposi{\stars{S}}}}(\eemapsec_{S}(s))
\ll
\eemap_{\hahnsp{\oposi{\stars{S}}}}(\theta).
\end{align}
By 
(\ref{al:love}) and
by 
$f\arel \eemap_{\hahnsp{\oposi{\stars{S}}}}(\theta)$, 
  we obtain the inequality 
\begin{align}\label{al:lovelove}
\eemap_{\hahnsp{\oposi{\stars{S}}}}(\eemapsec_{S}(s))<f.
\end{align}
According to the statement (\ref{item:eee1}) in Lemma \ref{lem:echoes}, we have 
\begin{align}\label{al:lovelovelove}
0_{\hahntai{S}}<\eemap_{\hahnsp{\oposi{\stars{S}}}}(\eemapsec_{S}(s)). 
\end{align}
Since  $I(s)=\eemap_{\hahnsp{\oposi{\stars{S}}}}(\eemapsec_{S}(s))$, 
by (\ref{al:lovelove}) and (\ref{al:lovelovelove}), 
we conclude that  
the map
$I$ is g-characteristic. 
\end{proof}
\begin{rmk}
Based on 
Lemma \ref{lem:hahnlinear}
and 
Proposition \ref{prop:ee}, 
we can consider
that
 every bottomed linearly ordered set is
a g-characteristic subset of  an ordered field. 
  \end{rmk}
  
\subsubsection{Characters}
We next discuss  the characters of linearly ordered Abelian groups. 
By the definition of characters, 
we obtain: 
\begin{lem}\label{lem:ch22}
Let $S$ be a bottomed linearly ordered set. 
Then 
the character $\chara(S)$ of $S$ is 
equal to the minimal cardinal of 
all cardinals $\kappa$ such that there exits
a characteristic isotone embedding 
$f\colon\invs{\kappa}\to S$. 
\end{lem}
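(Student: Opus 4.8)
The plan is to recognize that the two cardinals count the same configurations inside $S$. Recall that the underlying set of $\invs{\kappa}=\oposi{\kappa+1}$ is $\kappa+1$ and that passing from $\kappa+1$ to $\invs{\kappa}$ merely reverses the ordinal order, so that $\mzero_{\invs{\kappa}}=\kappa$ is the least element. I would therefore show that a single function $f$ with domain $\kappa+1$ is a witness for $\chara(S)$ in the sense of Definition~\ref{df:character} if and only if the same function, regarded as a map $\invs{\kappa}\to S$, is a characteristic isotone embedding. Since both $\chara(S)$ and the right-hand cardinal are defined as the least $\kappa$ admitting such an object, this equivalence gives the asserted equality (apart from the degenerate case $S=\{\mzero_{S}\}$, which I would treat separately).

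To prove $\chara(S)\le$ the right-hand cardinal, I would start from a characteristic isotone embedding $g\colon\invs{\kappa}\to S$ and read it as a function on $\kappa+1$. For $\alpha<\beta<\kappa$ we have $\beta<_{\invs{\kappa}}\alpha$, so isotonicity together with injectivity of $g$ yields $g(\beta)<g(\alpha)$, which is condition (1). Since $\kappa=\mzero_{\invs{\kappa}}$ is least in $\invs{\kappa}$ and $g$ is isotone, $g(\kappa)$ is least in the image; because the image is characteristic it contains $\mzero_{S}$, and minimality of $\mzero_{S}$ together with injectivity forces $g(\kappa)=\mzero_{S}$, which is condition (2). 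Finally, the characteristic property of the image supplies, for each $t\in S$ above $\mzero_{S}$, an image point $g(\alpha)\le t$ distinct from $\mzero_{S}$, whence $\alpha<\kappa$; this is condition (3). Thus $g$ is a witness and $\chara(S)\le\kappa$.

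For the reverse inequality I would run the identification backwards, and here lies the only genuine check. Given a witness $f\colon\kappa+1\to S$, condition (1) makes $f$ strictly isotone on $\{\alpha<\kappa\}$ for $\le_{\invs{\kappa}}$, and condition (2) places $f(\kappa)=\mzero_{S}$ at $\mzero_{\invs{\kappa}}$, so $f$ is isotone on all of $\invs{\kappa}$. The main obstacle is injectivity at the bottom, that is, $f(\alpha)\neq\mzero_{S}$ for every $\alpha<\kappa$; combined with the strict monotonicity of (1) this gives injectivity on $\invs{\kappa}$. When $\chara(S)$ is an infinite cardinal it is a limit ordinal with no largest element below it, so an equality $f(\alpha_{0})=\mzero_{S}$ with $\alpha_{0}<\kappa$ would force, via (1), some $f(\beta)<\mzero_{S}$ for $\alpha_{0}<\beta<\kappa$, contradicting the minimality of $\mzero_{S}$; the finite case $\chara(S)=1$ I would settle directly by sending $\mzero_{\invs{1}}$ to $\mzero_{S}$ and the top of $\invs{1}$ to the least element of $S\setminus\{\mzero_{S}\}$. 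Once injectivity is established, condition (2) puts $\mzero_{S}$ in the image and condition (3) makes the image characteristic, so $f$ is a characteristic isotone embedding of $\invs{\kappa}$ into $S$; hence the right-hand cardinal is $\le\chara(S)$, and the two inequalities give the equality.
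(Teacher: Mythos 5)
Your argument is correct and follows exactly the route the paper intends: the paper states Lemma~\ref{lem:ch22} as an immediate consequence of Definition~\ref{df:character} and gives no proof, and your write-up simply makes explicit the bijective correspondence between witnesses $f\colon\kappa+1\to S$ for the character and characteristic isotone embeddings $\invs{\kappa}\to S$, including the one genuinely non-immediate point (injectivity at the bottom, i.e.\ $f(\alpha)\neq\mzero_{S}$ for $\alpha<\kappa$, handled via the limit-ordinal argument for infinite $\kappa$ and directly for $\chara(S)=1$). Your aside about the degenerate case $S=\{\mzero_{S}\}$ is a fair observation, but it lies outside the situations the paper ever uses.
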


By the definition of 
characteristic subsets, we obtain: 
\begin{lem}\label{lem:glass}
Let $S$ be a bottomed linearly ordered set. 
If $T$ is a characteristic subset of $S$, 
then we have $\chara(S)=\chara(T)$. 
\end{lem}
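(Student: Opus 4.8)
The plan is to prove the two inequalities $\chara(S)\le\chara(T)$ and $\chara(T)\le\chara(S)$ separately, translating each character through Lemma \ref{lem:ch22} into the least cardinal $\kappa$ admitting a characteristic isotone embedding $\invs{\kappa}\to(\cdot)$. I would first record the elementary fact that the relation ``being a characteristic subset'' is transitive: if $R$ is characteristic in $T$ and $T$ is characteristic in $S$, then $\mzero_S=\mzero_T\in R$, and for each $s\in\stars{S}$ one chains $r\le t\le s$ through witnesses $t\in\stars{T}$ and $r\in\stars{R}$ supplied by Definition \ref{df:charazero}, so $R$ is characteristic in $S$.

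For $\chara(S)\le\chara(T)$, set $\kappa=\chara(T)$ and take, via Lemma \ref{lem:ch22}, a characteristic isotone embedding $f\colon\invs{\kappa}\to T$. Its image is characteristic in $T$, hence by transitivity characteristic in $S$; composing $f$ with the inclusion $T\hookrightarrow S$ yields a characteristic isotone embedding $\invs{\kappa}\to S$, and Lemma \ref{lem:ch22} gives $\chara(S)\le\kappa=\chara(T)$.

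For the reverse inequality I would dispose of the degenerate case $\stars{S}=\emptyset$ (then $T=S$ is forced) and otherwise set $\kappa=\chara(S)$ and fix a characteristic isotone embedding $g\colon\invs{\kappa}\to S$. Since $g$ is an order embedding of $\invs{\kappa}$ and its image is characteristic, one has $g(\kappa)=\mzero_S$ and $g(\alpha)\in\stars{S}$ for $\alpha<\kappa$, with $\{g(\alpha):\alpha<\kappa\}$ coinitial in $\stars{S}$. The key move is to push $g$ down into $T$: for each $\alpha<\kappa$ the characteristic property of $T$ provides $t_\alpha\in\stars{T}$ with $t_\alpha\le g(\alpha)$. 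Because $t_\alpha\le g(\alpha)$ and the $g(\alpha)$ are coinitial in $\stars{S}$, the set $\stars{C}=\{t_\alpha:\alpha<\kappa\}$ is coinitial in $\stars{S}$, hence coinitial in the smaller set $\stars{T}$; thus $C=\stars{C}\cup\{\mzero_S\}$ is a characteristic subset of $T$ with $\card(C)\le\kappa$.

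It remains to convert the characteristic subset $C$ into a characteristic isotone embedding $\invs{\nu}\to T$ with $\nu\le\kappa$, and this realization step is the main obstacle. Writing $\nu=\coi(\stars{C})$ for the coinitiality of $\stars{C}$, one has $\nu\le\card(\stars{C})\le\kappa$, and since $\nu$ is either $1$ (when $\stars{C}$ has a least element) or an infinite regular cardinal, a transfinite recursion extracts a strictly decreasing coinitial sequence $(c_\xi)_{\xi<\nu}$ in $\stars{C}$. Sending $\xi\mapsto c_\xi$ and $\nu\mapsto\mzero_S$ defines a characteristic isotone embedding $\invs{\nu}\to C\subseteq T$, whose image is coinitial in $\stars{C}$ and therefore, as $C$ is characteristic in $T$, characteristic in $T$; Lemma \ref{lem:ch22} then gives $\chara(T)\le\nu\le\kappa=\chara(S)$. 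Combining the two inequalities yields $\chara(S)=\chara(T)$. The only subtleties to watch are the bookkeeping of the minimum element during the extraction and the transfer of coinitiality from $\stars{C}$ to $\stars{T}$, both of which follow directly from the definition of a characteristic subset in Definition \ref{df:charazero}.
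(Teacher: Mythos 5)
Your argument is correct: the paper offers no proof at all for this lemma (it is stated as immediate from Definition \ref{df:charazero}), and your two-inequality argument via Lemma \ref{lem:ch22} --- transitivity of ``characteristic subset'' for $\chara(S)\le\chara(T)$, and pushing the embedding down into $T$ followed by the standard extraction of a strictly decreasing coinitial $\nu$-sequence for $\chara(T)\le\chara(S)$ --- is exactly the intended justification, spelled out carefully. No gaps; the only implicit step (that the coinitiality of a linear order without a least element is an infinite regular cardinal realized by a strictly decreasing coinitial sequence) is standard and at the level of detail the paper itself assumes.
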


\begin{prop}\label{prop:chacha}
Let $G$ be a linearly ordered Abelian group
with $\myomega\le \chara(G_{\ge 0})$. 
Then we have 
$\chara(G_{\ge 0})=\chara(\lolo{\Arc{G}})$. 
\end{prop}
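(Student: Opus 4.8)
The plan is to realize both quantities as coinitialities and then transport coinitial families through the level map $\cova_{G}\colon G_{\ge 0}\to \lolo{\Arc{G}}$, with Proposition \ref{prop:uniuni} serving as the dictionary between ``small positive elements of $G$'' and ``small classes in $\Arc{G}$''. By Lemma \ref{lem:ch22} (and Lemma \ref{lem:glass}), a characteristic isotone embedding $\invs{\nu}\to S$ is the same datum as a coinitial subset of $S\setminus\{\mzero_{S}\}$ of cardinality $\nu$ together with the bottom element, so $\chara(G_{\ge 0})$ is the least cardinality of a subset of $G_{>0}$ coinitial in $G_{>0}$, and likewise $\chara(\lolo{\Arc{G}})$ is the least cardinality of a subset of $\Arc{G}$ coinitial in $\Arc{G}$. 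Write these as $\kappa$ and $\lambda$; it suffices to show $\kappa=\lambda$. Observe first that $\myomega\le\chara(G_{\ge 0})$ means $\kappa\ge\myomega$, so $G_{>0}$ has no least element.

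For $\lambda\le\kappa$ I would push a coinitial family forward. Fix a coinitial family $\{\epsilon_{i}\}_{i\in I}\subseteq G_{>0}$ with $|I|=\kappa$ and put $\rho_{i}=\cova_{G}(\epsilon_{i})\in\Arc{G}$. Given $\eta\in\Arc{G}$, Proposition \ref{prop:uniuni}(2) supplies $\theta\in G_{>0}$ with the property that $\abs(x)<\theta$ forces $\cova_{G}(x)\arclele\eta$. Since $G_{>0}$ has no least element and the $\epsilon_{i}$ are coinitial, some $\epsilon_{i}$ satisfies $\epsilon_{i}<\theta$, whence $\rho_{i}=\cova_{G}(\epsilon_{i})\arcle\eta$. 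Thus $\{\rho_{i}\}$ is coinitial in $\Arc{G}$ and $\lambda\le|I|=\kappa$.

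For $\kappa\le\lambda$ I would pull a coinitial family back. Fix a coinitial family $\{\rho_{j}\}_{j\in J}\subseteq\Arc{G}$ with $|J|=\lambda$ and choose representatives $\epsilon_{j}\in G_{>0}$ with $\cova_{G}(\epsilon_{j})=\rho_{j}$. Given $\epsilon\in G_{>0}$, Proposition \ref{prop:uniuni}(1) supplies $\rho\in\Arc{G}$ such that $\cova_{G}(x)\arclele\rho$ forces $\abs(x)<\epsilon$. If I can select $j$ with $\rho_{j}\arclele\rho$, then $\cova_{G}(\epsilon_{j})=\rho_{j}\arclele\rho$ gives $\epsilon_{j}=\abs(\epsilon_{j})<\epsilon$, so $\{\epsilon_{j}\}$ is coinitial in $G_{>0}$ and $\kappa\le\lambda$.

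The step that needs care — and which I expect to be the main obstacle — is exactly this strict descent in the pull-back: Proposition \ref{prop:uniuni}(1) produces $\rho=[\epsilon]_{\arel}$, and a representative $\epsilon_{j}$ of a class merely satisfying $\rho_{j}=\rho$ need not lie below $\epsilon$, since $\epsilon_{j}\arel\epsilon$ is compatible with $\epsilon<\epsilon_{j}$. To run the argument I must always be able to take $\rho_{j}$ \emph{strictly} below $\rho$, i.e.\ I need $\Arc{G}$ to have no least element (equivalently $\chara(\lolo{\Arc{G}})\ge\myomega$). I would establish this as a preliminary lemma using the hypothesis $\myomega\le\chara(G_{\ge 0})$: analysing a representative of a hypothetical least class of $\Arc{G}$ through Proposition \ref{prop:uniuni} and the absence of a least element in $G_{>0}$ to derive a contradiction. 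Granting that $\Arc{G}$ has no least element, the coinitiality of $\Arc{G}$ below $\rho$ is realized by some $\rho_{j}\arclele\rho$, both inequalities hold, and we conclude $\kappa=\lambda$, that is, $\chara(G_{\ge 0})=\chara(\lolo{\Arc{G}})$.
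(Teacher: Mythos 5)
You have correctly isolated the crux, but the step you defer to a ``preliminary lemma'' is exactly where the argument cannot be closed: it is \emph{not} true that $\myomega\le\chara(G_{\ge 0})$ forces $\Arc{G}$ to have no least element. Any dense Archimedean group is a counterexample: for $G=\rr$ (or $G=\qq$) the set $G_{>0}$ has no minimum and has coinitiality $\myomega$, so $\chara(G_{\ge 0})=\myomega$, while $\Arc{G}$ is a single point and $\chara(\lolo{\Arc{G}})=1$; the paper itself records both $\chara(\rr_{\ge 0})=\myomega$ and $\rr\in\GGG{1}$. A non-Archimedean counterexample is $\zz\times\rr$ with the lexicographic order. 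In all of these cases the pull-back direction of your argument genuinely fails---there is no $\rho_{j}$ strictly below $\rho=[\epsilon]_{\arel}$ to select---and indeed the conclusion $\chara(G_{\ge 0})=\chara(\lolo{\Arc{G}})$ itself fails. So the gap is not a missing computation but a false auxiliary statement: the proposition only holds under the stronger hypothesis $\myomega\le\chara(\lolo{\Arc{G}})$ (equivalently, $\Arc{G}$ has no least element). That stronger hypothesis is automatic exactly in the situations where the proposition is invoked for uncountable $\kappa$, since a least Archimedean class generates a coinitial convex Archimedean subgroup, whence $\chara(G_{\ge 0})\in\{1,\myomega\}$ by H\"{o}lder's theorem.

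For what it is worth, the paper's own proof stumbles at the same spot: it asserts that, because the induced map $F\colon\invs{\kappa}\to\lolo{\Arc{G}}$ is characteristic, each fiber of $F$ has cardinality $<\kappa$, and that claim is equivalent to $\Arc{G}$ having no least element (if a least class exists, its preimage under a characteristic $f$ is cofinal in $\kappa$, hence of full size $\kappa$). Once the hypothesis is strengthened as above, your argument is sound and is, if anything, tidier than the paper's: transporting coinitial families through $\cova_{G}$ in both directions, with Proposition \ref{prop:uniuni} as the dictionary, avoids the paper's detour of repairing the non-injectivity of $F$ by a reindexing $l\colon\invs{\kappa}\to\invs{\kappa}$ extracted from the regularity of $\kappa$. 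I would therefore recast your write-up as: first record (or assume, with the corrected hypothesis) that $\Arc{G}$ has no least element, and then run your two transports verbatim.
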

\begin{proof}
Let $\kappa=\chara(G_{\ge 0})$. 
Note that $\kappa$ is regular. 
According to  Lemma \ref{lem:ch22}, 
 we can  
 take a 
characteristic map 
$f\colon \invs{\kappa}\to G_{\ge 0}$. 
We define a map
$F\colon \invs{\kappa}\to \lolo{\Arc{G}}$  by 
$F(\alpha)=[f(\alpha)]_{\arel}$ if $\alpha\neq \mzero_{\invs{\kappa}}$; 
otherwise $F(\mzero_{\invs{\kappa}})=\mzero_{\lolo{\Arc{G}}}$. 
Then the map $F$ is characteristic; however, 
it can happen that $F$ is not isotone. 
Since $F$ is characteristic, 
we observe that for each $\alpha<\kappa$, we have
$\card(\{\, \beta<\kappa\mid F(\alpha)=F(\beta)\, \})<\kappa$. 
By this fact,  and by the regularity of  
$\kappa$, 
we can take a characteristic map 
$l: \invs{\kappa}\to \invs{\kappa}$ such that 
$F\circ l\colon \invs{\kappa}\to G_{\ge 0}$
 is a characteristic isotone embedding. 
Thus, 
by Lemma \ref{lem:ch22}, 
we obtain 
$\chara(G_{\ge 0})\le \chara(\lolo{\Arc{G}})$. 

Let $\mu=\chara(\lolo{\Arc{G}})$. 
According to  Lemma \ref{lem:ch22}, we 
can
 take a characteristic isotone embedding  
$g\colon \invs{\mu}\to \lolo{\Arc{G}}$. 
For each 
$\alpha< \mu$, 
we
take 
$h(\alpha)\in G_{>0}$ such that 
$g(\alpha)=[h(\alpha)]_{\arel}$. 
We define $h(\mu)$ by 
$h(\mu)=0_{G}$ 
(note that $\mzero_{\invs{\mu}}=\mu$).
Then 
$h\colon \invs{\mu}\to G_{\ge 0}$ 
is a 
characteristic isotone embedding, and hence 
$\chara(\lolo{\Arc{G}})\le \chara(G_{\ge 0})$. 
Therefore we conclude that 
$\chara(G_{\ge 0})=\chara(\lolo{\Arc{G}})$. 
\end{proof}

Lemma \ref{lem:glass} and
Propositions \ref{prop:ee} and 
 \ref{prop:chacha} imply the following:
\begin{cor}\label{cor:charaP}
Let $S$ be a bottomed linearly ordered set
with 
$\myomega\le \chara(S)$. 
Then we have $\chara(\hahntai{S})=\chara(S)$. 
Equivalently, we have 
$\hahntai{S}\in \GGG{\chara(S)}$. 
\end{cor}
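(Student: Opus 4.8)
The plan is to assemble the three cited ingredients: the g-characteristic isotone embedding produced by Proposition \ref{prop:ee}, the invariance of the character under passing to characteristic subsets (Lemma \ref{lem:glass}), and the identification of $\chara(G_{\ge 0})$ with $\chara(\lolo{\Arc{G}})$ from Proposition \ref{prop:chacha}. Throughout I would read $\chara(\hahntai{S})$ as $\chara((\hahntai{S})_{\ge 0})$, the character of the non-negative cone of the Hahn field, so that the two assertions of the corollary become literally comparable through Proposition \ref{prop:chacha}.

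First I would invoke Proposition \ref{prop:ee}: since $\myomega \le \chara(S)$, the map $\aimap\colon S \to \hahntai{S}$ is a g-characteristic isotone embedding. By Definition \ref{df:charazero}, its image $\aimap(S)$ is therefore a characteristic subset of the bottomed linearly ordered set $(\hahntai{S})_{\ge 0}$; note in particular that $0_{\hahntai{S}} = \aimap(\mzero_{S})$ lies in $\aimap(S)$ and is the least element, which is exactly what lets ``g-characteristic'' translate into ``characteristic subset of $(\hahntai{S})_{\ge 0}$''. Applying Lemma \ref{lem:glass} to this characteristic subset yields
\[
\chara\big((\hahntai{S})_{\ge 0}\big) = \chara\big(\aimap(S)\big).
\]
Since $\aimap$ is an isotone embedding of a linearly ordered set, it is an order isomorphism onto $\aimap(S)$, and the character is defined purely order-theoretically in Definition \ref{df:character}, so $\chara(\aimap(S)) = \chara(S)$. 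Combining these equalities gives $\chara(\hahntai{S}) = \chara(S)$, the first assertion.

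For the equivalent formulation I would feed this result back into Proposition \ref{prop:chacha}. Because $\chara((\hahntai{S})_{\ge 0}) = \chara(S) \ge \myomega$, the hypothesis $\myomega \le \chara(G_{\ge 0})$ of Proposition \ref{prop:chacha} holds with $G = \hahntai{S}$, so $\chara((\hahntai{S})_{\ge 0}) = \chara(\lolo{\Arc{\hahntai{S}}})$. Chaining the two equalities gives $\chara(\lolo{\Arc{\hahntai{S}}}) = \chara(S)$, which is precisely the statement $\hahntai{S} \in \GGG{\chara(S)}$.

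Since every genuine difficulty has already been absorbed into the preceding propositions, I do not expect a real obstacle here; the only delicate points are bookkeeping ones. The main thing to be careful about is the order of operations: I must establish $\chara((\hahntai{S})_{\ge 0}) = \chara(S)$ via Proposition \ref{prop:ee}, Lemma \ref{lem:glass}, and the order isomorphism \emph{before} quoting Proposition \ref{prop:chacha}, so that the hypothesis $\myomega \le \chara((\hahntai{S})_{\ge 0})$ is already available at the moment I invoke that proposition. The subsidiary check that $\aimap(S)$ is bottomed with least element $0_{\hahntai{S}}$ is immediate from the definition of $\aimap$ in Proposition \ref{prop:ee}.
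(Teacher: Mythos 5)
Your proposal is correct and follows exactly the route the paper intends: the paper derives this corollary in one line from Lemma \ref{lem:glass} and Propositions \ref{prop:ee} and \ref{prop:chacha}, which are precisely the three ingredients you assemble, in the same order. Your reading of $\chara(\hahntai{S})$ as $\chara((\hahntai{S})_{\ge 0})$ is the intended one, and the bookkeeping you supply (that $\aimap(S)$ is a characteristic subset of the non-negative cone and that isotone embeddings preserve the character) correctly fills in the details the paper leaves implicit.
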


\begin{cor}\label{cor:kappachara}
Let $\kappa$ be a regular cardinal. 
Then we obtain  
$\chara(\invs{\kappa})=\kappa$ 
and 
$\chara(\hahntai{\invs{\kappa}})=\kappa$. 
Equivalently, 
we have $\invs{\kappa}\in \FFF{\kappa}$
and $\hahntai{\invs{\kappa}}\in \GGG{\kappa}$. 
\end{cor}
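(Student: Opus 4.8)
The plan is to reduce both equalities to the single computation $\chara(\invs{\kappa})=\kappa$; once this is in hand, the assertion about $\hahntai{\invs{\kappa}}$ is immediate from Corollary \ref{cor:charaP}, and the membership statements $\invs{\kappa}\in\FFF{\kappa}$, $\hahntai{\invs{\kappa}}\in\GGG{\kappa}$ are then just the definitions of $\FFF{\kappa}$ and $\GGG{\kappa}$.

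First I would compute $\chara(\invs{\kappa})$ via Lemma \ref{lem:ch22}, which identifies $\chara(S)$ with the least cardinal $\lambda$ admitting a characteristic isotone embedding $\invs{\lambda}\to S$. For the upper bound $\chara(\invs{\kappa})\le\kappa$, the identity map $\invs{\kappa}\to\invs{\kappa}$ is trivially a characteristic isotone embedding, since its image is all of $\invs{\kappa}$ and hence characteristic; so Lemma \ref{lem:ch22} applies at once. For the lower bound I would argue by contradiction: if some cardinal $\lambda<\kappa$ admitted a characteristic isotone embedding $g\colon\invs{\lambda}\to\invs{\kappa}$, then the set $g(\invs{\lambda})\setminus\{\mzero\}$ would be a coinitial subset of $\stars{\invs{\kappa}}$ of cardinality at most $|\lambda|<\kappa$.

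The key observation is that, by Definition \ref{df:opkappa}, $\stars{\invs{\kappa}}$ is the set of ordinals below $\kappa$ equipped with the \emph{reverse} order $\oposi{\kappa}$; hence a coinitial subset of $\stars{\invs{\kappa}}$ is precisely a cofinal subset of $\kappa$ in its usual well-ordering. Since $\kappa$ is regular we have $\cf(\kappa)=\kappa$, so no cofinal subset of $\kappa$ can have cardinality $<\kappa$. This contradiction gives $\chara(\invs{\kappa})\ge\kappa$, and together with the upper bound we obtain $\chara(\invs{\kappa})=\kappa$. As $\invs{\kappa}$ is bottomed with character $\kappa$, it follows that $\invs{\kappa}\in\FFF{\kappa}$. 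I expect this lower bound to be the only real content; the regularity of $\kappa$ is exactly what is needed here, whereas the upper bound is formal.

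Finally, because $\kappa$ is regular we have $\myomega\le\kappa=\chara(\invs{\kappa})$, so Corollary \ref{cor:charaP} applies with $S=\invs{\kappa}$ and yields $\chara(\hahntai{\invs{\kappa}})=\chara(\invs{\kappa})=\kappa$, which is equivalent to $\hahntai{\invs{\kappa}}\in\GGG{\kappa}$. This completes the reduction, so the entire corollary follows from Lemma \ref{lem:ch22}, the regularity-based coinitiality computation, and Corollary \ref{cor:charaP}.
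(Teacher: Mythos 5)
Your proposal is correct and follows essentially the same route as the paper: the paper likewise disposes of $\chara(\invs{\kappa})=\kappa$ by appealing to the regularity of $\kappa$ (you merely unpack this via Lemma \ref{lem:ch22} and the identification of coinitial subsets of $\stars{\invs{\kappa}}$ with cofinal subsets of $\kappa$), and then derives $\chara(\hahntai{\invs{\kappa}})=\kappa$ from Corollary \ref{cor:charaP} exactly as you do.
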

\begin{proof}
The regularity of $\kappa$ indicates 
$\chara(\invs{\kappa})=\kappa$. 
Corollary \ref{cor:charaP} implies 
$\chara(\hahntai{\invs{\kappa}})=\chara(\invs{\kappa})$, 
and hence 
$\chara(\hahntai{\invs{\kappa}})=\kappa$. 
\end{proof}

\subsubsection{Embedding theorems}
The following is known as 
Hahn's embedding theorem. 
The proof can be seen in \cite{MR67882} and \cite{MR52045}. 
\begin{thm}\label{thm:hahn}
Let $G$ be a linearly  ordered Abelian group. 
Then 
there exists a full isotone embedding 
$F\colon G\to \hahnsp{\Arc{G}}$
which is a group homomorphism. 
\end{thm}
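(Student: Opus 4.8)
The plan is to realize $G$ coordinatewise inside $\hahnsp{\Arc{G}}$ by recording, for each element, its leading real coefficients along the chain of Archimedean classes, following Hahn's classical strategy. Write $\Gamma=\Arc{G}$. First I would reduce to the divisible case: the divisible hull $G\otimes_{\zz}\qq$ carries a unique compatible order, every one of its elements has the form $\tfrac1n x$ with $x\in G$, and since $\tfrac1n x\arel x$ the inclusion induces an order isomorphism $\Gamma\cong\Arc{G\otimes\qq}$. A full isotone group embedding of $G\otimes\qq$ into $\hahnsp{\Gamma}$ restricts to one of $G$, and fullness is inherited because $\tfrac1n x\arel x$, so I may assume $G$ is an ordered $\qq$-vector space.

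Next, for each $\gamma\in\Gamma$ I would isolate the residue at $\gamma$. Using the valuation $\cova_{G}$ of Definition \ref{df:lambda} together with its non-Archimedean inequality in Proposition \ref{prop:abslambda}, the sets $C_{\gamma}=\{x\in G\mid\cova_{G}(x)\arcle\gamma\}$ and $C_{\gamma}^{-}=\{x\in G\mid\cova_{G}(x)\arclele\gamma\}$ are convex subgroups with $C_{\gamma}^{-}\subsetneq C_{\gamma}$, and the quotient $R_{\gamma}=C_{\gamma}/C_{\gamma}^{-}$ is Archimedean, since every nonzero coset is represented by an element of class exactly $\gamma$. By H\"older's theorem each $R_{\gamma}$ admits an order embedding $\psi_{\gamma}\colon R_{\gamma}\to\rr$ into the additive reals, and I would fix one such $\psi_{\gamma}$ for every $\gamma$. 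The intended value of $F(x)(\gamma)$ is $\psi_{\gamma}$ applied to the $\gamma$-residue of $x$ once the higher terms have been stripped off.

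The construction of $F$ itself I would carry out by Zorn's lemma on value-compatible partial embeddings. Consider pairs $(W,\phi)$ where $W$ is a $\qq$-subspace of $G$ and $\phi\colon W\to\hahnsp{\Gamma}$ is an order-preserving $\qq$-linear map such that for every nonzero $x\in W$ the support of $\phi(x)$ has maximum $\cova_{G}(x)$ with leading coefficient $\psi_{\cova_{G}(x)}(x+C_{\cova_{G}(x)}^{-})$; order these pairs by extension. Unions of chains are again such pairs, so a maximal element $(W_{*},\phi_{*})$ exists. The decisive step is to show $W_{*}=G$: given $x_{0}\in G\setminus W_{*}$, I would extend $\phi_{*}$ to $W_{*}\oplus\qq x_{0}$ by defining the function $F(x_{0})$ one coordinate at a time, descending through $\Gamma$ below $\gamma_{0}=\cova_{G}(x_{0})$, at each stage reading off the next real coefficient through the appropriate $\psi_{\gamma}$ and choosing it so that all order relations between $x_{0}$ and the elements of $W_{*}$ are respected.

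The hard part will be this extension step, and within it the verification that the coordinate function built for $x_{0}$ genuinely lies in $\hahnsp{\Gamma}$, that is, that its support is dually well-ordered. This is the technical heart of Hahn's theorem: the successive coordinates are chosen at strictly descending values of $\Gamma$, forced by the inequality $\cova_{G}(x+y)\arcle\cova_{G}(x)\lor\cova_{G}(y)$ of Proposition \ref{prop:abslambda}, which prevents the support from ascending, while the limit stages of the transfinite descent must be handled inside the target $\hahnsp{\Gamma}$ rather than in $G$, since $G$ need not contain the relevant infinite tails. Once $W_{*}=G$ is established, fullness is immediate: by Lemma \ref{lem:echoes} the Archimedean class of any $f\in\hahnsp{\Gamma}$ is determined by $\max(\operatorname{supp}f)\in\Gamma$, the value-compatibility built into $\phi_{*}$ gives $\max(\operatorname{supp}F(x))=\cova_{G}(x)$, and every $\gamma\in\Gamma$ is realized as $\cova_{G}(x)$ for some $x\in G$; hence $F(G)$ meets every Archimedean class of $\hahnsp{\Gamma}$, so $F$ is the desired full isotone group embedding.
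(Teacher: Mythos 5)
You should first be aware that the paper does not prove Theorem \ref{thm:hahn} at all: it states Hahn's embedding theorem and refers the reader to the literature (the cited sources are the classical treatments of Hausner--Wendel and Clifford). So there is no in-paper argument to compare against; the only question is whether your proposal would stand on its own. Your preliminary reductions are sound and are exactly the classical ones: a linearly ordered Abelian group is torsion-free, its divisible hull carries a unique compatible order, $\tfrac1n x\arel x$ gives $\Arc{G}\cong\Arc{G\otimes\qq}$ and the inheritance of fullness, the convex subgroups $C_{\gamma}$ and $C_{\gamma}^{-}$ are correctly identified via Proposition \ref{prop:abslambda}, the quotients $R_{\gamma}$ are Archimedean, and H\"older supplies the residue maps $\psi_{\gamma}$. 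The Zorn's lemma framework on value-compatible partial embeddings is also the right one.

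The genuine gap is that the decisive step --- extending a maximal $(W_{*},\phi_{*})$ past an element $x_{0}\notin W_{*}$ --- is described rather than carried out, and that step \emph{is} Hahn's theorem; everything before it is routine. Concretely, three things are asserted without argument. First, ``reading off the next real coefficient through $\psi_{\gamma}$'' is not directly meaningful: after the leading term is stripped, the remainder is an element of $\hahnsp{\Arc{G}}$ minus an element of $G$, not an element of $G$, so the residue maps $\psi_{\gamma}$ cannot be applied to it; one must instead work with the differences $x_{0}-w$ for $w\in W_{*}$ and show their images cohere into a single function, which is where pseudo-convergent families and the maximal completeness of the Hahn product enter in the published proofs. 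Second, the claim that the resulting support is dually well-ordered does not follow merely from the inequality $\cova_{G}(x+y)\arcle\cova_{G}(x)\lor\cova_{G}(y)$; the transfinite descent can pass through limit stages, and one must prove both that the accumulated support remains dually well-ordered there and that the limit function still separates $x_{0}$ order-correctly from $W_{*}$. Third, value-compatibility and order-preservation must be verified for every element $w+qx_{0}$ of the extended subspace, including the delicate case where $\cova_{G}(w)=\cova_{G}(x_{0})$ and the leading residues cancel, forcing the condition to be checked at strictly smaller classes. Since you explicitly flag this block as ``the hard part'' without resolving it, the proposal is an accurate roadmap to the classical proof but not yet a proof; to make it complete you would either need to supply the pseudo-limit argument or, matching what the paper itself does, cite one of the standard references for this step.
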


As a corollary of  Hahn's embedding theorem, 
we obtain
the following statement  known as  H\"{o}lder's embedding theorem: 
\begin{thm}\label{thm:holder}
Let $G$ be a linearly ordered Abelian group. 
If we have $\card(\Arc{G})=1$ (namely, $G$ is Archimedean), 
then there exits an isotone embedding
$F\colon G\to \rr$
 which is a group homomorphism. 
\end{thm}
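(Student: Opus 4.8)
The plan is to deduce the statement directly from Hahn's embedding theorem (Theorem \ref{thm:hahn}), as the phrase ``as a corollary'' already signals. The only work is to understand what the target Hahn group $\hahnsp{\Arc{G}}$ looks like under the hypothesis $\card(\Arc{G})=1$, and then transport the embedding across an identification of that group with $\rr$.

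First I would observe that when $G$ is Archimedean, the linearly ordered set $\Arc{G}$ consists of a single point, say $\{\ast\}$. I would then unwind the definition of the Hahn group over a one-point ordered set. Recall that $\hahnsp{L}$ is the set of maps $f\colon L\to \rr$ whose support $\{\, x\in L\mid f(x)\neq 0_{\rr}\, \}$ is a dually well-ordered subset of $L$. When $L=\{\ast\}$, every subset is finite and hence trivially dually well-ordered, so $\hahnsp{\{\ast\}}$ is the set of \emph{all} maps $\{\ast\}\to \rr$. The evaluation map $\mathrm{ev}\colon \hahnsp{\{\ast\}}\to \rr$ given by $\mathrm{ev}(f)=f(\ast)$ is a group isomorphism for the coordinate-wise addition. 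Moreover, by the definition of the order on $\hahnsp{L}$, where one compares values at the maximal coordinate of disagreement, for $f\neq g$ the only coordinate of disagreement is $\ast$, so $f\le_{\hahnsp{\{\ast\}}}g$ holds precisely when $f(\ast)\le_{\rr}g(\ast)$. Hence $\mathrm{ev}$ is an order isomorphism, i.e.\ an isomorphism of linearly ordered Abelian groups $\hahnsp{\{\ast\}}\cong \rr$.

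Finally I would apply Theorem \ref{thm:hahn} to obtain a full isotone group-homomorphism embedding $G\to \hahnsp{\Arc{G}}=\hahnsp{\{\ast\}}$, and compose it with $\mathrm{ev}$ to produce the desired map $F\colon G\to \rr$. Since the composition of an isotone embedding with an order isomorphism is again an isotone embedding, and the composition of group homomorphisms is a group homomorphism, the resulting $F$ is an isotone embedding and a group homomorphism, which is exactly what is required.

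There is essentially no genuine obstacle here: the entire content of the theorem is carried by Hahn's embedding theorem, and the Archimedean hypothesis is precisely what collapses the target Hahn group down to $\rr$. The one point that merits care is the routine identification $\hahnsp{\{\ast\}}\cong \rr$ as linearly ordered Abelian groups, and this is immediate once the group operation and the order on a Hahn group are spelled out for a singleton index set.
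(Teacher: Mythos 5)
Your proposal is correct and follows exactly the route the paper intends: the paper derives Theorem \ref{thm:holder} as an immediate corollary of Hahn's embedding theorem (Theorem \ref{thm:hahn}), and your identification of $\hahnsp{\Arc{G}}$ with $\rr$ when $\card(\Arc{G})=1$ is the only detail the paper leaves implicit. Nothing is missing.
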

\begin{rmk}
According to  Theorem \ref{thm:holder}, 
if $G$ is Archimedean, 
then 
$G$ can be considered as the subgroup of $\rr$. 
Thus, 
the group $G$ is dense in $\rr$, 
 or isomorphic to $\zz$ 
 (see, for example, \cite[Lemma 2.2]{abels2012topological}). 
\end{rmk}

\subsection{Metric structures}\label{subsec:met}
We next discuss  metric structures. 
\subsubsection{Basic statements on metrics}

\begin{df}\label{df:uequi}
Let $X$ and $Y$ be topological spaces. 
Let $G$ and $H$ be linearly ordered 
Abelian groups,  
and
let 
 $S$ and $T$ be bottomed linearly ordered sets. 
Let $d\in \met(X; G)$ or $d\in \ult(X; S)$, 
and $e\in \met(Y; H)$ or $e\in \ult(Y; T)$. 
Put $E=G_{>0}$ if $d\in \met(X; G)$; 
otherwise $E=\stars{S}$, and put 
$F=H_{>0}$ if 
$e\in \met(Y; H)$; 
otherwise $F=\stars{T}$. 
A map $f\colon (X, d) \to (Y, e)$
is  said to be \emph{uniformly continuous}
if
for all $\epsilon\in F$, 
there exists $\delta\in E$ such that 
if  $x, y\in X$ satisfy  $d(x, y)<\delta$, then 
$e(f(x), f(y))<\epsilon$. 
If there exists a bijection $f\colon (X, d)\to (Y, e)$ such that $f$ and $f^{-1}$ are uniformly continuous, 
we say that $(X, d)$ and $(Y, e)$ are 
\emph{uniformly equivalent to each other}. 
In the case of $X=Y$, 
we say that $d$ and $e$ are 
\emph{uniformly equivalent to each other} if 
the identity map $\mathrm{id}_{X}\colon (X, d)\to (X, e)$ and its inverse 
are uniformly continuous. 
The definition  of 
the uniform equivalence in this paper 
 is 
nothing but a 
paraphrase 
 of the
uniform equivalence between uniform spaces
induced from generalized (ultra)metrics. 
For uniform spaces, we refer the readers 
to \cite{W1970} or \cite{MR0370454}. 
\end{df}

\begin{lem}\label{lem:uniformdis}
We assume that 
$X$ is a topological space possessing an infinite metrizable gauge. 
Let 
$\kappa$ be a regular cardinal with 
$\kappa\in \metrank(X)$. 
Let $G\in \GGG{\kappa}$. 
Let $d\in \met(X; G)$. 
Then 
$\cova_{G}\circ d
\in \ult(X; \lolo{\Arc{G}})$, 
and
$d$ and $\cova_{G}\circ d$ are
uniformly equivalent to each other. 
\end{lem}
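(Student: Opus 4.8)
The plan is to verify the two claims separately: first that $\cova_{G}\circ d$ is an $\lolo{\Arc{G}}$-ultrametric on $X$, and then that $d$ and $\cova_{G}\circ d$ induce uniformly equivalent structures. For the first claim, I would check the ultrametric axioms (U\ref{item:u0})--(U\ref{item:u4}) directly, since the values of $\cova_{G}\circ d$ lie in the bottomed linearly ordered set $\lolo{\Arc{G}}$ whose least element is $\mzero_{\lolo{\Arc{G}}}$. The axioms concerning the bottom element and symmetry follow immediately from Proposition \ref{prop:abslambda}(\ref{item:000cova}) together with the corresponding $G$-metric axioms for $d$: for instance, $(\cova_{G}\circ d)(x,y)=\mzero_{\lolo{\Arc{G}}}$ holds if and only if $d(x,y)=0_{G}$, which holds if and only if $x=y$, giving (U\ref{item:u0}) and (U\ref{item:u1}); and symmetry (U\ref{item:u3}) is inherited from $d(x,y)=d(y,x)$.

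The strong triangle inequality (U\ref{item:u4}) is the only axiom requiring work, and it is where the Archimedean structure genuinely enters. The key input is Proposition \ref{prop:abslambda}(\ref{item:strong}), which states $\cova_{G}(a+b)\arcle \cova_{G}(a)\lor \cova_{G}(b)$. Applying this with $a=d(x,z)$ and $b=d(z,y)$, and using the triangle inequality (M\ref{item:m4}) for $d$ together with the monotonicity statement in Proposition \ref{prop:abslambda}(\ref{item:monotone}), I would obtain
\[
(\cova_{G}\circ d)(x,y)=\cova_{G}(d(x,y))\arcle \cova_{G}(d(x,z)+d(z,y))\arcle \cova_{G}(d(x,z))\lor \cova_{G}(d(z,y)),
\]
which is exactly (U\ref{item:u4}). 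I would just need to be careful that the nonnegativity (M\ref{item:m2}) of $d$ makes the absolute-value issues in Proposition \ref{prop:abslambda} vacuous, so that $\cova_{G}(d(x,y))=[d(x,y)]_{\arel}$ whenever $d(x,y)\neq 0$.

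For the uniform equivalence, the plan is to apply Proposition \ref{prop:uniuni}, which is the dictionary translating $G$-distance neighborhoods into $\Arc{G}$-neighborhoods and conversely; this is precisely why the hypothesis $G\in\GGG{\kappa}$ with $\kappa$ infinite is present. To see that the identity map $(X,d)\to(X,\cova_{G}\circ d)$ is uniformly continuous, given $\eta\in\stars{(\lolo{\Arc{G}})}=\Arc{G}$, Proposition \ref{prop:uniuni}(2) supplies a $\theta\in G_{>0}$ such that $\abs(z)<\theta$ forces $\cova_{G}(z)\arclele\eta$; taking $\delta=\theta$ and $z=d(x,y)$ gives the required implication. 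Conversely, for the inverse map, given $\epsilon\in G_{>0}$, Proposition \ref{prop:uniuni}(1) supplies $\rho\in\Arc{G}$ with $\cova_{G}(z)\arclele\rho\Rightarrow\abs(z)<\epsilon$, and I would use $\rho$ as the threshold in the $\cova_{G}\circ d$ metric.

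**The main obstacle** I anticipate is purely bookkeeping rather than conceptual: keeping the two distinct orders $\le$ and $\arcle$ (and their strict versions) straight, and confirming that the value $\mzero_{\lolo{\Arc{G}}}$ behaves correctly as the bottom of $\lolo{\Arc{G}}$ in each axiom, since $\cova_{G}$ is defined by a case split at $0_{G}$. The genuine mathematical content is entirely concentrated in the three already-proved statements Proposition \ref{prop:abslambda}(\ref{item:monotone}), (\ref{item:strong}) and Proposition \ref{prop:uniuni}; once these are invoked, the uniform equivalence is a direct translation and the ultrametric axioms are routine.
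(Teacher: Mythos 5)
Your proposal is correct and follows essentially the same route as the paper: the ultrametric axioms are verified via Proposition \ref{prop:abslambda} (with the strong triangle inequality coming from the combination of the triangle inequality for $d$, the monotonicity statement, and the statement $\cova_{G}(a+b)\arcle\cova_{G}(a)\lor\cova_{G}(b)$), and the uniform equivalence is exactly the two directions of Proposition \ref{prop:uniuni}. The only difference is that you spell out the details the paper leaves implicit.
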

\begin{proof}
From the statements (\ref{item:000cova}), 
and 
(\ref{item:monotone})
in  Proposition \ref{prop:abslambda}, 
it follows  that $e$ satisfies the 
condition (U\ref{item:u0})--(U\ref{item:u3}) in 
Definition 
\ref{df:sultrametrics}. 

By the statements (\ref{item:abs})
and   (\ref{item:strong})
  in  Proposition \ref{prop:abslambda}, 
we verify
 that $e$ satisfies the strong triangle inequality
(the condition (U\ref{item:u4}) in Definition 
\ref{df:sultrametrics}). 
Hence $e$ is a 
$\lolo{\Arc{G}}$-ultrametric on $X$. 
The remaining 
 part 
 of the lemma follows from 
 Proposition \ref{prop:uniuni}. 
\end{proof}

We next prove an analogue of 
\cite[Lemma 2.2]{Ishiki2021ultra}
concerning 
functions preserving ultrametrics.
\begin{lem}\label{lem:amenable}
Let $S$ and $T$  be 
bottomed linearly ordered sets. 
We assume that 
 $\psi\colon S\to T$ satisfies that 
\begin{enumerate}
\item\label{item:iso1} 
the map $\psi$ is isotone;
\item\label{item:zero2}
if $\psi(x)=\mzero_{T}$, then 
$x=\mzero_{S}$;
\item\label{item:conti3}
$\psi$ is continuous at $\mzero_{S}$, namely, 
for all $\epsilon\in \stars{T}$, 
there exists $\delta\in \stars{S}$ such that 
if $x\in S$ satisfies $x<\delta$, then we have 
$\psi(x)<\epsilon$.
\end{enumerate} 
Let $d\in \ult(X; S)$. 
Then 
$\psi\circ d\in \ult(X; T)$, 
 and 
$d$ and $\psi\circ d$
generate the same topology on $X$. 
Moreover, 
they 
 are 
uniformly equivalent to 
each other. 
\end{lem}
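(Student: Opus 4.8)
The plan is to verify directly that $\psi\circ d$ satisfies the ultrametric axioms (U\ref{item:u0})--(U\ref{item:u4}) of Definition \ref{df:sultrametrics}, and then to establish the uniform equivalence of $d$ and $\psi\circ d$ in the sense of Definition \ref{df:uequi}; the coincidence of the induced topologies, and hence the membership $\psi\circ d\in \ult(X; T)$, will follow automatically from the uniform equivalence.

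First I would record the preliminary fact that $\psi(\mzero_{S})=\mzero_{T}$, which is not assumed outright. Indeed, for every $\epsilon\in \stars{T}$, condition (\ref{item:conti3}) supplies $\delta\in \stars{S}$ with $\psi(x)<\epsilon$ whenever $x<\delta$; applying this to $x=\mzero_{S}$, which lies below $\delta$ since $\delta\in\stars{S}$, gives $\psi(\mzero_{S})<\epsilon$. As $\epsilon\in\stars{T}$ was arbitrary and $\mzero_{T}\le \psi(\mzero_{S})$ always holds, the value $\psi(\mzero_{S})$ cannot itself be a strictly positive element of $T$ (otherwise taking $\epsilon=\psi(\mzero_{S})$ yields a contradiction), so $\psi(\mzero_{S})=\mzero_{T}$.

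With this in hand the axioms are routine. Conditions (U\ref{item:u1}) and (U\ref{item:u2}) follow from $\psi(d(x,x))=\psi(\mzero_{S})=\mzero_{T}$ and from $\mzero_{T}$ being least in $T$; condition (U\ref{item:u3}) is immediate from the symmetry of $d$; and (U\ref{item:u0}) follows from hypothesis (\ref{item:zero2}) together with (U\ref{item:u0}) for $d$. For the strong triangle inequality (U\ref{item:u4}) I would use that an isotone map between linearly ordered sets preserves binary maxima, i.e.\ $\psi(a\lor b)=\psi(a)\lor \psi(b)$; applying $\psi$, which is isotone by (\ref{item:iso1}), to $d(x,y)\le d(x,z)\lor d(z,y)$ then yields $\psi\circ d(x,y)\le \psi\circ d(x,z)\lor \psi\circ d(z,y)$, as required.

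Finally I would prove uniform equivalence. Uniform continuity of $\mathrm{id}\colon (X,d)\to (X,\psi\circ d)$ is precisely the content of condition (\ref{item:conti3}). For the reverse direction, given $\delta\in \stars{S}$ I would set $\eta=\psi(\delta)$; by (\ref{item:zero2}) we have $\eta\in\stars{T}$, and if $\psi\circ d(x,y)<\eta=\psi(\delta)$ then $d(x,y)<\delta$, for otherwise $\delta\le d(x,y)$ would force $\psi(\delta)\le \psi\circ d(x,y)$ by isotonicity, a contradiction. Thus $\mathrm{id}\colon (X,\psi\circ d)\to (X,d)$ is uniformly continuous as well. Uniform equivalence implies that the two (ultra)metrics induce the same topology, so $\psi\circ d$ generates the topology of $X$ and therefore belongs to $\ult(X; T)$, completing the proof. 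The only point requiring care is the derivation of $\psi(\mzero_{S})=\mzero_{T}$, which must be extracted from the continuity hypothesis (\ref{item:conti3}); everything else is a direct translation of the hypotheses.
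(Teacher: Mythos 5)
Your proof is correct and follows essentially the same route as the paper: check the ultrametric axioms directly, then establish uniform equivalence using hypothesis (3) for one direction and isotonicity together with (2) for the other. Your write-up is in fact slightly tighter in two places: you explicitly derive $\psi(\mzero_{S})=\mzero_{T}$ from (3) (needed for $\psi\circ d(x,x)=\mzero_{T}$ but not spelled out in the paper, which attributes the non-triangle axioms only to (1) and (2)), and your choice $\eta=\psi(\delta)$ in the reverse direction avoids the paper's case split on whether $\stars{T}$ has a least element.
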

\begin{proof}
Since $\psi$ is isotone, 
the  map
$\psi\circ d$ satisfies the strong triangle inequality
(the condition (U\ref{item:u4}) in Definition \ref{df:sultrametrics}). 
By the assumptions 
(\ref{item:iso1}) and  
(\ref{item:zero2}), 
the map $\psi\circ d$ 
satisfies the conditions (U\ref{item:u0})--(U\ref{item:u3}) 
in Definition \ref{df:sultrametrics}, 
and hence we conclude that 
$\psi\circ d$ is a
$T$-ultrametrics. 
It suffices to show that 
$d$ and $\psi\circ d$ are 
uniformly equivalent to each other. 
We divide the proof into two cases. 

Case 1 ($\stars{T}$ has the least element): 
By the assumptions 
(\ref{item:zero2}) and 
(\ref{item:conti3}), 
we verify 
 that $\stars{S}$
has the least element. 
Put $\alpha=\min \stars{S}$ and $\beta=\min \stars{T}$. 
In this case, 
if $d(x, y)<\alpha$  if and only if $\psi\circ d(x, y)<\beta$. 
Then $d$ and $\psi\circ d$ are uniformly equivalent to 
each other. 

Case 2 ($\stars{T}$ does not have the least element):
Take arbitrary $\epsilon\in \stars{T}$. 
By the assumption (\ref{item:conti3}), there exists $\delta$ with 
$\psi(\delta)<\epsilon$. 
Thus, if $x, y\in X$ satisfy $d(x, y)<\delta$, then 
$(\psi\circ d)(x, y)<\epsilon$. 

We next prove the converse. 
Take arbitrary $\eta\in \stars{S}$. 
Since $\stars{T}$ does not have the least element, 
we can take $\theta\in T$ with $\theta<\psi(\eta)$. 
By the assumption that 
$\psi$ is isotone, 
if $x, y\in X$ satisfy  $(\psi\circ d)(x, y)<\theta$, 
then $d(x, y)<\eta$. 
Therefore, the ultrametrics $d$ and $\psi\circ d$
are uniformly equivalent to each other. 
\end{proof}
\begin{rmk}
As the author proved 
Lemma 2.2 in 
\cite{Ishiki2021ultra} 
using \cite[Theorem 9]{MR3206769}
and 
\cite[Theorem 2.9]{MR4052988}, 
we can show the converse of 
Lemma \ref{lem:amenable}; 
namely, 
we can show 
that 
if $\psi\colon S\to T$ satisfies that 
for all $d\in\ult(d; S)$ we have 
$\psi\circ d\in \ult(X; T)$, 
then $\psi$ satisfies 
the conditions (\ref{item:iso1}), 
(\ref{item:zero2}) and 
(\ref{item:conti3})
in Lemma \ref{lem:amenable}. 
Since \cite[Lemma 2.2]{Ishiki2021ultra}, 
\cite[Theorem 9]{MR3206769}
and 
\cite[Theorem 2.9]{MR4052988} are proven using 
only the order structure of $\rr$, 
we can obtain an analogue of these statements for 
general ordered sets. 
\end{rmk}

\begin{lem}\label{lem:lll}
Let $S$ be a bottomed  linearly ordered set. 
Let $\kappa$ be a cardinal. 
Let $l\colon \invs{\kappa}\to S$ be a 
characteristic isotone embedding. 
Then the map $l$ satisfies 
the 
assumptions 
(\ref{item:iso1}), (\ref{item:zero2}), and 
(\ref{item:conti3}) appearing in 
Lemma \ref{lem:amenable}. 
\end{lem}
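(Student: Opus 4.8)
The plan is to check, one at a time, that the map $l\colon \invs{\kappa}\to S$ satisfies the three hypotheses (\ref{item:iso1}), (\ref{item:zero2}), and (\ref{item:conti3}) of Lemma \ref{lem:amenable}; here $\invs{\kappa}$ plays the role of the source (the ``$S$'' of that lemma) and $S$ plays the role of the target (the ``$T$'' of that lemma). Condition (\ref{item:iso1}) requires only that $l$ be isotone, which holds by hypothesis since $l$ is an isotone embedding.

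The key preliminary observation I would establish is that $l(\mzero_{\invs{\kappa}})=\mzero_{S}$. Indeed, because $\mzero_{\invs{\kappa}}$ is the least element of $\invs{\kappa}$ and $l$ is isotone, we have $l(\mzero_{\invs{\kappa}})\le l(y)$ for every $y\in \invs{\kappa}$, so $l(\mzero_{\invs{\kappa}})$ is the least element of the image $l(\invs{\kappa})$. Since $l$ is characteristic, its image is a characteristic subset of $S$, and in particular $\mzero_{S}\in l(\invs{\kappa})$; hence $l(\mzero_{\invs{\kappa}})\le \mzero_{S}$. As $\mzero_{S}$ is the least element of $S$, the reverse inequality $\mzero_{S}\le l(\mzero_{\invs{\kappa}})$ is automatic, and therefore $l(\mzero_{\invs{\kappa}})=\mzero_{S}$. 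Condition (\ref{item:zero2}) now follows at once: if $l(x)=\mzero_{S}=l(\mzero_{\invs{\kappa}})$, then the injectivity of the embedding $l$ forces $x=\mzero_{\invs{\kappa}}$.

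For condition (\ref{item:conti3}) I would argue as follows. Fix $\epsilon\in \stars{S}$, i.e.\ $\epsilon\in S$ with $\mzero_{S}<\epsilon$. Since the image of $l$ is a characteristic subset of $S$, the defining property of characteristic subsets (Definition \ref{df:charazero}) yields an element $t\in l(\invs{\kappa})\setminus\{\mzero_{S}\}$ with $t\le \epsilon$; write $t=l(\delta)$ with $\delta\in \invs{\kappa}$. Because $l(\delta)=t\neq \mzero_{S}$ and $l(\mzero_{\invs{\kappa}})=\mzero_{S}$, injectivity of $l$ gives $\delta\neq \mzero_{\invs{\kappa}}$, so $\delta\in \stars{\invs{\kappa}}$. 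Finally, for any $x\in \invs{\kappa}$ with $x<\delta$, isotonicity together with injectivity of $l$ gives $l(x)<l(\delta)\le \epsilon$, hence $l(x)<\epsilon$. This verifies condition (\ref{item:conti3}) with the witness $\delta$, completing the proof.

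I do not anticipate a genuine obstacle here: the statement is a direct unwinding of the definitions of isotone embedding and of characteristic subset. The only points that require a little care are to establish the identity $l(\mzero_{\invs{\kappa}})=\mzero_{S}$ before handling (\ref{item:zero2}) and (\ref{item:conti3}), and to keep track of which of the ordered sets $\invs{\kappa}$ and $S$ plays the source and which plays the target in the three conditions of Lemma \ref{lem:amenable}.
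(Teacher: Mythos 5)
Your proof is correct and follows essentially the same route as the paper, whose own proof is just the two-sentence observation that (1) and (2) hold by the hypothesis that $l$ is an isotone embedding and that (3) holds because $l$ is characteristic; you have merely filled in the routine details (the identity $l(\mzero_{\invs{\kappa}})=\mzero_{S}$ and the extraction of the witness $\delta$ from the characteristic property of the image).
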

\begin{proof}
By the assumption, 
the map 
$l$
satisfies the conditions 
(\ref{item:iso1}) and (\ref{item:zero2}). 
Since $l$ is characteristic, 
it satisfies the condition (\ref{item:conti3}). 
\end{proof}

\begin{df}\label{df:zeta}
Let $\kappa$ be a regular cardinal. 
Let $S\in \FFF{\kappa}$. 
Let $l\colon\invs{\kappa}\to S$ be a 
characteristic isotone embedding. 
We define  a map $\zeta_{S, l}\colon S\to \invs{\kappa}$ by 
\begin{align*}
\zeta_{S, l}(s)=
\begin{cases}
0 & \text{if $l(0)<s$}\\
\alpha+1 & \text{if $l(\alpha+1)<s\le l(\alpha)$}\\
\mzero_{\invs{\kappa}} & \text{ if $s=\mzero_{S}$}. 
\end{cases}
\end{align*}
Note that $0$ is 
the maximal element of
$\invs{\kappa}$. 
Remark
 that 
$\zeta_{S, l}\colon S\to \invs{\kappa}$ satisfies the 
assumptions 
 (\ref{item:iso1}), 
 (\ref{item:zero2}), 
 and 
 (\ref{item:conti3}) in Lemma \ref{lem:amenable}. 
\end{df}

By the definition of g-characteristic subsets, 
we obtain: 
\begin{lem}\label{lem:inclusions}
Let $X$ be a topological space 
with $\metrank(X)\neq \emptyset$. 
Let  $G$ be a linearly ordered Abelian group
with 
$\met(X; G)\neq \emptyset$. 
If $S$ is a g-characteristic subset of $G$, 
we have $\ult(X; S)\mysub \met(X; G)$. 
\end{lem}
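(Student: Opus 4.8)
The plan is to fix an arbitrary $d\in \ult(X; S)$ and verify that $d\in \met(X; G)$; since the asserted inclusion is vacuous when $\ult(X; S)=\emptyset$, this suffices. The first step is to record two consequences of $S$ being a characteristic subset of $G_{\ge 0}$: we have $0_{G}\in S$, and since every element of $S$ lies in $G_{\ge 0}$ and hence dominates $0_{G}$, the element $0_{G}$ is the least element of $S$, so that $\mzero_{S}=0_{G}$. In particular all values of $d$ lie in $S\mysub G_{\ge 0}$, and $d$ may be regarded as a map $X\times X\to G$. The second step is to check the $G$-metric axioms. Conditions (M\ref{item:m0}), (M\ref{item:m1}), (M\ref{item:m2}) and (M\ref{item:m3}) are immediate translations of (U\ref{item:u0})--(U\ref{item:u3}) once $\mzero_{S}=0_{G}$ and $S\mysub G_{\ge 0}$ are used. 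The only axiom requiring an argument is the triangle inequality (M\ref{item:m4}): starting from the strong triangle inequality (U\ref{item:u4}), namely $d(x, y)\le d(x, z)\lor d(z, y)$, I would invoke the elementary fact that $a\lor b\le a+b$ for all $a, b\in G_{\ge 0}$, which follows from order-compatibility of the group operation (if $a\le b$ then $b=0_{G}+b\le a+b$). Combining the two inequalities yields $d(x, y)\le d(x, z)+d(z, y)$, so $d$ is a $G$-metric.

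It then remains to show that the topology induced by $d$ as a $G$-metric coincides with the topology induced by $d$ as an $S$-ultrametric, the latter being the topology of $X$ because $d\in \ult(X; S)$. Writing $B_{d}(x, \epsilon)=\{\, z\in X\mid d(x, z)<\epsilon\, \}$, I note that every $S$-ball (radius in $\stars{S}$) is a $G$-ball, since $\stars{S}\mysub G_{>0}$; hence the $S$-topology is contained in the $G$-topology. For the reverse inclusion, fix a $G$-ball $B_{d}(x, \epsilon)$ with $\epsilon\in G_{>0}$ and a point $y\in B_{d}(x, \epsilon)$. Because $S$ is g-characteristic, there is $\delta\in \stars{S}$ with $\delta\le \epsilon$; then for any $z$ with $d(y, z)<\delta$ the strong triangle inequality gives $d(x, z)\le d(x, y)\lor d(y, z)<\epsilon$, so the $S$-ball $B_{d}(y, \delta)$ is contained in $B_{d}(x, \epsilon)$. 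Thus $B_{d}(x, \epsilon)$ is open in the $S$-topology, and the two topologies agree, whence $d\in \met(X; G)$.

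The heart of the argument, and the only place where the hypothesis on $S$ is genuinely used, is this reverse topological inclusion: one must exploit precisely that $S$ is characteristic, i.e.\ coinitial in $G_{>0}$, in order to refine an arbitrary $G$-ball by an $S$-ball, after which the strong triangle inequality makes the refinement automatic because every point of an ultrametric ball serves as a center. I do not anticipate any serious obstacle beyond organizing these observations and being careful that the bottom element of $S$ is identified with $0_{G}$ throughout.
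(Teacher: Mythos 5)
Your proposal is correct and follows essentially the same route as the paper: the paper's (very terse) proof likewise reduces the triangle inequality to the fact that $x\lor y\le x+y$ for $x, y\in G_{\ge 0}$ and attributes the rest (including the agreement of topologies via coinitiality of $\stars{S}$ in $G_{>0}$) to the definition of g-characteristic subsets. You have simply supplied the details that the paper leaves implicit.
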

\begin{proof}
The lemma follows from the definition of 
g-characteristic subsets. 
Note that, 
to prove that all members in $\ult(X; S)$ satisfy the 
triangle inequality, 
we use the fact that 
$x\lor y\le x+y$ for all $x, y \in G_{\ge 0}$. 
\end{proof}

\begin{prop}\label{prop:characterization}
Let $\kappa$ be a regular cardinal. 
Let $X$ be a topological space. 
Then the following conditions are
equivalent to each other. 
\begin{enumerate}
\item\label{item:000} 
$\kappa\in \metrank(X)$;
\item\label{item:existsg} 
there exists $G\in \GGG{\kappa}$ such that
$\met(X; G)\neq \emptyset$;
\item\label{item:allg}
 for all  $G\in \GGG{\kappa}$, we have 
$\met(X; G)\neq \emptyset$;
\item\label{item:existss}
 there exists $S\in \FFF{\kappa}$ such that
$\ult(X; S)\neq \emptyset$;
\item\label{item:alls}
for all  $S\in \FFF{\kappa}$, we have 
$\ult(X; S)\neq \emptyset$;
\item\label{item:+1}
$\ult(X; \invs{\kappa})\neq \emptyset$. \end{enumerate}
\end{prop}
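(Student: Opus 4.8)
The plan is to use condition (6), the nonemptiness of $\ult(X; \invs{\kappa})$ for the canonical value set $\invs{\kappa}$, as a hub through which all the other conditions are routed, with Lemma \ref{lem:amenable} serving as the main transfer mechanism. First note that (1) and (2) are literally equivalent by unwinding the definition of a metrizable gauge, and that the two implications (3) $\Rightarrow$ (2) and (5) $\Rightarrow$ (4) are trivial once we know $\GGG{\kappa}$ and $\FFF{\kappa}$ are nonempty, which is exactly Corollary \ref{cor:kappachara} (it supplies $\hahntai{\invs{\kappa}}\in\GGG{\kappa}$ and $\invs{\kappa}\in\FFF{\kappa}$). So it remains to establish the cycle (2) $\Rightarrow$ (4) $\Rightarrow$ (6) $\Rightarrow$ (3) $\Rightarrow$ (2) together with (6) $\Rightarrow$ (5) $\Rightarrow$ (4).

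For the purely order-set implications I would use the characteristic isotone embeddings guaranteed by Lemma \ref{lem:charakappa}. Given any $S\in\FFF{\kappa}$ there is a characteristic isotone embedding $l\colon\invs{\kappa}\to S$, and by Lemma \ref{lem:lll} this $l$ satisfies the three hypotheses of Lemma \ref{lem:amenable}; hence for $d\in\ult(X;\invs{\kappa})$ the composite $l\circ d$ lies in $\ult(X; S)$. This yields (6) $\Rightarrow$ (5) (for every such $S$) and thus (6) $\Rightarrow$ (4). Conversely, starting from $d\in\ult(X;S)$ with $S\in\FFF{\kappa}$, I would post-compose with the map $\zeta_{S,l}\colon S\to\invs{\kappa}$ of Definition \ref{df:zeta}, which is recorded there to satisfy the hypotheses of Lemma \ref{lem:amenable}, so that $\zeta_{S,l}\circ d\in\ult(X;\invs{\kappa})$; this gives (4) $\Rightarrow$ (6), closing the equivalence (4) $\Leftrightarrow$ (5) $\Leftrightarrow$ (6).

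It then remains to bridge the group-valued conditions (2), (3) and the set-valued ones. For (2) $\Rightarrow$ (4) I would take $G\in\GGG{\kappa}$ and $d\in\met(X;G)$; condition (2) itself forces $\kappa\in\metrank(X)$, and since $\kappa$ is regular (hence $\myomega\le\kappa$) the space $X$ possesses an infinite metrizable gauge, so the hypotheses of Lemma \ref{lem:uniformdis} are met and it gives $\cova_{G}\circ d\in\ult(X;\lolo{\Arc{G}})$. As $\lolo{\Arc{G}}$ is bottomed with $\chara(\lolo{\Arc{G}})=\kappa$, we have $\lolo{\Arc{G}}\in\FFF{\kappa}$, which is (4). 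For (6) $\Rightarrow$ (3), fix an arbitrary $G\in\GGG{\kappa}$ and, using Lemma \ref{lem:charakappa}, a g-characteristic isotone embedding $l'\colon\invs{\kappa}\to G$; viewed as a map into the bottomed set $G_{\ge 0}$ it is a characteristic isotone embedding, so Lemmas \ref{lem:lll} and \ref{lem:amenable} give $l'\circ d\in\ult(X; G_{\ge 0})$ for $d\in\ult(X;\invs{\kappa})$. Its image is a g-characteristic subset of $G$, so Lemma \ref{lem:inclusions} identifies $l'\circ d$ as an element of $\met(X;G)$; as $G$ was arbitrary this is (3).

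The routine parts are the isotone/amenability bookkeeping, which is already packaged in Lemmas \ref{lem:lll}, \ref{lem:amenable} and Definition \ref{df:zeta}. The step needing genuine care is the passage between ordered groups and ordered sets, i.e.\ (2) $\Rightarrow$ (4) and (6) $\Rightarrow$ (3): here one must invoke the Archimedean-class machinery ($\cova_{G}$ and the g-characteristic embeddings) and, crucially, observe that an ultrametric with values in $G_{\ge 0}$ is automatically a $G$-metric inducing the same topology, since the strong triangle inequality implies the triangle inequality (via $x\lor y\le x+y$ on $G_{\ge 0}$) and the ultrametric balls of radius $\epsilon\in G_{>0}$ coincide with the metric balls. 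This last point is exactly the content of Lemma \ref{lem:inclusions}. A secondary subtlety, worth checking explicitly, is that the hypotheses of Lemma \ref{lem:uniformdis} indeed hold, which is so precisely because assumption (2) already delivers $\kappa\in\metrank(X)$.
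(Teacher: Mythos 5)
Your proposal is correct and follows essentially the same route as the paper: the same decomposition into $(1)\Leftrightarrow(2)$ by definition, $(3)\Rightarrow(2)$ and $(5)\Rightarrow(4)$ via Corollary \ref{cor:kappachara}, $(2)\Rightarrow(4)$ via Proposition \ref{prop:chacha} and Lemma \ref{lem:uniformdis}, $(6)\Rightarrow(5)$ and $(6)\Rightarrow(3)$ via Lemmas \ref{lem:amenable}, \ref{lem:lll} and \ref{lem:inclusions}, and $(4)\Rightarrow(6)$ via $\zeta_{S,l}$. Your explicit check that hypothesis of Lemma \ref{lem:uniformdis} (possession of an infinite metrizable gauge) follows from the regularity of $\kappa$ is a point the paper leaves implicit, but otherwise the arguments coincide.
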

\begin{proof}
The equivalence $(\ref{item:000})\iff (\ref{item:existsg})$
is the definition of $\kappa\in \metrank(X)$. 

The implications $(\ref{item:allg})\To (\ref{item:existsg})$
and $(\ref{item:alls})\To (\ref{item:existss})$ follows 
from the fact that  $\GGG{\kappa}\neq \emptyset$ and 
$\FFF{\kappa}\neq \emptyset$ for all regular cardinals 
$\kappa$ (see Corollary \ref{cor:kappachara}).

According to  Proposition \ref{prop:chacha}, 
for every $G\in \GGG{\kappa}$, we have 
$\lolo{\Arc{G}}\in \FFF{\kappa}$. 
Then, by Lemma \ref{lem:uniformdis}, 
we obtain 
the implication 
$(\ref{item:existsg})\To (\ref{item:existss})$. 

We assume the condition (\ref{item:+1}). 
Take arbitrary $G\in \GGG{\kappa}$ and 
$S\in \FFF{\kappa}$. 
Take 
$d\in \ult(X; \invs{\kappa})$. 
Take 
a g-characteristic isotone embedding 
$f\colon \invs{\kappa}\to G$ and 
a characteristic isotone   embedding 
$g\colon \invs{\kappa}\to S$. 
Then, by Lemmas \ref{lem:amenable} and 
\ref{lem:lll}, 
we have $f\circ d\in \ult(X; G_{\ge 0})$ and 
$g\circ d\in \ult(X; S)$. 
Thus, 
we obtain 
the implication 
$(\ref{item:+1})\To (\ref{item:alls})$. 
According to 
Lemma \ref{lem:inclusions}, 
we observe 
 that 
$f\circ d\in \met(X; G)$, 
and hence 
we obtain 
the implication
$(\ref{item:+1})\To (\ref{item:allg})$. 
Note that the implication
$(\ref{item:+1})\To (\ref{item:allg})$ 
can be reduced  from 
Corollary \ref{cor:kappachara}. 

To finish the proof, it suffices to show the implication 
$(\ref{item:existss}) \To (\ref{item:+1})$. 
We assume the condition (\ref{item:existss}), and 
take $d\in \ult(X; S)$. 
Take a characteristic
isotone embedding 
$l\colon \invs{\kappa}\to S$, 
and  take a map 
 $\zeta_{S, l}\colon S\to \invs{\kappa}$ 
in Definition \ref{df:zeta}. 
We also define a map 
$e\colon X^{2}\to \invs{\kappa}$ by
$e=\zeta_{S, l}\circ d$. 
Since $\zeta_{S, l}$ satisfies 
the assumptions in
Lemma \ref{lem:amenable}, 
we obtain 
$e\in \ult(X; \invs{S})$. 
This finishes the proof. 
\end{proof}

We next observe that
the Hahn group and the Hahn fields are 
universal in the theory of generalized  metrics in 
some sense. 
Let $X$ be a set. 
Let $G$ be a linearly ordered  group and 
$S$ a bottomed linearly ordered set. 
Let $d$ be a $G$-metric or $S$-ultrametric on 
$X$. 
We denote by 
$U(x, \epsilon; d)$ (resp.~$B(x, \epsilon; d)$)
the open (resp.~closed) ball centered at 
$x$ with radius $\epsilon$.

\begin{lem}\label{lem:univg}

Let $G$ be a linearly ordered Abelian group. 
Let $X$ be a topological space with 
$\met(X; G)\neq \emptyset$ . 
Let $d\in \met(X; G)$. 
By Hahn's embedding theorem 
(Theorem \ref{thm:hahn}), 
we can consider that $G$ is a full subgroup of
$\hahnsp{\Arc{G}}$. 
Then, the topology of 
$(X, d)$ is generated by 
$\{U(x, \epsilon; d)\mid x\in X,  \epsilon\in\hahnsp{\Arc{G}}_{>0} \}$. 
In particular, we have $\met(X; G)\mysub \met(X; \hahnsp{\Arc{G}})$. 
\end{lem}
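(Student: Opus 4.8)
The plan is to show that the $G$-metric $d$, regarded now as a map into $H := \hahnsp{\Arc{G}}$, is again a metric inducing the very same topology on $X$. That $d$ satisfies (M\ref{item:m0})--(M\ref{item:m4}) as an $H$-metric is immediate, since $G$ is an ordered subgroup of $H$ and each axiom is phrased purely through the order and the group law, which are inherited. Moreover, because $G_{>0}\mysub H_{>0}$, every ball $U(x,\epsilon;d)$ with $\epsilon\in G_{>0}$ is also a ball with radius in $H_{>0}$; hence the topology generated by the $G$-radii is contained in the one generated by the $H$-radii. The entire content of the lemma is therefore the reverse inclusion: every $U(x,\epsilon;d)$ with $\epsilon\in H_{>0}$ is open with respect to the $G$-topology.

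For this, fix $\epsilon\in H_{>0}$ and a point $y\in U(x,\epsilon;d)$, and put $\eta=\epsilon-d(x,y)$; note $\eta\in H_{>0}$ since $d(x,y)<\epsilon$ and $d(x,y)\in G\mysub H$. By the triangle inequality (M\ref{item:m4}), once we produce $\delta\in G_{>0}$ such that every $g\in G$ with $g<\delta$ satisfies $g<\eta$, we obtain $U(y,\delta;d)\mysub U(x,\epsilon;d)$: indeed $d(y,z)<\delta$ forces $d(y,z)<\eta$, whence $d(x,z)\le d(x,y)+d(y,z)<d(x,y)+\eta=\epsilon$. Thus the lemma reduces to the following claim: for every $\eta\in H_{>0}$ there is $\delta\in G_{>0}$ with $\{\,g\in G\mid g<\delta\,\}\mysub\{\,g\in G\mid g<\eta\,\}$.

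To prove the claim I would exploit the Archimedean structure. Since the inclusion $G\hookrightarrow H$ is an isotone group embedding, it preserves and reflects both $\arel$ and $\arclele$, and the fullness of $G$ in $H$ (Definition \ref{df:morph}), furnished by Theorem \ref{thm:hahn}, makes the induced map $\Arc{G}\to\Arc{H}$ a surjective order embedding, hence an isomorphism; I identify the two sets along it. Let $\alpha=[\eta]_{\arel}$. If $\alpha$ is not the least element of $\Arc{H}$, then by fullness there is $b\in G_{>0}$ whose class lies strictly below $\alpha$, so $b\arclele\eta$ and in particular $b<\eta$; taking $\delta=b$ settles the claim. If instead $\alpha$ is the least element of $\Arc{H}$, it corresponds to the least element $m_{0}$ of $\Arc{G}$, and the elements of $H$ whose support is contained in $\{m_{0}\}$ form a subgroup of $H$ that is order-isomorphic to $\rr$ and contains $\eta$. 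Inside this copy of $\rr$, the elements of $G$ of class $\alpha$ form a subgroup, which (as recalled after Theorem \ref{thm:holder}) is dense in $\rr$ or cyclic. In the dense case there is $g\in G$ with $0<g<\eta$, and $\delta=g$ works; in the cyclic case its least positive element is the least element of $G_{>0}$ as a whole (every positive element of $G$ of class above $\alpha$ dominates it), and for this $\delta$ one has $\{\,g\in G\mid g<\delta\,\}=G_{\le 0}\mysub\{\,g\in G\mid g<\eta\,\}$ because $0<\eta$.

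Carrying this out shows $U(x,\epsilon;d)$ is a $G$-neighborhood of each of its points, hence $G$-open, so the two topologies coincide with the topology of $X$; the concluding inclusion $\met(X;G)\mysub\met(X;\hahnsp{\Arc{G}})$ is then immediate. The main obstacle is precisely the case in which $\eta$ is smaller than every positive element of $G$: here coinitiality of $G_{>0}$ in $H_{>0}$ genuinely fails (already for $G=\zz\mysub\rr=H$, where no positive integer lies below $\tfrac12$), so one cannot simply choose $\delta\le\eta$. The resolution rests on the fact that $d$ takes values in $G$: using fullness to pin $\eta$ down in the minimal Archimedean class, and then the dichotomy for subgroups of $\rr$ coming from Hölder's theorem (Theorem \ref{thm:holder}), one recovers a least positive element of $G$, which supplies the required $\delta$.
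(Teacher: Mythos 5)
Your proof is correct, and it is in essence the elaboration of the paper's proof, which consists of the single sentence ``The lemma follows from the fact that $G$ is full in $\hahnsp{\Arc{G}}$.'' Both arguments rest on fullness, but you go further in two useful ways. First, you isolate the actual point to be proved: for each $\eta\in\hahnsp{\Arc{G}}_{>0}$ one needs $\delta\in G_{>0}$ with $\{\,g\in G\mid g<\delta\,\}\mysub\{\,g\in G\mid g<\eta\,\}$, which is weaker than coinitiality of $G_{>0}$ in $\hahnsp{\Arc{G}}_{>0}$ (and you rightly note that coinitiality genuinely fails, e.g.\ for $\zz\mysub\rr$). Second, you observe that fullness alone does not settle the case where $[\eta]_{\arel}$ is the least Archimedean class: there the naive choice of $a\in G_{>0}$ with $a\arel\eta$ need not work (already for $\qq\mysub\rr$ with $\eta$ irrational), and one must invoke the H\"older dichotomy --- the minimal convex subgroup of $G$ is either dense in its copy of $\rr$, giving some $0<g<\eta$, or cyclic, in which case its generator is $\min G_{>0}$ and $\{\,g\in G\mid g<\delta\,\}=G_{\le 0}$. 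This case analysis is sound and supplies exactly the detail the paper's proof leaves implicit; the only cosmetic imprecision is that ``the elements of $G$ of class $\alpha$'' should be read as the convex subgroup they generate (i.e.\ together with $0$ and negatives), which is what you clearly intend.
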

\begin{proof}
The lemma follows from the fact that 
 $G$ is full in 
$\hahnsp{\Arc{G}}$. 
\end{proof}

Let $S$ be a bottomed linearly  ordered set. 
We now define an extension of $S$. 
If $\stars{S}$ has the least element, we
define  a linearly ordered set 
$\mychu{S}$ as follows: 
Take a countable set 
$E=\{e_{i}\}_{i\in \zz_{\ge 0}}$
with $E\cap S=\emptyset$. 
Put 
$\mychu{S}=S\sqcup E$. We
define an order on 
$\mychu{S}$,  which is 
an extension of the order of $S$, 
 by 
 $\mzero_{S}<e_{i}<\min\stars{S}$ and $e_{i+1}<e_{i}$ for all $i\in \zz_{\ge 0}$. 
If $\stars{S}$ does not have the least element, 
then we put 
$\mychu{S}=S$. 
Note that $\mychu{S}$ satisfies 
$\myomega\le \chara(\mychu{S})$. 

\begin{lem}\label{lem:univs}
Let $S$ be a bottomed linearly ordered set. 
Let $X$ be a topological space with 
$\ult(X; S)\neq \emptyset$. 
Then, we have 
$\ult(X; S)\mysub 
\ult(X; \hahntai{\mychu{S}}_{\ge 0})$. 
Moreover, 
$\ult(X; S)\mysub 
\met(X; \hahntai{\mychu{S}})$. 
\end{lem}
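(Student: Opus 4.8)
The plan is to exhibit $S$ as an order-subset of $\hahntai{\mychu{S}}_{\ge 0}$ by a single isotone embedding and then transport ultrametrics along it by Lemma \ref{lem:amenable}. First I would note that the construction of $\mychu{S}$ guarantees $\myomega\le\chara(\mychu{S})$, so Proposition \ref{prop:ee} applies to $\mychu{S}$ and produces a g-characteristic isotone embedding $\aimap\colon\mychu{S}\to\hahntai{\mychu{S}}$; being g-characteristic it takes values in $\hahntai{\mychu{S}}_{\ge 0}$ and sends $\mzero_{\mychu{S}}$ to $0$. Since the order of $\mychu{S}$ extends that of $S$ and keeps the same least element, the inclusion $\iota\colon S\to\mychu{S}$ is an isotone embedding with $\iota(\mzero_S)=\mzero_{\mychu{S}}$, and I would set $\psi=\aimap\circ\iota\colon S\to\hahntai{\mychu{S}}_{\ge 0}$.

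Next I would verify that $\psi$ fulfils the hypotheses (\ref{item:iso1}), (\ref{item:zero2}), (\ref{item:conti3}) of Lemma \ref{lem:amenable}. Conditions (\ref{item:iso1}) and (\ref{item:zero2}) are immediate: $\psi$ is a composite of isotone embeddings, hence isotone and injective, and $\psi(x)=0=\psi(\mzero_S)$ forces $x=\mzero_S$ by injectivity. The substantive point is continuity at $\mzero_S$, and here I would distinguish the two cases in the construction of $\mychu{S}$. If $\stars{S}$ has no least element, then $\mychu{S}=S$ and $\psi=\aimap$, so given $\epsilon\in\hahntai{\mychu{S}}_{>0}$ the g-characteristic property of $\aimap$ supplies $t\in\stars{S}$ with $\aimap(t)\le\epsilon$, whence $\delta=t$ works because $x<t$ implies $\aimap(x)<\aimap(t)\le\epsilon$. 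If $\stars{S}$ has a least element $m$, then $\delta=m$ works trivially, since the only $x\in S$ with $x<m$ is $\mzero_S$ and $\psi(\mzero_S)=0<\epsilon$.

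With these checks in place, Lemma \ref{lem:amenable} shows that for each $d\in\ult(X;S)$ the map $\psi\circ d$ lies in $\ult(X;\hahntai{\mychu{S}}_{\ge 0})$ and generates the same topology as $d$, hence the topology of $X$; identifying $S$ with $\psi(S)$ (following the same convention as in Lemma \ref{lem:univg}) identifies $d$ with $\psi\circ d$ and yields $\ult(X;S)\mysub\ult(X;\hahntai{\mychu{S}}_{\ge 0})$. For the ``Moreover'' assertion I would observe that $\hahntai{\mychu{S}}_{\ge 0}=G_{\ge 0}$ with $G=\hahntai{\mychu{S}}$ is trivially a g-characteristic subset of $G$ (take $t=s$ in Definition \ref{df:charazero}), so Lemma \ref{lem:inclusions} gives $\ult(X;\hahntai{\mychu{S}}_{\ge 0})\mysub\met(X;\hahntai{\mychu{S}})$, and chaining the two inclusions finishes the proof. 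The only delicate step is the continuity condition (\ref{item:conti3}); it is precisely in order to make $\hahntai{\mychu{S}}$ well-defined and this continuity harmless that $\mychu{S}$ is engineered to satisfy $\myomega\le\chara(\mychu{S})$, so the hard part here is conceptual rather than computational.
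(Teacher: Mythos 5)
Your proof is correct and follows exactly the route the paper intends: its own proof of this lemma is the one-line remark that it ``follows from Lemmas \ref{lem:amenable} and \ref{lem:inclusions}'', and you have simply supplied the details — the embedding $\psi=\aimap\circ\iota$ from Proposition \ref{prop:ee}, the verification of conditions (1)–(3) of Lemma \ref{lem:amenable} (including the correct case split on whether $\stars{S}$ has a least element), and the application of Lemma \ref{lem:inclusions} to the g-characteristic subset $\hahntai{\mychu{S}}_{\ge 0}$.
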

\begin{proof}
 The lemma follows from Lemmas \ref{lem:amenable}
 and \ref{lem:inclusions}. 
\end{proof}

\begin{rmk}
Based on  Lemma \ref{lem:univs}, 
we can observe that 
some
statements on $G$-metrics 
are still valid for $S$-ultrametrics. 
For example, 
statements in the future of this paper such as 
Lemmas 
\ref{lem:senkei}, 
\ref{lem:kappacompleteandcomplete}, 
and 
\ref{lem:cptandcomplete}
 hold true for not only 
 $G$-metrics but also
 $S$-ultrametric spaces. 
\end{rmk}

\subsubsection{Metrizable  gauges}
We next give a complete description of 
$\metrank(X)$ for a topological space $X$
(Proposition \ref{prop:classify}). 
\begin{prop}\label{prop:arc=1}
Let $X$ be a topological space. 
If $1\in \metrank(X)$, 
then $X$ is $\rr$-metrizable (metrizable in the ordinary sense). 
\end{prop}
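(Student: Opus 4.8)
The plan is to unwind the definition of $1 \in \metrank(X)$ and reduce everything to Hölder's embedding theorem. By Definition \ref{df:metgauge}, the hypothesis $1 \in \metrank(X)$ means there exists a linearly ordered Abelian group $G$ with $\chara(\lolo{\Arc{G}}) = 1$ and $\met(X; G) \neq \emptyset$. So first I would fix such a $G$ together with a metric $d \in \met(X; G)$, and extract what $\chara(\lolo{\Arc{G}}) = 1$ tells us about $G$.

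The key observation is that $\chara(\lolo{\Arc{G}}) = 1$ forces $G$ to be Archimedean. Indeed, recall that $\lolo{\Arc{G}}$ is the one-point bottom extension of $\Arc{G}$, so its least element is $\mzero$, and $\chara(\lolo{\Arc{G}}) = 1$ means (by Definition \ref{df:character}) that already the single step from the bottom suffices to be coinitial; concretely this says $\stars{(\lolo{\Arc{G}})} = \Arc{G}$ has cofinality $1$ from below in a way that collapses it to a single element. Using the remark in the paper that $G \in \GGG{1}$ if and only if $G$ is Archimedean, together with $\chara(\lolo{\Arc{G}}) = 1$ meaning $G \in \GGG{1}$, I would conclude $\card(\Arc{G}) = 1$, i.e. $G$ is Archimedean. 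This is the conceptual heart of the argument, and I expect the main (minor) obstacle to be pinning down the character bookkeeping cleanly, since $\chara$ is defined via maps out of $\kappa + 1$ and the degenerate case $\kappa = 1$ needs a careful reading of Definition \ref{df:character}.

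Once $G$ is Archimedean, I would invoke Hölder's embedding theorem (Theorem \ref{thm:holder}) to obtain an isotone group embedding $F \colon G \to \rr$. The plan is then to transport the metric: define $d' = F \circ d$ and check that $d'$ is an $\rr$-valued metric on $X$. Since $F$ is an injective, order-preserving group homomorphism, it sends $0_G$ to $0$, preserves the strict positivity in (M\ref{item:m0})--(M\ref{item:m2}), respects symmetry (M\ref{item:m3}) trivially, and converts the triangle inequality (M\ref{item:m4}) $d(x,y) \le d(x,z) + d(z,y)$ into $F(d(x,y)) \le F(d(x,z)) + F(d(z,y))$ because $F$ is both isotone and additive. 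The axioms (M\ref{item:m0})--(M\ref{item:m4}) for $d'$ thus follow immediately.

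The final step is to verify that $d'$ generates the same topology on $X$ as $d$, so that $d' \in \met(X; \rr)$ and hence $X$ is $\rr$-metrizable. For this I would argue directly at the level of open balls: since $F$ is an isotone embedding, for $\epsilon \in G_{>0}$ the ball $U(x, \epsilon; d)$ equals $U(x, F(\epsilon); d')$, and conversely, using that $F(G)$ is coinitial in $\rr_{>0}$ (because, as the remark after Theorem \ref{thm:holder} notes, an Archimedean $G$ embeds as a dense subgroup of $\rr$ or as a copy of $\zz$), every $\rr$-ball contains and is contained in suitable $F(G)$-balls. This matches the neighbourhood bases, so the two metrics induce the same topology, giving $d' \in \met(X; \rr)$ and completing the proof. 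I expect this density/coinitiality check to be the only place requiring a small amount of care, but it is routine given Hölder's theorem.
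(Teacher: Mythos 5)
Your proposal is correct and follows essentially the same route as the paper: the paper's proof simply takes $G\in\GGG{1}$ with $\met(X;G)\neq\emptyset$, invokes H\"older's embedding theorem (Theorem \ref{thm:holder}) to regard $G$ as a subgroup of $\rr$, and concludes $\rr$-metrizability. You have merely filled in the details the paper leaves implicit (transporting the metric along the embedding, checking the axioms, and comparing ball bases, including the degenerate $\zz$ case where the topology is discrete), all of which check out.
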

\begin{proof}
Take $G\in \GGG{1}$ such that 
$\met(X; G)\neq \emptyset$. 
According to 
 Theorem \ref{thm:holder}, the set $G$ can be considered as 
a subgroup of $\rr$. 
Thus, 
the space $X$ is $\rr$-metrizable. 
\end{proof}

\begin{prop}\label{prop:arc=countable}
A topological space 
$X$ is $\rr_{\ge 0}$-ultrametrizable (ultrametrizable in the ordinary sense) if and only if 
$\myomega\in \metrank(X)$. 
\end{prop}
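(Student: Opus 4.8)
The plan is to reduce everything to Proposition \ref{prop:characterization} applied with the regular cardinal $\kappa=\myomega$, using only the single observation that $\rr_{\ge 0}\in\FFF{\myomega}$. Recall that $\chara(\rr_{\ge 0})=\myomega$ (Definition \ref{df:character}), so by the definition of $\FFF{\kappa}$ we indeed have $\rr_{\ge 0}\in\FFF{\myomega}$; and $\myomega$ is regular (see the remark following Definition \ref{df:morph0}), so Proposition \ref{prop:characterization} is applicable to it. Note also that, by definition, $X$ being $\rr_{\ge 0}$-ultrametrizable means exactly $\ult(X;\rr_{\ge 0})\neq\emptyset$.

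For the forward implication, I would assume $X$ is $\rr_{\ge 0}$-ultrametrizable, i.e.\ $\ult(X;\rr_{\ge 0})\neq\emptyset$. Since $\rr_{\ge 0}\in\FFF{\myomega}$, this is precisely condition (\ref{item:existss}) of Proposition \ref{prop:characterization} for $\kappa=\myomega$ (witnessed by $S=\rr_{\ge 0}$). The equivalence $(\ref{item:existss})\iff(\ref{item:000})$ in that proposition then yields $\myomega\in\metrank(X)$.

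For the converse, I would assume $\myomega\in\metrank(X)$, which is condition (\ref{item:000}) of Proposition \ref{prop:characterization} for $\kappa=\myomega$. The equivalence $(\ref{item:000})\iff(\ref{item:alls})$ gives that $\ult(X;S)\neq\emptyset$ for \emph{all} $S\in\FFF{\myomega}$; specializing to $S=\rr_{\ge 0}$ produces $\ult(X;\rr_{\ge 0})\neq\emptyset$, i.e.\ $X$ is $\rr_{\ge 0}$-ultrametrizable. This closes the biconditional.

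There is essentially no hard analytic step here: the entire content has already been packaged into Proposition \ref{prop:characterization}, whose countable case encodes the passage between a generic $S\in\FFF{\myomega}$ (or $G\in\GGG{\myomega}$) and the concrete range $\rr_{\ge 0}$. The only point requiring a moment of care is to confirm that $\rr_{\ge 0}$ genuinely sits in $\FFF{\myomega}$ rather than in some other $\FFF{\kappa}$, which is immediate from $\chara(\rr_{\ge 0})=\myomega$; once that membership is recorded, both directions are one-line invocations of the stated equivalences.
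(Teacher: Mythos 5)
Your proposal is correct and follows exactly the paper's route: the paper's proof is a one-line appeal to Proposition \ref{prop:characterization} together with the observation that $\rr_{\ge 0}\in \FFF{\myomega}$, which is precisely the argument you spell out in detail.
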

\begin{proof}
The proposition follows from 
Proposition \ref{prop:characterization} and the fact that
$\rr_{\ge 0}\in \FFF{\myomega}$
(see also Lemma \ref{lem:univg}). 
\end{proof}
\begin{rmk}
We provide examples of  
theorems, 
similarly to Proposition
 \ref{prop:arc=countable}, 
 stating 
 that 
values of metrics determine the topology of the underlying space. 
As mentioned in Section \ref{sec:intro}, 
Dovgoshey--Shcherbak \cite{MR4335845} proved that 
an $\rr_{\ge 0}$-ultrametrizable space $X$ is separable if and only if 
for every
 $d\in \ult(X; \rr_{\ge 0})$, the set 
$\{\, d(x, y)\mid x, y\in X\, \}$ is countable. 
Broughan  \cite[Theorem 2]{MR314012} showed that 
for a topological space $X$, 
 the following are equivalent to each other:
\begin{enumerate}
\item\label{item:Broughan1}
 $X$ is $\rr_{\ge 0}$-ultrametrizable; 
\item\label{item:Broughan2} 
there exists $d\in \met(X; \rr)$ such that 
$\{\, d(x, y)\mid x, y\in X\, \}\mysub \{0\}\cup
\{\, 1/n\mid
n\in \zz_{\ge 1}\, \}$; 
\item\label{item:Broughan3}
there exist
 a characteristic  subset $S$ of $\rr_{\ge 0}$
isomorphic to $\invs{\myomega}$ as ordered sets, and 
 $d\in \met(X; \rr)$ such that 
$\{\, d(x, y)\mid x, y\in X\, \}\mysub S$.  
\end{enumerate}
Note that the metrics $d$ appearing 
in 
(\ref{item:Broughan2}) or 
(\ref{item:Broughan3})
is not assumed to be an  ultrametric 
(see also \cite[Proposition 2.14]{Ishiki2021ultra}). 

\end{rmk}

The next lemma states that all 
$\kappa$-metrizable space (in the sense of Sikorski)
 are $\kappa$-additive
(see \cite[(viii),  p129, ]{sikorski1950remarks}). 
\begin{lem}\label{lem:additive}
We assume that 
 $X$ is a topological space 
possessing an infinite metrizable gauge. 
Let 
$\kappa$ be a regular cardinal with 
$\kappa\in \metrank(X)$. 
Then, for every family $\{U_{i}\}_{i\in I}$ of open subsets 
with $\card(I)<\kappa$, 
the set $\bigcap_{i\in I}U_{i}$ is open in $X$. 
\end{lem}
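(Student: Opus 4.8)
The plan is to reduce the statement to a single generalized ultrametric whose value set is exactly a set of ordinals, and then to invoke the regularity of $\kappa$ through a supremum argument. Since $\kappa$ is a regular cardinal with $\kappa\in\metrank(X)$, Proposition \ref{prop:characterization} yields an ultrametric $d\in\ult(X;\invs{\kappa})$ generating the topology of $X$. Recall from Definition \ref{df:opkappa} that $\invs{\kappa}=\oposi{\kappa+1}$, so its underlying set is $\{\,\alpha\mid\alpha\le\kappa\,\}$ equipped with the \emph{reversed} ordinal order; here $\mzero_{\invs{\kappa}}=\kappa$, the maximum is $0$, and $\stars{\invs{\kappa}}$ consists precisely of the ordinals $\alpha<\kappa$.

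First I would reduce openness to an interior-point condition: it suffices to show that every $x\in\bigcap_{i\in I}U_{i}$ has a neighborhood contained in the intersection, the cases $I=\emptyset$ or $\bigcap_{i\in I}U_{i}=\emptyset$ being trivial. Since the open balls of $d$ form a base of the topology, for each $i\in I$ I can choose $\epsilon_{i}\in\stars{\invs{\kappa}}$ with $U(x,\epsilon_{i};d)\mysub U_{i}$; writing $\alpha_{i}<\kappa$ for the ordinal representing $\epsilon_{i}$, this produces a family $\{\alpha_{i}\}_{i\in I}$ of ordinals below $\kappa$ indexed by a set of cardinality $<\kappa$.

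The key step is to set $\alpha=\sup_{i\in I}\alpha_{i}$ and put $\epsilon=\alpha\in\invs{\kappa}$. This is exactly where regularity of $\kappa$ enters: since $\card(I)<\kappa$ and each $\alpha_{i}<\kappa$, regularity forces $\alpha<\kappa$, so $\epsilon\in\stars{\invs{\kappa}}$ and $U(x,\epsilon;d)$ is a genuine open ball containing $x$ (indeed $d(x,x)=\mzero_{\invs{\kappa}}=\kappa<_{\invs{\kappa}}\epsilon$ because $\alpha<\kappa$). For every $i$ we have $\alpha_{i}\le\alpha$ in the ordinal order, hence $\epsilon\le_{\invs{\kappa}}\epsilon_{i}$ in the reversed order; therefore $d(x,y)<_{\invs{\kappa}}\epsilon$ implies $d(x,y)<_{\invs{\kappa}}\epsilon_{i}$, that is, $U(x,\epsilon;d)\mysub U(x,\epsilon_{i};d)\mysub U_{i}$. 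Intersecting over $i\in I$ gives $U(x,\epsilon;d)\mysub\bigcap_{i\in I}U_{i}$, so $x$ is an interior point and the intersection is open.

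The only genuine obstacle is the behavior of the supremum $\alpha$: without regularity of $\kappa$ one could have $\alpha=\kappa$, in which case $\epsilon$ would collapse to $\mzero_{\invs{\kappa}}$ and the radius would cease to define a nontrivial ball, so the argument would break. Everything else is a direct unwinding of the definitions; the one point demanding care is keeping the ordinal order and the reversed order of $\invs{\kappa}$ straight, but this bookkeeping is routine.
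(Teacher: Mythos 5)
Your proof is correct and is essentially the argument the paper intends: nested balls with radii indexed by $\invs{\kappa}$, plus regularity of $\kappa$ to keep the supremum of the $<\kappa$ many radii below $\kappa$. The only cosmetic difference is that the paper works directly with a $G$-metric $d\in\met(X;G)$ and a g-characteristic isotone embedding $l\colon\invs{\kappa}\to G$ to index the radii, whereas you first pass to an $\invs{\kappa}$-valued ultrametric via Proposition \ref{prop:characterization}; the two routes are interchangeable here.
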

\begin{proof}
Let $G\in \GGG{\kappa}$, and 
$d\in \met(X; G)$. 
Take a 
g-characteristic
isotone embedding
$l\colon \invs{\kappa}\to G$. 
The lemma follows from the fact that 
for every 
$a\in X$, 
and 
for every 
 $\theta<\kappa$, 
we have 
$U(a, l(\theta); d)\mysub 
\bigcap_{\alpha<\theta}U(a, l(\alpha); d)$. 
\end{proof}
\begin{prop}\label{prop:twocard}
Let $X$ be a topological space. 
We assume that there exist two  cardinals  $\kappa, \mu\in \metrank(X)$ with 
$\kappa<\mu$, 
and  assume that either of the following is satisfied: 
\begin{enumerate}
\item 
$\kappa=1$  and 
$\myomega<\mu$; 
\item 
$\myomega\le \kappa$. 
\end{enumerate}
Then 
the space $X$ is discrete. 
\end{prop}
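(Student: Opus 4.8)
The plan is to show that every singleton $\{x\}$ is open, which is exactly the assertion that $X$ is discrete. The engine is the additivity furnished by Lemma~\ref{lem:additive}. In both cases one has $\myomega < \mu$: in case~(1) this is assumed, and in case~(2) it follows from $\mu > \kappa \ge \myomega$. Hence $\mu$ is an uncountable (in particular, regular) metrizable gauge, so $X$ possesses an infinite metrizable gauge, and by Lemma~\ref{lem:additive} the intersection of any family of strictly fewer than $\mu$ open subsets of $X$ is open. It therefore suffices, for each $x \in X$, to write $\{x\}$ as an intersection of a family of open sets whose cardinality is $< \mu$, and then appeal to this $\mu$-additivity.

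The desired representation is supplied by the smaller gauge $\kappa$. In the principal case $\myomega \le \kappa$, I would fix $G \in \GGG{\kappa}$ with $\met(X; G) \ne \emptyset$ together with some $d \in \met(X; G)$, and use Lemma~\ref{lem:charakappa} to obtain a g-characteristic isotone embedding $l \colon \invs{\kappa} \to G$. Being g-characteristic means precisely that the nonzero values $\{\, l(\alpha) \mid \alpha < \kappa \,\}$ are coinitial in $G_{>0}$. Consequently, for each $x \in X$,
\[
\{x\} = \bigcap_{\alpha < \kappa} U(x, l(\alpha); d),
\]
since for $y \ne x$ one has $d(x, y) > 0$ by (M\ref{item:m0}) and (M\ref{item:m2}), and coinitiality produces $\alpha < \kappa$ with $l(\alpha) \le d(x, y)$, whence $y \notin U(x, l(\alpha); d)$. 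This writes $\{x\}$ as an intersection of $\kappa$ open balls; as $\kappa < \mu$, the family has cardinality $< \mu$, and Lemma~\ref{lem:additive} makes $\{x\}$ open.

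In the remaining case $\kappa = 1$ with $\myomega < \mu$, a group in $\GGG{1}$ is Archimedean and supplies no $\kappa$-indexed coinitial family, so I would instead invoke Proposition~\ref{prop:arc=1} to conclude that $X$ is $\rr$-metrizable and fix an ordinary metric $d \in \met(X; \rr)$. Then $\{x\} = \bigcap_{n \in \zz_{\ge 1}} U(x, 1/n; d)$ is a countable intersection of open balls, which again has cardinality $\myomega < \mu$, so $\mu$-additivity renders $\{x\}$ open. The one delicate point, and the reason that both the splitting into cases and the strict inequality $\myomega < \mu$ in case~(1) are indispensable, is that Lemma~\ref{lem:additive} only upgrades intersections of size $< \mu$: a singleton in a metric space is most naturally a countable intersection of balls, so unless $\mu$ strictly exceeds $\myomega$ one cannot conclude openness (the Cantor set, carrying the gauges $1$ and $\myomega$, shows the conclusion genuinely fails when $\mu = \myomega$). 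I expect no conceptual difficulty here beyond keeping this cardinality comparison straight.
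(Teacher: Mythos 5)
Your proof is correct and follows essentially the same route as the paper: write each singleton as an intersection of at most $\max\{\myomega,\kappa\}<\mu$ open sets (via a g-characteristic family of radii when $\myomega\le\kappa$, via ordinary $\rr$-metrizability when $\kappa=1$) and then invoke the $\mu$-additivity of Lemma~\ref{lem:additive}. The paper merely compresses your two cases into one line by setting $\theta=\max\{\myomega,\kappa\}$; your version spells out the details, including the correct observation that case~(1) genuinely needs $\myomega<\mu$.
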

\begin{proof}
Put $\theta=\max\{\myomega, \kappa\}$. 
By the assumptions, 
 we have $\theta<\mu$. 
By $\theta\in \metrank(X)$, 
in any case, 
for every point $x\in X$, 
there exists an open set 
$\{U_{\alpha}\}_{\alpha<\theta}$ such that 
$\{x\}=\bigcap_{\alpha<\theta}U_{\alpha}$. 
By Lemma \ref{lem:additive},  and 
 $\theta<\mu$, 
the set $\{x\}$ is open. 
Therefore the space $X$ is discrete. 
\end{proof}

\begin{prop}\label{prop:discrete}
If $X$ is a discrete space, 
then $\metrank(X)$ consists of 
and only of  
$1$ and 
all regular  cardinals. 
\end{prop}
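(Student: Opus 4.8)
The plan is to prove the two inclusions of the asserted equality separately. The inclusion $\metrank(X)\mysub\{1\}\cup\{\,\kappa\mid\kappa\text{ regular}\,\}$ requires no special hypothesis on $X$: it is recorded in Definition \ref{df:metgauge}, where it is observed that every metrizable gauge is necessarily $1$ or a regular cardinal. Hence the entire content of the proposition lies in the reverse inclusion, and I would produce, for a discrete $X$, a witness for each of $1$ and every regular cardinal.

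For the value $1$, I would use that a discrete space is $\rr$-metrizable: the discrete metric given by $d(x,y)=1$ when $x\neq y$ and $d(x,x)=0$ belongs to $\met(X;\rr)$, and since $\rr$ is Archimedean we have $\rr\in\GGG{1}$. Thus $1\in\metrank(X)$ directly from Definition \ref{df:metgauge}.

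For a regular cardinal $\kappa$, my plan is to invoke the equivalence $(\ref{item:000})\iff(\ref{item:+1})$ in Proposition \ref{prop:characterization}, which reduces the claim $\kappa\in\metrank(X)$ to exhibiting a single member of $\ult(X;\invs{\kappa})$. Recalling from Definition \ref{df:opkappa} that $\invs{\kappa}$ has least element $\mzero_{\invs{\kappa}}=\kappa$ and greatest element $0$, I would take the two-valued discrete ultrametric
\[
d(x,y)=
\begin{cases}
\mzero_{\invs{\kappa}} & \text{if } x=y,\\
0 & \text{if } x\neq y.
\end{cases}
\]
Conditions (U\ref{item:u0})--(U\ref{item:u3}) are immediate, and the strong triangle inequality (U\ref{item:u4}) follows from a case split: if $x=y$ the left-hand side is the bottom $\mzero_{\invs{\kappa}}$ and nothing is to prove, while if $x\neq y$ then at least one of $x\neq z$, $z\neq y$ holds, so the right-hand maximum already attains the top element $0$. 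As $0>\mzero_{\invs{\kappa}}$ (valid since $\kappa\ge 1$), the open ball $U(x,0;d)$ equals $\{x\}$, so $d$ generates the discrete topology and hence $d\in\ult(X;\invs{\kappa})$. Since $\chara(\invs{\kappa})=\kappa$ by Corollary \ref{cor:kappachara}, Proposition \ref{prop:characterization} gives $\kappa\in\metrank(X)$, completing the reverse inclusion and therefore the equality.

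I do not expect a genuine obstacle; the only step needing a little care is checking that the two-valued ultrametric really induces the discrete topology, which rests on $0$ lying strictly above the bottom $\mzero_{\invs{\kappa}}$ of $\invs{\kappa}$, a fact available for every regular (indeed every nonzero) $\kappa$.
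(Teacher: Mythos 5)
Your proof is correct and follows essentially the same approach as the paper: the key construction in both is the two-valued discrete metric. The only (inessential) difference is that for regular $\kappa$ the paper defines the two-valued map directly as a $G$-metric for an arbitrary $G\in\GGG{\kappa}$ and any $\delta\in G_{>0}$, whereas you realize it as an element of $\ult(X;\invs{\kappa})$ and then transfer via the equivalence $(\ref{item:000})\iff(\ref{item:+1})$ of Proposition \ref{prop:characterization}; both routes are valid and of comparable length.
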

\begin{proof}
Let $\kappa$ be a cardinal
such that $\kappa=1$ or it is regular. 
Take $G\in \GGG{\kappa}$, and 
take $\delta\in G_{>0}$. 
We define a map $d: X\times X\to G_{\ge 0}$ by 
$d(x, y)=\delta$ if $x\neq y$; otherwise, 
$d(x, x)=0$. 
Then $d\in \met(X; G)$, and hence 
$\kappa\in \metrank(X)$. 
\end{proof}

In summary, 
combining 
Propositions \ref{prop:arc=1}, 
\ref{prop:arc=countable}, 
\ref{prop:twocard}, 
and 
\ref{prop:discrete}, 
we obtain the following proposition: 
\begin{prop}\label{prop:classify}
Let $X$ be a topological space. 
The one and only one of the following 
holds true:
\begin{enumerate}
\renewcommand{\labelenumi}{(C\arabic{enumi})}
\item 
$\metrank(X)=\emptyset$.\label{item:met0}
\item 
$\metrank(X)=\{1\}$. \label{item:c1}
\item 
$\metrank(X)=\{1, \myomega\}$. \label{item:c2}
\item 
$\metrank(X)=\{\kappa\}$ for some 
 uncountable  regular cardinal $\kappa$. \label{item:c3}
\item 
$\metrank(X)$ consists of and only of   $1$ and  all regular cardinals. \label{item:c4}
\end{enumerate}
\end{prop}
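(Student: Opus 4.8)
The plan is to assemble the classification from the four cited propositions together with the basic constraint, recorded in Definition \ref{df:metgauge}, that every element of $\metrank(X)$ is either $1$ or a regular cardinal. Throughout I may thus regard $\metrank(X)$ as a subset of $\{1\}\cup\{\text{regular cardinals}\}$.

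First I would record a monotonicity between the two smallest gauges: if $\myomega\in\metrank(X)$, then by Proposition \ref{prop:arc=countable} the space $X$ is $\rr_{\ge 0}$-ultrametrizable, so any $d\in\ult(X;\rr_{\ge 0})$ is in particular an $\rr$-metric inducing the same topology; since $\rr\in\GGG{1}$ this gives $\met(X;\rr)\neq\emptyset$ and hence $1\in\metrank(X)$. In particular $\metrank(X)$ can never be the singleton $\{\myomega\}$.

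Next I would dichotomize according to whether $\metrank(X)$ contains a pair that triggers Proposition \ref{prop:twocard}; call distinct $\kappa<\mu$ in $\metrank(X)$ a \emph{discreteness pair} if either $\kappa=1$ and $\myomega<\mu$, or $\myomega\le\kappa$. If $\metrank(X)$ contains a discreteness pair, then Proposition \ref{prop:twocard} makes $X$ discrete, whereupon Proposition \ref{prop:discrete} forces $\metrank(X)$ to consist of exactly $1$ and all regular cardinals, i.e.\ case (C5). For the complementary case I would use the elementary observation that, within $\{1\}\cup\{\text{regular cardinals}\}$, the only pair of distinct elements that is \emph{not} a discreteness pair is $\{1,\myomega\}$ (any two regular cardinals have smaller one $\ge\myomega$, and any pair $1<\mu$ with $\mu>\myomega$ triggers the first clause). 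Consequently a $\metrank(X)$ with no discreteness pair has at most two elements, and if it has two they are exactly $1$ and $\myomega$. Enumerating the remaining possibilities: $\emptyset$ gives (C1); the singleton $\{1\}$ gives (C2); a singleton $\{\kappa\}$ with $\kappa$ an uncountable regular cardinal gives (C4) (the singleton $\{\myomega\}$ being excluded by the monotonicity step); and the two-element case $\{1,\myomega\}$ gives (C3).

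Finally I would check mutual exclusivity, which is immediate upon inspecting the five listed sets. All the substantive content lives in the cited propositions, so the only genuine work is the bookkeeping of the preceding paragraph; I expect the mild obstacle to be precisely the verification that $\{1,\myomega\}$ is the unique non-triggering pair and that the stray singleton $\{\myomega\}$ is ruled out, both of which fall out once the monotonicity implication and the exact hypotheses of Proposition \ref{prop:twocard} are in hand.
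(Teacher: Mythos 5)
Your proposal is correct and follows essentially the same route as the paper, whose proof consists precisely of combining Propositions \ref{prop:arc=1}, \ref{prop:arc=countable}, \ref{prop:twocard}, and \ref{prop:discrete}; you have simply written out the case bookkeeping (the monotonicity $\myomega\in\metrank(X)\Rightarrow 1\in\metrank(X)$, the identification of $\{1,\myomega\}$ as the unique pair not forcing discreteness, and the exclusion of the singleton $\{\myomega\}$) that the paper leaves implicit.
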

\begin{rmk}
Let $X$ is a topological space. 
The  space $X$ satisfies the condition (C\ref{item:met0}) if and only if 
$X$ is not $G$-metrizable for any  $G\in \GGG{\kappa}$, nor 
 $S$-ultrametrizable for any $S\in \FFF{\kappa}$, 
 for any cardinal $\kappa$. 
The  space $X$ 
satisfies the condition (C\ref{item:c1}) if and only if 
$X$ is
$\rr$-metrizable and 
 non-discrete and 
non-$\rr_{\ge 0}$-ultrametrizable. 
The  space $X$ satisfies the condition (C\ref{item:c2}) if and only if 
$X$ is $\rr_{\ge 0}$-ultrametrizable and non-discrete. 
The  space $X$ satisfies the condition (C\ref{item:c3}) if and only if $X$ is non-discrete and 
$\kappa$-metrizable in the sense of Sikorski. 
The  space $X$ satisfies the condition (C\ref{item:c4}) if and only if 
$X$ is discrete. 
Note that 
$X$ possessing an infinite metrizable gauge if and only if $X$ satisfies  any one of  the conditions  
(C\ref{item:c2}), (C\ref{item:c3}), 
or  (C\ref{item:c4}). 
\end{rmk}

\subsubsection{Ultrametrics}

The following lemma can be proven by a 
similar method  
to the case of ordinary ultrametrics 
(see, for example, \cite[The statement 4 in the page 3]{MR2598517}). 
\begin{lem}\label{lem:isosceles}
Let 
$X$ 
be a set and 
$S$ be a bottomed 
linearly ordered set.
Let 
$d: X^2\to S$ 
be an $S$-ultrametric on $X$. 
Then 
for all 
$x, y, z\in X$, 
the inequality 
$d(x, z)<d(y, z)$ 
implies 
$d(y, z)=d(x, y)$. 
\end{lem}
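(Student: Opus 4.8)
The plan is to prove the isosceles property using nothing beyond the symmetry (U\ref{item:u3}) and the strong triangle inequality (U\ref{item:u4}), together with the linearity of the order on $S$. The strategy is to apply (U\ref{item:u4}) twice, inserting the ``third'' vertex into each of the two pairs that should turn out equal, and then to play the two resulting bounds against each other. Throughout I would freely use (U\ref{item:u3}) to rewrite $d(a,b)$ as $d(b,a)$.

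First I would insert the vertex $x$ into the pair $(y,z)$. By (U\ref{item:u4}) and (U\ref{item:u3}),
\[
d(y, z)\le d(y, x)\lor d(x, z)=d(x, y)\lor d(x, z).
\]
Since $S$ is linearly ordered, the right-hand maximum is either $d(x,y)$ or $d(x,z)$; if it equalled $d(x,z)$ we would get $d(y,z)\le d(x,z)$, contradicting the hypothesis $d(x,z)<d(y,z)$. Hence $d(x,y)\lor d(x,z)=d(x,y)$, and therefore $d(y,z)\le d(x,y)$. Next I would insert the vertex $z$ into the pair $(x,y)$: again by (U\ref{item:u4}) and (U\ref{item:u3}),
\[
d(x, y)\le d(x, z)\lor d(z, y)=d(x, z)\lor d(y, z).
\]
Because $d(x,z)<d(y,z)$, this maximum is $d(y,z)$, so $d(x,y)\le d(y,z)$. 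Combining the two bounds gives $d(y,z)=d(x,y)$, which is the claim.

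I do not expect any genuine obstacle: the whole argument is forced once both applications of (U\ref{item:u4}) are written down. The only point that deserves care is that the step ruling out $d(x,y)\lor d(x,z)=d(x,z)$ relies on $\le$ being a \emph{linear} order on $S$, so that a maximum of two elements is literally one of them and the strict hypothesis $d(x,z)<d(y,z)$ can push the maximum onto the $d(x,y)$ term. This is exactly the place where the proof for general bottomed linearly ordered sets runs identically to the classical real-valued case, and it is the reason the statement holds verbatim in this setting.
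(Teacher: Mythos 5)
Your proof is correct and is exactly the standard two-fold application of the strong triangle inequality that the paper invokes by reference (it gives no written proof, citing the classical ultrametric case). Both applications of (U4) and the use of linearity of the order to force the maximum onto the right term are precisely the intended argument, so nothing further is needed.
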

The following is well-known in the case of ordinary ultrametrics. 
\begin{lem}\label{lem:centers}
Let 
$X$ 
be a set and 
$S$ be a bottomed linearly ordered set.
Let 
$d\colon X^{2}\to S$ be an $S$-ultrametric. 
Let $x\in X$ and $s\in \stars{S}$. 
Then,  for all $y\in B(x, s; d)$, we have 
$B(x, s; d)=B(y, s; d)$. 
In particular, 
each 
$B(x, s; d)$ is  clopen, and 
we have 
$d(a, b)\le s$ for all $a, b\in B(x, s; d)$. 
\end{lem}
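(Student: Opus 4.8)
The plan is to exploit the strong triangle inequality (U\ref{item:u4}) together with the isosceles property recorded in Lemma \ref{lem:isosceles}. Recall that $B(x, s; d)=\{\, z\in X\mid d(x, z)\le s\, \}$. First I would prove the identity of the balls. Fix $y\in B(x, s; d)$, so that $d(x, y)\le s$. For any $z\in B(x, s; d)$ the strong triangle inequality gives $d(y, z)\le d(y, x)\lor d(x, z)\le s$, hence $z\in B(y, s; d)$; the reverse inclusion follows from the symmetric estimate $d(x, z)\le d(x, y)\lor d(y, z)\le s$. Therefore $B(x, s; d)=B(y, s; d)$. The diameter bound is immediate from the same inequality: for $a, b\in B(x, s; d)$, using (U\ref{item:u3}) and (U\ref{item:u4}) we get $d(a, b)\le d(a, x)\lor d(x, b)=d(x, a)\lor d(x, b)\le s$.

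Next I would establish openness. Since $s\in \stars{S}$ we have $\mzero_{S}<s$, so the equality $d(y, y)=\mzero_{S}$ from (U\ref{item:u1}) yields $y\in U(y, s; d)$. Moreover $U(y, s; d)\mysub B(y, s; d)$, because $d(y, w)<s$ implies $d(y, w)\le s$. Combined with $B(y, s; d)=B(x, s; d)$, this exhibits $U(y, s; d)$ as an open neighbourhood of $y$ contained in $B(x, s; d)$. As $y\in B(x, s; d)$ was arbitrary, $B(x, s; d)$ is open.

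For closedness I would show that the complement is open. Let $z\in X$ satisfy $z\notin B(x, s; d)$; since $S$ is linearly ordered, this means $d(x, z)>s$, and in particular $d(x, z)\in \stars{S}$, so that $U(z, d(x, z); d)$ is a genuine open neighbourhood of $z$. For any $w\in U(z, d(x, z); d)$ we have $d(z, w)<d(z, x)$, and Lemma \ref{lem:isosceles} (applied to $w$, $x$, $z$) forces $d(x, w)=d(x, z)>s$, whence $w\notin B(x, s; d)$. Thus $U(z, d(x, z); d)$ is disjoint from $B(x, s; d)$, the complement is open, and $B(x, s; d)$ is clopen.

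The computations are entirely routine, resting only on (U\ref{item:u4}) and the isosceles lemma. The one point requiring care is that $S$ is a general bottomed linearly ordered set rather than $\rr_{\ge 0}$, so every radius used must be verified to lie in $\stars{S}$ in order to guarantee that the relevant open balls are nonempty neighbourhoods of their centres; this holds for both $s$ and $d(x, z)$, and I expect this bookkeeping, rather than any genuine difficulty, to be the only thing to watch.
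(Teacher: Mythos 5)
Your proposal is correct and follows essentially the same route as the paper: the ball equality is obtained exactly as in the paper's proof via the strong triangle inequality, and the paper simply asserts that the clopenness and diameter bound "follow from the former part," which your openness and closedness arguments (the latter via Lemma \ref{lem:isosceles}) legitimately fill in. The only cosmetic difference is that closedness could also be deduced directly from the ball equality by writing the complement as a union of balls $B(z,s;d)$ with $z\notin B(x,s;d)$, but your version is equally valid.
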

\begin{proof}
Take $z\in B(x, s; d)$, 
then $d(y, z)\le d(y, x)\lor d(x, z)\le s$. 
Thus, 
$B(x, s; d)\mysub B(y, s; d)$. 
Similarly, 
we obtain
$B(y, s; d)\mysub B(x, s; d)$. 
The latter part follows from the former one. 
\end{proof}
\begin{rmk}
The similar statement to Lemma 
\ref{lem:centers} for open balls holds true. 
We omit the proof. 
\end{rmk}

\subsubsection{Completeness}

Let $X$ be a non-empty set. 
A set $\myfilter{F}$ 
consisting of subsets of $X$ is 
said to be a \emph{filter on $X$} if 
the following conditions are  satisfied:
\begin{enumerate}
\item $\emptyset\not \in \myfilter{F}$ (in this paper, all filters are assumed to be proper);
\item $X\in \myfilter{F}$; 
\item if $A, B\in \myfilter{F}$, then $A\cap B\in \myfilter{F}$; 
\item if $E\mysub X$ and $A\in \myfilter{F}$ satisfy 
$A\mysub E$, then $E\in \myfilter{F}$. 
\end{enumerate}
Let $X$ be a topological space. 
We say that a filter $\myfilter{F}$ on $X$ \emph{converges to 
$p\in X$} if $\myfilter{F}$ contains all neighborhoods of $p$. 
In this case, the point $p$ is called a 
\emph{limit point of $\myfilter{F}$}. 
We say that a 
 filter $\myfilter{F}$ on $X$  
 \emph{has a cluster point} if 
 $\bigcap_{A\in \myfilter{F}}\cl_{X}(A)\neq \emptyset$, where 
 $\cl_{X}$ is the closure operator of $X$. 
 
Let $X$ be a set. Let $G$ be a linearly ordered Abelian group and 
$S$ be a linearly ordered set. Let $d$ be a 
$G$-metric (resp.~an $S$-ultrametric).  
A filter $\myfilter{F}$ on $(X, d)$ is said to be 
\emph{Cauchy} if for all $\epsilon\in G_{>0}$ (resp.~$\epsilon\in \stars{S}$), there exists $A\in \myfilter{F}$ such that $d(x, y)< \epsilon$ for all $x, y\in A$. 
As defined in Section \ref{sec:intro}, 
we say that the space $(X, d)$ is \emph{complete} if 
every Cauchy filter on $(X, d)$ has a limit point. 

Let $X$ and  $Y$ be sets, 
and $f\colon X\to Y$ be a map. 
Then, for a filter $\myfilter{F}$ on $X$, 
we define a filter 
$f_{\sharp}\myfilter{F}$ on $Y$ by the 
filter generated by 
the set 
$\{\, f(A)\mid A\in \myfilter{F}\, \}$. 
The filter $f_{\sharp}\myfilter{F}$ is 
called the 
\emph{pushout filter of $\myfilter{F}$ by $f$}.

 For more discussions 
 on
  filters, 
 we refer the readers to 
 \cite{W1970}.

The following lemme states that 
being a Cauchy filter is invariant under the 
uniform equivalence, 
which can be deduced from  
  the definitions of 
Cauchy filters and 
the uniform equivalence. 
\begin{lem}\label{lem:cpltuni}
We assume that 
 $X$ is  a topological space possessing an infinite metrizable gauge. 
Let $\kappa$ be a regular cardinal  with $\kappa\in \metrank(X)$. 
Let $d\in \met(X; G)$ or $d\in \ult(X; S)$
for some $G\in \GGG{\kappa}$, 
or $S\in \FFF{\kappa}$. 
Let $e\in \met(X; H)$ or $e\in \ult(X; T)$
for some $H\in \GGG{\kappa}$, 
or $T\in \FFF{\kappa}$. 
If $d$ and $e$ are uniformly equivalent, 
then every Cauchy filter $\myfilter{F}$ on 
$(X, d)$ is also Cauchy  on $(X, e)$. 
Moreover, if $(X, d)$ is complete, 
then so is $(X, e)$. 
\end{lem}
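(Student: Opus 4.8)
The plan is to read both assertions straight off Definition \ref{df:uequi} of the uniform equivalence together with the definitions of Cauchy filters and completeness. To treat the four metric/ultrametric combinations uniformly, I would fix the notation of Definition \ref{df:uequi}: write $E$ for the positive cone of the range structure of $d$ (so $E=G_{>0}$ when $d\in\met(X;G)$ and $E=\stars{S}$ when $d\in\ult(X;S)$) and $F$ for that of $e$, so that uniform continuity of $\mathrm{id}_{X}\colon(X,d)\to(X,e)$ reads: for every $\epsilon\in F$ there is $\delta\in E$ with $d(x,y)<\delta\To e(x,y)<\epsilon$ for all $x,y\in X$, and symmetrically for $\mathrm{id}_{X}\colon(X,e)\to(X,d)$.

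First I would transfer the Cauchy condition. Let $\myfilter{F}$ be Cauchy on $(X,d)$ and fix $\epsilon\in F$. By uniform continuity of $\mathrm{id}_{X}\colon(X,d)\to(X,e)$ choose $\delta\in E$ with $d(x,y)<\delta\To e(x,y)<\epsilon$; since $\myfilter{F}$ is Cauchy on $(X,d)$, there is $A\in\myfilter{F}$ with $d(x,y)<\delta$ for all $x,y\in A$, whence $e(x,y)<\epsilon$ for all $x,y\in A$. As $\epsilon\in F$ was arbitrary, $\myfilter{F}$ is Cauchy on $(X,e)$. Interchanging the roles of $d$ and $e$ and invoking uniform continuity of $\mathrm{id}_{X}\colon(X,e)\to(X,d)$ yields the same statement with $d$ and $e$ swapped, which I will use in the second part.

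For completeness, assume $(X,d)$ is complete and let $\myfilter{G}$ be a Cauchy filter on $(X,e)$. By the swapped version of the first part, $\myfilter{G}$ is Cauchy on $(X,d)$, so by completeness of $(X,d)$ it has a limit point $p\in X$, i.e. $\myfilter{G}$ contains every neighborhood of $p$. The decisive observation is that being a limit point is a purely topological notion: by the definitions of $\met(X;G)$ and $\ult(X;S)$, both $d$ and $e$ generate the topology of $X$, so the neighborhood filter of $p$ is the same whether computed from $d$ or from $e$. Hence $p$ is also a limit point of $\myfilter{G}$ on $(X,e)$, and $(X,e)$ is complete.

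I do not anticipate a real obstacle: the argument is a direct unwinding of the definitions, and the only care needed is the bookkeeping over the metric versus ultrametric cases (absorbed into the $E,F$ convention) and the remark that limit points, being topological, transfer for free once Cauchy-ness has been transferred in both directions.
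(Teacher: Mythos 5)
Your argument is correct and is exactly the routine unwinding of definitions that the paper has in mind: the paper omits the proof entirely, noting only that the lemma "can be deduced from the definitions of Cauchy filters and the uniform equivalence." Your bookkeeping with $E$ and $F$, the two-directional transfer of Cauchy-ness, and the observation that convergence is topological (since both metrics generate the topology of $X$ by the definitions of $\met$ and $\ult$) fill in precisely the intended details.
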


\begin{lem}\label{lem:completecomplete}
We assume that $X$ is a topological space 
possessing  an infinite metrizable gauge. 
Let $\kappa$ be a cardinal with 
$\kappa\in \metrank(X)$. 
Let $G\in \GGG{\kappa}$. 
If there exists a complete 
$G$-metric $d\in \met(X; G)$, 
then there exists a
complete  $\invs{\kappa}$-ultrametric 
$h\in \ult(X; \invs{\kappa})$. 
\end{lem}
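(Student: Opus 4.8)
The plan is to exploit the fact that completeness is a uniform invariant (Lemma \ref{lem:cpltuni}), so that it suffices to manufacture an $\invs{\kappa}$-ultrametric that is uniformly equivalent to the given complete $G$-metric $d$. I would build such an ultrametric in two steps: first collapse the Archimedean structure of $G$ by means of the map $\cova_{G}$, and then reindex the resulting ultrametric along a characteristic isotone embedding of $\invs{\kappa}$. Throughout, $\kappa$ is treated as a regular cardinal, which is what is needed to invoke Lemmas \ref{lem:uniformdis} and \ref{lem:cpltuni}.

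First I would set $e=\cova_{G}\circ d$. By Lemma \ref{lem:uniformdis}, we have $e\in \ult(X; \lolo{\Arc{G}})$ and $e$ is uniformly equivalent to $d$; since $d$ is complete, Lemma \ref{lem:cpltuni} then yields that $e$ is complete as well. To apply Lemma \ref{lem:cpltuni} to the pair $(d,e)$, I use that $\lolo{\Arc{G}}\in \FFF{\kappa}$: this holds because $G\in \GGG{\kappa}$ means precisely $\chara(\lolo{\Arc{G}})=\kappa$, and $\lolo{\Arc{G}}$ is bottomed.

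Next, since $\lolo{\Arc{G}}\in \FFF{\kappa}$, Lemma \ref{lem:charakappa} furnishes a characteristic isotone embedding $l\colon \invs{\kappa}\to \lolo{\Arc{G}}$, and I would form the map $\zeta=\zeta_{\lolo{\Arc{G}},l}\colon \lolo{\Arc{G}}\to \invs{\kappa}$ of Definition \ref{df:zeta}, which (by the remark accompanying that definition) satisfies hypotheses (1)--(3) of Lemma \ref{lem:amenable}. Setting $h=\zeta\circ e$, Lemma \ref{lem:amenable} gives $h\in \ult(X; \invs{\kappa})$ together with the uniform equivalence of $h$ and $e$. Because $\invs{\kappa}\in \FFF{\kappa}$ by Corollary \ref{cor:kappachara}, Lemma \ref{lem:cpltuni} applies once more, now to the pair $(e,h)$, transferring the completeness of $e$ to $h$. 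This produces the desired complete $h\in \ult(X; \invs{\kappa})$.

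The argument is essentially an assembly of earlier results, so I do not expect any genuine computational obstacle; the one point demanding care is the bookkeeping of the hypotheses of Lemma \ref{lem:cpltuni}, namely that every (ultra)metric entering a uniform-equivalence step is valued in a group lying in $\GGG{\kappa}$ or a bottomed set lying in $\FFF{\kappa}$. This is exactly why I verify $\lolo{\Arc{G}}\in \FFF{\kappa}$ and $\invs{\kappa}\in \FFF{\kappa}$ before each invocation. Conceptually, the whole content is the observation that $\cova_{G}$ and $\zeta_{\lolo{\Arc{G}},l}$ only ever perturb $d$ within its uniform-equivalence class, along which completeness is preserved.
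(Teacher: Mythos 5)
Your proof is correct and follows essentially the same route as the paper's: first pass to $e=\cova_{G}\circ d$ via Lemma \ref{lem:uniformdis}, then compose with $\zeta_{\lolo{\Arc{G}},l}$ via Definition \ref{df:zeta} and Lemma \ref{lem:amenable}, transferring completeness at each step through Lemma \ref{lem:cpltuni}. The extra bookkeeping you do (checking $\lolo{\Arc{G}}\in\FFF{\kappa}$ and $\invs{\kappa}\in\FFF{\kappa}$ before each invocation) is implicit in the paper's proof and is a reasonable addition.
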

\begin{proof}
Put $S=\lolo{\Arc{G}}$. 
Put $e=\cova_{G}\circ d$. 
According to 
Lemmas \ref{lem:uniformdis}
and \ref{lem:cpltuni}, 
 we observe that $e\in \ult(X; \lolo{\Arc{G}})$ and 
$e$ is complete. 
Take a characteristic isotone embedding 
$l\colon \invs{\kappa}\to S$, and take a map 
$\zeta_{S, l}\colon S\to \invs{\kappa}$
in 
Definition 
\ref{df:zeta}. 
Put $h=\zeta_{S,  l}\circ e$. 
By Lemma \ref{lem:amenable}, 
we have 
$h\in \ult(X; \invs{\kappa})$, 
and 
we verify that 
 $e$ and $h$ are 
 uniformly equivalent to each other. 
By Lemma \ref{lem:cpltuni}, 
we conclude that $h$ is a complete
$\invs{\kappa}$-ultrametric. 
\end{proof}

We next show that  a Cauchy filter on a generalized metric space has a linearly ordered (nested) subfamily, 
 which plays
 a key role to discuss the completeness on 
 generalized metric spaces. 
\begin{lem}\label{lem:senkei}
We assume that 
$X$ is a topological space 
possessing an infinite metrizable gauge. 
Let $\kappa$ be a regular cardinal
with 
$\kappa\in \metrank(X)$. 
Let $G\in \GGG{\kappa}$ 
 and 
$d\in \met(X; G)$. 
Let $\myfilter{F}$ be a Cauchy filter on $(X, d)$. 
Let 
$l\colon \invs{\kappa}\to G$ be 
a g-characteristic isotone embedding. 
Then, 
there exists a  map 
$\myBmap\colon \kappa\to \myfilter{F}$ such that 
\begin{enumerate}
\item\label{item:senkei1}
each $\myBmap(\alpha)$ is a clopen subset of $X$, namely, the set $\myBmap(\alpha)$ is closed and open in $X$; 
\item\label{item:senkei2}
for all $\alpha<\kappa$, 
and for all $x, y\in \myBmap(\alpha)$, 
we have 
$d(x, y)\le l(\alpha)$;
\item\label{item:senkei3}
if $\alpha\le \beta$, then 
$\myBmap(\alpha)\mysup \myBmap(\beta)$, namely, 
the map $\myBmap$ is an isotone map from 
$(\kappa, \le )$ to 
the ordered set
$(\{\, \myBmap(\alpha)\mid \alpha<\kappa\, \}, \mysup)$. 
\end{enumerate}
\end{lem}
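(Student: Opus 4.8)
The plan is to avoid working with $d$ directly and instead pass to the uniformly equivalent ultrametric $e=\cova_{G}\circ d$, whose closed balls are clopen and form a partition of $X$ at each radius. This circumvents the fact that closed $d$-balls need not be clopen, and—crucially—it produces the nestedness in (\ref{item:senkei3}) without ever intersecting $\kappa$-many members of $\myfilter{F}$ (such intersections need not lie in $\myfilter{F}$, since filters are only closed under finite intersections). Concretely, I set $S=\lolo{\Arc{G}}$ and $e=\cova_{G}\circ d$. By Lemma \ref{lem:uniformdis} we have $e\in\ult(X;S)$ and $e$ is uniformly equivalent to $d$; since $\lolo{\Arc{G}}\in\FFF{\kappa}$ by Proposition \ref{prop:chacha}, Lemma \ref{lem:cpltuni} shows that $\myfilter{F}$ is a Cauchy filter on $(X,e)$ as well.

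Next I would single out, for each radius $s\in\stars{S}$, a canonical clopen member of $\myfilter{F}$. By Lemma \ref{lem:centers} the closed $e$-balls of radius $s$ partition $X$ into clopen sets, and because $\myfilter{F}$ is Cauchy on $(X,e)$ there is $A\in\myfilter{F}$ with $e(x,y)<s$ for all $x,y\in A$; then $A\subseteq B(p,s;e)$ for any $p\in A$, so $B(p,s;e)\in\myfilter{F}$ by upward closure. As distinct balls of radius $s$ are disjoint and a filter cannot contain two disjoint sets, this ball is the unique one of radius $s$ lying in $\myfilter{F}$; denote it $\myBmap_{s}$. If $s\le s'$, then $\myBmap_{s}\cap\myBmap_{s'}\in\myfilter{F}$ is non-empty, and re-centering both balls at a common point via Lemma \ref{lem:centers} yields $\myBmap_{s}\subseteq\myBmap_{s'}$.

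The delicate step is choosing, for each $\alpha<\kappa$, a radius $\sigma_{\alpha}\in\Arc{G}$ so that $\myBmap(\alpha):=\myBmap_{\sigma_{\alpha}}$ has honest $d$-diameter $\le l(\alpha)$ and so that $\sigma$ is antitone. The diameter constraint forces a \emph{strict} Archimedean drop: if $e(x,y)\le\sigma_{\alpha}$ and $\sigma_{\alpha}\arclele[l(\alpha)]_{\arel}$, then $[d(x,y)]_{\arel}\arclele[l(\alpha)]_{\arel}$, i.e. $d(x,y)\arclele l(\alpha)$, whence $d(x,y)<l(\alpha)$ (and $d(x,y)=0\le l(\alpha)$ otherwise). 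A radius merely of class $[l(\alpha)]_{\arel}$ would not do, since a pair with $d(x,y)\arel l(\alpha)$ may satisfy $d(x,y)>l(\alpha)$. I would thus build $\sigma$ by transfinite recursion: at stage $\alpha$ the set $\{\sigma_{\gamma}\mid\gamma<\alpha\}\cup\{[l(\alpha)]_{\arel}\}$ has cardinality $<\kappa$, and since $\kappa$ is regular and equals the coinitiality of $\Arc{G}$ (Definition \ref{df:character}), this set is not coinitial in $\Arc{G}$ and therefore admits a strict lower bound, which I take as $\sigma_{\alpha}$. The resulting $\sigma$ is strictly antitone with $\sigma_{\alpha}\arclele[l(\alpha)]_{\arel}$ for every $\alpha$.

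Finally I would verify the three conclusions for $\myBmap(\alpha)=\myBmap_{\sigma_{\alpha}}$: each is a clopen member of $\myfilter{F}$ by construction, giving (\ref{item:senkei1}); the displayed computation gives $d(x,y)\le l(\alpha)$ for all $x,y\in\myBmap(\alpha)$, which is (\ref{item:senkei2}); and for $\alpha\le\beta$ we have $\sigma_{\beta}\arcle\sigma_{\alpha}$, so the nestedness of the family $\myBmap_{s}$ established above yields $\myBmap(\beta)\subseteq\myBmap(\alpha)$, which is (\ref{item:senkei3}). The main obstacle is exactly the content of the last two paragraphs: extracting clopen nested sets from a Cauchy filter indexed by an uncountable $\kappa$ without forming illegal infinite intersections (resolved by the ultrametric partition structure and re-centering), and reconciling the Archimedean-class diameter inherited from $e$ with the genuine $d$-bound demanded by (\ref{item:senkei2}) (resolved by the strict-class recursion, where the regularity of $\kappa$ is essential).
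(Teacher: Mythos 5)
Your proof is correct, and its skeleton coincides with the paper's: both pass to $e=\cova_{G}\circ d\in\ult(X;\lolo{\Arc{G}})$, observe that $\myfilter{F}$ stays Cauchy there, promote a small member of the filter to a closed $e$-ball (clopen, and in $\myfilter{F}$ by upward closure), and obtain nestedness by re-centering via Lemma \ref{lem:centers}. Where you genuinely diverge is in the choice of radii. The paper simply sets $\myBmap(\alpha)=B(c(l(\alpha)),l(\alpha);e)$, i.e.\ (reading the radius as $[l(\alpha)]_{\arel}$, since $e$ takes values in $\lolo{\Arc{G}}$) it takes the closed $e$-ball whose radius is the Archimedean class of $l(\alpha)$; for $x,y$ in that ball one only gets $\cova_{G}(d(x,y))\arcle[l(\alpha)]_{\arel}$, which permits $d(x,y)\arel l(\alpha)$ with $d(x,y)>l(\alpha)$ (for instance $d(x,y)=2\cdot l(\alpha)$), so condition (2) does not follow as literally written there. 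Your transfinite recursion choosing $\sigma_{\alpha}\arclele[l(\alpha)]_{\arel}$ strictly below all earlier radii --- legitimate because $\kappa$ is regular and equals the coinitiality of $\Arc{G}$, so a set of fewer than $\kappa$ classes cannot be coinitial --- forces the strict Archimedean drop $d(x,y)\ll l(\alpha)$ and hence the honest bound $d(x,y)<l(\alpha)$, and it simultaneously makes $\alpha\mapsto\sigma_{\alpha}$ antitone so that (3) is immediate. This extra step repairs a real imprecision in the paper's own argument (a lighter fix would be to use open $e$-balls of radius $[l(\alpha)]_{\arel}$, which also drop the class strictly); everything else you do matches the paper, including the uniqueness-of-the-ball observation and the re-centering argument for monotonicity.
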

\begin{proof}
Put $S=\lolo{\Arc{G}}$, and 
$e=\cova_{G}\circ d\in \ult(X; S)$. 
Since $(X, d)$ and $(X, e)$ are 
uniformly equivalent to each other
(see Lemma \ref{lem:uniformdis}), 
by Lemma \ref{lem:cpltuni}, 
the filter $\myfilter{F}$ is also Cauchy on 
$(X, e)$. 
By this observation, 
for each $s\in \stars{S}$, there exists 
$A\in \myfilter{F}$ such that 
 $e(x, y)\le s$ for all $x, y\in A$. 
Then, by the definition of 
filters, we observe  that for all $s\in\stars{S}$, 
there exists $c(s)\in X$ 
such that 
$B(c(s), s; e)\in \myfilter{F}$. 
We define a map 
$\myBmap\colon \kappa\to \myfilter{F}$ 
by 
$\myBmap(\alpha)=B(c(l(\alpha)), l(\alpha); e)$. 
Then, 
by 
Lemma  \ref{lem:centers},  the map 
$\myBmap$ satisfies the conditions 
(\ref{item:senkei1}) and 
(\ref{item:senkei2}). 

We next prove the condition 
(\ref{item:senkei3}). 
Take $\alpha, \beta<\kappa$ with $\alpha\le \beta$. 
Since $\myfilter{F}$ is a filter, we have 
$\myBmap(\alpha)\cap \myBmap(\beta)\neq \emptyset$. 
Take $p\in \myBmap(\alpha)\cap \myBmap(\beta)$. 
Then, by Lemma \ref{lem:centers}, we have 
$\myBmap(\alpha)=B(p, l(\alpha); e)$ and 
$\myBmap(\beta)=B(p, l(\beta); e)$. 
By $l(\beta)\le l(\alpha)$, 
we obtain  $\myBmap(\alpha)\mysup \myBmap(\beta)$. 
This completes the proof. 
\end{proof}

Let $\kappa$ be a regular  cardinal. 
Let $X$ be a topological space with 
$\kappa\in \metrank(X)$. 
Let $G\in \GGG{\kappa}$ and 
$d\in \met(X; G)$. 
A map from $\kappa$ to $X$ is called a 
\emph{$\kappa$-sequence}. 
A $\kappa$-sequence $\{x_{\alpha}\}_{\alpha<\kappa}$
 is said to be \emph{Cauchy} if 
for every $\epsilon\in G_{>0}$, 
there exists $\gamma<\kappa$ such that 
for all $\alpha, \beta<\kappa$ with $\gamma<\alpha, \beta$, we have $d(x_{\alpha}, x_{\beta})<\epsilon$. 
We say that 
a $\kappa$-sequence $\{x_{\alpha}\}_{\alpha}$ converges 
to $p\in X$ if for all $\epsilon\in G_{>0}$, there 
there exists $\gamma<\kappa$ such that 
for all $\alpha<\kappa$ with $\gamma<\alpha$, 
we have $d(x_{\alpha}, p)<\epsilon$. 
In this case, the point $p$ is call a \emph{limit of 
$\{x_{\alpha}\}_{\alpha<\kappa}$}. 
A space $(X, d)$ is said to be 
\emph{$\kappa$-complete} if every Cauchy 
$\kappa$-sequence has a limit. 
The same notions on $S$-ultrametric spaces 
are defined by a similar method.

In \cite[Theorem 1.4]{stevenson1969results}, 
it is  stated that an $\omega_{\mu}$-metric space 
$(X, d)$ is $\omega_{\mu}$-complete if and only if 
the uniform space $X$ induced from $d$ is complete 
as a uniform space (i.e., all Cauchy filters are convergent), 
and the proof is omitted since that is  ``a 
straightforward generalization of the proof of standard 
topological theorem''; however, 
the author can not find a proof   by such  a method. 
In the present paper, we give   a proof using 
Lemma \ref{lem:senkei}: 

\begin{lem}\label{lem:kappacompleteandcomplete}
We assume that 
$X$ is a topological space 
possessing an infinite metrizable gauge. 
Let $\kappa$ be a regular  cardinal with 
$\kappa\in \metrank(X)$. 
Let $G\in \GGG{\kappa}$ and 
$d\in \met(X; G)$. 
Then 
the  space $(X, d)$ is complete if and only if 
it is $\kappa$-complete. 
\end{lem}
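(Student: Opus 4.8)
The plan is to prove the two implications separately, and for the harder direction to pass to the uniformly equivalent ultrametric $e=\cova_{G}\circ d$ in order to circumvent the absence of divisibility (``halving'') in a general linearly ordered group. Throughout put $S=\lolo{\Arc{G}}$, so that $e\in\ult(X; S)$ generates the same topology as $d$ by Lemma \ref{lem:uniformdis}.

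First I would treat the implication \emph{complete $\To$ $\kappa$-complete}, which needs no reduction. Given a Cauchy $\kappa$-sequence $\{x_{\alpha}\}_{\alpha<\kappa}$, for each $\gamma<\kappa$ set $T_{\gamma}=\{\, x_{\alpha}\mid \gamma<\alpha<\kappa\,\}$. Since $\kappa$ is a regular infinite, hence limit, ordinal, each $T_{\gamma}$ is non-empty and $T_{\gamma}\cap T_{\gamma'}\mysup T_{\max\{\gamma,\gamma'\}}$, so $\{T_{\gamma}\}_{\gamma<\kappa}$ is a filter base generating a filter $\myfilter{F}$. The Cauchy condition on the sequence says exactly that for every $\epsilon\in G_{>0}$ there is $\gamma$ with $d(x,y)<\epsilon$ for all $x,y\in T_{\gamma}$, so $\myfilter{F}$ is Cauchy. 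By completeness $\myfilter{F}$ has a limit point $p$, i.e.\ it contains every neighbourhood of $p$; in particular $U(p,\epsilon;d)\in\myfilter{F}$, so $U(p,\epsilon;d)$ contains some $T_{\gamma}$, whence $d(x_{\alpha},p)<\epsilon$ for all $\alpha>\gamma$. Thus $x_{\alpha}\to p$.

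For \emph{$\kappa$-complete $\To$ complete}, let $\myfilter{F}$ be a Cauchy filter on $(X,d)$, fix a g-characteristic isotone embedding $l\colon\invs{\kappa}\to G$, and apply Lemma \ref{lem:senkei} to obtain a decreasing family $\myBmap\colon\kappa\to\myfilter{F}$ of clopen sets with $d$-diameter bounded by $l(\alpha)$. Choosing $x_{\alpha}\in\myBmap(\alpha)$ gives a $\kappa$-sequence, which is Cauchy: for $\epsilon\in G_{>0}$ the coinitiality of the image of $l$ (together with a passage to a successor ordinal to make the inequality strict) yields $\gamma<\kappa$ with $l(\gamma)<\epsilon$, and for $\alpha,\beta>\gamma$ both points lie in $\myBmap(\gamma)$, so $d(x_{\alpha},x_{\beta})\le l(\gamma)<\epsilon$. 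By $\kappa$-completeness $\{x_{\alpha}\}$ converges to some $p\in X$.

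It then remains to show $\myfilter{F}$ converges to $p$, and this is where the ultrametric enters. By the monotonicity in Proposition \ref{prop:abslambda} the $d$-diameter bound transfers to $e(x,y)\arcle s_{\alpha}:=\cova_{G}(l(\alpha))$ for $x,y\in\myBmap(\alpha)$, and by Proposition \ref{prop:uniuni} the family $\{s_{\alpha}\}$ is coinitial in $\stars{S}$. Given a basic neighbourhood $U(p,s;e)$, topological convergence $x_{\alpha}\to p$ provides $\gamma_{0}$ with $e(x_{\alpha},p)<s$ for $\alpha>\gamma_{0}$, and I choose $\delta>\gamma_{0}$ with $s_{\delta}<s$. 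For any $y\in\myBmap(\delta)$ the strong triangle inequality gives $e(y,p)\le e(y,x_{\delta})\lor e(x_{\delta},p)\le s_{\delta}\lor e(x_{\delta},p)<s$, so $\myBmap(\delta)\mysub U(p,s;e)$ and hence $U(p,s;e)\in\myfilter{F}$; thus $\myfilter{F}$ converges to $p$ and $(X,d)$ is complete. The main obstacle is precisely this last estimate: in a general linearly ordered group one cannot split $\epsilon$ in half to run the usual two-term triangle-inequality argument, and replacing $d$ by the uniformly equivalent ultrametric $e$ — so that the maximum operator $\lor$ takes the place of addition — is what makes the bound go through.
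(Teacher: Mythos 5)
Your proof is correct and follows essentially the same route as the paper: the tail filter of a Cauchy $\kappa$-sequence for one direction, and Lemma \ref{lem:senkei} plus a choice of points $x_{\alpha}\in\myBmap(\alpha)$ for the other. The only difference is cosmetic: where the paper concludes from $\bigcap_{\alpha<\kappa}\myBmap(\alpha)=\{p\}$ that $\myfilter{F}$ converges, you spell out the same fact explicitly via the ultrametric $e=\cova_{G}\circ d$ and the strong triangle inequality, which makes the final step more transparent but is not a different argument.
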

\begin{proof}
First assume that $(X, d)$ is complete. 
Take a $\kappa$-sequence 
$\{x_{\alpha}\}_{\alpha<\kappa}$. 
By considering the 
filter generated by 
$\{\, \{x_{\alpha}\mid \beta\le \alpha\}\mid \beta<\kappa\, \}$, 
and applying the completeness to this filter, 
we observe that $\{x_{\alpha}\}_{\alpha<\kappa}$ is convergent. Then,  the space 
$(X, d)$ is $\kappa$-complete. 

We next assume that $(X, d)$ is $\kappa$-complete. 
Take an arbitrary  Cauchy filter $\mathcal{F}$ on $X$. 
We take a g-characteristic 
isotone embeding 
$l\colon \invs{\kappa}\to G$
and 
 take a map 
$\myBmap\colon \kappa\to \myfilter{F}$ satisfying the conditions in 
Lemma \ref{lem:senkei}
associated with the map $l$. 
For each $\alpha<\kappa$, we take 
$x_{\alpha}\in \myBmap(\alpha)$. 
Then, 
by 
(\ref{item:senkei2}) 
and 
(\ref{item:senkei3}) in Lemma \ref{lem:senkei}, 
the $\kappa$-sequence  
$\{x_{\alpha}\}_{\alpha<\kappa}$ 
is 
Cauchy 
on 
$(X, d)$. 
Since $(X, d)$ is $\kappa$-complete, there exists a limit 
$p$ of 
$\{x_{\alpha}\}_{\alpha<\kappa}$. 
According to the condition  (\ref{item:senkei2}) in Lemma 
\ref{lem:senkei}, 
we have 
$\bigcap_{\alpha<\kappa}\myBmap(\alpha)=\{p\}$. 
Then, 
the filter 
$\mathcal{F}$ converges to $p$, 
and hence 
$(X, d)$ is complete. This finishes the proof. 
\end{proof}

\begin{lem}\label{lem:cptfilter}
Let $\kappa$ be a cardinal. 
Let $X$ be a finally $\kappa$-compact  topological space. 
If a family $\{A_{\alpha}\}_{\alpha<\kappa}$ of 
closed subsets of $X$ satisfies that for all $\theta<\kappa$ we have $\bigcap_{\beta<\theta}A_{\beta}\neq \emptyset$, 
then $\bigcap_{\alpha<\kappa}A_{\alpha}\neq \emptyset$. 
\end{lem}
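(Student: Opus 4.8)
The plan is to argue by contraposition, converting the intersection statement about the closed family $\{A_{\alpha}\}_{\alpha<\kappa}$ into a covering statement to which final $\kappa$-compactness applies directly. So I would suppose, toward a contradiction, that $\bigcap_{\alpha<\kappa}A_{\alpha}=\emptyset$, and put $U_{\alpha}=X\setminus A_{\alpha}$ for each $\alpha<\kappa$. Since each $A_{\alpha}$ is closed, each $U_{\alpha}$ is open, and by De Morgan the assumption $\bigcap_{\alpha<\kappa}A_{\alpha}=\emptyset$ is equivalent to $\bigcup_{\alpha<\kappa}U_{\alpha}=X$; that is, $\{U_{\alpha}\}_{\alpha<\kappa}$ is an open cover of $X$.

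Next I would invoke final $\kappa$-compactness to extract an index set $J\mysub \kappa$ with $\card(J)<\kappa$ for which $\{U_{\alpha}\}_{\alpha\in J}$ already covers $X$. Taking complements once more yields $\bigcap_{\alpha\in J}A_{\alpha}=X\setminus\bigcup_{\alpha\in J}U_{\alpha}=\emptyset$. The remaining task is to manufacture a single ordinal $\theta<\kappa$ for which the initial-segment intersection $\bigcap_{\beta<\theta}A_{\beta}$ is already empty; this will contradict the standing hypothesis $\bigcap_{\beta<\theta}A_{\beta}\neq\emptyset$ and finish the proof.

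This final reduction is the crux, and it is exactly where the regularity of $\kappa$ enters. Since $\card(J)<\kappa$ and $\kappa$ is regular, the set $J$ is bounded in $\kappa$, so $\sup J<\kappa$; and because $\kappa$ is an infinite cardinal, hence a limit ordinal, I may take $\theta=\sup J+1<\kappa$, which satisfies $J\mysub \theta$. Consequently $\bigcap_{\beta<\theta}A_{\beta}\mysub\bigcap_{\alpha\in J}A_{\alpha}=\emptyset$, the desired contradiction. I expect the boundedness of $J$ to be the only genuinely nontrivial point: it rests essentially on regularity, and for singular $\kappa$ the statement in fact fails — for example $X=[0,\omega_{\myomega})$, which is finally $\omega_{\myomega}$-compact (it is even Lindel\"of), together with the terminal segments $A_{\alpha}=[\alpha,\omega_{\myomega})$ has all initial-segment intersections non-empty yet empty total intersection. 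Thus in every application $\kappa$ must (and does) denote a regular cardinal.
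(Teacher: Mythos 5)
Your proof is correct and follows essentially the same route as the paper: the paper's entire argument is the one-line remark that supposing $\bigcap_{\alpha<\kappa}A_{\alpha}=\emptyset$ and applying final $\kappa$-compactness to the open cover $\{X\setminus A_{\alpha}\}_{\alpha<\kappa}$ yields a contradiction. The one substantive thing you add is the explicit reduction from the small subcover index set $J$ to an initial segment $\theta<\kappa$, and you are right that this is where regularity is genuinely needed: the lemma is stated for an arbitrary cardinal $\kappa$, but the hypothesis only controls initial-segment intersections, and your counterexample $X=[0,\omega_{\omega})$ with $A_{\alpha}=[\alpha,\omega_{\omega})$ (the space is Lindel\"of, since $[0,\omega_{\omega})=\bigcup_{n<\omega}[0,\omega_{n}]$ is a countable union of compacta, hence finally $\omega_{\omega}$-compact) shows the statement as written fails for singular $\kappa$. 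The paper's proof silently makes the same boundedness step, so it too requires regularity; this causes no harm downstream, since the lemma is only invoked in Lemma \ref{lem:cptandcomplete}, where $\kappa$ is assumed regular, but your version is the more honest one.
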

\begin{proof}
Supposing  $\bigcap_{\alpha<\kappa}A_{\alpha}=\emptyset$, and 
applying the final $\kappa$-compactness to 
$\{X\setminus A_{\alpha}\}_{\alpha<\kappa}$, 
we obtain a contradiction. 
\end{proof}

\begin{lem}\label{lem:cptandcomplete}
We assume that 
$X$ is a topological space possessing 
an infinite metrizable gauge. 
Let $\kappa$ be a regular  cardinal
 with 
$\kappa\in \metrank(X)$. 
Let $G\in \GGG{\kappa}$. 
If $X$ is finally $\kappa$-compact, 
then for all $d\in \met(X; G)$, 
the space  $(X, d)$ is complete. 
\end{lem}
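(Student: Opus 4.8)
The plan is to take an arbitrary Cauchy filter $\myfilter{F}$ on $(X, d)$ and produce a limit point, which by definition yields completeness. The engine is the nested family of clopen sets furnished by Lemma~\ref{lem:senkei}: after fixing a g-characteristic isotone embedding $l\colon \invs{\kappa}\to G$, I obtain a map $\myBmap\colon \kappa\to \myfilter{F}$ whose values are clopen, shrink in $d$-diameter along $l$ (so that $d(x,y)\le l(\alpha)$ for all $x,y\in \myBmap(\alpha)$), and are nested decreasingly in the sense that $\alpha\le\beta$ implies $\myBmap(\alpha)\mysup \myBmap(\beta)$.

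Next I would feed this family into Lemma~\ref{lem:cptfilter}. Each $\myBmap(\alpha)$ is closed, being clopen, so I only need the hypothesis that $\bigcap_{\beta<\theta}\myBmap(\beta)\neq\emptyset$ for every $\theta<\kappa$. This is immediate from nestedness: since $\beta\le\theta$ gives $\myBmap(\beta)\mysup \myBmap(\theta)$, we have $\bigcap_{\beta<\theta}\myBmap(\beta)\mysup \myBmap(\theta)$, and $\myBmap(\theta)$ is a member of the filter, hence nonempty. Final $\kappa$-compactness, via Lemma~\ref{lem:cptfilter}, then produces a point $p\in\bigcap_{\alpha<\kappa}\myBmap(\alpha)$.

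It then remains to show that $\myfilter{F}$ converges to $p$, that is, that every neighborhood of $p$ lies in $\myfilter{F}$; since the open balls $U(p,\epsilon;d)$ with $\epsilon\in G_{>0}$ form a neighborhood base at $p$ and filters are upward closed, it suffices to trap each such ball from below by some $\myBmap(\beta)$. Given $\epsilon\in G_{>0}$, the characteristic property of the image of $l$ yields $\alpha<\kappa$ with $0<l(\alpha)\le\epsilon$; if this inequality is strict I keep $\beta=\alpha$, and if $l(\alpha)=\epsilon$ I pass to $\beta=\alpha+1<\kappa$ (legitimate since $\kappa$ is an infinite cardinal, hence a limit ordinal), using that $l$ is an isotone embedding of $\invs{\kappa}$ to get $l(\alpha+1)<l(\alpha)=\epsilon$. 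As $p\in\myBmap(\beta)$, the diameter condition gives $d(p,x)\le l(\beta)<\epsilon$ for all $x\in\myBmap(\beta)$, whence $\myBmap(\beta)\mysub U(p,\epsilon;d)$ and so $U(p,\epsilon;d)\in\myfilter{F}$.

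The main obstacle I anticipate is this last convergence step: the diameter bound in Lemma~\ref{lem:senkei} is stated with a non-strict inequality $d(x,y)\le l(\alpha)$, so obtaining membership in the \emph{open} ball requires extracting arbitrarily small \emph{strict} radii from the characteristic family $\{\, l(\alpha)\mid \alpha<\kappa\, \}$, which is exactly where the order structure of $\invs{\kappa}$ (its lack of a least positive value, coming from $\kappa$ being a limit ordinal) is used; everything else reduces to bookkeeping with the nestedness. As an alternative route, one could instead invoke Lemma~\ref{lem:kappacompleteandcomplete} and verify $\kappa$-completeness directly, applying Lemma~\ref{lem:cptfilter} to the tail closures $\cl_{X}(\{\, x_{\gamma}\mid \beta\le\gamma<\kappa\, \})$ of a Cauchy $\kappa$-sequence $\{x_{\gamma}\}_{\gamma<\kappa}$; the combinatorial heart is the same.
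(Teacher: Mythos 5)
Your proposal is correct and follows essentially the same route as the paper: fix a g-characteristic isotone embedding $l$, apply Lemma~\ref{lem:senkei} to get the nested clopen family $\myBmap$, use nestedness plus Lemma~\ref{lem:cptfilter} to obtain a point $p$ in the total intersection, and conclude convergence of the filter to $p$. The only difference is that you spell out the final convergence step (extracting strict radii $l(\beta)<\epsilon$ from the characteristic image), which the paper leaves implicit.
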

\begin{proof}
Let $\myfilter{F}$ be a Cauchy filter on $(X, d)$. 
We take a g-characteristic 
isotone embedding  
$l: \invs{\kappa}\to G$, 
 and 
take a map 
$\myBmap \colon\kappa\to \myfilter{F}$ 
satisfying the conditions in 
Lemma \ref{lem:senkei}
associated with the map $l$. 
By the condition (\ref{item:senkei3}) 
in Lemma \ref{lem:senkei}, 
for every $\theta<\kappa$, 
we have 
$\bigcap_{\beta<\theta}\myBmap(\beta)\neq \emptyset$. 
Since $X$ is finally $\kappa$-compact, 
by Lemma \ref{lem:cptfilter}, 
we have 
$\bigcap_{\alpha<\kappa}\myBmap(\alpha)\neq \emptyset$.
By the conditions
 (\ref{item:senkei2}) in Lemma \ref{lem:senkei}, 
there exists $p\in X$ with 
$\bigcap_{\alpha<\kappa}\myBmap(\alpha)=\{p\}$. 
Then, $\myfilter{F}$ converges to $p$. Thus, 
the space $(X, d)$ is complete. 
\end{proof}

\begin{rmk}
Let $\kappa$ be a cardinal. 
We say that a filter $\myfilter{F}$ is a 
\emph{$\kappa$-filter}
if 
the intersection of less than $\kappa$ many 
member of $\myfilter{F}$ belongs to $\myfilter{F}$. 
Using the notion of $\kappa$-filters, 
the proof of Lemma \ref{lem:cptandcomplete} can be 
translated as follows: 
Lemma \ref{lem:senkei} states that 
every Cauchy filter $\myfilter{F}$ on $(X, d)$
contains a Cauchy $\kappa$-filter $\myfilter{G}$. 
Since $X$ is finally $\kappa$-compact, 
by \cite[Proposition 2.2]{MR682706}, 
the filter $\myfilter{G}$ has a cluster point. 
Since $\myfilter{G}$ is Cauchy,  it is convergent, 
and hence so does $\myfilter{F}$. 
This means that $(X, d)$ is complete. 
\end{rmk}

\subsection{Examples}
We shall  provide
 some examples of 
generalized metric spaces  and ultrametrics spaces. 
\begin{df}
Let $G$ be a linearly ordered Abelian group. 
We define maps 
$\metabs, \metcova\colon G\times G\to G$ by 
$\metabs(x, y)=\abs(x-y)$ and 
$\metcova(x, y)=\cova_{G}(x-y)$. 
\end{df}
\begin{prop}\label{prop:GG}
Let $G$ be a linearly ordered Abelian group. 
Then, 
we have 
$\metabs\in \met(G;  G)$. 
Moreover, 
if 
$G\in \GGG{\kappa}$ for some
infinite cardinal $\kappa$, 
we have 
$\metcova\in \ult(G; \lolo{\Arc{G}})$. 
\end{prop}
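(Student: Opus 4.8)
The plan is to first establish that $\metabs$ is a $G$-metric generating the order topology of $G$, and then to deduce the ultrametric statement by recognizing $\metcova$ as $\cova_{G}\circ \metabs$ and invoking Lemma \ref{lem:uniformdis}.

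First I would verify the five axioms (M1)--(M5) for $\metabs(x,y)=\abs(x-y)$. Axioms (M1)--(M3) are immediate from the definition of $\abs$: the equality $\abs(w)=0$ forces $w=0$, we have $\abs(0)=0$, and $\abs(w)\ge 0$ always. Symmetry (M4) follows from $\abs(-w)=\abs(w)$ applied to $w=x-y$. The triangle inequality (M5) is exactly Proposition \ref{prop:abslambda}(\ref{item:abs}) applied to the decomposition $x-y=(x-z)+(z-y)$, yielding $\abs(x-y)\le \abs(x-z)+\abs(z-y)$. Thus $\metabs$ is a $G$-metric.

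Second, for the topological claim I would regard $G$ as a topological space under its order topology and show the metric topology of $\metabs$ coincides with it. The key computation is that each open ball is a symmetric open interval: since $\abs(w)<\epsilon$ is equivalent to $-\epsilon<w<\epsilon$, we have $U(a,\epsilon;\metabs)=\{\, x\in G\mid a-\epsilon<x<a+\epsilon\, \}$ for every $a\in G$ and $\epsilon\in G_{>0}$. Every such ball is order-open, so the metric topology is contained in the order topology. Conversely, given an order-basic open set and a point $a$ inside it, I would choose $\epsilon\in G_{>0}$ small enough (the minimum of the two positive distances to the endpoints, or any positive element below a single endpoint distance in the ray case) so that the interval $(a-\epsilon,a+\epsilon)$ lies inside it; this gives the reverse containment. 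Hence $\metabs\in \met(G;G)$. This identification of balls with intervals, together with the endpoint-distance argument, is the only genuinely nontrivial point, and I expect it to be the main (though routine) obstacle.

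Finally I would treat $\metcova$. The central observation is that $\cova_{G}(\abs(w))=\cova_{G}(w)$ for all $w\in G$: both equal $\mzero$ when $w=0$, and for $w\neq 0$ both equal $[\abs(w)]_{\arel}$, since $\abs(w)\ge 0$ gives $\abs(\abs(w))=\abs(w)$. Therefore $\metcova=\cova_{G}\circ \metabs$. Assuming $G\in\GGG{\kappa}$ with $\kappa$ infinite, I would note that $\kappa=\chara(\lolo{\Arc{G}})$ is then a regular cardinal (being infinite and, by Definition \ref{df:character}, equal to $1$ or regular), that $\kappa\in\metrank(G)$ because $\metabs\in\met(G;G)$ witnesses it, and that $G$ consequently possesses an infinite metrizable gauge. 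These are precisely the hypotheses of Lemma \ref{lem:uniformdis} with $X=G$ and $d=\metabs$, which yields $\cova_{G}\circ\metabs\in\ult(G;\lolo{\Arc{G}})$. Since $\metcova=\cova_{G}\circ\metabs$, this is the desired conclusion. One could alternatively verify the ultrametric axioms directly from Proposition \ref{prop:abslambda}(\ref{item:000cova}),(\ref{item:strong}) and match the topology via Proposition \ref{prop:uniuni}, but the reduction to Lemma \ref{lem:uniformdis} avoids repeating that work.
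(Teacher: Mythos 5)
Your proposal is correct and follows essentially the same route as the paper: the paper likewise establishes $\metabs\in\met(G;G)$ by noting the order topology is generated by $\abs$ (you simply spell out the ball--interval identification it leaves implicit) and then obtains the ultrametric claim from the uniform equivalence of $\metabs$ and $\cova_{G}\circ\metabs$ via Proposition \ref{prop:uniuni}, which is exactly the content of Lemma \ref{lem:uniformdis} that you invoke. Your observation that $\metcova=\cova_{G}\circ\metabs$ and your check of the hypotheses of that lemma are both correct, so no gap remains.
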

\begin{proof}
Similarly to the proof that the order topology on 
$\rr$ are generated by $|*|$, 
we can prove that  the order topology is generated by 
the absolute value 
$\abs\colon G\to G_{\ge 0}$. 
By Proposition 
\ref{prop:uniuni}, 
we observe  that 
$\metabs$ 
and 
$\metcova$
generate the same topology on $G$. 
This finishes the proof. 
\end{proof}

\begin{df}\label{df:kappasp}
Let $S$ be a bottomed linearly ordered set. 
We define an ultrametric 
$\orddis_{S}$ by 
\[
\orddis_{S}(x, y)
=
\begin{cases}
\mzero_{S} & \text{if $x=y$;}\\
x\lor y & \text{if $x\neq y$, }
\end{cases}
\]
where $\lor$ means the maximum operator on 
$S$. 
Then $\orddis_{S}$ is an $S$-ultrametric. 
\end{df}

\begin{rmk}
The metric $\orddis_{S}$ is a generalization of 
the Laflamme--Pouzet--Sauer's
construction of ultrametrics
 \cite[Proposition 2]{MR2435142}, 
which also can be found in \cite{Ishiki2021ultra} and
\cite{MR2854677}. 
\end{rmk}

\begin{lem}\label{lem:kappaspsp}
Let $S$ be a bottomed linearly ordered set. 
Let $\kappa$ be a regular cardinal. 
Then  the following statements hold true:
\begin{enumerate}
\item\label{item:ssss1}
The set  
$\stars{S}$ 
is a discrete subset of 
$(S, \orddis_{S})$. 
\item\label{item:ssss2}
If $\stars{S}$ does not have 
 the least element, 
then 
the point $\mzero_{S}\in S$ is the unique accumulation point of 
$(S, \orddis_{S})$. 
\end{enumerate}
\end{lem}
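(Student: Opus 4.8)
The plan is to reduce both statements to one explicit description of the open balls of $\orddis_{S}$. The first thing I would record is that for $x \in S$ and an admissible (positive) radius $s \in \stars{S}$, the ball $U(x, s; \orddis_{S})$ always contains $x$, since $\orddis_{S}(x, x) = \mzero_{S} < s$, whereas a point $y \neq x$ lies in it exactly when $x \lor y < s$. With this in hand, both items follow by choosing the radius $s$ cleverly; as the statement stands, the regular cardinal $\kappa$ plays no role in the argument.

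For statement (\ref{item:ssss1}), I would fix $x \in \stars{S}$ and take the radius to be $x$ itself, which is legitimate precisely because $x \in \stars{S}$. For any $y \neq x$ we have $x \lor y \ge x$, so the inequality $x \lor y < x$ is impossible; hence $U(x, x; \orddis_{S}) = \{x\}$. This shows that $\{x\}$ is open in $(S, \orddis_{S})$, so every point of $\stars{S}$ is isolated in the ambient space, and \emph{a fortiori} $\stars{S}$ carries the discrete subspace topology.

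For statement (\ref{item:ssss2}), the isolation established in (\ref{item:ssss1}) immediately rules out every point of $\stars{S}$ as an accumulation point, leaving $\mzero_{S}$ as the only candidate. To see that it genuinely is one, I would compute, using $\orddis_{S}(\mzero_{S}, y) = y$ for $y \neq \mzero_{S}$, that $U(\mzero_{S}, s; \orddis_{S}) = \{\mzero_{S}\} \cup \{\, y \in \stars{S} \mid y < s \,\}$ for each $s \in \stars{S}$. The hypothesis that $\stars{S}$ has no least element says exactly that for every $s \in \stars{S}$ there is some $y \in \stars{S}$ with $y < s$; thus each basic neighborhood of $\mzero_{S}$ contains a point other than $\mzero_{S}$, so $\mzero_{S}$ is an accumulation point, and by the previous paragraph the unique one.

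There is no genuine obstacle here, as both claims are direct consequences of the ball computation; the only care needed is bookkeeping. I would make sure the chosen radii lie in $\stars{S}$ so that they are admissible positive radii, and I would keep the $y = x$ case (where $\orddis_{S}$ returns $\mzero_{S}$) separate from the $y \neq x$ case (where it returns the maximum). I would also state explicitly the equivalence underlying (\ref{item:ssss2})---that $\mzero_{S}$ is isolated if and only if $\stars{S}$ has a least element---to clarify why the hypothesis is exactly what is needed for $\mzero_{S}$ to be an accumulation point.
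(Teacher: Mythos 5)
Your proposal is correct and follows essentially the same route as the paper: compute the balls of $\orddis_{S}$, observe that $U(x, x; \orddis_{S})=\{x\}$ for $x\in \stars{S}$ to get discreteness, and use the absence of a least element in $\stars{S}$ to place a point $y<s$ inside every ball $U(\mzero_{S}, s; \orddis_{S})$. Your added remarks (that $\kappa$ is unused and that uniqueness follows from the isolation established in the first item) are accurate and merely make explicit what the paper leaves implicit.
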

\begin{proof}
We first show the statement  (\ref{item:ssss1}). 
Take  arbitrary $x\in \stars{S}$. 
By the definition of $\orddis_{S}$, 
we have 
$U(x, x; \orddis_{S})=\{x\}$. 
Thus, the set $\stars{S}$ is discrete. 

We next verify the statement  (\ref{item:ssss2}). 
Take arbitrary $x\in \stars{S}$. 
Since $\stars{S}$ dos not have the least element, 
we can take $y\in \stars{S}$ with $y<x$. 
Since we have 
$y\in U(\mzero_{S}, x; \orddis_{S})$, 
the point $\mzero_{S}$ is an accumulation point of 
the space
$(S, \orddis_{S})$. 
\end{proof}

\begin{rmk}
Let $S$ be a bottomed linearly  ordered set. 
The order topology on $S$ is
not induced from $\orddis_{S}$ in general. 
\end{rmk}

Let $\kappa$ be a cardinal. 
Note that  we have 
$\invs{\kappa}=\kappa+1$ as sets,  and hence 
we obtain $\kappa\mysub \invs{\kappa}$,  and 
$\kappa=\mzero_{\invs{\kappa}}\in \invs{\kappa}$. 
Applying Lemma 
\ref{lem:kappaspsp}
to the order set 
$\invs{\kappa}$, 
we obtain:

\begin{cor}\label{cor:kappaspsp}
Let $\kappa$ be a regular cardinal. 
Then the following statements 
hold true:
\begin{enumerate}
\item\label{item:kpkp1}
The set  
$\kappa=\{\, \alpha\mid \alpha<\kappa\, \}$ 
is a discrete subset of 
$(\invs{\kappa}, \orddis_{\invs{\kappa}})$. 
\item\label{item:kpkp2}
The point $\kappa\in \invs{\kappa}$ is the unique accumulation point of the space 
$(\invs{\kappa}, \orddis_{\invs{\kappa}})$. 
\item\label{item:kpkp3}
The  $\kappa$-sequence 
$\{\alpha\}_{\alpha<\kappa}$
in $\invs{\kappa}$ converges to the point 
$\kappa\in \invs{\kappa}$. 
\end{enumerate}
\end{cor}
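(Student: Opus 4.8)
The plan is to obtain all three assertions from Lemma \ref{lem:kappaspsp} applied to the bottomed linearly ordered set $S=\invs{\kappa}$, supplemented by one short direct computation for the convergence in statement (\ref{item:kpkp3}). The running bookkeeping is that $\invs{\kappa}=\oposi{\kappa+1}$ carries the \emph{dual} of the ordinal order: as noted just before the corollary, $\invs{\kappa}=\kappa+1$ as sets with $\mzero_{\invs{\kappa}}=\kappa$, so $\kappa$ is the $\le_{\invs{\kappa}}$-least element and $0$ the $\le_{\invs{\kappa}}$-greatest element, and the punctured set is $\stars{\invs{\kappa}}=\invs{\kappa}\setminus\{\kappa\}=\{\,\alpha\mid \alpha<\kappa\,\}$. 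Thus statement (\ref{item:kpkp1}) is nothing but the instantiation of the statement (\ref{item:ssss1}) in Lemma \ref{lem:kappaspsp} with $S=\invs{\kappa}$, since $\stars{\invs{\kappa}}$ is exactly the set $\{\,\alpha\mid \alpha<\kappa\,\}$ named in (\ref{item:kpkp1}).

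For statement (\ref{item:kpkp2}) I would apply the statement (\ref{item:ssss2}) in Lemma \ref{lem:kappaspsp}, whose hypothesis requires that $\stars{\invs{\kappa}}$ have no least element with respect to $\le_{\invs{\kappa}}$. Unwinding the duality, a $\le_{\invs{\kappa}}$-least element of $\{\,\alpha\mid \alpha<\kappa\,\}$ would be its \emph{ordinal}-greatest element; but $\kappa$ is a regular cardinal, hence a limit ordinal, so $\{\,\alpha\mid \alpha<\kappa\,\}$ has no ordinal-maximum. The hypothesis therefore holds, and Lemma \ref{lem:kappaspsp} yields that $\mzero_{\invs{\kappa}}=\kappa$ is the unique accumulation point, which is precisely (\ref{item:kpkp2}).

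The only part calling for a calculation beyond invoking the lemma is (\ref{item:kpkp3}). Using Definition \ref{df:kappasp} together with the fact that $\kappa=\mzero_{\invs{\kappa}}$ is the $\le_{\invs{\kappa}}$-least element, for every $\alpha<\kappa$ one computes $\orddis_{\invs{\kappa}}(\alpha,\kappa)=\alpha\lor\kappa=\alpha$, where $\lor$ is the $\invs{\kappa}$-maximum. Given any $\epsilon\in\stars{\invs{\kappa}}$, the choice $\gamma=\epsilon$ then works: for every index $\alpha$ with $\epsilon<\alpha<\kappa$ in the ordinal order we have $\alpha<_{\invs{\kappa}}\epsilon$, that is $\orddis_{\invs{\kappa}}(\alpha,\kappa)<_{\invs{\kappa}}\epsilon$, which is exactly the defining condition for $\{\alpha\}_{\alpha<\kappa}$ to converge to $\kappa$. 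The main (admittedly minor) obstacle throughout is consistency of orders: one must read $\lor$, the convergence inequality, and the indexing condition ``$\gamma<\alpha$'' in the correct directions, since the $\kappa$-sequence is indexed by the ordinal order while the ultrametric values $\orddis_{\invs{\kappa}}$ are compared in the dual order $\le_{\invs{\kappa}}$; once this is tracked carefully, each step is immediate.
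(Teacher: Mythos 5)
Your proposal is correct and follows essentially the same route as the paper: statements (1) and (2) are instances of Lemma \ref{lem:kappaspsp} applied to $S=\invs{\kappa}$ (you additionally, and correctly, verify that $\stars{\invs{\kappa}}$ has no $\le_{\invs{\kappa}}$-least element because a regular cardinal is a limit ordinal), and (3) is obtained from the definition of $\orddis_{\invs{\kappa}}$, for which your explicit computation $\orddis_{\invs{\kappa}}(\alpha,\kappa)=\alpha$ and the choice $\gamma=\epsilon$ correctly track the duality between the index order and $\le_{\invs{\kappa}}$.
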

\begin{proof}
The statements (\ref{item:kpkp1}) and 
(\ref{item:kpkp2}) are deduced from 
 (\ref{item:ssss1}) and 
(\ref{item:ssss2}) in 
Lemma \ref{lem:kappaspsp}. 
The statement (\ref{item:kpkp3})
follows from the statement 
(\ref{item:kpkp2}) and the definition of 
$\orddis_{\invs{\kappa}}$. 
\end{proof}


\section{Retractions}\label{sec:retraction}

\subsection{Proof of Theorem \ref{thm:retract}}\label{subsec:proofof1}
To show Theorem \ref{thm:retract}, 
we first  prove 
the existence of Lipschitz retractions
 under 
certain conditions. 
\begin{df}\label{df:setdis}
Let $X$ be a set. 
Let $G$ be a linearly ordered Abelian group. 
Let $d$ be a $G$-metric on $X$. 
Then,
 we define 
 $\setdis_{d, A}\colon X\to G$
 by 
\[
\setdis_{d, A}(x)=
\inf\{\, d(x, a)\mid a\in A\, \}
\]
if the infimum in  right hand side exists. 
If $d$ is an 
$S$-ultrametric on $X$ for a linearly ordered set 
$S$, 
similarly to the case of  $G$-metrics,  
the function $\setdis_{d, A}$ is 
defined 
as 
$\setdis_{d, A}(x)=\inf\{\, d(x, a)\mid a\in A\, \}$. 
Note that 
 the infimum is taken in $S$ even if 
$S$ is a subset of some other linearly 
ordered set. 
Thus, we should actually 
denote by $\setdis_{S, d, A}(x)$ rather than 
$\setdis_{d, A}(x)$; however,  in this paper, 
 there are no confusions
on sets where we take the infimum.  

\end{df}

To show Theorem \ref{thm:uniformretract}, 
we use a celebrated  construction of 
Lipschitz retractions
in  the proof of 
\cite[Theorem 2.9]{brodskiy2007dimension}. 

\begin{thm}\label{thm:uniformretract}
Let $\kappa$ be a regular cardinal. 
Let $X$ be a topological space with 
$\kappa\in \metrank(X)$. 
Let $G\in \GGG{\kappa}$, 
and $S$ be a Dedekind complete g-characteristic  subset of  
$G$ 
(in this case, we have 
$\ult(X; S)\neq \emptyset$). 
Let 
$d\in \ult(X; S)$. 
Let $A$ be a closed subset of $X$. 
Let 
$\concon\in G$ with $1<\concon$. 
Then there exists a retraction 
$r\colon X\to A$ satisfying that 
for all 
$x, y\in X$, we have 
$d(r(x), r(y))\le \concon^{2}d(x, y)$. 
\end{thm}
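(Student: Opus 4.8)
The plan is to construct the retraction by hand, following the Brodskiy--Dydak--Higes--Mitra idea: send each point to a carefully chosen near point of $A$, and exploit the ultrametric (isosceles) structure to make the choice coherent. Since $S$ is Dedekind complete, the infimum $\setdis_{d, A}(x)=\inf\{\, d(x, a)\mid a\in A\, \}$ exists in $S$ for every $x\in X$; write $\rho(x)=\setdis_{d, A}(x)$. Because $A$ is closed, $\rho(x)=\mzero_{S}$ if and only if $x\in A$. I would fix a well-ordering $\preceq$ of $A$, put
\[
N(x)=\{\, a\in A\mid d(x, a)\le \concon\rho(x)\, \},
\]
and define $r(x)=\min_{\preceq}N(x)$. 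First I check that $r$ is well defined and is a retraction. For $x\in A$ we have $\rho(x)=\mzero_{S}$, hence $N(x)=\{x\}$ and $r(x)=x$, so $r|_{A}=\mathrm{id}_{A}$. For $x\notin A$ we have $\rho(x)>\mzero_{S}$, and since $\concon>1$ the element $\concon\rho(x)$ strictly exceeds the greatest lower bound $\rho(x)$; hence $\concon\rho(x)$ is not a lower bound of $\{\, d(x, a)\mid a\in A\, \}$ and some $a\in A$ lies in $N(x)$. Being a non-empty subset of the well-ordered set $(A, \preceq)$, the set $N(x)$ has a $\preceq$-least element, so $r(x)\in A$ is defined.

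The core of the proof is the following coherence property: if $x, y\in X$ satisfy $d(x, y)<\rho(x)$, then $r(x)=r(y)$. Indeed, for every $a\in A$ we have $d(x, a)\ge \rho(x)>d(x, y)$, so Lemma \ref{lem:isosceles}, applied to the triple $x, y, a$, forces $d(y, a)=d(x, a)$. Consequently $\rho(y)=\rho(x)$ and $N(y)=N(x)$, and therefore $r(y)=r(x)$. In particular the condition $d(x, y)<\rho(x)$ is symmetric in $x$ and $y$: it is equivalent to $d(x, y)<\rho(y)$.

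It then remains to establish the Lipschitz estimate, which I would obtain by comparing $t:=d(x, y)$ with $\rho(x)$. If $t<\rho(x)$, the coherence property gives $r(x)=r(y)$ and the inequality is trivial. Otherwise $t\ge \rho(x)$, and by the symmetry just noted $t\ge \rho(y)$ as well; then $d(x, r(x))\le \concon\rho(x)\le \concon t$ and $d(y, r(y))\le \concon\rho(y)\le \concon t$, so the strong triangle inequality yields
\[
d(r(x), r(y))\le d(r(x), x)\lor d(x, y)\lor d(y, r(y))\le (\concon t)\lor t\lor (\concon t)=\concon t.
\]
Since $1<\concon$ we have $\concon t\le \concon^{2}t$, so $d(r(x), r(y))\le \concon^{2}d(x, y)$; in particular $r$ is (uniformly) continuous, hence a genuine retraction. (The argument in fact produces a $\concon$-Lipschitz retraction, which is stronger than the stated bound.)

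The step I expect to be the main obstacle is reconciling well-definedness with coherence. In a general ordered field the infimum $\rho(x)$ need not be attained, so the ball of radius exactly $\rho(x)$ about $x$ may fail to meet $A$; the hypothesis $\concon>1$ is introduced precisely to enlarge the admissible radius to $\concon\rho(x)$, just enough to make $N(x)$ non-empty while keeping it controlled, and Dedekind completeness of $S$ is what guarantees that $\rho(x)$ is a bona fide element of $S$ about which these comparisons make sense. Once these points are secured, the isosceles property (Lemma \ref{lem:isosceles}) does the rest: it makes the assignment $x\mapsto N(x)$ constant on every ball of radius below $\rho(x)$, so the single $\preceq$-least choice is automatically coherent across the whole space.
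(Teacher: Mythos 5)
Your proof is correct, and it takes a genuinely different --- and in fact cleaner --- route than the paper's. The construction of $r$ is identical (your $N(x)$ is the paper's $\stst{A}{x}$, and $r(x)$ is its $\preceq$-least element), but the Lipschitz estimate is obtained differently. The paper argues by contradiction: assuming $\concon^{2}d(x,y)<d(r(x),r(y))$ with $r(x)\prec r(y)$, it splits into two cases according to whether $d(y,r(x))\le d(y,r(y))$, and in the second case runs a fairly intricate chain of applications of Lemma \ref{lem:isosceles} (the ``trigonal pyramid'' computation) to contradict the minimality of $r(y)$ in $\stst{A}{y}$. You instead prove directly that $r$ is locally constant off $A$ in a quantified sense: if $d(x,y)<\setdis_{d,A}(x)$, then every $a\in A$ satisfies $d(y,a)=d(x,a)$ by a single application of Lemma \ref{lem:isosceles}, whence $\setdis_{d,A}(y)=\setdis_{d,A}(x)$, $N(y)=N(x)$ and $r(y)=r(x)$; in the complementary case $\setdis_{d,A}(x)\lor\setdis_{d,A}(y)\le d(x,y)$ you get $d(r(x),r(y))\le d(r(x),x)\lor d(x,y)\lor d(y,r(y))\le \concon d(x,y)$ from the strong triangle inequality alone. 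This buys two things: the argument is direct (no contradiction, no case analysis involving the well-order), and it yields the sharper constant $\concon$ in place of $\concon^{2}$; it also reveals that $\preceq$-minimality is inessential --- any choice function $r(x)\in N(x)$ depending only on the set $N(x)$ works, whereas the paper's contradiction argument uses minimality substantively. One remark applying equally to both proofs: the statement only assumes $G$ is a linearly ordered Abelian group, yet both arguments form products such as $\concon\cdot\setdis_{d,A}(x)$ and use that multiplication by elements $\ge 1$ preserves the order on $G_{\ge 0}$; this is harmless because in the application $G$ is a Hahn field, but your write-up, like the paper's, silently relies on this multiplicative structure.
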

\begin{proof}
Take a well-ordering 
$\preceq$ on $X$. 
We write $x\prec y$ if $x\preceq y$ and $x\neq y$. 
Remark that the well-ordering  $\preceq$ is not related to
the topology of $X$ nor 
 the order on $G$ 
 in general. 
Since $S$ is Dedekind complete, 
the value $\setdis_{d, A}(x)$ always exists. 
For every $x\in X$, we 
define 
\[
\stst{A}{x}=\{\, a\in A\mid \concon^{-1}\cdot d(x, a)\le 
\setdis_{d, A}(x)\, \}. 
\]
By $1<\concon$, 
we have $\setdis_{d, A}(x)<\concon\cdot \setdis_{d, A}(x)$, and hence there exits  $a\in A$ such that 
$d(x, a)\le \concon\cdot \setdis_{d, A}$. 
Thus, each $\stst{A}{x}$ is non-empty. 
Note that 
if $a\in A$, 
then $\stst{A}{a}=\{a\}$. 

We define
a 
map 
 $r\colon X\to A$ by 
$r(x)=\min_{\preceq}\stst{A}{x}$,
 namely, 
$r(x)$ is the minimal 
 element of 
$\stst{A}{x}$ with respect to the well-ordering 
$\preceq$. 
Since $\stst{A}{a}=\{a\}$ for all $a\in A$, 
we have $r(a)=a$ for all $a\in A$. 

Thus, to finish the proof, 
we only need to show 
 that 
for all $x, y\in X$, we have 
$d(r(x), r(y))\le \concon^{2}d(x, y)$. 
For the sake of contradiction, 
we suppose  that 
there exist $x, y\in X$ such that 
\begin{align}\label{al:000}
\concon^{2}d(x, y)<d(r(x), r(y))
\end{align}
We may assume that $r(x)\prec r(y)$. 
Depending on whether 
$d(y, r(x))\le d(y, r(y))$ holds true or not, 
we divide the proof into two cases. 

Case 1 ($d(y, r(x))\le d(y, r(y))$): 
In this case, 
we obtain $r(x)\in \stst{A}{y}$.
Since $r(x)\prec r(y)$,  this contradicts 
the minimality of $r(y)$ in $\stst{A}{y}$.

Case 2 ($d(y, r(y))<d(y, r(x))$):
We first prove that 
the three values 
$d(r(x), x)$, $d(r(x), y)$,  and $d(r(x), d(y))$ 
are equal to each other. 
This means that the four points 
$\{r(x), r(y), x, y\}$ form a 
trigonal pyramid whose lengths 
 of 
edges starting with  $r(x)$ 
are equal to each other. 
We put $D=d(r(x), r(y))$. 
Applying 
Lemma  \ref{lem:isosceles}
to $\{y, r(x), r(y)\}$, 
and 
by $d(y, r(y))<d(y, r(x))$, we obtain 
\begin{align}\label{al:111}
D=d(r(x), r(y))=d(r(x), y). 
\end{align}
By $1<\concon$ and (\ref{al:000}), 
we have $d(x, y)<D$. 
Since  $D=d(y, r(x))$, 
by applying Lemma \ref{lem:isosceles} to $\{x, y, r(x)\}$ 
and by $d(x, y)<D$, we have 
\begin{align}\label{al:444}
d(r(x), x)=d(r(x), y)=D. 
\end{align}
This completes  our first purpose. 

We next determine which two values in  
$\{d(x, y), d(x, r(y)), d(y, r(y))\}$ are 
equal to each other. 
By (\ref{al:000}), by the definitions of 
$r(x)$ and $\stst{A}{x}$, and by $r(y)\in A$,  we have 
\begin{align*}
\concon^{-1}\cdot D
=
\concon^{-1}\cdot d(x, r(x))
\le 
\setdis_{d, A}(x)
\le d(x, r(y)). 
\end{align*}
This implies 
\begin{align}\label{al:555}
\concon^{-1}\cdot D\le d(x, r(y)). 
\end{align}
By (\ref{al:000}), (\ref{al:555}) and 
$1<\concon$, 
we have
$d(x, y)<d(x, r(y))$. 
By this inequality  and by  applying Lemma \ref{lem:isosceles} to $\{x, y, r(x)\}$, we obtain 
\begin{align}\label{al:666}
d(r(y), x)=d(r(y), y). 
\end{align}

From these observations, 
we shall deduce a contradiction. 
By $r(x)\prec r(y)$, 
we see that  $r(x)\not \in \stst{A}{y}$. 
Thus, 
$\setdis_{d, A}(x)<\concon^{-1}\cdot d(r(x), y)$. 
Since $D=d(r(x), y)$, there exists $b\in A$ 
such that $d(x, b)<\concon^{-1}\cdot D$. 
By (\ref{al:000}), (\ref{al:555}), 
 and 
(\ref{al:666}), and by $b\in A$, we observe that 
\begin{align*}
d(x, y)&<
\concon^{-2}\cdot D
\le 
\concon^{-1}\cdot d(x, r(y))
=
\concon^{-1}\cdot d(y, r(y))\\
&\le \setdis_{d, A}(y)\le d(y, b). 
\end{align*}
Namely, $d(x, y)<d(y, b)$. 
Then, applying Lemma \ref{lem:isosceles} to $\{x, y, b\}$, 
we have 
$d(x, b)=d(y, b)$. 
By  $d(y, b)<\concon^{-1}\cdot D$ and  $D=d(x, r(x))$, and 
by 
$r(x)\in \stst{A}{x}$, we have 
\[
d(x, b)=d(y, b)<
\concon^{-1}\cdot d(x, r(x))
\le 
\setdis_{d, A}(x). 
\]
Namely,  $d(x, b)<\setdis_{d, A}(x)$, 
which contradicts 
$\setdis_{d, A}(x)\le d(x, b)$ (recall that $b\in A$). 

In any case, 
we obtain a contradiction. 
Therefore, 
we conclude that  
$d(r(x), r(y))\le \concon^{2}d(x, y)$
for all $x, y\in X$. 
This completes the proof. 
\end{proof}

\begin{proof}[Proof of Theorem \ref{thm:retract}]
We assume that $X$ 
is a topological space possessing 
an infinite metrizable gauge. 
Let $\kappa$ be a regular cardinal
 with 
$\kappa\in \metrank(X)$. 
Let $G\in \GGG{\kappa}$ and $d\in \met(X; G)$. 
 Let $A$ be a
non-empty closed subset of $X$. 
The latter part of the theorem 
follows from the former  one. 
We only need to prove the former part of 
Theorem \ref{thm:retract}.

Put $S=\lolo{\Arc{G}}$, 
and $e=\cova_{G}\circ d$. 
By Lemma \ref{lem:uniformdis}, 
we have  $e\in \ult(X; S)$. 
Since $S$ can be regarded as a characteristic subset 
of $\comp{S}$ (see Lemma \ref{lem:dedekindcomp}), and 
since $\comp{S}$ can be regarded as a 
g-characteristic subset of $\hahntai{\comp{S}}$
(see Proposition \ref{prop:ee}), 
we can considered that 
$e\in \ult(X; \comp{S})$ and 
$e\in \met(X; \hahntai{\comp{S}})$
(see Lemma \ref{lem:inclusions}). 
Take $\concon\in \hahntai{\comp{S}}$ with 
$1<\concon$. 
Since 
$\comp{S}$ is Dedekind complete, 
we can apply  Theorem \ref{thm:uniformretract} to 
$X$, $A$, $e$, and $\concon$. 
Thus, we obtain a retraction
 $r\colon X\to A$ such that 
$e(r(x), r(y))\le \concon^{2}e(x, y)$ for all $x, y\in X$. 
In particular, 
the map $r$ 
is uniformly continuous with respect to $e$. 
Since $d$ and $e(=\cova_{G}\circ d)$ are uniformly equivalent to each other (see Lemma \ref{lem:uniformdis}), 
the retraction $r\colon X\to A$ is uniformly continuous 
with respect to $d$. 
This finishes the proof of Theorem \ref{thm:retract}. 
\end{proof}



\subsection{Applications}\label{subsec:cors}
In this subsection, 
using the existence of retractions, 
we provide a characterization of
 the closedness of 
subsets of generalized metric spaces. 

Let $X$ be a topological space 
and 
$d\in \met(X; G)$ or $d\in \ult(X; S)$.
A map $f\colon X\to X$ is said to be 
\emph{
$1$-Lipschitz} if 
we have $d(f(x), f(y))\le d(x, y)$ for all 
$x, y\in X$. 

\begin{prop}\label{prop:1-lipretract}
We assume that $X$ is a topological space 
possessing an infinite metrizable gauge. 
Let $\kappa$ be a regular cardinal 
 with 
$\kappa\in \metrank(X)$. 
Let $A$ be a closed subset of $X$. 
Let $G\in \GGG{\kappa}$. 
Let $h\in \met(X; G)$. 
Let $r\colon X\to A$ be a retraction. 
We define a $G$-metric $k$
on $X$ by 
\[
k(x, y)=h(x, y)\lor h(r(x), r(y)).
\] 
Then 
we have 
$k\in \met(X; G)$ and 
the map
$r$ is $1$-Lipschitz with respect to 
the $G$-metric $k$. 
\end{prop}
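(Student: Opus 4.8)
The plan is to verify directly that $k$ satisfies the $G$-metric axioms (M\ref{item:m0})--(M\ref{item:m4}), then to check that $k$ induces the topology of $X$, and finally to establish the $1$-Lipschitz property of $r$ with respect to $k$. Throughout I would use only that $h$ is a $G$-metric generating the topology of $X$ and that $r$, being a retraction, is continuous; the hypotheses on $\kappa$ and $\metrank(X)$ serve to guarantee the ambient framework (so that $\met(X;G)\neq\emptyset$) rather than entering the argument itself.

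First I would check the axioms. Since $k(x,y)=h(x,y)\lor h(r(x),r(y))$ is the maximum of two non-negative values of $h$, the axioms (M\ref{item:m1})--(M\ref{item:m3}) follow immediately from the corresponding properties of $h$ together with the symmetry of the maximum operator; in particular $k(x,x)=0$. For (M\ref{item:m0}), if $k(x,y)=0$ then both terms of the maximum vanish, so $h(x,y)=0$ and hence $x=y$. For the triangle inequality (M\ref{item:m4}), I would note that $h(x,y)\le h(x,z)+h(z,y)\le k(x,z)+k(z,y)$ and, applying the triangle inequality of $h$ to the images, $h(r(x),r(y))\le h(r(x),r(z))+h(r(z),r(y))\le k(x,z)+k(z,y)$; since both terms are bounded by $k(x,z)+k(z,y)$, so is their maximum. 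Here I use only the elementary fact that $h(a,b)\le k(a,b)$ for all $a,b$, which holds because $k$ is a maximum whose first entry is $h$.

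Next I would show $k\in\met(X;G)$, that is, that $k$ generates the topology of $h$. The inequality $h\le k$ gives $U(x,\epsilon;k)\subseteq U(x,\epsilon;h)$ for every $\epsilon\in G_{>0}$, so the $k$-topology refines the $h$-topology. For the reverse inclusion I would invoke the continuity of the retraction $r$ at a fixed point $x\in X$: given $\epsilon\in G_{>0}$, continuity provides $\delta\in G_{>0}$ with $\delta\le\epsilon$ such that $h(x,y)<\delta$ forces $h(r(x),r(y))<\epsilon$. Then $h(x,y)<\delta$ yields both $h(x,y)<\epsilon$ and $h(r(x),r(y))<\epsilon$, hence $k(x,y)=h(x,y)\lor h(r(x),r(y))<\epsilon$, so that $U(x,\delta;h)\subseteq U(x,\epsilon;k)$. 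The two topologies therefore coincide. This topology comparison is the only step needing genuine care, and the point to stress is that it uses the continuity of $r$ (but not its uniform continuity), which is exactly what the definition of a retraction supplies.

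Finally, for the $1$-Lipschitz property I would use that $r$ fixes $A$ pointwise, so that $r\circ r=r$. For $x,y\in X$ the images $r(x),r(y)$ lie in $A$, whence $r(r(x))=r(x)$ and $r(r(y))=r(y)$, giving $k(r(x),r(y))=h(r(x),r(y))\lor h(r(x),r(y))=h(r(x),r(y))$. Since $k(x,y)=h(x,y)\lor h(r(x),r(y))\ge h(r(x),r(y))$, I conclude $k(r(x),r(y))\le k(x,y)$, which is precisely the assertion that $r$ is $1$-Lipschitz with respect to $k$.
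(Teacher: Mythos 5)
Your proposal is correct and follows essentially the same route as the paper: the paper's proof likewise disposes of $k\in\met(X;G)$ by appealing to the continuity of $r$ and then establishes the $1$-Lipschitz property via the identical computation $k(r(x),r(y))=h(r(x),r(y))\lor h(r(r(x)),r(r(y)))=h(r(x),r(y))\le k(x,y)$. You merely spell out the metric axioms and the two-sided topology comparison that the paper leaves implicit, and your observation that the hypotheses on $\kappa$ and $\metrank(X)$ play no role in the argument itself is accurate.
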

\begin{proof}
Since $r$ is continuous, 
we obtain $k\in \met(X; G)$. 
Since $r$ is a retraction, 
for all $x, y\in X$, we have 
\begin{align*}
k(r(x), r(y))&=h(r(x), r(y))\lor h(r(r(x)), r(r(y)))\\
&=h(r(x), r(y))\lor h(r(x), r(y))=h(r(x), r(y))\\
&\le k(x, y). 
\end{align*}
Namely,  the map 
 $r$ is $1$-Lipschitz with respect to 
$k$. 
\end{proof}

Let $X$ be a topological space with 
$\metrank(X)\neq \emptyset$ and 
let $S$ be a linearly ordered set 
with $\ult(X; S)\neq \emptyset$. 
Let $d\in \ult(X; S)$. 
A subset $A$ of $X$ is 
said to be 
\emph{proximal}
  if 
for all $x\in X$ there exists $a\in A$ such that 
\[
d(x, a)=\inf\{\, d(x, z)\mid z\in A\, \}. 
\]
The proximality for  $G$-metric spaces is
defined in the same way. 

The following theorem  is  a characterization of the proximality in 
general ultrametric spaces using $1$-Lipschitz maps. 
The proof can be seen in \cite[Theorem 4.6 and Proposition 2.6]{artico1981some}. 
\begin{thm}\label{thm:Artico}
We assume that $X$ is a
topological space possessing 
an infinite metrizable gauge. 
Let $\kappa$ be a regular cardinal
with $\kappa\in \metrank(X)$. 
Let $A$ be a closed subset of $X$. 
Let $S\in \FFF{\kappa}$, and
$d\in \ult(X; S)$. 
Then 
the following two statements are
equivalent to each other. 
\begin{enumerate}
\item The set 
$A$ is a proximal  subset of $(X, d)$. 
\item 
There exists a $1$-Lipschitz retraction 
$r\colon X\to A$. 
\end{enumerate}
\end{thm}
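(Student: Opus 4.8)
The plan is to prove the two implications separately, treating $(2)\To(1)$ as the routine direction and $(1)\To(2)$ as the substantial one; notably, neither direction needs a completeness hypothesis.

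For $(2)\To(1)$, I would start from a $1$-Lipschitz retraction $r\colon X\to A$ and show that $r(x)$ realizes the distance from $x$ to $A$. Fix $x\in X$ and take an arbitrary $z\in A$. Since $r$ is a retraction, $r(z)=z$, so the $1$-Lipschitz condition gives $d(r(x), z)=d(r(x), r(z))\le d(x, z)$. Feeding this into the strong triangle inequality (condition (U\ref{item:u4})) yields
\[
d(x, r(x))\le d(x, z)\lor d(z, r(x))\le d(x, z)\lor d(x, z)=d(x, z).
\]
As $z\in A$ was arbitrary and $r(x)\in A$, the value $d(x, r(x))$ is an attained lower bound of $\{\, d(x, z)\mid z\in A\, \}$, hence equals its infimum. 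Thus $A$ is proximal. This direction uses the ultrametric inequality essentially.

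For $(1)\To(2)$, I would mimic the well-ordering construction in the proof of Theorem \ref{thm:uniformretract}, but replace the $\concon$-approximate nearest-point sets by \emph{exact} ones, which proximality makes available without any Dedekind completeness. Concretely, since $A$ is proximal, for each $x\in X$ the value $\setdis_{d, A}(x)$ exists and is attained, so the set $\stst{A}{x}=\{\, a\in A\mid d(x, a)=\setdis_{d, A}(x)\, \}$ is non-empty; moreover $\stst{A}{a}=\{a\}$ for $a\in A$ by (U\ref{item:u0}). Fixing a well-ordering $\preceq$ on $X$, I would define $r(x)=\min_{\preceq}\stst{A}{x}$, which is automatically a retraction.

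The main work, and the expected obstacle, is verifying that $r$ is $1$-Lipschitz. I would argue by contradiction, assuming $d(x, y)<d(r(x), r(y))$ for some $x, y$ with $r(x)\prec r(y)$, and split into two cases exactly as in Theorem \ref{thm:uniformretract}. If $d(y, r(x))\le d(y, r(y))$, then since $r(y)$ already minimizes the distance from $y$ to $A$ and $r(x)\in A$, these two values are forced to be equal, so $r(x)\in \stst{A}{y}$, contradicting the minimality defining $r(y)$. In the remaining case $d(y, r(y))<d(y, r(x))$, two applications of the isosceles principle (Lemma \ref{lem:isosceles}), first to $\{y, r(x), r(y)\}$ and then to $\{x, y, r(x)\}$, collapse the configuration to $d(x, r(x))=d(y, r(x))=d(r(x), r(y))=:D$ with $d(x, y)<D$ and $d(y, r(y))<D$. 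Then $\setdis_{d, A}(x)=d(x, r(x))=D$ forces $D\le d(x, r(y))$ because $r(y)\in A$, whereas the strong triangle inequality gives
\[
d(x, r(y))\le d(x, y)\lor d(y, r(y))<D,
\]
a contradiction. I expect the only delicate point to be the bookkeeping of strict versus non-strict inequalities across the two cases; the exactness afforded by proximality makes this cleaner than the $\concon>1$ argument of Theorem \ref{thm:uniformretract}, since no multiplicative error factors need to be tracked.
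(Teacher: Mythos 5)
Your proof is correct in both directions, but note that the paper itself does not prove Theorem \ref{thm:Artico} at all: it imports the statement from Artico--Moresco, citing \cite[Theorem 4.6 and Proposition 2.6]{artico1981some}, and only uses it as a black box (in Corollary \ref{cor:characlosed}). So your argument is a genuinely self-contained alternative. Your direction $(2)\To(1)$ is the standard computation and is fine; the essential use of the strong triangle inequality to get $d(x,r(x))\le d(x,z)$ rather than $\le 2\cdot d(x,z)$ is exactly the point. Your direction $(1)\To(2)$ is, as you say, the proof of Theorem \ref{thm:uniformretract} with the $\concon$-approximate nearest-point sets replaced by the exact sets $\stst{A}{x}=\{\, a\in A\mid d(x,a)=\setdis_{d,A}(x)\, \}$, which proximality makes non-empty without invoking Dedekind completeness of the value set; I checked the two cases and they close correctly: Case 1 uses only the minimality of $r(y)$ in $\stst{A}{y}$ under $\preceq$, and Case 2 reduces via two applications of Lemma \ref{lem:isosceles} to $d(x,r(x))=d(y,r(x))=d(r(x),r(y))=D$ with $d(x,y)<D$ and $d(y,r(y))<D$, whence $D=\setdis_{d,A}(x)\le d(x,r(y))\le d(x,y)\lor d(y,r(y))<D$, a contradiction. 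What your route buys is a uniform treatment: the same well-ordering template yields both the $\concon^{2}$-Lipschitz retraction of Theorem \ref{thm:uniformretract} (from approximate minimizers, needing Dedekind completeness) and the $1$-Lipschitz retraction here (from exact minimizers, needing proximality), making the paper independent of the external reference; what the paper's citation buys is brevity and the full Artico--Moresco equivalence in their original generality.
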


Using Proposition \ref{prop:1-lipretract} and Theorems \ref{thm:retract} and \ref{thm:Artico}, 
we obtain a characterization of the closedness in a 
space possessing an infinite metrizable gauge. 
\begin{cor}\label{cor:characlosed}
We assume that 
$X$ is a topological space possessing 
an infinite metrizable gauge. 
Let $\kappa$ be a regular cardinal
 with
$\kappa\in \metrank(X)$. 
 Let 
 $A$ be a subset of $X$. 
Then 
the following statements  are equivalent to each other. 
\begin{enumerate}
\item\label{item:closd}
The set $A$ is closed.
\item\label{item:ret}
The set $A$ is a retract of $X$.
\item\label{item:sprox}
For all $S\in \FFF{\kappa}$, 
there exists  $d\in \ult(X; S)$ such that 
$A$ is a proximal subset of  $(X, d)$.
\item\label{item:alls1lp}
For all $S\in \FFF{\kappa}$, 
there exists  $d\in \ult(X;  S)$ such that 
$A$ is a $1$-Lipschitz retraction of 
$(X, d)$.

\end{enumerate}
\end{cor}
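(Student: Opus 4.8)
My plan is to show $(\ref{item:closd})\Leftrightarrow(\ref{item:ret})$ together with $(\ref{item:closd})\Rightarrow(\ref{item:alls1lp})$, $(\ref{item:closd})\Rightarrow(\ref{item:sprox})$, $(\ref{item:alls1lp})\Rightarrow(\ref{item:closd})$, and $(\ref{item:sprox})\Rightarrow(\ref{item:closd})$, so that each of $(\ref{item:ret})$, $(\ref{item:sprox})$, $(\ref{item:alls1lp})$ becomes equivalent to $(\ref{item:closd})$. First I would record that $X$ is Hausdorff: passing to the ultrametric $\cova_{G}\circ d\in\ult(X;\lolo{\Arc{G}})$ of Lemma \ref{lem:uniformdis}, distinct points lie in disjoint balls by the ultrametric ball argument of Lemma \ref{lem:centers}. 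I also tacitly assume $A\neq\emptyset$ (as in Theorem \ref{thm:retract}); when $A=\emptyset$ each of $(\ref{item:ret})$--$(\ref{item:alls1lp})$ forces $X=\emptyset$, so the equivalence is to be read for non-empty $A$.

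For $(\ref{item:closd})\Rightarrow(\ref{item:ret})$ I would simply invoke Theorem \ref{thm:retract}, which produces a continuous retraction onto the non-empty closed set $A$. For the converse $(\ref{item:ret})\Rightarrow(\ref{item:closd})$, a retraction $r\colon X\to A$ satisfies $A=\{x\in X\mid r(x)=x\}$, the locus where the continuous maps $r$ and $\mathrm{id}_{X}$ agree, which is closed since $X$ is Hausdorff. The positive content is $(\ref{item:closd})\Rightarrow(\ref{item:alls1lp})$ and $(\ref{item:closd})\Rightarrow(\ref{item:sprox})$, which I would handle by a single construction. Fix $S\in\FFF{\kappa}$. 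Since $\kappa\in\metrank(X)$, Proposition \ref{prop:characterization} supplies some $h\in\ult(X;S)$, and Theorem \ref{thm:retract} supplies a continuous retraction $r\colon X\to A$. Put $k(x,y)=h(x,y)\lor h(r(x),r(y))$. The computation of Proposition \ref{prop:1-lipretract} shows that $r$ is $1$-Lipschitz with respect to $k$ and that $k$ generates the topology of $X$; since $k(x,z)=h(x,z)\lor h(r(x),r(z))\le(h(x,y)\lor h(y,z))\lor(h(r(x),r(y))\lor h(r(y),r(z)))=k(x,y)\lor k(y,z)$ and $k$ is $S$-valued, in fact $k\in\ult(X;S)$. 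This gives $(\ref{item:alls1lp})$. As $A$ is closed, Theorem \ref{thm:Artico} applied to $k$ then yields that $A$ is a proximal subset of $(X,k)$, which is $(\ref{item:sprox})$.

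It remains to close the loop. For $(\ref{item:sprox})\Rightarrow(\ref{item:closd})$, choose any $S\in\FFF{\kappa}$ and $d\in\ult(X;S)$ for which $A$ is proximal; if $x\in\cl_{X}(A)$ then every ball $U(x,s;d)$ with $s\in\stars{S}$ meets $A$, so $\inf\{\, d(x,z)\mid z\in A\,\}=\mzero_{S}$, and proximality supplies $a\in A$ with $d(x,a)=\mzero_{S}$, whence $x=a\in A$ by (U\ref{item:u0}); thus $A$ is closed. For $(\ref{item:alls1lp})\Rightarrow(\ref{item:closd})$, a $1$-Lipschitz retraction is in particular continuous, so $A$ is a retract and $(\ref{item:ret})\Rightarrow(\ref{item:closd})$ applies.

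I expect the only real friction to be organizational rather than mathematical: Theorem \ref{thm:Artico} presupposes that $A$ is closed, so the implications must be sequenced so that Artico is invoked only under hypothesis $(\ref{item:closd})$ (never while merely assuming $(\ref{item:sprox})$ or $(\ref{item:alls1lp})$). The one substantive check is the transfer of Proposition \ref{prop:1-lipretract} from $G$-metrics to $S$-ultrametrics, namely that $k$ retains the strong triangle inequality (verified above) and keeps its values inside $S$, so that $k\in\ult(X;S)$ for the prescribed $S$. The genuinely hard input, the existence of the retraction, is already provided by Theorem \ref{thm:retract}, so the corollary reduces to assembling these ingredients.
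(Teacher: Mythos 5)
Your proof is correct and assembles the same three ingredients as the paper's: Theorem \ref{thm:retract} for $(\ref{item:closd})\To(\ref{item:ret})$, the Hausdorff property for the converse, Proposition \ref{prop:1-lipretract} for producing the $1$-Lipschitz retraction, and Theorem \ref{thm:Artico} for proximality. The one place you genuinely diverge is the treatment of condition (\ref{item:sprox}). The paper simply cites Theorem \ref{thm:Artico} for the equivalence $(\ref{item:sprox})\iff(\ref{item:alls1lp})$, even though that theorem presupposes that $A$ is closed, so the direction $(\ref{item:sprox})\To(\ref{item:alls1lp})$ is, as written there, invoked before closedness of $A$ is available. You instead prove $(\ref{item:sprox})\To(\ref{item:closd})$ directly from the definition of proximality --- for $x\in\cl_{X}(A)$ the minimizer $a$ of $d(x,\cdot)$ over $A$ satisfies $d(x,a)<s$ for every $s\in\stars{S}$, hence $d(x,a)=\mzero_{S}$ and $x=a$ --- and invoke Theorem \ref{thm:Artico} only under hypothesis (\ref{item:closd}); this sequencing is cleaner and patches the small circularity. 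Your side remarks (the Hausdorff verification via the associated ultrametric, the degenerate case $A=\emptyset$, and the check that the Proposition \ref{prop:1-lipretract} construction $k=h\lor r^{*}h$ stays $S$-valued and satisfies the strong triangle inequality so that $k\in\ult(X;S)$) are all points the paper leaves implicit, and each of your verifications is sound.
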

\begin{proof}
By Theorem \ref{thm:retract}, 
we obtain the implication 
$(\ref{item:closd})\To (\ref{item:ret})$. 

Since $X$ is Hausdorff, 
we obtain the implication 
$(\ref{item:ret})\To (\ref{item:closd})$. 

By Proposition \ref{prop:1-lipretract}, 
we obtain the implication 
$(\ref{item:ret})\To (\ref{item:alls1lp})$. 

Theorem \ref{thm:Artico} means the equivalence 
$(\ref{item:sprox})\iff (\ref{item:alls1lp})$. 

The condition (\ref{item:alls1lp}) literary 
implies the condition (\ref{item:ret}). 
Thus we have 
the implication 
$(\ref{item:alls1lp})\To (\ref{item:ret})$. 
\end{proof}


\section{Extensors of ultrametrics and metrics of high power}\label{sec:extensors}

\subsection{Proof of Theorem \ref{thm:extensor}}\label{subsec:proofof2}

We now  introduce a metric vanishing on a given closed subset. 
The construction is an analogue of   \cite{Ha1930}. 
\begin{df}\label{df:subsubmap}
Let $X$ be a topological space 
with $\metrank(X)\neq \emptyset$. 
Let $A$ be a closed subset of $X$. 
Let $G$ be a linearly ordered Abelian group with 
$\met(X; G)\neq \emptyset$. 
Let  $S$ be a Dedekind complete g-characteristic subset of $G$. 
Let $h\in \ult(X, S)$. 
We define a symmetric function 
$\subsubmap[S, h, A]\colon X\times X\to S$ by 
\[
\subsubmap[S, h, A](x, y)=h(x, y)\land (\setdis_{h, A}(x)\lor \setdis_{h, A}(y)). 
\]
Note that since $S$ is Dedekind complete, 
the function $\setdis_{h, A}$ always exists. 
\end{df}

\begin{prop}\label{prop:vanishingmetric}
We assume that $X$ is a topological space 
possessing an infinite 
metrizable gauge. 
Let $\kappa$ be a regular cardinal
with $\kappa\in \metrank(X)$. 
Let $A$ be a closed subset of $X$. 
Let $G\in \GGG{\kappa}$. 
Let  $S$ be a Dedekind complete g-characteristic subset of $G$. 
Let $h\in \ult(X, S)$. 
Then
the following properties  are satisfied: 
\begin{enumerate}
\item\label{item:pro1}
the map $\subsubmap[S, h, A]$ is an $S$-pseudo-ultrametric on $X$, namely, 
it satisfies the condition \emph{(U\ref{item:u1})}--\emph{(U\ref{item:u4})} in 
Definition \ref{df:sultrametrics}; 
\item\label{item:pro2}
for all $x, y\in A$, we have 
$\subsubmap[S, h, A](x, y)=\mzero_{S}(=0_{G})$; 
\item\label{item:pro3}
the restriction 
$\subsubmap[S, h, A]|_{(X\setminus A)\times(X\setminus A)}$ is 
an $S$-ultrametric on the set $X\setminus A$,  and 
it generates the same topology of $X\setminus A$.
\end{enumerate}
\end{prop}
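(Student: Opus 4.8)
The plan is to abbreviate $\subsubmap[S,h,A]$ by $\Theta$ and $\setdis_{h,A}$ by $\varrho$, and to isolate first the single inequality that drives everything: the set-distance function is ``$1$-Lipschitz'' for the strong triangle inequality, i.e.\ $\varrho(u)\le h(u,v)\lor\varrho(v)$ for all $u,v\in X$. To prove this I would apply the strong triangle inequality of $h$ to get $h(u,a)\le h(u,v)\lor h(v,a)$ for every $a\in A$, and then take the infimum over $a\in A$. The only delicate point is that the infimum distributes over the constant maximum, namely $\inf_{a\in A}\bigl(h(u,v)\lor h(v,a)\bigr)=h(u,v)\lor\varrho(v)$; this holds in any Dedekind complete linearly ordered set, and since $S$ is Dedekind complete every $\varrho$-value lies in $S$, so it applies here. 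One checks the nontrivial direction $\ge$ by noting that if the infimum $I$ on the left exceeded $h(u,v)\lor\varrho(v)$, then $I>\varrho(v)=\inf_a h(v,a)$ would furnish $a_0$ with $h(v,a_0)<I$ together with $h(u,v)<I$, whence $h(u,v)\lor h(v,a_0)<I$, contradicting that $I$ is a lower bound.

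Statement (\ref{item:pro2}) is then immediate: for $x,y\in A$ we have $\varrho(x)=\varrho(y)=\mzero_{S}$ (take $a=x$, resp.\ $a=y$, in the infimum), so $\varrho(x)\lor\varrho(y)=\mzero_{S}$ and $\Theta(x,y)=h(x,y)\land\mzero_{S}=\mzero_{S}$. The conditions (U\ref{item:u1})--(U\ref{item:u3}) of statement (\ref{item:pro1}) are equally direct from the definition and the symmetry of $h$ and of $\lor$. The heart of the matter, which I expect to be the main obstacle, is the strong triangle inequality (U\ref{item:u4}): $\Theta(x,z)\le\Theta(x,y)\lor\Theta(y,z)$. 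Writing $a=\varrho(x)$, $b=\varrho(y)$, $c=\varrho(z)$, and using that both sides are invariant under $x\leftrightarrow z$, I would assume $a\le c$, so the left side is $h(x,z)\land c$, and split on the position of $b$. If $c\le b$, then $a\lor b=b\lor c=b$, and the distributive law $(p\land b)\lor(q\land b)=(p\lor q)\land b$ for a linear order gives $\Theta(x,y)\lor\Theta(y,z)=b\land(h(x,y)\lor h(y,z))\ge b\land h(x,z)\ge c\land h(x,z)$, which is the left side. If $b<c$, then the Lipschitz inequality $c=\varrho(z)\le h(y,z)\lor b$ forces $h(y,z)\ge c$, hence $\Theta(y,z)=h(y,z)\land(b\lor c)=c$, so $\Theta(x,y)\lor\Theta(y,z)\ge c\ge h(x,z)\land c$. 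In either case (U\ref{item:u4}) holds.

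For statement (\ref{item:pro3}) I would first verify (U\ref{item:u0}) on $X\setminus A$. For distinct $x,y\in X\setminus A$ we have $h(x,y)>\mzero_{S}$, and since $A$ is closed and $x\notin A$ there is a basic neighborhood $U(x,\epsilon;h)$ of $x$ disjoint from $A$, whence $h(x,a)\ge\epsilon$ for all $a\in A$ and thus $\varrho(x)\ge\epsilon>\mzero_{S}$; likewise $\varrho(y)>\mzero_{S}$. Therefore $\Theta(x,y)=h(x,y)\land(\varrho(x)\lor\varrho(y))>\mzero_{S}$, and together with (U\ref{item:u1})--(U\ref{item:u4}) this makes $\Theta|_{(X\setminus A)^{2}}$ an $S$-ultrametric. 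For the topology, the inequality $\Theta\le h$ yields $U(x,\epsilon;h)\mysub U(x,\epsilon;\Theta)$, so the $\Theta$-topology is coarser. For the reverse inclusion, fix $x\in X\setminus A$ and $\epsilon\in\stars{S}$ and put $\delta=\varrho(x)\land\epsilon$; since $\varrho(x)>\mzero_{S}$ we have $\delta\in\stars{S}$, and if $\Theta(x,y)<\delta$, then $\varrho(x)\lor\varrho(y)\ge\varrho(x)\ge\delta>\Theta(x,y)$, so the minimum defining $\Theta(x,y)$ must be attained at $h(x,y)$, giving $h(x,y)=\Theta(x,y)<\delta\le\epsilon$; hence $U(x,\delta;\Theta)\mysub U(x,\epsilon;h)$. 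This shows the two topologies on $X\setminus A$ coincide and completes the proof.
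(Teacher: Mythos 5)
Your proposal is correct, and its overall structure matches the paper's: verify (U\ref{item:u1})--(U\ref{item:u4}) and the vanishing on $A$ directly, then compare topologies on $X\setminus A$ by showing that $\subsubmap[S,h,A]$ coincides with $h$ at scales below $\setdis_{h,A}(x)$. The differences are in execution. For the strong triangle inequality, the paper lists four inequalities --- $h(x,y)\le h(x,z)\lor h(z,y)$ together with three bounds on $\setdis_{h,A}(x)\lor\setdis_{h,A}(y)$ --- and lets the distributivity of $\land$ over $\lor$ in a linear order do the rest; the content of two of those bounds is exactly the $1$-Lipschitz property $\setdis_{h,A}(u)\le h(u,v)\lor\setdis_{h,A}(v)$, which the paper leaves implicit (``by the definition of $\setdis_{h,A}$'') and which you isolate and prove carefully, then feed into a direct case analysis on the relative positions of $\setdis_{h,A}(x)$, $\setdis_{h,A}(y)$, $\setdis_{h,A}(z)$. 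Both arguments are sound; yours makes the one nontrivial ingredient visible, while the paper's is more mechanical. For the topology comparison, you obtain the inclusion $U(x,\epsilon;h)\mysub U(x,\epsilon;\subsubmap[S,h,A])$ for free from the pointwise bound $\subsubmap[S,h,A]\le h$, whereas the paper proves that same direction by invoking the isosceles lemma (Lemma \ref{lem:isosceles}) to show that $\setdis_{h,A}$ is locally constant off $A$ --- more than is needed there. Your choice of radius $\delta=\setdis_{h,A}(x)\land\epsilon$ in the reverse direction also sidesteps the paper's appeal to $G_{>0}$ having no least element; the two arguments are otherwise the same.
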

\begin{proof}
We first prove the statement (\ref{item:pro1}). 
The proof of the triangle inequality is similar to \cite{Ha1930}. 
By the definition of 
$\setdis_{h, A}$, 
for all $x, y, z\in X$, 
we obtain the following:
\begin{enumerate}
\renewcommand{\labelenumi}{(\alph{enumi})}
\item 
$h(x, y)\le h(x, z)\lor h(z, y)$
\item 
$\setdis_{h, A}(x)\lor 
\setdis_{h, A}(y)\le 
(\setdis_{h, A}(x)\lor \setdis_{h, A}(z))\lor h(z, y)$
\item 
$\setdis_{h, A}(x)\lor \setdis_{h, A}(y)\le 
h(x, z)\lor(\setdis_{h, A}(z)\lor \setdis_{h, A}(y))$
\item 
$\setdis_{h, A}(x)\lor \setdis_{h, A}(y)\le 
(\setdis_{h, A}(x)\lor \setdis_{h, A}(z))
\lor (\setdis_{h, A}(z)\lor \setdis_{h, A}(y))$
\end{enumerate}
These inequalities imply
that 
$\subsubmap[S, h, A]$ satisfies 
the triangle inequality. 
The remaining part follows from 
the definition of $\subsubmap[S, h, A]$. 

By the definitions of $\setdis_{h, A}$ 
and $\subsubmap[S, h, A]$, 
the statement (\ref{item:pro2}) is true. 

We now show that the statement (\ref{item:pro3}). 
Put $\subsubmap=\subsubmap[S, h, A]$. 
By the definition of $\subsubmap$, 
we verify that $\subsubmap$ is an 
$S$-ultrametric on $X\setminus A$. 
It suffices to show that 
$\subsubmap|_{(X\setminus A)\times (X\setminus A)}$
and $h|_{(X\setminus A)\times (X\setminus A)}$
generates the same topology. 

Take arbitrary $a\in X\setminus A$, 
and 
$\epsilon\in G_{>0}$. 
 By $G\in \GGG{\kappa}$, 
 the set $G_{>0}$ does not have the least element, 
 and hence 
 we  can take
 $\delta\in G_{>0}$
 such that 
$\delta<\min\{\epsilon, \setdis_{h, A}(a)\}$. 
Take $x \in U(a, \delta; h)$. 
Take arbitrary $p\in A$. 
Since $\delta<\setdis_{h, A}(a)$, 
we have $\delta<h(a, p)$. 
By $h(x, a)<\delta$, 
we have $h(x, a)<h(a, p)$. 
According to 
Lemma \ref{lem:isosceles}, 
we obtain $h(x, p)=h(a, p)$. 
Thus, we have $\setdis_{h, A}(a)=\setdis_{h, A}(x)$. 
Since $h(x, a)<\delta$
and $\delta<\setdis_{h, A}(a)$, 
we obtain  $\subsubmap(a, x)=h(x, a)$. 
By $\delta<\epsilon$, 
we conclude that 
$U(a,\delta; h)\mysub U(a, \epsilon; \subsubmap)$. 

We next prove the converse inclusion. 
 Take arbitrary $a\in X\setminus A$ and $\epsilon\in G_{>0}$. 
 By $G\in \GGG{\kappa}$, 
 the set $G_{>0}$ does not have the least element, 
 and hence 
 we  can take $\delta\in G_{>0}$
 with  $\delta<\min\{\epsilon, \setdis_{h, A}(a)\}$. 
Take
  $x\in B(a, \delta; \subsubmap)$. Then, 
 we have 
 \begin{align}\label{al:4444}
 \subsubmap(a, x)
 =
 h(a, x)\land (\setdis_{h, A}(a)\lor \setdis_{h, A}(x))
 <\delta. 
 \end{align}
 For the sake of  contradiction, 
suppose that 
$\subsubmap(a, x)=
 (\setdis_{h, A}(a)\lor \setdis_{h, A}(x))$. 
 Then,  we have 
 $(\setdis_{h, A}(a)\lor \setdis_{h, A}(x))<\delta$, 
 and hence 
 $ \setdis_{h, A}(a)<\delta$. 
 This contradicts $\delta<\setdis_{h, A}(a)$ 
 (the definition of $\delta$). 
 Thus, we have 
 $ \subsubmap(a, x)=h(a, x)$. 
By  (\ref{al:4444}) and $\delta<\epsilon$, we have 
$h(a, x)<\epsilon$, which  implies that 
$U(a, \delta; \subsubmap)
\mysub U(a, \epsilon; h)$. 
 This completes the proof. 
\end{proof}

\begin{df}\label{df:pulback}
Let $X$, $Y$, and $Z$ be sets. 
Let 
$f\colon X\to Y$  and 
$d\colon Y\times Y\to Z$ be maps. 
Then, we define a map
$f^{*}d\colon X\times X\to Z$ by 
$f^{*}d(x, y)=d(f(x), f(y))$. 
This map is called the 
\emph{pullback of $d$ by 
$f$}. 
\end{df}

\begin{df}\label{df:modifymet}
Let $X$ be a topological space 
with $\metrank(X)\neq \emptyset$. 
Let $A$ be a closed subset of $X$. 
Let $G$ be a linearly ordered Abelian group with 
$\met(X; G)\neq \emptyset$. 
Let  $S$ be a Dedekind complete g-characteristic subset of $G$. 
Let $h\in \ult(X, S)$. 
Let $d\in \met(A; G)$. 
Let $r\colon X\to A$ be a uniformly continuous 
retraction with respect to $h$. 
We define a map 
$\submap[S, h, r, A, d]\colon
X\times X\to G$ by 
\[
\submap[S, h, r, A, d](x, y)=
r^{*}d(x, y)\lor \subsubmap[S, h, A](x, y), 
\] 
where $r^{*}d$ is 
the pullback of $d$ by $r$. 
\end{df}

\begin{lem}\label{lem:phantom}
Let $X$ be a topological space 
with $\metrank(X)\neq \emptyset$. 
Let $A$ be a closed subset of $X$. 
Let $G$ be a linearly ordered Abelian group with 
$\met(X; G)\neq \emptyset$. 
Let  $S$ be a Dedekind complete g-characteristic subset of $G$. 
Let $h\in \ult(X, S)$. 
Let $d\in \met(A; G)$. 
Let $r\colon X\to A$ be a uniformly continuous 
retraction with respect to $h$. 
Then the map 
$\submap[S, h, r, A, d]$ is 
a $G$-metric on $X$. 
Moreover, 
if $d$ is an $S$-ultrametric, 
then $\submap[S, h, r, A, d]$ is an
$S$-ultrametric. 
\end{lem}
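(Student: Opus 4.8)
The plan is to verify the metric axioms (M\ref{item:m0})--(M\ref{item:m4}) directly for the map in question. Put $\submap=\submap[S, h, r, A, d]$ and $\subsubmap=\subsubmap[S, h, A]$, so that $\submap=r^{*}d\lor \subsubmap$. Since $d$ is a $G$-metric on $A$ and $r\colon X\to A$, the pullback $r^{*}d$ obeys (M\ref{item:m1})--(M\ref{item:m4}); and by Proposition \ref{prop:vanishingmetric}(\ref{item:pro1}) the map $\subsubmap$ is an $S$-pseudo-ultrametric, hence symmetric, nonnegative, vanishing on the diagonal, and subject to the strong triangle inequality (U\ref{item:u4}). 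The conditions (M\ref{item:m1}), (M\ref{item:m2}) and (M\ref{item:m3}) for $\submap$ then follow immediately, because $\lor$ preserves vanishing on the diagonal, nonnegativity, and symmetry.

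For the triangle inequality (M\ref{item:m4}) I would fix $x, y, z\in X$ and bound each constituent against $\submap(x, z)+\submap(z, y)$. On one side, $r^{*}d(x, y)\le r^{*}d(x, z)+r^{*}d(z, y)\le \submap(x, z)+\submap(z, y)$, using $r^{*}d\le \submap$. On the other side, the strong triangle inequality gives $\subsubmap(x, y)\le \subsubmap(x, z)\lor \subsubmap(z, y)\le \submap(x, z)\lor \submap(z, y)$, and the elementary inequality $a\lor b\le a+b$ for $a, b\in G_{\ge 0}$ (the same fact invoked in Lemma \ref{lem:inclusions}) then yields $\subsubmap(x, y)\le \submap(x, z)+\submap(z, y)$. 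Taking the maximum of the two left-hand sides establishes (M\ref{item:m4}).

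The step I expect to be the crux is the separation axiom (M\ref{item:m0}), which I would argue by splitting on membership in $A$. Suppose $\submap(x, y)=0$, so that both $r^{*}d(x, y)=0$ and $\subsubmap(x, y)=0$. If $x, y\in A$, then $r(x)=x$ and $r(y)=y$ as $r$ is a retraction, whence $d(x, y)=r^{*}d(x, y)=0$ and $x=y$ by (M\ref{item:m0}) for $d$. Otherwise one of the points, say $x$, lies in $X\setminus A$; since $A$ is closed and $h$ generates the topology of $X$, there is an open ball $U(x, \epsilon; h)$ with $\epsilon\in \stars{S}$ disjoint from $A$, so that $h(x, a)\ge \epsilon$ for every $a\in A$ and hence $\setdis_{h, A}(x)\ge \epsilon>0$. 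Then $\setdis_{h, A}(x)\lor \setdis_{h, A}(y)>0$, and the equality $\subsubmap(x, y)=h(x, y)\land(\setdis_{h, A}(x)\lor \setdis_{h, A}(y))=0$ forces $h(x, y)=0$, so $x=y$ by (U\ref{item:u0}) for the $S$-ultrametric $h$. This is the only place where the closedness of $A$ is used, entering through the positivity of $\setdis_{h, A}$ off $A$, and it completes the proof that $\submap$ is a $G$-metric.

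For the final assertion, suppose $d\in \ult(A; S)$. Then the pullback $r^{*}d$ takes values in $S$ and satisfies the strong triangle inequality (U\ref{item:u4}), being the pullback of an $S$-ultrametric, while $\subsubmap$ is $S$-valued and satisfies (U\ref{item:u4}) by Proposition \ref{prop:vanishingmetric}(\ref{item:pro1}). Since $\lor$ preserves (U\ref{item:u4}) and keeps values in $S$, the map $\submap$ is $S$-valued and satisfies (U\ref{item:u1})--(U\ref{item:u4}); as the separation property proved above is precisely (U\ref{item:u0}), we conclude that $\submap$ is an $S$-ultrametric. Every verification other than the separation argument reduces to a corresponding property of $r^{*}d$ or of $\subsubmap$ together with the monotonicity of $\lor$.
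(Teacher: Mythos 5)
Your proof is correct and follows essentially the same route as the paper: decompose $\submap=r^{*}d\lor\subsubmap[S,h,A]$, derive (M\ref{item:m1})--(M\ref{item:m3}) and the triangle inequality from the corresponding properties of the two constituents together with $a\lor b\le a+b$, and obtain separation from the positivity of $\setdis_{h,A}$ off the closed set $A$. The only difference is presentational: where the paper cites statements (2) and (3) of Proposition \ref{prop:vanishingmetric} for (M\ref{item:m0}), you unwind that argument directly, which is the same underlying fact.
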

\begin{proof}
Put $\submap=\submap[S, h, r, A, d]$. 
By
the statements (2) and  (3) in  Proposition \ref{prop:vanishingmetric}, 
and by $d\in \met(A; G)$,  
we observe 
 that $\submap$ satisfies the condition 
(M\ref{item:m0}) in Definition \ref{df:gmetrics}. 
By the statement (1) in  Proposition \ref{prop:vanishingmetric}, 
the map $\submap$ satisfies 
the condition (M\ref{item:m1})--(M\ref{item:m3}). 
From the fact that 
$(x+y)\lor (u+v)\le (x\lor u)+(y\lor v)$ for all 
$x, y, u, v\in G$, 
we deduce 
the triangle inequality 
(the condition (M\ref{item:u4})) for $\submap$. 
The latter part can be proven in a similar way. 
\end{proof}

The following lemma is proven using the ideas of 
Proof of (1-ii) in  the proof of 
\cite[Theorem 1]{NN1981}. 
\begin{lem}\label{lem:metmet}
We assume that 
$X$ is a topological space 
possessing 
an 
infinite metrizable gauge. 
Let $\kappa$ be a regular cardinal
with $\kappa\in \metrank(X)$. 
Let $A$ be a closed subset of $X$. 
Let $G\in \GGG{\kappa}$. 
Let  $S$ be a Dedekind complete g-characteristic subset of $G$. 
Let $h\in \ult(X, S)$. 
Let $d\in \met(A; G)$. 
Then
the $G$-metric 
$\submap[S, h, r,  A, d]$ generates 
the same topology of $X$. 
\end{lem}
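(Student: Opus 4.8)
The plan is to prove that the identity map is a homeomorphism between $(X, h)$ and $(X, \submap)$, where I abbreviate $\submap = \submap[S, h, r, A, d]$ and $\subsubmap = \subsubmap[S, h, A]$. Recall that $h \in \ult(X; S)$ generates the topology of $X$, and since $A$ carries the subspace topology, both $d$ and the restriction $h|_{A^{2}}$ generate the topology of $A$; hence the identity $(A, d) \to (A, h|_{A^{2}})$ is a homeomorphism. I will use this repeatedly in the form: for each $a_{0} \in A$ and each $\epsilon \in G_{>0}$ there is $\delta \in G_{>0}$ with $d(a_{0}, a) < \delta \To h(a_{0}, a) < \epsilon$ for $a \in A$, and conversely. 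Throughout I rely on $\submap = r^{*}d \lor \subsubmap$ and on $\subsubmap(x, y) = h(x, y) \land (\setdis_{h, A}(x) \lor \setdis_{h, A}(y))$ from Definition \ref{df:subsubmap}.

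First I would show $\mathrm{id}\colon (X, h) \to (X, \submap)$ is continuous, i.e.\ every $\submap$-ball about a point contains an $h$-ball. Fix $x_{0}$ and $\epsilon \in G_{>0}$. Since $\subsubmap(x_{0}, y) \le h(x_{0}, y)$, it suffices to make both terms of $\submap$ small: the second is $< \epsilon$ once $h(x_{0}, y) < \epsilon$, and for the first I use that $r$ is uniformly continuous with respect to $h$ together with the equivalence of $d$ and $h|_{A^{2}}$ to find $\delta$ with $h(x_{0}, y) < \delta \To d(r(x_{0}), r(y)) < \epsilon$. Taking a minimum produces the required $h$-ball inside the given $\submap$-ball.

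The reverse continuity $\mathrm{id}\colon (X, \submap) \to (X, h)$ splits into two cases. For $x_{0} \in X \setminus A$, closedness of $A$ gives $s_{0} := \setdis_{h, A}(x_{0}) > 0$; choosing $\delta < \min\{\epsilon, s_{0}\}$ (possible since $G_{>0}$ has no least element because $G \in \GGG{\kappa}$), the inequality $\submap(x_{0}, y) < \delta$ forces $\subsubmap(x_{0}, y) < \delta$, and because $\setdis_{h, A}(x_{0}) \lor \setdis_{h, A}(y) \ge s_{0} > \delta$, the minimum defining $\subsubmap$ must be realized by $h(x_{0}, y)$, so $h(x_{0}, y) < \delta < \epsilon$.

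For $x_{0} \in A$ -- the main obstacle -- I first simplify: since $r(x_{0}) = x_{0}$ and $\setdis_{h, A}(x_{0}) = 0 \le h(x_{0}, y)$, one computes $\subsubmap(x_{0}, y) = \setdis_{h, A}(y)$ and hence $\submap(x_{0}, y) = d(x_{0}, r(y)) \lor \setdis_{h, A}(y)$. Thus $\submap$ records no direct information about $h(x_{0}, y)$, so the delicate point is to recover smallness of $h(x_{0}, y)$ from smallness of $d(x_{0}, r(y))$ and $\setdis_{h, A}(y)$. Given $\epsilon$, I would choose $\delta_{1}$ from the uniform continuity of $r$ (so $h(u, v) < \delta_{1} \To h(r(u), r(v)) < \epsilon$), $\delta_{2}$ from the equivalence of $d$ and $h|_{A^{2}}$ at $x_{0}$, and set $\delta = \min\{\delta_{1}, \delta_{2}, \epsilon\}$. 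If $\submap(x_{0}, y) < \delta$, then $\setdis_{h, A}(y) < \delta$ yields some $a \in A$ with $h(y, a) < \delta \le \delta_{1}$; uniform continuity gives $h(r(y), a) = h(r(y), r(a)) < \epsilon$, while $d(x_{0}, r(y)) < \delta \le \delta_{2}$ gives $h(x_{0}, r(y)) < \epsilon$. Two applications of the strong triangle inequality (U\ref{item:u4}) then give $h(x_{0}, y) \le h(x_{0}, r(y)) \lor h(r(y), a) \lor h(a, y) < \epsilon$. The crux is precisely this step: the retraction $r$ need not send $y$ to a near-nearest point of $A$, so $h(y, r(y))$ cannot be bounded directly; introducing an independent near-realizer $a$ of $\setdis_{h, A}(y)$ and transporting its proximity to $y$ through $r$ by \emph{uniform} continuity (mere continuity fails, since $a$ varies with $y$) is what closes the estimate. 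Combining the two directions yields that $\submap$ and $h$ generate the same topology.
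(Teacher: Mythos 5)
Your proposal is correct and follows essentially the same route as the paper's proof: the easy direction via $\subsubmap[S,h,A]\le h$ and continuity of $r$, the case $x_{0}\notin A$ via a radius below $\setdis_{h,A}(x_{0})$, and the crucial case $x_{0}\in A$ via a near-realizer $a\in A$ of $\setdis_{h,A}(y)$ transported through the uniformly continuous retraction and a three-link strong triangle inequality, exactly as in the paper's Case 2. Your treatment is in fact slightly cleaner, since you handle the forward inclusion uniformly and avoid the paper's spurious condition $\delta<\setdis_{h,A}(a)$ for $a\in A$.
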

\begin{proof}
Put $\submap=\submap[S, h, r, A, d]$ and 
$\subsubmap=\subsubmap[S, h, A]$. 
Since we have $h\in \ult(X; S)$, 
it suffices to show that $\submap$ and $h$
generate the same topology. 

Take $a\in X$ and $\epsilon\in G_{>0}$. 
If $a\not\in A$, 
then
we can 
take $\delta\in G_{>0}$
with  $\delta<\min\{\epsilon, \setdis_{h, A}(a)\}$. 
Similarly to the proof of Proposition \ref{prop:vanishingmetric}, we have
$\subsubmap(a, x)=h(a, x)$. 
Hence $\submap(a, x)=r^{*}d(a, x)\lor h(a, x)$. 
Since $r$ is continuous, 
there exists a sufficiently small $\eta\in G_{>0}$ 
such that 
$U(a, \eta; h)\mysub U(a, \epsilon; \submap)$. 
If $a\in A$, 
then we have 
$(\setdis_{h, A}(a)\lor \setdis_{h, A}(x))
=\setdis_{h, A}(x)\le h(x, a)$.
Thus, we also have 
$\submap(a, x)= r^{*}d(a, x)\lor \setdis_{h, A}(x)
\le r^{*}d(a, x)\lor h(x, a)$. 
Since $r$ is continuous, there 
exists a sufficiently small 
$\eta\in G_{>0}$ such that 
$U(a, \eta; h)\mysub U(a, \epsilon; \submap)$. 

We next prove the converse inclusion. 
 Take arbitrary $a\in X$ and $\epsilon\in G_{>0}$.

Case 1 ($a\not\in A$):
This case is proven in the same way as 
the proof of Proposition \ref{prop:vanishingmetric}. 
 By $G\in \GGG{\kappa}$, 
 the set $G_{>0}$ does not have the least element, 
 and hence 
 we  can take 
 $\delta\in G_{>0}$
 with  $\delta<\min\{\epsilon, \setdis_{h, A}(a)\}$. 
Take 
  $x\in U(a, \delta; \submap)$. Then, 
 we have 
 \begin{align}\label{al:41}
 \subsubmap(a, x)
 =
 h(a, x)\land (\setdis_{h, A}(a)\lor \setdis_{h, A}(x))
 <\delta. 
 \end{align}
 For the sake of  contradiction, 
 we suppose that 
\[
\subsubmap(a, x)=
 (\setdis_{h, A}(a)\lor \setdis_{h, A}(x)).
 \] 
 Then,  we have 
 $(\setdis_{h, A}(a)\lor \setdis_{h, A}(x))<\delta$. 
Hence $ \setdis_{h, A}(a)<\delta$, 
 which contradicts 
 $\delta<\setdis_{h, A}(a)$. 
 Thus, we obtain  
 $ \subsubmap(a, x)=h(a, x)$. 
By  (\ref{al:41}) and $\delta<\epsilon$, we have 
$h(a, x)<\epsilon$. 
This   implies the inclusion  
$U(a, \delta; \submap)
\mysub 
U(a, \epsilon; h)$.

Case 2  ($a\in A$):
Take  $\delta\in G_{>0}$ satisfying  
that 
\begin{enumerate}
\renewcommand{\labelenumi}{(a-\arabic{enumi})}
\item $\delta<\epsilon$;\label{item:delta1}
\item 
$\delta<\setdis_{h, A}(a)$;\label{item:delta0} 
\item 
for all $y\in X$ with 
$d(a, y)<\delta$, we have 
$h(a, y)<\epsilon$;\label{item:delta2}
\item 
if $h(u, v)<\delta$, then 
$h(r(u), r(v))<\epsilon$.\label{item:delta3}
\end{enumerate}
Since $d$ and $h$ generate the same topology on $A$, 
the condition (a-\ref{item:delta2}) is guaranteed. 
Since $r$ is uniformly continuous with respect to $h$, 
the condition (a-\ref{item:delta3}) is guaranteed. 

Take 
$x\in U(a, \delta; \submap)$. 
We now verify  $h(a, x)<\epsilon$. 
By the definition of $\submap$, 
we have  $e(a, x)<\delta$ and 
$\subsubmap(a, x)<\delta$. 
By the definitions of $e$ and $\subsubmap$, we obtain 
\begin{align}
&d(a, r(x))<\delta, \label{al:arxdelta}\\
&h(a, x)\land (\setdis_{h, A}(a)\lor \setdis_{h, A}(x))<\delta.\label{al:brabra}
\end{align} 

We first  assume that $h(a, x)\le \delta$. 
Then,  by (a-\ref{item:delta1}), 
we have $h(a, x)<\epsilon$. 

We next assume that  $\delta<h(a, x)$. 
By (a-\ref{item:delta0}) and (\ref{al:brabra}), 
we have  $\setdis_{h, A}(x)<\delta<h(a, x)$. 
Then,  there exists $b\in A$  with 
$\setdis_{h, A}(x)\le h(b, x)<\delta$. 
Thus,  we have 
\begin{align}\label{al:hbxepsilon}
h(b, x)<\epsilon.
\end{align} 
From $h(b, x)<\delta$ and (a-\ref{item:delta3}), 
it follows that $h(r(b), r(x))<\epsilon$.
By 
$r(b)=b$, 
we have  
\begin{align}\label{al:hbrxepsilon}
h(b, r(x))<\epsilon. 
\end{align}
By (a-\ref{item:delta2}), and by (\ref{al:arxdelta}), 
 we have 
 \begin{align}\label{al:arx}
 h(a, r(x))<\epsilon
 \end{align}
The inequalities 
(\ref{al:hbxepsilon}), 
(\ref{al:hbrxepsilon}), and 
(\ref{al:arx})
imply 
 \begin{align*}
 h(x, a)&\le h(x, b)\lor h(b, r(x))\lor h(r(x), a)
\\ 
&< \epsilon\lor \epsilon\lor \epsilon
=\epsilon. 
 \end{align*}
 Thus, $h(a, x)<\epsilon$. 
 
Therefore,   we conclude that 
 $U(a, \delta; \submap)\mysub U(a, \epsilon; h)$.
 This completes the proof  that 
 $\submap$ and $h$ generate the same topology of $X$. 
\end{proof}

\begin{df}\label{df:paramap}
Let $X$ be a topological space 
with $\metrank(X)\neq \emptyset$.
Let $A$ be a closed subset of $X$.  
Let $G$ be a linearly ordered Abelian group with 
$\met(X; G)\neq \emptyset$. 
Let  $S$ be a  g-characteristic subset of $G$. 
Let $h\in \ult(X, S)$. 
Let $k\in \ult(A, S)$. 
Let 
$r\colon X\to A$ be a uniformly continuous 
retraction with respect to $h$. 
We define a map 
$\paramap[S, h, r, k]\colon X\times X\to S$
 by
\[
\paramap[S, h, r, k]=r^{*}k \lor h 
\]
Note that $\paramap[S, h, r, k]$ is an 
$S$-ultrametric, 
and generates the same topology of $X$
since $h\in \ult(X; S)$. 
\end{df}

The following lemma is proven using the ideas of 
Proof of (1-vi) in the proof of
\cite[Theorem 1]{NN1981}.

\begin{lem}\label{lem:completeext}
We assume that
$X$ is a topological space 
possessing 
an infinite metrizable gauge. 
Let $\kappa$ be a regular cardinal 
with $\kappa\in \metrank(X)$. 
Let $A$ be a closed subset of $X$. 
Let $G\in \GGG{\kappa}$. 
Let  $S$ be a  g-characteristic subset of $G$. 
Let $d\in \met(A; G)$. 
Let $h\in \ult(X, S)$. 
Let $r\colon X\to A$ be a uniformly continuous 
retraction with respect to $h$. 
Let $k\in \ult(A; S)$ be an $S$-ultrametric on $A$ uniformly equivalent to 
$d$. 
Then the map 
$r$ is 
 uniformly continuous 
with respect to $\paramap[S, h, r, k]$. 
If $d$ and $h$ are complete, 
then so is 
$\submap[S, \paramap[S, h, r, k], r, A, d]$. 
\end{lem}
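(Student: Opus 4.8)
The plan is to prove the two assertions separately, writing $P=\paramap[S,h,r,k]=r^{*}k\lor h$ and $M=\submap[S,P,r,A,d]$, and abbreviating $g=\setdis_{P,A}$ for the $P$-distance to $A$ (which exists since the defining hypotheses of $\submap$ force $S$ to be Dedekind complete). The point of the first assertion is precisely that it supplies the prerequisite making $M=\submap[S,P,r,A,d]$ well-defined in the sense of Definition \ref{df:modifymet}.

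For the first assertion I would compute $P$ along $r$. Since $r$ is a retraction, $r\circ r=r$, so for all $x,y\in X$
\[
P(r(x),r(y))=k(r(x),r(y))\lor h(r(x),r(y)),\qquad P(x,y)=k(r(x),r(y))\lor h(x,y).
\]
Hence the $r^{*}k$-part of $P(r(x),r(y))$ is already dominated by $P(x,y)$, and only the term $h(r(x),r(y))$ must be controlled. Given $\epsilon\in\stars{S}$, uniform continuity of $r$ with respect to $h$ (a hypothesis) furnishes $\delta_{0}\in\stars{S}$ with $h(u,v)<\delta_{0}\To h(r(u),r(v))<\epsilon$; taking $\delta=\min\{\epsilon,\delta_{0}\}\in\stars{S}$, the inequality $P(x,y)<\delta$ forces both $k(r(x),r(y))<\epsilon$ and $h(r(x),r(y))<\epsilon$, whence $P(r(x),r(y))<\epsilon$.

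For the second assertion, let $\myfilter{F}$ be a Cauchy filter on $(X,M)$. Since $M\ge r^{*}d$, the retraction $r\colon(X,M)\to(A,d)$ is $1$-Lipschitz, so $r_{\sharp}\myfilter{F}$ is Cauchy on $(A,d)$ and, by completeness of $d$, converges to some $p\in A$. I would then split according to how $\myfilter{F}$ sits relative to $A$. In the first case, $\{x:g(x)<\epsilon\}\in\myfilter{F}$ for every $\epsilon\in\stars{S}$; using $r(p)=p$, $g(p)=\mzero_{S}$ and
\[
M(p,x)=d(p,r(x))\lor\bigl(P(p,x)\land g(x)\bigr)\le d(p,r(x))\lor g(x),
\]
I intersect that filter set with a set on which $d(p,r(x))<\epsilon$ (available since $r_{\sharp}\myfilter{F}\to p$) to conclude $U(p,\epsilon;M)\in\myfilter{F}$, i.e. $\myfilter{F}\to p$ in $(X,M)$, recalling that $M$ generates the topology of $X$ by Lemma \ref{lem:metmet}.

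The heart of the argument, and the step I expect to be the main obstacle, is the remaining case, where some $\epsilon_{0}\in\stars{S}$ satisfies $\{x:g(x)<\epsilon_{0}\}\notin\myfilter{F}$. Here I would choose $F_{0}\in\myfilter{F}$ with $M(y,y')<\epsilon_{0}$ for all $y,y'\in F_{0}$; upward-closure of $\myfilter{F}$ then forces some $x_{0}\in F_{0}$ with $g(x_{0})\ge\epsilon_{0}$. The key ultrametric computation is that, since $\subsubmap[S,P,A](x_{0},y)\le M(x_{0},y)<\epsilon_{0}\le g(x_{0})\lor g(y)$, the minimum defining $\subsubmap[S,P,A](x_{0},y)$ is attained by $P(x_{0},y)$, so $P(x_{0},y)<\epsilon_{0}$ for every $y\in F_{0}$; Lemma \ref{lem:isosceles} then gives $P(y,a)=P(x_{0},a)$ for all $a\in A$, whence $g(y)=g(x_{0})\ge\epsilon_{0}$ throughout $F_{0}$. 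Consequently, for any $\epsilon\in\stars{S}$ with $\epsilon\le\epsilon_{0}$ and any $F\in\myfilter{F}$ contained in $F_{0}$ with $M(y,y')<\epsilon$, the relation $\subsubmap[S,P,A](y,y')\le M(y,y')<\epsilon$ together with $g(y)\lor g(y')=g(x_{0})\ge\epsilon$ upgrades to $P(y,y')<\epsilon$; as $F_{0}$ itself already witnesses $P$-diameter $\le\epsilon_{0}$, this shows $\myfilter{F}$ is $P$-Cauchy, hence $h$-Cauchy since $P\ge h$. Completeness of $(X,h)$ then yields a topological limit point $q$, which is a limit point of $\myfilter{F}$ in $(X,M)$ because $h$, $P$ and $M$ all generate the topology of $X$. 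The genuine difficulty is exactly this upgrade from $M$-smallness to $P$-smallness away from $A$, resting on locating the controlling point $x_{0}$ and on the isosceles property of the ultrametric $P$.
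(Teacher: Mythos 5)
Your proposal is correct, and its completeness half takes a genuinely different route from the paper's. The first assertion is handled exactly as in the paper: the retraction identity absorbs the $r^{*}k$-term and the uniform continuity of $r$ with respect to $h$ controls the remaining $h(r(x),r(y))$. For the completeness claim, the paper argues by contradiction that every $M$-Cauchy filter is Cauchy for $P$ (written $u$ there): it invokes Lemma \ref{lem:senkei} to extract a $\kappa$-indexed nested family of clopen filter sets together with a calibrated pair of g-characteristic embeddings $l,m\colon\invs{\kappa}\to G$ recording a modulus of uniform continuity for $r$; the assumed failure of $P$-Cauchyness forces the minimum in $\subsubmap$ to be attained by the distance-to-$A$ term on all small filter sets, so the filter approaches $A$, and the limit $p$ of $r_{\sharp}\myfilter{F}$ (completeness of $d$) is then shown to be a $P$-limit of the filter, contradicting the assumption; completeness of $h$ is then used implicitly via $h\le P$ to convert $P$-Cauchyness into convergence. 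Your dichotomy on whether $\setdis_{P,A}$ becomes arbitrarily small along the filter replaces that transfinite bookkeeping with two short direct arguments: in the near-$A$ case only completeness of $d$ is needed and the filter converges outright to $p$; in the bounded-away case Lemma \ref{lem:isosceles} shows $\setdis_{P,A}$ is constant on small filter sets, which is precisely what upgrades $M$-smallness to $P$-smallness, and completeness of $h$ finishes. This cleanly separates the roles of the two completeness hypotheses, avoids Lemma \ref{lem:senkei} altogether, and sidesteps the slightly delicate negation-of-Cauchy manipulations in the paper's version. One bookkeeping point you rightly flag and the lemma's statement elides: $\setdis_{P,A}$, and hence $\submap$, only makes sense when $S$ is Dedekind complete, as Definition \ref{df:modifymet} requires; in the application $S$ is the Dedekind complete set $T=l(\invs{\kappa})$, so your standing assumption is the intended reading.
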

\begin{proof}
Put $\submap=
\submap[S, \paramap[S, h, r, k], r, A, d]$  and 
$u=\paramap[S, h, r, k]$. 
Note that $u$ is an
$S$-ultrametric. 
Since $r\colon X\to A$ is a retraction, 
for all $x, y\in X$, 
we have 
\begin{align*}
u(r(x), r(y))&=k(r(r(x)), r(r(x)))\lor h(r(x), r(y))\\
&=k(r(x), r(y))\lor h(r(x), r(y))\le 
u(x, y)\lor h(r(x), r(y)). 
\end{align*}
Since  $r$ is uniformly continuous with respect to 
$h$, 
by the definition of $u$, 
 the map $r$ is 
uniformly continuous 
with respect to $u$. 

We next show that 
every Cauchy filter 
$\myfilter{F}$
on 
$(X, \submap)$ 
is also Cauchy in $(X, u)$. 
We take  g-characteristic 
isotone embeddings
$l, m\colon \invs{\kappa}\to G$ such that 
\begin{enumerate}
\renewcommand{\labelenumi}{(a-\arabic{enumi})}
\item\label{item:hikarii}
for all $\alpha< \kappa$, 
if $x, y\in X$ satisfies $u(x, y)\le l(\alpha)$, 
then we have $u(r(x), r(y))\le m(\alpha)$; 
\item\label{item:hikariii}
for all $\alpha<\kappa$,  we have 
$l(\alpha)<m(\alpha)$. 
\end{enumerate}
By the regularity of $\kappa$ and 
by the 
uniform continuity of $r$ with respect to $u$,
we can construct
 g-characteristic isotone embeddings 
$l^{\prime}, m\colon \invs{\kappa}\to G$ satisfying
the condition  (a-\ref{item:hikarii}). 
We define $l\colon \invs{\kappa}\to G$
 by $l(\alpha)=\min\{l^{\prime}(\alpha), m(\alpha+1)\}$ if $\alpha<\kappa$, and by $l(\kappa)=0_{G}$. 
 Then 
 $m$ is a g-characteristic isotone embedding, 
 and  
 the pair $l, m \colon \invs{\kappa}\to G$ 
 satisfies the conditions 
 (a-\ref{item:hikarii}) and 
(a-\ref{item:hikariii}).

For the sake of contradiction, 
we 
suppose 
that 
there exists a
Cauchy 
filter $\myfilter{F}$
on 
$(X, \submap)$, 
which is not Cauchy 
on
$(X, u)$. 
Then, 
we obtain the following fact:
\begin{enumerate}
\renewcommand{\labelenumi}{(Fact \arabic{enumi})}
\setlength{\leftskip}{8mm}
\item\label{item:daijifact}
there exits 
$\epsilon\in G_{>0}$ 
such that 
for all 
$F\in \myfilter{F}$, 
and for all $x, y\in F$, 
we have 
$\epsilon\le u(x, y)$. 
\end{enumerate}
Since 
$l\colon\invs{\kappa}\to G$ is a g-characteristic 
isotone embedding and 
$\myfilter{F}$ is Cauchy on 
$(X, \submap)$, 
we can take a map 
 $\myBmap\colon\kappa\to \myfilter{F}$
 stated in Lemma \ref{lem:senkei}
 associated with the map $l$. 
 
 According to  the condition (\ref{item:senkei2}) in Lemma \ref{lem:senkei}, 
 the following statement  holds true: 
 \begin{enumerate}
 \setcounter{enumi}{1}
 \renewcommand{\labelenumi}{(Fact \arabic{enumi})}
\setlength{\leftskip}{8mm}
 \item\label{item:fact2}
 we have 
$\submap(x, y)\le l(\alpha)$
for all 
$\alpha<\kappa$, 
and for all 
$x, y\in \myBmap(\alpha)$, 
\end{enumerate}
Thus, we obtain 
$\setdis_{u, A}(x)\lor \setdis_{u, A}(y)\le l(\alpha)$.
Then, by (a-\ref{item:hikariii}), 
for each $x\in \myBmap(\alpha)$,  
there exists $a_{x}\in A$
with 
$u(x, a_{x})<m(\alpha)$. 
By (a-\ref{item:hikarii}), we have 
$u(a_{x}, r(x))=u(r(x), r(a_{x}))< m(\alpha)$. 
Then, for all $\alpha<\kappa$, 
and  for all 
$x\in \myBmap(\alpha)$, we have 
$u(r(x), x)\le u(r(x), a_{x})\lor 
u(a_{x}, x)\le 
m(\alpha)\lor m(\alpha)=m(\alpha)$. 
By (a-\ref{item:hikariii}), for all $\alpha<\kappa$, 
and  for all 
$x\in \myBmap(\alpha)$, we obtain 
\begin{align}\label{al:daiji47}
u(r(x), x)\le  m(\alpha).
\end{align}
By $r^{*}d(x, y)\le \submap(x, y)$, 
and by (Fact \ref{item:fact2}), 
we have 
$d(r(x), r(y))\le l(\alpha)$ for all $x, y\in \myBmap(\alpha)$ and for all $\alpha<\kappa$. 
Thus, the pushout 
filter $r_{\sharp}\myfilter{F}$ of 
$\myfilter{F}$ by $r$
 is a Cauchy filter on 
$(A, d)$.

Since $(A, d)$ is complete, 
there 
exits a limit $p\in A$ of $r_{\sharp}\myfilter{F}$. 
Since $d$ and $u$ generate the same topology on $A$, 
the point $p$ is also a limit of $r_{\sharp}\myfilter{F}$ with respect to 
$u$. 
Then, 
there exists $\theta<\kappa$ satisfying  that 
\begin{align}
&m(\theta)<\epsilon;\label{al:lep}\\
&r(\myBmap(\theta))\mysub 
U(p, \epsilon;  u)\label{al:epsilonballu}.  
\end{align}
From (\ref{al:daiji47}), 
(\ref{al:lep}), and 
(\ref{al:epsilonballu}), 
it follows that 
for all $x\in \myBmap(\theta)$, 
we have 
\[
u(p, x)\le u(p, r(x))\lor u(r(x), x)
<
\epsilon\lor m(\alpha)\le \epsilon. 
\]
Namely,  we obtain $\myBmap(\theta)\mysub 
U(p, \epsilon;  u)$. 
This inclusion and the strong triangle inequality 
 imply that 
$u(x, y)<\epsilon$ for all $x, y\in \myBmap(\theta)$. 
This contradicts 
(Fact \ref{item:daijifact}).  
Thus, the filter $\myfilter{F}$ is Cauchy on $(X, u)$. 
This leads to the lemma. 
\end{proof}

\begin{rmk}
The author does not know whether 
the metrics $d$ and $\submap[S, \paramap[S, h, r, k], r, A, d]$ in Lemma \ref{lem:completeext} are
uniformly equivalent to each other or not. 
\end{rmk}



\begin{proof}[Proof of Theorem \ref{thm:extensor}]
We assume that $X$ is a topological space 
possessing an infinite metrizable gauge. 
Let $\kappa$ be a regular cardinal
with with $\kappa\in \metrank(X)$. 
Let $G\in \GGG{\kappa}$. 
Let $S$ be a g-characteristic subset of 
$G$. 
Let $A$ be a closed subset of $X$. 

Remark that
since $S$ is g-characteristic in $G$, 
 the set $\cova_{G}(S)$ is 
 characteristic in the set $\lolo{\Arc{G}}$. 
Fix 
a characteristic isotone embedding
$m\colon \invs{\kappa}\to \lolo{\Arc{G}}$
such that $m(\alpha)\in \cova_{G}(S)$ for all $\alpha\in \invs{\kappa}$. 
For each $\alpha\in \invs{\kappa}$, 
we take $l_{\alpha}\in S$ such that 
$\cova_{G}(l_{\alpha})=m(\alpha)$. 
We define a map 
$l\colon \invs{\kappa}\to S$
by $l(\alpha)=l_{\alpha}$. 
Then 
the map $l$
is a characteristic isotone embedding, and 
the relation 
$\cova_{G}\circ l=m$ holds 
true. 
Let 
$\zeta_{\lolo{\Arc{G}}, m}\colon \lolo{\Arc{G}}\to \invs{\kappa}$ 
be a map
in Definition \ref{df:zeta}
associated with $m$. 
Put 
$\xi=\zeta_{\lolo{\Arc{G}}, m}$. 
We put $T=l(\invs{\kappa})$. 
Note that $T$ is a 
Dedekind complete 
g-characteristic subset 
of $G$ and $T\mysub S$. 

For all $\eta\in G_{>0}$, 
there exists $\alpha\in \invs{\kappa}$
such that $m(\alpha+1)<\cova_{G}(\eta)\le m(\alpha)$. 
In this case, we have 
$\xi\circ \cova_{G}(\eta)=\alpha+1$, and hence 
$l\circ \xi\circ \cova_{G}(\eta)=l(\alpha+1)$. 
By the definition of $l$, 
by 
$\cova_{G}\circ l=m$, 
and 
by 
$m(\alpha+1)<\cova_{G}(\eta)$, 
we have $l\circ \xi\circ \cova_{G}(\eta)<\eta$. 
Namely, the following holds true:
\begin{enumerate}
 \setcounter{enumi}{0}
 \renewcommand{\labelenumi}{(Fact \arabic{enumi})}
\setlength{\leftskip}{8mm}
\item\label{item:daijifact1}
for all $\eta\in G_{>0}$, 
we have $l\circ \xi\circ \cova_{G}(\eta)< \eta$. 
\end{enumerate}

Fix 
 $h\in \ult(X; T)$
 (see Proposition 
\ref{prop:characterization}). 
Using Theorem 
\ref{thm:retract}, 
we take a uniformly continuous 
retraction  $r\colon X\to A$
with respect to $h$. 

For each $d\in \met(A; G)$, 
we 
put $D[d]=\cova_{G}\circ d\in 
\ult(A; \lolo{\Arc{G}})$
(see Lemma \ref{lem:uniformdis}). 
Let 
$k[d]=l \circ \xi\circ D[d](=l \circ \xi\circ\cova_{G}\circ d)$. 
According to  Lemmas  \ref{lem:amenable} and \ref{lem:lll}, 
we observe  that $k[d]\in \ult(X; T)$. 

Put 
$u[d]=\paramap[T, h, r, k[d]](=r^{*}k[e]\lor h)$. 
Then we have $u[d]\in \ult(X; T)$. 
According to Lemma \ref{lem:cpltuni}, 
the metrics $d$ and $D[d]$ are
uniformly equivalent to each other. 
Thus, $D[d]$ satisfies the assumption 
in 
Lemma \ref{lem:completeext}. 
By Lemma \ref{lem:completeext}, 
the map $r$ is uniformly continuous with respect to 
$u[d]$. 

We define
a map $\mainmap\colon \met(A; G)\to \met(X; G)$ by 
\[
\mainmap(d)=\submap[T, u[d], r, A, d].
\] 
We now verify that 
the map $\mainmap$ is a desired one. 
By
Lemma
\ref{lem:metmet}, 
we observe  that 
$\mainmap$ is certainly  a map from $\met(A; G)$
into $\met(X; G)$. 

We now show  that $\mainmap$ satisfies 
(A\ref{item:contia1}). 
Fix $d\in \met(A; G)$ and $\epsilon\in G_{>0}$. 
Take $\eta\in G_{>0}$ with $\eta<\epsilon$. 
Take arbitrary $e\in \myVnbd(d; \eta)$, 
 which means that 
for all $x, y\in A$ we have 
\begin{align}
&d(x, y)< e(x, y)\lor \eta;\label{al:49} \\
&e(x, y)<d(x, y)\lor \eta.\label{al:410}
\end{align}

Since $\cova_{G}$ is isotone, 
by (\ref{al:49}) and (\ref{al:410}), 
for all $x, y\in A$, we have 
\begin{align}
D[d](x, y)\le D[e](x, y)\lor \cova_{G}(\eta);\label{al:daijial1} \\
D[e](x, y)\le D[d](x, y)\lor \cova_{G}(\eta). \label{al:daijial2}
\end{align}

Since $l$ and $\xi$ are isotone, 
by (Fact \ref{item:daijifact1}), 
and 
by (\ref{al:daijial1}), 
 and (\ref{al:daijial2}), 
for all $x, y\in A$, we obtain
\begin{align}
&k[d](x, y)\le k[e](x, y)\lor \eta;\label{al:413} \\
&k[e](x, y)\le k[d](x, y)\lor \eta.\label{al:414} 
\end{align}
By (\ref{al:413}) and (\ref{al:414}),  
for all $x, y\in X$, we have
\begin{align}
&r^{*}k[d](x, y)\le  r^{*}k[e](x, y)\lor \eta; \\
&r^{*}k[e](x, y)\le  r^{*}k[d](x, y)\lor \eta. 
\end{align}

These inequalities imply that 
for all $x, y\in X$, we have 
\begin{align}
&u[d](x, y)\le u[e](x, y)\lor \eta;\label{al:415}\\
&u[e](x, y)\le u[d](x, y)\lor \eta.\label{al:416} 
\end{align}

By (\ref{al:415}) and (\ref{al:416}), for all $x\in X$, we obtain 
\begin{align}
&\setdis_{u[d], A}(x)\le \setdis_{u[e], A}(x)\lor \eta; \\
&\setdis_{u[e], A}(x)\le \setdis_{u[d], A}(x)\lor \eta.  
 \end{align}
Thus,  for all $x, y\in X$,  the following hold true:
 \begin{align}
 &\subsubmap[u[d], A](x, y)\le \subsubmap[u[e], A](x, y)\lor \eta; \\
 &\subsubmap[u[e], A](x, y)\le 
 \subsubmap[u[d], A](x, y)\lor \eta. 
\end{align}
Therefore, by $\eta<\epsilon$, 
we conclude that
 $\mainmap(e)\in \myVnbd(\mainmap(d);\epsilon)$, 
 and hence 
 the map $\mainmap$ is continuous. 
 This implies the condition (A\ref{item:contia1}). 

By the definition of $\mainmap$, 
we see that $\mainmap$ satisfies 
(A\ref{item:a2}). 

The latter part of 
Lemma \ref{lem:phantom} implies 
the conditions (A\ref{item:a4})
and (A\ref{item:a45})
(note that $T\mysub S$).

Note that 
$D[d\lor e]=D[d]\lor D[e]$
and 
$k[d\lor e]=k[d]\lor k[e]$. 
By 
$k[d\lor e]=k[d]\lor k[e]$, 
we have 
$\setdis_{u[d]}\lor \setdis_{u[e]}=\setdis_{u[d\lor e]}$. 
Then 
$\subsubmap_{k[d\lor e]}=
\subsubmap_{k[d]}\lor \subsubmap_{k[e]}$, 
and hence
$\mainmap(d\lor e)=\mainmap(d)\lor \mainmap(e)$. 
Thus, the condition (A\ref{item:a5}) is satisfied. 
The condition (A\ref{item:a5}) implies the condition 
(A\ref{item:a3}). 

To show the condition (A\ref{item:a6}), 
we assume that
there exists a complete $G$-metric $D$
in $\met(X; G)$. 
By Lemma \ref{lem:completecomplete}, 
there exists a complete
$\invs{\kappa}$-ultrametric 
$H\in \ult(X; \invs{\kappa})$. 
Put $h'=l\circ H$. 
By Lemmas 
\ref{lem:amenable}
and \ref{lem:cpltuni}, 
we have $h'\in \ult(X; T)$ and 
 $h'$ is complete. 
 Using Theorem 
 \ref{thm:retract}, 
 we 
 can take a 
 uniformly continuous retraction 
 $r'\colon X\to A$
 with respect to $h'$. 
According to 
Lemmas \ref{lem:uniformdis}
and \ref{lem:cpltuni}, 
 if $d\in \met(A; G)$ is complte, 
 then so is 
 $D[d]$. 
Note that 
$\paramap[S, h', r', k[d]]$ is complete. 

Using $h'$ and $r'$ instead of $h$ and $r$ in the argument 
discussed above, 
by Lemma \ref{lem:completeext} ($D[d]$ and $d$ are uniformly equivalent to each other), 
we obtain a map 
$\mainmap\colon \met(A; G)\to \met(X; G)$ 
satisfying  that 
if  $d\in \met(X; G)$ is complete,  
then 
so is the $G$-metrics  $\mainmap(d)$. 
Namely, the map $\mainmap$ satisfies 
the condition (A\ref{item:a6}). 

This finishes  the proof of 
Theorem \ref{thm:extensor}. 
\end{proof}

\begin{proof}[Proof of Theorem \ref{thm:extensorult}]
Let $S$ be a characteristic subset of $\rr_{\ge 0}$.  
Let $X$ be an $\rr_{\ge 0}$-ultrametrizable space. 

Put $G=\hahntai{S}$. 
Then the set $S$ can considered as
 a 
g-characteristic subset of $G$
(see Proposition \ref{prop:ee}). 
Let 
$\mainmap: \met(A; G)\to \met(X; G)$
be a map stated in 
Theorem \ref{thm:extensor}. 
Note that 
$\ult(X; S)\mysub \met(X; G)$
(see Lemma \ref{lem:inclusions}). 
Put 
$\mainmapult=\mainmap|_{\ult(X; S)}$. 
By the condition (A\ref{item:a45}) in 
Theorem \ref{thm:extensor}, 
we confirm that 
 $\mainmapult$  is  a 
 map from $\ult(A; S)$  into 
 $\ult(X; S)$. 
Similarly to the proof of  
Theorem \ref{thm:extensor}, 
we observe 
 that $\mainmap$ 
satisfies 
(B\ref{item:b1})--(B\ref{item:b5}). 
In particular,  
from the argument verifying  that 
$e\in\myVnbd(d; \eta)$ implies $\mainmap(e)\in \myVnbd(\mainmap(d); \epsilon)$, 
it follows that 
$\mainmapult$ is an isometric embedding
with respect to 
$\umetdis_{A}^{S}$ and $\umetdis_{X}^{S}$.

Note that, 
to prove the condition (B\ref{item:b5}), 
we need to confirm that 
there exists a complete $S$-ultrametric 
 $D\in\met(X; G)$. 
 From 
 \cite[Proposition 2.17]{Ishiki2021ultra}, 
there exists complete $D\in \ult(X; S)$, 
and hence, 
we have $D\in \met(X; G)$. 
 Thus, the space $X$ satisfies the 
 assumption in Theorem \ref{thm:extensor} for 
 the condition (A\ref{item:a6}). 
 Hence the condition 
 (B\ref{item:b5}) holds true. 
 
This completes the proof of 
Theorem \ref{thm:extensorult}. 
\end{proof}

\subsection{Proof of Theorem \ref{thm:kappa-compact}}
We next show Theorem \ref{thm:kappa-compact}. 

The following lemma is stated in 
\cite[Theorem 1.3]{stevenson1969results}
(see also \cite[Proposition 2.2]{MR682706}. 
The proof is similary  to the case of 
ordinary metric spaces. 
\begin{lem}\label{lem:kappaandseq}
We assume that $X$ 
is a topological space possessing 
an infinite metrizable gauge. 
Let $\kappa$ be a regular cardinal
 with 
$\kappa\in \metrank(X)$. 
Then the following are equivalent to each other:
\begin{enumerate}
\item Every $\kappa$-sequence in $X$ has 
a convergent $\kappa$-subsequence. 
\item Every open cover of $X$ whose cardinal is  $\kappa$ has a subcover
with cardinal $<\kappa$. 
\item The space $X$ is finally $\kappa$-compact. 

\end{enumerate}
\end{lem}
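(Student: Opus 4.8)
The plan is to establish the cyclic chain $(1)\To(2)\To(3)\To(1)$, which is the high-power analogue of the metric equivalences ``sequentially compact $\iff$ countably compact $\iff$ compact''. Since the three conditions are purely topological, I would first use Proposition \ref{prop:characterization} to fix an $\invs{\kappa}$-ultrametric $d\in\ult(X;\invs{\kappa})$ compatible with the topology of $X$, and argue entirely with $d$. The essential geometric input is Lemma \ref{lem:centers}: for each radius $\beta\in\stars{\invs{\kappa}}$ the closed balls $\{B(x,\beta;d)\mid x\in X\}$ form a clopen partition of $X$, these partitions refine as $\beta$ decreases towards $\mzero$, and because $\chara(\invs{\kappa})=\kappa$ there is a decreasing coinitial family of radii of order type $\kappa$, so every point has a neighbourhood base $\{B(p,\beta;d)\}$ indexed by $\kappa$.

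For $(1)\To(2)$ I would argue by contradiction: given an open cover $\{U_\alpha\}_{\alpha<\kappa}$ with no subcover of cardinality $<\kappa$, for each $\gamma<\kappa$ the subfamily $\{U_\alpha\}_{\alpha<\gamma}$ has cardinality $\le|\gamma|<\kappa$ and hence fails to cover, so I may pick $x_\gamma\in X\setminus\bigcup_{\alpha<\gamma}U_\alpha$. Applying $(1)$ to the $\kappa$-sequence $\{x_\gamma\}_{\gamma<\kappa}$ gives a convergent $\kappa$-subsequence with limit $p$; fixing $\alpha_0$ with $p\in U_{\alpha_0}$ yields a contradiction, since the subsequence indices are cofinal in $\kappa$ by regularity, so cofinally many of its terms lie outside $U_{\alpha_0}$, whereas convergence forces them eventually inside.

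The substantial step is $(2)\To(3)$, where I expect the principal obstacle to sit. First I would show that $(2)$ forces each $\beta$-ball partition to have fewer than $\kappa$ pieces (a form of $\kappa$-total boundedness): a clopen partition is a minimal open cover, so if it had $\kappa$ pieces then $(2)$ would contradict minimality, and if it had more than $\kappa$ pieces one merges all but $\kappa$ of them into their (open) union, obtaining a $\kappa$-sized cover whose every subcover must still contain those $\kappa$ pairwise-disjoint pieces and so has size $\ge\kappa$, again contradicting $(2)$. Choosing one centre per piece across all $\beta<\kappa$ then gives a dense set of cardinality $\le\kappa$, and by Lemma \ref{lem:centers} the associated balls form a base of cardinality $\le\kappa$; consequently every open cover admits a subcover of cardinality $\le\kappa$. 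A final application of $(2)$ shrinks any such subcover of size exactly $\kappa$ below $\kappa$, yielding $(3)$. The delicate points are the cardinal bookkeeping that keeps the base of size $\le\kappa$ (using the regularity of $\kappa$) and the merging argument disposing of the ``more than $\kappa$ pieces'' case.

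Finally, for $(3)\To(1)$ I would take an arbitrary $\kappa$-sequence $\{x_\alpha\}_{\alpha<\kappa}$ and consider the closures of its tails $T_\beta=\{x_\alpha\mid \beta\le\alpha<\kappa\}$. These form a nested family of closed sets with the $<\kappa$-intersection property, since the supremum of fewer than $\kappa$ ordinals stays below $\kappa$ by regularity; hence Lemma \ref{lem:cptfilter} supplies a point $p\in\bigcap_{\beta<\kappa}\cl_X(T_\beta)$, that is, a cluster point each of whose neighbourhoods captures cofinally many $x_\alpha$. Using the decreasing $\kappa$-indexed neighbourhood base at $p$, and again invoking the regularity of $\kappa$ to keep the chosen indices bounded below $\kappa$ at limit stages, I would transfinitely select a strictly increasing $\kappa$-sequence of indices along which the terms converge to $p$, producing the required convergent $\kappa$-subsequence and closing the cycle.
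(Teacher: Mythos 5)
Your proof is correct. Note that the paper itself supplies no argument for this lemma: it cites \cite[Theorem 1.3]{stevenson1969results} and remarks only that the proof is ``similar to the case of ordinary metric spaces,'' so what you have written is precisely the worked-out generalization that the paper leaves to the reader. Your route is the expected one: fix $d\in\ult(X;\invs{\kappa})$ via Proposition \ref{prop:characterization}, use Lemma \ref{lem:centers} to get, for each radius, a clopen partition by closed balls and a $\kappa$-indexed decreasing neighbourhood base at every point, and then run the classical cycle. The two places where high-power care is genuinely needed are handled properly: in $(2)\Rightarrow(3)$ the cardinality trichotomy for a clopen partition (fewer than $\kappa$, exactly $\kappa$, more than $\kappa$ pieces, the last disposed of by merging all but $\kappa$ pieces into one open set) correctly yields a base of cardinality at most $\kappa\cdot\kappa=\kappa$, after which one more application of $(2)$ finishes; and in $(3)\Rightarrow(1)$ the tails $\cl_X(T_\beta)$ satisfy the hypothesis of Lemma \ref{lem:cptfilter} because the family is nested and each $T_\beta$ is non-empty, while the regularity of $\kappa$ is exactly what keeps the suprema of the chosen indices below $\kappa$ at limit stages of the transfinite selection. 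The only cosmetic point is that you pass freely between metric convergence of $\kappa$-sequences (as defined in the paper) and topological convergence; this is harmless since the closed balls form a neighbourhood base, but it deserves one explicit sentence.
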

\begin{lem}\label{lem:closedsubandcpt}
We assume that $X$ 
is a topological space possessing 
an infinite metrizable gauge. 
Let $\kappa$ be a regular cardinal
 with 
$\kappa\in \metrank(X)$. 
If $X$ is not finally $\kappa$-compact, 
then 
there exists a closed subset  $A$ of $X$ such that 
$\card(A)=\kappa$ and $A$ is discrete. 
\end{lem}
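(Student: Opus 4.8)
The plan is to realize the failure of final $\kappa$-compactness through the covering characterization in Lemma~\ref{lem:kappaandseq}, and then to carve out a closed discrete subset of cardinality $\kappa$ by a transfinite ``escaping point'' construction, using the $\kappa$-additivity of $X$ (Lemma~\ref{lem:additive}) together with the Hausdorffness of generalized metrizable spaces to control closures. Concretely: since $X$ is not finally $\kappa$-compact, the equivalence of conditions (2) and (3) in Lemma~\ref{lem:kappaandseq} yields an open cover $\{U_\alpha\}_{\alpha<\kappa}$ of cardinality $\kappa$ that admits no subcover of cardinality $<\kappa$. I would then set $V_\beta=\bigcup_{\alpha\le\beta}U_\alpha$; these form an increasing family of open sets with $\bigcup_{\beta<\kappa}V_\beta=X$, and since $\{U_\alpha\mid \alpha\le\beta\}$ has cardinality $\le\card(\beta)+1<\kappa$ (using that $\kappa$ is infinite regular), no $V_\beta$ can equal $X$. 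Hence I may choose a point $x_\beta\in X\setminus V_\beta$ for each $\beta<\kappa$, and put $A=\{x_\beta\mid \beta<\kappa\}$.

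The decisive property is a local one: given any $p\in X$, pick $\gamma<\kappa$ with $p\in U_\gamma\subseteq V_\gamma$. For every $\beta\ge\gamma$ we have $V_\gamma\subseteq V_\beta$ and $x_\beta\notin V_\beta$, so $x_\beta\notin V_\gamma$; thus $V_\gamma$ is an open neighbourhood of $p$ with $V_\gamma\cap A\subseteq\{x_\beta\mid\beta<\gamma\}$, a set of cardinality $<\kappa$. To upgrade this to \emph{closed} and \emph{discrete}, I would fix $p$, write $F=(V_\gamma\cap A)\setminus\{p\}$ (which has cardinality $<\kappa$), and for each $q\in F$ choose, by the Hausdorffness of $X$, an open $W_q\ni p$ with $q\notin W_q$. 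Then $O=V_\gamma\cap\bigcap_{q\in F}W_q$ is an intersection of $V_\gamma$ with fewer than $\kappa$ open sets, hence open by Lemma~\ref{lem:additive}; it contains $p$ and satisfies $O\cap A\subseteq\{p\}$. If $p\notin A$ this shows $p\notin\overline{A}$, so $A$ is closed; if $p\in A$ it exhibits $p$ as isolated in $A$, so $A$ is discrete.

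It remains to secure $\card(A)=\kappa$, and this is the step that genuinely needs the regularity of $\kappa$. Suppose $\card(A)<\kappa$ and list $A=\{y_i\}_{i<\lambda}$ with $\lambda<\kappa$; each $y_i$ lies in some $V_{\delta_i}$ since the $V_\delta$ cover $X$, and by regularity $\gamma^*=\sup_{i<\lambda}\delta_i<\kappa$, whence $A\subseteq V_{\gamma^*}$. But $x_{\gamma^*}\in A$ while $x_{\gamma^*}\notin V_{\gamma^*}$ by construction, a contradiction; therefore $\card(A)=\kappa$. I expect the only real subtlety to be arranging closedness and discreteness \emph{simultaneously} out of the mere ``$<\kappa$ points in a neighbourhood'' estimate — this is precisely where $\kappa$-additivity (Lemma~\ref{lem:additive}) is indispensable, since in the ordinary ($\kappa=\omega_0$) case one would intersect only finitely many separating neighbourhoods, whereas here one must intersect up to $<\kappa$ of them and still land on an open set.
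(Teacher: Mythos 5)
Your proof is correct, but it follows a genuinely different route from the paper's. The paper invokes the \emph{sequential} characterization in Lemma~\ref{lem:kappaandseq}: failure of final $\kappa$-compactness produces a $\kappa$-sequence $\{p_\alpha\}_{\alpha<\kappa}$ with no convergent $\kappa$-subsequence, and $A$ is taken to be its range; discreteness is read off from the absence of cluster points, and $\card(A)=\kappa$ follows because, by regularity, each point can be repeated fewer than $\kappa$ times. You instead use the \emph{covering} characterization, build the increasing chain $V_\beta=\bigcup_{\alpha\le\beta}U_\alpha$, and pick escaping points $x_\beta\in X\setminus V_\beta$; the key local estimate ``$V_\gamma\cap A$ has cardinality $<\kappa$'' is then upgraded to closed discreteness by intersecting $V_\gamma$ with fewer than $\kappa$ Hausdorff-separating neighbourhoods, which is legitimate precisely because of the $\kappa$-additivity in Lemma~\ref{lem:additive} (and $X$ is indeed Hausdorff, as the paper itself uses elsewhere, e.g.\ in Corollary~\ref{cor:characlosed}). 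Your cardinality argument is the same pigeonhole via regularity, just phrased through the chain $\{V_\beta\}$ rather than through repetitions in a sequence. What your approach buys is an explicit, self-contained verification that $A$ is \emph{closed}: the paper's proof only exhibits, for each $p\in A$, a neighbourhood meeting $A$ in $\{p\}$, and leaves the treatment of points of $\overline{A}\setminus A$ implicit (it would require arguing that an accumulation point of $A$ yields a convergent $\kappa$-subsequence, which again needs the character/$\kappa$-additivity of $X$). What the paper's approach buys is brevity and a direct link to the $\kappa$-sequence machinery already set up for completeness. Both arguments are valid.
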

\begin{proof}
Since $X$ is not finally $\kappa$-compact, 
by Lemma \ref{lem:kappaandseq}, 
there exists a $\kappa$-sequence 
$\{p_{\alpha}\}_{\alpha<\kappa}$ whose all 
$\kappa$-subsequence have no limits. 
Put $A=\{\, p_{\alpha}\mid \alpha<\kappa\, \}$. 
Then,  for all $p\in A$, 
there exists an open set $U_{p}$ 
such that 
$p\in U_{p}$ 
and 
$U_{p}\cap A=\{p\}$. 
Thus, 
the set $A$ is discrete. 
Since all $\kappa$-subsequences of 
$\{p_{\alpha}\}_{\alpha<\kappa}$ have no limits, 
we conclude that 
for each $p\in A$, we have 
$\card(\{\, \alpha<\kappa \mid p_{\alpha}=p\, \})<\kappa$. 
Thus, by the regularity of $\kappa$, 
we see that $\card(A)=\kappa$. 
\end{proof}

\begin{proof}[Proof of Theorem \ref{thm:kappa-compact}]
We assume that $X$ 
is a topological space possessing 
an infinite metrizable gauge. 
Let $\kappa$ be a regular cardinal
 with 
$\kappa\in \metrank(X)$. 

Since $\GGG{\kappa}\neq \emptyset$
and $\FFF{\kappa}\neq \emptyset$, 
we obtain 
the implications 
$(\ref{item:com3})\To (\ref{item:com2})$
and
$(\ref{item:com5})\To (\ref{item:com4})$. 
According to  Lemma \ref{lem:cptandcomplete}, 
we have the implications $(\ref{item:com1})\To (\ref{item:com3})$ 
and $(\ref{item:com1})\To (\ref{item:com5})$. 

To show $(\ref{item:com2})\To (\ref{item:com4})$, 
we assume the statement  (\ref{item:com2}). 
Then all $d\in \ult(X; G_{\ge 0})$ are complete. 
Put
 $S=G_{\ge 0}$. 
According to 
Proposition \ref{prop:chacha}, 
we have  $S\in \FFF{\kappa}$. 
Hence
 the statement  (\ref{item:com4}) holds true. 
Thus, we obtain
the implication $(\ref{item:com2})\To (\ref{item:com4})$. 

To prove Theorem \ref{thm:kappa-compact}, 
we only need to show the implication 
$(\ref{item:com4})\To (\ref{item:com1})$. 
We assume that 
$X$ is not finally $\kappa$-compact. 
Then, 
by Lemma \ref{lem:closedsubandcpt}, 
there exists a closed subset $A$ of $X$ 
such that 
$\card(A)=\kappa$ and $A$ is discrete.   
Take a bijection 
$f\colon A\to \kappa$, 
and put $g=f^{-1}$. 
Let $e=f^{*}\orddis_{\invs{\kappa}}$, 
where $\orddis_{\invs{\kappa}}$ is the ultrametric on 
$\invs{\kappa}$ in 
Definition  \ref{df:kappasp}. 
Then $e\in \ult(A; \invs{\kappa})$. 

Note that 
$\kappa\mysub \invs{\kappa}$. 
Put $G=\hahntai{\invs{\kappa}}$.
The set  $\invs{\kappa}$ can be considered as  a 
g-characteristic subset of $G$ 
(see Proposition 
\ref{prop:ee}), 
and hence 
$e\in \ult(X; G_{\ge 0})$. 
We take 
a map 
$\mainmap\colon \met(A; G)\to \met(X; G)$
satisfying the conditions in
 Theorem \ref{thm:extensor}. 
Put 
$d=\mainmap(e)$. 
Then, 
by condition 
(A4) in Theorem \ref{thm:extensor}, 
we obtain 
$d\in \ult(X; G_{\ge 0})$ 
with 
$d|_{A^{2}}=e$. 
Since 
$\{\alpha\}_{\alpha<\kappa}$ 
is 
Cauchy in 
$(\invs{\kappa}, \orddis_{\invs{\kappa}})$ 
(see 
(\ref{item:kpkp3})
in 
Corollary \ref{cor:kappaspsp}), 
 the sequence 
$\{g(\alpha)\}_{\alpha<\kappa}$ is 
Cauchy in $(X, d)$. 
Since 
$A$ is closed, 
the sequence 
$\{g(\alpha)\}_{\alpha<\kappa}$ 
has no limits. 
 Hence 
 $(X, d)$ 
 is not complete. 
 Therefore we obtain 
the implication  
$(\ref{item:com4})\To (\ref{item:com1})$. 
 This finishes the proof of  
 Theorem \ref{thm:kappa-compact}. 
\end{proof}

\section{Table of symbols}\label{sec:tableofsymbols}

\setlongtables
\begin{tabularx}{\textwidth}{|l|X|}

\endhead
\hline 

Symbol 
&
Description
\\
\hline
\hline 

$\met(X; G)$
& 
The set of all $G$-metrics that generate the same topology on a topological space $X$.
\\
\hline

$\mzero_{S}$ 
& 
The minimal element of a bottomed inearly ordered set $S$. 
\\
\hline

$\lolo{L}$
&
The one-point order extension of 
a linearly ordered set $L$. 
This is a ordered set by putting the minimal element 
$\mzero_{\lolo{G}}$ to $L$ (Definition \ref{df:oneptextorder}). 
\\
\hline

$\chara(S)$
&
The character of $S$ (Definition \ref{df:character}). 
\\
\hline

$x\ll y$
& 
For all $n\in \zz_{\ge 1}$, we have $n\cdot x<y$. 
\\
\hline 

$x\arel y$
&
$x$
and $y$ are Archimedean equivalent to each other. 
Namely, there exist $n, n\in \zz_{\ge 1}$ such that 
$y\le n\cdot x$ and $x\le m\cdot y$. 
\\
\hline
$[x]_{\arel}$
&
The Archimedean equivalence class of $x$. 
\\
\hline 
$\Arc{G}$
&
The set of all Archimedean classes of 
a linearly ordered Abelian group $G$. 
\\
\hline 

$\alpha\arcle\beta$
&
$x\ll y$ or $x\arel y$, where 
$\alpha=[x]_{\arel}$ and $\beta=[y]_{\arel}$. 
\\
\hline

$\alpha\arclele \beta$
& 
$x\ll y$, where 
$\alpha=[x]_{\arel}$ and $\beta=[y]_{\arel}$. 
We use the same symbol to the order $\ll$ on $G$. 
\\
\hline

$\metrank(X)$ 
& 
The set (class) of all metrizable gauges of 
a topological space $X$. 
\\
\hline 

$\myomega$
&
The least infinite  cardinal. 
$\myomega=\{0, 1, 2, \dots, \}$
\\
\hline

$\GGG{\kappa}$
& 
The class of all linearly ordered Abelian group 
$G$ with 
$\chara(\Arc{G})=\kappa$. 
\\
\hline 
$\FFF{\kappa}$
& 
The class of all bottomed linearly ordered set 
$S$ with $\chara(S)=\kappa$. 
\\
\hline

$\myVnbd(d; \epsilon)$
& 
The 
set of all $e\in \met(X; d)$ such that 
for all $x, y\in X$ we have $e(x, y)<d(x, y)\lor \epsilon$ and $d(x, y)<e(x, y)\lor \epsilon$. 
\\
\hline

$\umetdis_{X}^{S}$ 
&
 The infimum of 
 $\epsilon\in S\sqcup \{\infty\}$ such that 
 for all $x, y\in X$ we have 
$d(x, y)\le e(x, y)\lor \epsilon$, 
 and 
$e(x, y)\le d(x, y)\lor \epsilon$. 
\\
\hline 

$\oposi{S}$
&
The ordered set  with the dual order to $(S, \le_{S})$. 
\\
\hline 

$\invs{\kappa}$

&
$\oposi{\kappa+1}$ (Definition \ref{df:opkappa}). 
\\
\hline

$\comp{S}$
&
The Dedekind completion of 
$(S, \le)$. 
\\
\hline

$\abs(x)$
& 
The absolute value of $x$ of $G$
(Definition \ref{df:lambda}). 
\\
\hline

$\cova_{G}(x)$
&
The natural valuation (or covaluation) of 
$G$ (Definition \ref{df:lambda}). 
\\
\hline

$\hahnsp{L}$
&
The Hahn group induced from a linearly ordered set $L$. 
\\
\hline 

$\hahnkor{G}$
&
The Hahn field induced from a linearly ordered Abelian group $G$. 
\\
\hline

$\stars{S}$
&
$\stars{S}=S\setminus \{\mzero_{S}\}$
(Definition \ref{df:stars}). 
\\
\hline 

$\hahntai{S}$
&
$\hahntai{S}=\hahnkor{\hahnsp{\oposi{\stars{S}}}}$ 
(Definition \ref{df:hahnp}). 
\\
\hline 

$\zeta_{S, l}$
& 
The map defined in Definition \ref{df:zeta} 
induced from 
$S\in \FFF{\kappa}$ 
and a characteristic 
isotone embedding 
 $l\colon \invs{\kappa}\to S$. 
\\
\hline 

$U(x, \epsilon; d)$
&
The open ball centered at $x$ with radius $\epsilon$
by a $G$-metric or $S$-ultrametric  $d$. 
\\
\hline 

$B(x, \epsilon; d)$
&
The closed ball centered at $x$ with radius $\epsilon$
by a $G$-metric or $S$-ultrametric  $d$. 
\\
\hline

$\orddis_{S}$
& 
The  $S$-ultrametric on $S$ defined in 
Definition \ref{df:kappasp}. 
\\
\hline

$\setdis_{d, A}(x)$
&
The distance between the set $A$ and the point $x$
(Definition \ref{df:setdis}). 
\\
\hline

$\subsubmap[S, h, A]$
&
The $S$-pseudo-ultrametric defined in Definition \ref{df:subsubmap}. 
\\
\hline

$\submap[S, h, r, A, d]$
&
The metric defined in Definition \ref{df:modifymet}. 
\\
\hline

$f^{*}d$
&
The pullback induced from $d$ and a map $f$ 
(Definition \ref{df:pulback}). 
\\
\hline

$\paramap[S, h, r, k]$
&
The metric defined in Definition \ref{df:paramap}.
\\
\hline

$f_{\sharp}\myfilter{F}$
&
The pushout filter  of $\myfilter{F}$ by a map $f$. 
\\
\hline

\end{tabularx}

\bibliographystyle{amsplain}
\bibliography{bibtex/ultra.bib}

\end{document}